\documentclass{amsart}    
\oddsidemargin 0mm
\evensidemargin 0mm
\topmargin 0mm
\textwidth 160mm
\textheight 230mm 
\tolerance=9999   
\usepackage{amssymb,amstext,amsmath,amscd,amsthm,amsfonts,enumerate,graphicx,latexsym,stmaryrd,multicol}
\usepackage{color}
\usepackage{hyperref} 
\usepackage{tikz-cd}  

\usepackage{amsmath, mathtools}
\hypersetup{colorlinks=true} 

\usepackage[all]{xy}

\newcommand{\old}[1]{{\color{red} #1}}
\newcommand{\new}[1]{{\color{blue} #1}}
\newtheorem{thm}{Theorem}[section]  
\newtheorem{lem}[thm]{Lemma}
\newtheorem{prop}[thm]{Proposition}
\newtheorem{cor}[thm]{Corollary}
\theoremstyle{definition}
\newtheorem{dfn}[thm]{Definition}

\newtheorem{setup}[thm]{Setup}
\newtheorem{rem}[thm]{Remark}

\newtheorem{conj}[thm]{Conjecture}
\newtheorem{ex}[thm]{Example}
\newtheorem{chunk}[thm]{}
\theoremstyle{remark}

\newtheorem*{ac}{Acknowlegments}

\numberwithin{equation}{thm}

\def\depth{\operatorname{depth}}

\def\Tr{\operatorname{Tr}}
\def\Ext{\operatorname{Ext}} 

\def\m{\mathfrak{m}}

\def\ann{\operatorname{ann}} 
\def\V{\operatorname{V}}

\def\supp{\operatorname{Supp}}
\def\p{\mathfrak{p}} 
\def\Hom{\operatorname{Hom}} 
\def \Ht {\operatorname{ht}} 
\def \spec {\operatorname{Spec}}
\def\syz{\Omega}
 
\def \q {\mathfrak {q}}

\def \id {\operatorname{id}} 
\def \grade   {\operatorname{grade}}
\def \Min {\operatorname{Min}} 
\def \Ass {\operatorname{Ass}}

\def \Tor {\operatorname{Tor}}

\def \pd {\operatorname{pd}}
\def \soc {\operatorname{Soc}}
\def \im {\operatorname{Im}}
\def \Ker {\operatorname{Ker}}
\def \Cok {\operatorname{Coker}}
\def \End {\operatorname{End}}  
\def \n {\mathfrak {n}} 
\def \NF {\operatorname{NF}}  
\def \t {\operatorname{t}}
\def \rank {\operatorname{rank}}   
\begin{document}
\title{Vanishing of (co)homology of Burch and related submodules}   

\author{Souvik Dey}
\address{Department of Mathematics, University of Kansas, 405 Snow Hall, 1460 Jayhawk Blvd.,
Lawrence, KS 66045, U.S.A.}
\email{souvik@ku.edu}

\author{Toshinori Kobayashi}
\address{School of Science and Technology, Meiji University, 1-1-1 Higashi-Mita, Tama-ku, Kawasaki-shi, Kanagawa 214-8571, Japan}
\email{toshinorikobayashi@icloud.com}    

\thanks{Kobayashi was partly supported by JSPS Grant-in-Aid for JSPS Fellows 21J00567.}

\date{}    

\begin{abstract}    
We introduce the notion of Burch submodules and weakly $\m$-full submodules of modules over a local ring $(R,\m)$ and study their properties. One of our main results shows that Burch submodules satisfy $2$-Tor rigid and test property. We also show that over a local ring $(R,\m)$ a submodule $M$ of a finitely generated $R$-module $X$, such that either $M=\m X$ or $M (\subseteq \m X$) is weakly $\m$-full in $X$, is $1$-Tor rigid and a test module provided that $X$ is faithful (and $X/M$ has finite length when $M$ is weakly $\m$-full). As an application, we give some new class of modules and a new class of rings such that a conjecture of Huneke and Wiegand is affirmative for them. 
\end{abstract}     

\keywords{Burch submodule, weakly $\m$-full submodule, test module, Tor-rigidity, Ulrich module, Cohen--Macaulay ring.}  
\subjclass[2020]{13D07, 13D05, 13C13} 

\maketitle 

\section{Introduction}  

Throughout this paper, rings are assumed to be commutative and Noetherian, and modules are assumed to be finitely generated unless otherwise specified. In this paper, we introduce the notion of Burch submodules and study their homological properties with considerations of other related modules.
Our investigations show that Burch submodules have nice properties about the vanishing of certain (co)homologies, while they have a simple definition and many important examples.
Our motivation comes from the paper of Dao, Takahashi and second author \cite{burch} wherein the Burch ideals are introduced.
In that paper, they reveal that Burch ideals have many interesting properties.
Among them, we concentrate on a result on homological properties of Burch ideals which originate from Burch's result \cite[Theorem 5 (ii)]{b}.

\begin{thm}[Burch] \label{bideal}
Let $(R,\m)$ be a local ring.
Let $I$ be a Burch ideal of $R$, i.e., an ideal of $R$ with an inequality $\m I:_R\m\ne I:_R\m$. Let $M$ be an R-module. If $\Tor^R_t(R/I,M)=\Tor^R_{t+1}(R/I,M)=0$ for some positive integer $t$, then $M$ has projective dimension at most $t$.  
\end{thm}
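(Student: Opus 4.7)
The plan is to exploit the Burch condition to extract a nonzero socle element of $R/I$ that interacts with a specific minimal generator of $I$, produce $k$ as a stable direct summand of $\syz^2_R(R/I)$, and then use this to transfer Tor-vanishings from $R/I$ to $k$ (whose Tor encodes projective dimension).

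First, from the hypothesis $\mathfrak{m}I :_R \mathfrak{m} \subsetneq I :_R \mathfrak{m}$, choose $x \in R$ and $y \in \mathfrak{m}$ with $x\mathfrak{m} \subseteq I$ and $xy \notin \mathfrak{m}I$. Since $x \in I$ would force $x\mathfrak{m} \subseteq \mathfrak{m}I$, we must have $x \notin I$; hence $\bar x \in R/I$ is a nonzero socle element and $xy$ is a minimal generator of $I$. The key structural step is to exhibit $k$ as a direct summand (in the stable module category) of $\syz^2_R(R/I)$: in a minimal free resolution $\cdots \to F_1 \to F_0 = R \to R/I \to 0$ with a basis of $F_1$ mapping to a minimal generating set of $I$ containing $xy$, the relation $y \cdot (xy) = xy^2 \in x\mathfrak{m}^2 \subseteq \mathfrak{m}I$ produces a distinguished second-syzygy element which, after a Nakayama-style bookkeeping, splits off a copy of $k$.

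With this summand in hand, for every $j \geq 1$, $\Tor^R_j(k, M)$ appears as a direct summand of $\Tor^R_j(\syz^2_R(R/I), M) \cong \Tor^R_{j+2}(R/I, M)$. For $t \geq 2$, applying this with $j = t - 1 \geq 1$ and using $\Tor^R_{t+1}(R/I, M) = 0$ yields $\Tor^R_{t-1}(k, M) = 0$, whence $\pd_R M \leq t - 2 < t$.

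The case $t = 1$ is the main obstacle: the summand argument gives no direct information, and both hypotheses are essential. I would use the short exact sequence $0 \to k \to R/I \to R/(I+xR) \to 0$ arising from the socle inclusion $1 \mapsto \bar x$; chasing the induced Tor long exact sequence against $\Tor^R_1(R/I, M) = \Tor^R_2(R/I, M) = 0$ yields a surjection $\Tor^R_3(R/(I+xR), M) \twoheadrightarrow \Tor^R_2(k, M)$, so it suffices to force $\Tor^R_3(R/(I+xR), M) = 0$. Either verifying that $I + xR$ itself satisfies a Burch-type condition (reducing to the already-settled $t \geq 2$ case), or an iterated diagram chase exploiting the minimality of $xy \notin \mathfrak{m}I$, should close this case; the delicate bookkeeping needed to handle $t = 1$ is the crux of the argument.
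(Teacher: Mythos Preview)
Your proposal has a genuine gap: the central structural claim that $k$ is a direct summand of $\syz^2_R(R/I)$ for every Burch ideal $I$ is false. Take $R = k[\![s,t]\!]/(st)$ and $I = (s^2,t^2)$. One computes $\m I = (s^3,t^3)$, $I:_R\m = \m$, and $\m I:_R\m = I$, so $I$ is Burch; but the relations on the minimal generators $s^2,t^2$ are generated by $(t,0)$ and $(0,s)$, giving $\syz^2_R(R/I) \cong R/(s)\oplus R/(t)$, which has depth one as an $R$-module and therefore admits no $k$-summand. Your sketch (``a Nakayama-style bookkeeping splits off a copy of $k$'') does not supply an argument, and none exists. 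A warning sign you might have caught: for $t\ge 2$ your argument uses only the single vanishing $\Tor_{t+1}^R(R/I,M)=0$, which would make $R/I$ essentially $1$-Tor-rigid test at high degrees---stronger than what holds for Burch ideals in general. The $t=1$ case you leave entirely open.

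The paper's route is completely different and avoids any summand statement. Working with a minimal free resolution $(F_n,\partial_n)$ of $M$, one uses \emph{both} vanishings to prove the equality $F_t\otimes_R(I:_R\m)=F_t\otimes_R(\m I:_R\m)$ of submodules of $F_t\otimes_R R$; the Burch condition then forces $F_t=0$, yielding the sharper bound $\pd M<t$, uniformly in $t\ge 1$ (and in fact for Burch submodules of arbitrary modules). If you want to rescue a summand-style argument, the correct input is Lemma~\ref{burchsum}: $k$ is a direct summand of $I/aI$ for some $a\in\m$. When $\depth R>0$ one may take $a$ regular on $I$, and then the sequence $0\to I\xrightarrow{a} I\to I/aI\to 0$ together with $\Tor_t^R(R/I,M)=\Tor_{t+1}^R(R/I,M)=0$ gives $\Tor_t^R(I/aI,M)=0$, hence $\Tor_t^R(k,M)=0$; but note this genuinely needs both vanishings and a depth hypothesis.
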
   

This theorem says an $R$-module of the form $R/I$ where $I$ is a Burch ideal satisfy 2-Tor rigid test property (see Definition \ref{df22} (10) for precise definition of $n$-Tor rigid test property for an integer $n\ge 1$).
Note that $n$-Tor rigid test properties of modules have been a frequent interest of study in commutative algebra, for instance see \cite{rigid}, \cite{cw}, and the numerous references therein.  
We define Burch submodules as a generalization of the class of Burch ideals (Definition \ref{burchdef}).
As one of our main results, we establish a generalization of Theorem \ref{bideal} concerning  Burch submodules: 

\begin{thm}[Corollary \ref{c2} and Proposition \ref{finpd3}]
Let $(R,\m)$ be a local ring, $M,X$ be $R$-modules, and $N$ be a Burch submodule of $X$. Assume either 
\begin{enumerate}[\rm(1)]
\item $\Tor^R_{t}(X/N,M)=\Tor^R_{t+1}(X/N,M)=0$ or
\item $\Tor^R_{t-1}(N,M)=\Tor^R_{t}(N,M)=0$
\end{enumerate}
for some positive integer $t$.
Then $M$ has projective dimension at most $t-1$.    
\end{thm}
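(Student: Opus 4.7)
The plan is to reduce the desired bound $\pd_R M\leq t-1$ to the classical identification $\pd_R M=\sup\{i:\Tor_i^R(k,M)\neq 0\}$. The crucial bridge, which I expect to be established earlier in the paper as the submodule analogue of the key structural fact behind Theorem~\ref{bideal} (cf.~\cite{burch}), is that the maximal ideal $\m$ occurs (stably) as a direct summand of the first syzygy $\syz(X/N)$ whenever $N$ is a Burch submodule of $X$. Granted this, both cases amount to pushing the Tor-vanishing through the short exact sequences $0\to\syz(X/N)\to F_0\to X/N\to 0$ and $0\to\m\to R\to k\to 0$ until it lands on $\Tor_i^R(k,M)$ in a useful degree.

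For Case~(1), the dimension-shifting isomorphism $\Tor_i^R(X/N,M)\cong\Tor_{i-1}^R(\syz(X/N),M)$ for $i\geq 2$ translates the hypothesis into $\Tor_{t-1}^R(\syz(X/N),M)=\Tor_t^R(\syz(X/N),M)=0$ (valid for $t\geq 2$). The direct-summand structure yields $\Tor_{t-1}^R(\m,M)=\Tor_t^R(\m,M)=0$, and plugging into $0\to\m\to R\to k\to 0$ gives $\Tor_t^R(k,M)=0$, whence the bound. Case~(2) is handled in parallel: assuming (as is natural for the definition) that $N\subseteq\m X$, any minimal free cover $F\twoheadrightarrow X$ remains a minimal cover of $X/N$, producing the short exact sequence $0\to\syz X\to\syz(X/N)\to N\to 0$; the associated long exact sequence, combined with the Tor-vanishing on $N$, transfers the vanishing from $N$ onto $\syz(X/N)$ (modulo vanishing against $\syz X$, absorbed by another dimension shift), and from there onto $\m$ via the summand structure, again culminating in $\Tor_t^R(k,M)=0$.

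The main obstacle will be the borderline case $t=1$ of Case~(1), where dimension shifting at the lower index is unavailable: $\Tor_1^R(X/N,M)=0$ gives only the injection $\syz(X/N)\otimes M\hookrightarrow F_0\otimes M$, not an outright vanishing. To still conclude that $M$ is free, one must exploit the explicit form of the splitting $\m\hookrightarrow\syz(X/N)\hookrightarrow F_0$: a Burch witness $y\in X$ with $y\m\subseteq N$ and $y\m\not\subseteq\m N$ lifts to a minimal generator $\tilde y\in F_0$, and post-composing the embedding with the coordinate projection $F_0\twoheadrightarrow R$ picking out the $\tilde y$-component recovers the natural inclusion $\m\hookrightarrow R$. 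Tensoring with $M$ then forces the multiplication map $\m\otimes M\to M$ to be injective, i.e.\ $\Tor_1^R(k,M)=0$, and hence $M$ is free. The corresponding $t=1$ case of Case~(2) is instead vacuous: $\Tor_0^R(N,M)=N\otimes M=0$ with $N\neq 0$ forces $M=0$ by Nakayama on a local ring.
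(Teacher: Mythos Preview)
Your ``crucial bridge'' is false, and the paper does not establish anything like it. Take $R=k[x]/(x^3)$, $X=R$, and $N=(x^2)$. Then $N:_X\m=(x)$, so $\m(N:_X\m)=(x^2)\neq 0=\m N$ and $N$ is Burch; but $\syz(X/N)=(x^2)\cong R/(x)=k$ has length~$1$, while $\m=(x)\cong R/(x^2)$ has length~$2$, so $\m$ is not a direct summand of $\syz(X/N)$, stably or otherwise. What \cite{burch} proves for Burch \emph{ideals} is that $k$ (not $\m$) is a direct summand of $\syz^2(R/I)$ (not $\syz^1$), and the present paper does not extend even that statement to Burch submodules. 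Your Case~(2) sketch has further problems: the containment $N\subseteq\m X$ is not part of the definition (e.g.\ $N=R\oplus\m\subseteq R\oplus R=X$ is Burch but $N\not\subseteq\m X$), and even granting it, the long exact sequence from $0\to\syz X\to\syz(X/N)\to N\to 0$ gives you no control over $\Tor_i(\syz X,M)$, so the vanishing does not transfer.

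The paper's argument is completely different and never invokes a summand structure. It fixes a minimal free resolution $(F_n,\partial_n)$ of $M$ and, using only the two Tor vanishings and the inclusions $\m N'\subseteq N\subseteq N'$ for $N'=N:_X\m$, chases elements inside the modules $F_i\otimes_R(-)$ to force an equality of the form $F_t\otimes_R(N:_X\m)=F_t\otimes_R(\m N:_X\m)$ (for Case~(1)) or $F_t\otimes_R\m N'=F_t\otimes_R\m N$ (for Case~(2)). Either equality, read against the Burch hypothesis via Lemma~\ref{burchalter}, forces $F_t=0$, i.e.\ $\pd M<t$. The point is that the Burch condition is used at the very end as an inequality of submodules, not converted up front into a splitting.
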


The result of Levin and Vasconcelos \cite[page 316, Lemma]{lv} says that nonzero modules of the form $\m M$ for some $R$-modules $M$ satisfy the 2-Tor rigid test property.
We see that $\m M$ is a Burch submodule of $M$ (Example \ref{mN}), and hence we recover the cited result. Moreover such an observation allows us to recover and generalize the result of Iyenger and Puthenpurakal \cite[Theorem 3.2]{iy}; see Corollary \ref{9}. On the other hand, we develop the argument of Levin and Vasconcelos, and obtain 1-Tor rigid test property of $\m M$ in the case where $M$ is a faithful module. Our arguments also allow us to improve and recover a result of Iyenger and Puthenpurakal \cite[Theorem 3.1]{iy}:

\begin{thm}[Proposition \ref{t} and \ref{faithquot}]
Let $(R,\m)$ be a local ring, $M,X$ be $R$-modules, and $N$ be a submodule of $X$. Then we have the following:

\begin{enumerate}[\rm(1)]
\item Assume $N$ is faithful and either $\Tor_t^R(M,\m N)=0$ or $\Tor_{t+1}^R(M,X/\m N)=0$ for some integer $t\ge 0$.  
Then $\pd M\le t$.  

\item Assume $\Tor_t^R(M,X/\m N)=0=\Tor^R_t(M,X)$ for some integer $t\ge 1$. If $X\ne 0$ and $X/N$ has finite length, then either $\depth X=0$, or $\pd M<t$.   
\end{enumerate}  
\end{thm}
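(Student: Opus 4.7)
My plan is to handle the two parts separately, in each case exploiting the short exact sequence $0\to \m N\to X\to X/\m N\to 0$, the standard identification $\Tor^R_{i+1}(M,k)\cong\Tor^R_i(M,\m)$ for $i\ge 1$ from $0\to\m\to R\to k\to 0$, and the 2-Tor-rigid test property of the Burch submodule $\m N\subseteq N$ provided by Example \ref{mN} combined with Corollary \ref{c2}(2): namely, $\Tor^R_{t-1}(\m N,M)=\Tor^R_t(\m N,M)=0$ implies $\pd M\le t-1$.

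For Part (1), the new ingredient is a short exact sequence built from faithfulness. Choosing generators $n_1,\ldots,n_r$ of $N$, the map $\phi\colon R\to N^r$, $a\mapsto(an_1,\ldots,an_r)$, is injective because $\ann_R N=0$. I would verify $\phi(R)\cap(\m N)^r=\phi(\m)$ by observing that $aN\subseteq \m N$ forces $a\in \m$ (since $N/\m N$ is a nonzero, hence faithful, $R/\m$-module). With $C:=N^r/\phi(R)$, this produces
\[
0\to \m\to(\m N)^r\to \m C\to 0.
\]
Under the hypothesis $\Tor^R_t(M,\m N)=0$, I would combine the long exact sequence of $\Tor^R_*(M,-)$ applied to the displayed sequence with Burch 2-Tor-rigidity to upgrade the single vanishing to consecutive vanishings $\Tor^R_t(M,\m N)=\Tor^R_{t+1}(M,\m N)=0$, at which point Corollary \ref{c2}(2) yields $\pd M\le t$. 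Under the alternative hypothesis $\Tor^R_{t+1}(M,X/\m N)=0$, the long exact sequence of $0\to \m N\to X\to X/\m N\to 0$ converts the hypothesis into a statement about $\Tor^R_t(M,\m N)$, reducing to the previous case.

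For Part (2), I would set up the dichotomy directly: either $\m\in\Ass X$, so $\depth X=0$ and the conclusion holds trivially, or there is an $X$-regular element $x\in \m$. In the second case, applying $\Tor^R_*(M,-)$ to $0\to \m N\to X\to X/\m N\to 0$ under the given vanishings gives an injection $\Tor^R_{t-1}(M,\m N)\hookrightarrow \Tor^R_{t-1}(M,X)$ and a surjection $\Tor^R_{t+1}(M,X/\m N)\twoheadrightarrow \Tor^R_t(M,\m N)$. The finite-length hypothesis on $X/N$—hence on $X/\m N$, because $N/\m N$ is a $k$-vector space—together with the short exact sequence $0\to N/\m N\to X/\m N\to X/N\to 0$ permits a d\'evissage identifying $\Tor^R_t(M,k)^{\mu(N)}$ as a quotient of $\Tor^R_{t+1}(M,X/N)$. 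Using the regular-element short exact sequence $0\to X\xrightarrow{x}X\to X/xX\to 0$ (which is also available for $\m N$ in place of $X$, since $x$ is $\m N$-regular), I can lift the degree-$t$ vanishing on $X$ upward to produce the second consecutive vanishing needed for Corollary \ref{c2}(2) applied to $\m N\subseteq N$, yielding $\pd M\le t-1$.

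The main obstacle I anticipate is the bootstrap step in Part (1): extracting $\Tor^R_{t+1}(M,\m N)=0$ from $\Tor^R_t(M,\m N)=0$ together with the faithfulness-derived sequence requires careful control of the connecting homomorphisms and may well require iterating with both this sequence and $0\to \m N\to N\to N/\m N\to 0$. This is the point where faithfulness and Burch 2-Tor-rigidity must be deployed simultaneously; neither ingredient alone suffices. In Part (2), the subtle point will be making the d\'evissage commute with the regular-element reduction so that the dichotomy closes cleanly and the Burch property can be invoked at the correct degree.
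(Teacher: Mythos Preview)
Your proposal has genuine gaps in both parts, and the missing ingredient in each case is the same: the paper's argument works at the level of the \emph{differentials} $\partial_i$ in a minimal free resolution $(F_n,\partial_n)$ of $M$, not merely at the level of Tor groups, and that extra granularity is what makes a single vanishing suffice.

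\textbf{Part (1).} Your bootstrap from $\Tor^R_t(M,\m N)=0$ to $\Tor^R_{t+1}(M,\m N)=0$ is exactly the crux, as you note, but the sequence $0\to\m\to(\m N)^r\to\m C\to 0$ does not supply it: the long exact sequence only gives a surjection $\Tor^R_{t+1}(M,\m C)\twoheadrightarrow\Tor^R_t(M,\m)$ and an injection $\Tor^R_t(M,\m C)\hookrightarrow\Tor^R_{t-1}(M,\m)$, neither of which forces any new vanishing. Burch $2$-Tor rigidity needs two consecutive zeros as \emph{input}; it cannot manufacture the second one. The paper instead invokes the Levin--Vasconcelos argument (Proposition~\ref{t}(1)): from $H_t(\m(F_\bullet\otimes N))=\Tor^R_t(M,\m N)=0$ one gets $\partial_{t+1}\otimes N=0$ directly, hence $I_1(\partial_{t+1})\subseteq\ann(N)=0$, so $\partial_{t+1}=0$. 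Your reduction of the $X/\m N$ hypothesis to the $\m N$ hypothesis is also incorrect: the long exact sequence of $0\to\m N\to X\to X/\m N\to 0$ gives only $\Tor^R_t(M,\m N)\hookrightarrow\Tor^R_t(M,X)$, not $\Tor^R_t(M,\m N)=0$. The paper handles this case by a separate direct calculation (Proposition~\ref{faithquot}(1)) again yielding $\partial_{t+1}\otimes N=0$.

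\textbf{Part (2).} Your ``lift upward'' step does not work: from $0\to X\xrightarrow{x}X\to X/xX\to 0$ with $\Tor^R_t(M,X)=0$ you only get $\Tor^R_{t+1}(M,X/xX)\cong\Tor^R_{t+1}(M,X)/x\Tor^R_{t+1}(M,X)$, which says nothing about vanishing at level $t+1$. Likewise, the displayed injections and surjections you list for $\m N$ do not close up to two consecutive vanishings. The paper's route is again matrix-theoretic: from $\Tor^R_t(M,X/\m N)=0$ it first shows $\partial_t\otimes N=0$; then $\Tor^R_t(M,X)=0$ identifies $\im(\partial_t\otimes X)\cong\syz^tM\otimes_R X$ inside $F_{t-1}\otimes_R X$, whence $(N:_RX)(\syz^tM\otimes_R X)\subseteq\im(\partial_t\otimes N)=0$. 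Since $X/N$ has finite length, some $\m^m$ kills $\syz^tM\otimes_R X$, a submodule of $X^{\oplus\rank F_{t-1}}$; so either $\depth X=0$ or $\syz^tM\otimes_R X=0$, forcing $\pd M<t$.

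In short, the paper never appeals to Burch $2$-Tor rigidity here; it proves the sharper statement $I_1(\partial_s)\subseteq\ann(N)$ for the relevant $s$ via Levin--Vasconcelos-type manipulations of $F_\bullet\otimes(-)$, and then reads off the conclusion from faithfulness or from the finite-length hypothesis.
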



To explain another important source of Burch submodules, let us recall weakly $\m$-full ideals introduced in \cite{two}.
An ideal $I$ is called \textit{weakly $\m$-full} if the equality $\m I:_R\m=I$ holds.
It is shown in \cite[3.11]{two} that weakly $\m$-full ideal $I$ with $\depth R/I=0$ is Burch, and in \cite[Theorem 2.10]{hw} that $\m$-primary weakly $\m$-full ideal over a local ring $R$ of positive depth is 1-Tor rigid test.
To extend these results, we focus on weakly $\m$-full submodules $N$ of an $R$-module $X$, i.e., submodules satisfying the equality $\m N:_X \m=N$.
We show that a weakly $\m$-full submodule $N$ with $\depth X/N=0$ is a Burch submodule of $X$ (Lemma \ref{wb}).
On the other hand, we prove the following result on Tor rigid test property of weakly $\m$-full submodules:
\begin{thm}[Corollary \ref{wfinpd} and \ref{th411}]
Let $(R,\m)$ be a local ring, $M,X$ be $R$-modules, and $N$ be a weakly $\m$-full submodule of $X$.
Assume $X$ is faithful, $N\subseteq \m X$, $X/N$ has finite length, and either $\Tor_t^R(M,N)=0$ or $\Tor_{t+1}^R(M,X/N)=0$ for some integer $t\ge 0$. Then $\pd M\le t$.   
\end{thm}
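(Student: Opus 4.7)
The plan is to combine Lemma \ref{wb} with the Burch theorem (Corollary \ref{c2}) and Proposition \ref{t}, bridging the gap between our single-Tor hypothesis and the two-Tor setup of the Burch theorem.

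First, I would note that $X \neq 0$ (by faithfulness) and $N \subsetneq X$ (by Nakayama, since $N \subseteq \m X$), hence $X/N$ is a nonzero finite length module with $\depth X/N = 0$. By Lemma \ref{wb}, $N$ is a Burch submodule of $X$, putting Corollary \ref{c2} at our disposal.

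To derive the result from one Tor vanishing, I would reduce to Proposition \ref{t}, applied with the faithful module specialized to $X$ itself. That specialization concludes $\pd M \le t$ from either $\Tor_t(M, \m X) = 0$ or $\Tor_{t+1}(M, X/\m X) = 0$. The inclusion $N \subseteq \m X$ yields the two short exact sequences $0 \to N \to \m X \to \m X/N \to 0$ and $0 \to \m X/N \to X/N \to X/\m X \to 0$, whose long exact sequences for $\Tor(M,-)$ yield, in Case 1, an injection $\Tor_t(M, \m X) \hookrightarrow \Tor_t(M, \m X/N)$, and in Case 2, an injection $\Tor_{t+1}(M, X/\m X) \hookrightarrow \Tor_t(M, \m X/N)$. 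The common key residual task is to show $\Tor_t(M, \m X/N) = 0$.

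To show this, I would proceed by induction on the length of $\m X/N$. The base case $N = \m X$ is immediately handled by Proposition \ref{t}. For the inductive step, the submodule $\tilde N := (N :_X \m) \cap \m X$ gives an enlargement $N \subsetneq \tilde N \subseteq \m X$ with strictly smaller $\ell(\m X / \tilde N)$, corresponding to the nonzero socle of $\m X / N$ (nonzero because $\m X / N$ is of finite length). The main obstacle will be establishing that $\tilde N$ is again weakly $\m$-full in $X$ and that the Tor vanishing hypothesis propagates to the pair $(X, \tilde N)$; this requires careful analysis of the short exact sequence $0 \to N \to \tilde N \to \tilde N / N \to 0$ (whose third term is a $k$-vector space) combined with the Burch property of $N$ to control the connecting maps in the Tor long exact sequence.
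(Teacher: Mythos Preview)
Your approach has a genuine gap: the intermediate target $\Tor_t^R(M,\m X/N)=0$ is false in general. Take $R=k[\![x]\!]$, $X=R$, $N=(x^2)$, $M=k$, $t=1$. Then $N$ is weakly $\m$-full in $X$ (since $(x^3):_R(x)=(x^2)$), $N\subseteq \m X$, $X$ is faithful, $X/N$ has finite length, and $\Tor_1^R(k,N)=0$ since $N\cong R$. The conclusion $\pd k\le 1$ holds. But $\m X/N=(x)/(x^2)\cong k$, so $\Tor_1^R(k,\m X/N)=\Tor_1^R(k,k)\cong k\ne 0$. Thus the ``common key residual task'' you set yourself cannot be accomplished.

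Even if one reinterprets your induction as being on the full theorem statement (reducing to the pair $(X,\tilde N)$ with $\tilde N=(N:_X\m)\cap \m X$), the two obstacles you flag are real and unaddressed: you give no argument that $\tilde N$ is weakly $\m$-full in $X$, and the Tor vanishing does not propagate---from $\Tor_t(M,N)=0$ and the sequence $0\to N\to \tilde N\to \tilde N/N\to 0$ one only gets $\Tor_t(M,\tilde N)\hookrightarrow \Tor_t(M,k)^{\oplus s}$, which is exactly what one is trying to show vanishes. Your appeal to ``the Burch property of $N$ to control the connecting maps'' is not an argument; the Burch condition is a \emph{non}-equality $\m(N:_X\m)\ne\m N$ and gives no handle on connecting homomorphisms.

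The paper's proof is entirely different and does not pass through Proposition~\ref{t} or Corollary~\ref{c2}. It works directly with a minimal free resolution $(F_n,\partial_n)$ of $M$ and shows (Lemmas~\ref{ll} and~\ref{pp}, then Theorem~\ref{t412}) that the weakly $\m$-full hypothesis forces $\Ker(\partial_t\otimes N)=\Ker(\partial_t\otimes X)$, from which one deduces $\im(\partial_{t+1}\otimes X)\subseteq \m\,\im(\partial_{t+1}\otimes X)$, hence $\partial_{t+1}\otimes X=0$ by Nakayama. Faithfulness of $X$ then gives $\partial_{t+1}=0$. The $X/N$ case (Theorem~\ref{Jfull}) is argued analogously. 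The key point is that the weakly $\m$-full condition is used to climb the tower $N\subseteq (N:_X\m)\subseteq ((N:_X\m):_X\m)\subseteq\cdots\subseteq X$ at the level of \emph{kernels of $\partial_t\otimes(-)$}, not at the level of Tor groups.
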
     

As applications, we explore local rings over which all modules satisfy $n$-Tor rigid property for some $n$.    
We first consider Tor rigid test property of Ulrich modules. We show that faithful Ulrich modules over a Cohen--Macaulay local ring of dimension $d\ge 1$ are  $d$-Tor rigid test (Theorem \ref{faithulhigh}). Celikbas told us that this result has some similarity to some result of \cite{clty}, however, our proofs are entirely different. A variation of Theorem \ref{faithulhigh} for Ulrich modules of full support, while testing against modules locally free on $\Ass(R)$, is also proved in Theorem \ref{ulloc}. Using our results, we first show that over Cohen--Macaulay local rings of minimal multiplicity, every module of constant rank is $n$-Tor rigid test for some $n$ (Theorem \ref{newmin}). After that, we show that over deformations (by a non-empty regular sequence) of them, every module is $n$-Tor rigid for some $n$ (Corollary \ref{defmin}). 
These results improve the results of \cite{gp} by lowering the number of consecutive vanishing.
Also we show that for Cohen--Macaulay local rings  with a surjection from a direct sum $\m^{\oplus n}$ of $\m$ for some $n\ge 1$ to the canonical dual of $\m$, syzygy of every non-free maximal Cohen--Macaulay $R$-module  is always $2$-Tor rigid test, and moreover it is $1$-Tor rigid test if it has constant rank or $R$ is Artinian, see Theorem \ref{trig} and Theorem \ref{t55} respectively and the remarks following them.  
At a glance, the existence of such a surjection is strange and seems to be difficult to approach, but we give a sufficient condition (Remark \ref{r56} and Proposition \ref{64}), and so it is easy to construct a non-trivial example. 
Using the notion of such rings, we also highly generalize \cite[Theorem 4.3]{hw} in Proposition \ref{genhw4.3}. 

The long standing conjecture of Huneke and Wiegand \cite{hw1} states that $M\otimes_R \Hom_R(M,R)$ has nonzero torsion if $M$ is a non-free module having constant rank over a one-dimensional local ring $R$.
We refer to \cite{hiw} for details.
Using our theorems on Tor rigid and test modules, we give some partial answers to the conjecture as follows.

\begin{thm}[Theorem \ref{74}]
Let $(R,\m)$ be a Cohen--Macaulay local ring of dimension one.
Let $M$ be an $R$-module having positive constant rank.
Assume one of the following conditions hold:
\begin{enumerate} [\rm (1)]
\item there exists an $R$-module $N$ such that $M\cong \m N$.
\item $M$ is a weakly $\m$-full submodule of some $R$-module $X$ such that $X/M$ has finite length.
\item $R$ is a Cohen--Macaulay local ring with a canonical module $\omega$ and there exists a surjection $\m^{\oplus n} \to \Hom_R(\m,\omega)$ for some $n$.
\end{enumerate}  
Then $M\otimes_R \Hom_R(M,R)$ has nonzero torsion if $M$ is non-free. 
\end{thm}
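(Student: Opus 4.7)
My plan is to prove the contrapositive in each of the three cases: assume $M \otimes_R \Hom_R(M, R)$ is torsion-free (write $M^* := \Hom_R(M, R)$), and deduce that $M$ is free. The strategy has three parts: (i) reduce to $\Tor_1^R(M, M^*) = 0$; (ii) invoke a case-specific rigid-test result to force $\pd_R M^* = 0$; (iii) pass from ``$M^*$ free'' to ``$M$ free'' by a duality argument.

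For (i), I would invoke the classical Huneke--Wiegand reduction (\cite{hw1,hiw}): over a one-dimensional Cohen--Macaulay local ring, if $L$ and $L'$ are finitely generated torsion-free modules with $L \otimes_R L'$ torsion-free, then $\Tor_1^R(L, L') = 0$. Since $M$ has positive constant rank it is torsion-free, and $M^*$ is torsion-free as the dual of a module of positive rank; so $\Tor_1^R(M, M^*) = 0$. For (ii), in case~(1), $M \cong \m N$ and a quick check shows $M_\p = \m_\p N_\p = R_\p N_\p = N_\p$ at each minimal prime $\p$ (since $\m R_\p$ contains a unit), so $N$ inherits the positive constant rank of $M$; for reduced $R$ this forces $N$ faithful, and Proposition~\ref{t}(1) with $t=1$ and test module $M^*$ then yields $\pd_R M^* \le 1$. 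In case~(2), the theorem assembled from Corollary~\ref{wfinpd} and Corollary~\ref{th411}, with $M$ in the role of the weakly $\m$-full submodule and $M^*$ as the test module, yields $\pd_R M^* \le 1$ directly. In case~(3), the surjection hypothesis on $\m^{\oplus n} \to \Hom_R(\m,\omega)$ puts us in the setting of Proposition~\ref{genhw4.3}, which I would invoke directly. Since $M^*$ is torsion-free, $\depth_R M^* \ge 1 = \depth R$, and Auslander--Buchsbaum upgrades $\pd_R M^* \le 1$ to $\pd_R M^* = 0$, so $M^*$ is free of rank $r := \rank M$.

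For (iii), $M^{**} \cong R^r$ is free, and the canonical injection $\phi_M\colon M \hookrightarrow M^{**}$ has torsion cokernel $C$, which has support in $\{\m\}$ (because $M_\p \cong M^{**}_\p$ at every minimal prime) and hence finite length. Dualizing $0 \to M \to M^{**} \to C \to 0$ and using $\Hom_R(C, R) = 0$ gives an exact sequence $0 \to M^{***} \xrightarrow{\phi_M^*} M^* \to \Ext^1_R(C, R) \to 0$. The identity $\phi_M^* \circ \phi_{M^*} = \id_{M^*}$ forces $\phi_M^*$ to be surjective, hence an isomorphism, so $\Ext^1_R(C, R) = 0$. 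A dévissage along a length filtration of $C$, combined with $\Ext^1_R(k, R) \ne 0$ (which holds because $\depth R = 1$), then forces $C = 0$, so $M \cong M^{**}$ is free.

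The main obstacle I anticipate is the faithfulness adjustment in case~(1) for non-reduced $R$: the argument that $N$ is faithful uses $\sqrt{0} = 0$, which holds only in the reduced case. For non-reduced $R$, $\ann N$ may be a nonzero nilpotent ideal and Proposition~\ref{t}(1) does not apply verbatim. In that situation I would reroute through the Burch-submodule rigidity of Corollary~\ref{c2} applied via Example~\ref{mN} (identifying $\m N$ as a Burch submodule of $N$), which avoids faithfulness but requires two consecutive Tor vanishings $\Tor_1^R(M, M^*) = \Tor_2^R(M, M^*) = 0$. The second vanishing would have to be extracted from $\Tor_1^R(M, M^*) = 0$ via the syzygy shift $\Tor_2^R(M, M^*) \cong \Tor_1^R(\Omega M, M^*)$ together with a separate rigidity argument on the maximal Cohen--Macaulay module $\Omega M$ over the one-dimensional Cohen--Macaulay ring $R$.
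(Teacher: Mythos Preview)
Your step~(i) contains the main gap. The implication ``$M\otimes_R M^*$ torsion-free $\Rightarrow \Tor_1^R(M,M^*)=0$'' over a general one-dimensional Cohen--Macaulay local ring is not a known result; the Auslander--Lichtenbaum and Huneke--Wiegand arguments of this type require one factor to have finite projective dimension (or $R$ to be a hypersurface), and there is no natural embedding of $\Tor_1^R(M,M^*)$ into $M\otimes_R M^*$. The paper never asserts this vanishing. For cases~(1) and~(2) it instead takes the exact sequence $0\to M^*\to F\to L\to 0$ coming from $M^*\cong\syz^2\Tr M$; tensoring with $M$ embeds $\Tor_1^R(L,M)$ into $M\otimes_R M^*$, and this Tor is torsion (as $M$ is locally free on $\Ass R$), hence zero. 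Then Proposition~\ref{t}(5) or Theorem~\ref{t412}(3), applied to the pair $(L,\m N)$ using that $L$ is locally free on $\Ass R$ and $\supp N=\spec R$, gives $\pd L\le 1$, so $M^*$ is free. Note this completely sidesteps your faithfulness worry in case~(1): parts~(5) or~(6) of Proposition~\ref{t} require only full support or constant rank, not faithfulness, so your proposed detour through two-consecutive Tor vanishing is unnecessary.

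For case~(3), Proposition~\ref{genhw4.3} is the wrong citation: its hypothesis is $N\otimes_R M^\dag\cong\omega^{\oplus n}$, unrelated to what you have. The paper instead embeds $M$ in a free module, $0\to M\to G\to K\to 0$, obtains $\Tor_1^R(K,M^*)=0$ by the same torsion-in-torsion-free trick, shifts to $\Tor_3^R(K,\Tr M)=0$, and invokes Theorem~\ref{t55} (the rigidity theorem built for the hypothesis $\m^{\oplus n}\twoheadrightarrow\m^\dag$) to force $\pd K\le 1$ or $\pd\Tr M\le 1$. Your step~(iii) is correct but heavier than needed; the paper just quotes Lemma~\ref{62} and \cite[1.1]{hw1}. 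Also, in case~(2) Corollary~\ref{wfinpd} needs $M\subseteq\m X$, which you do not verify; the paper's Theorem~\ref{74}(2) has this built into its hypothesis.
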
  

As corollaries of this theorem, we recover results of Celikbas, Goto, Takahashi and Taniguchi \cite[Proposition 1.3]{hw} and Huneke, Iyengar and Wiegand \cite[Corollary 3.5]{hiw}.  

Now we explain the organization of the paper. In section 2, we collect our fundamental notations and definitions, and discuss about a generalization of the result of Levin and Vasconcelos, and also, that of Iyenger and Puthenpurakal \cite[Theorem 3.1]{iy}. In section 3, we give the definition of Burch submodules and their basic properties. We explain our results on weakly $\m$-full submodules in section 4.
Section 5 contains investigations on Tor rigid test property over some Cohen--Macaulay local rings. We also add further observations on the existence of a surjective $R$-homomorphism from a direct sum $\m^{\oplus n}$ of $\m$ for some $n\ge 1$ to a canonical dual of $\m$ in Section 6. We give a characterization of such a condition by using canonical ideals, and an application to numerical semigroups (Proposition \ref{pfn}). Section 7 is devoted to our approach to the conjecture of Huneke and Wiegand.
In Section 8, we collect some examples which complement our results.   

\section{preliminaries}   
Throughout this paper, $R$ will denote a local ring with unique maximal ideal $\m$ and residue field $k$.   

In this section, we give some preliminary propositions and some observations on Tor rigid test property of $R$-modules of the form $\m M$ for some $R$-module $M$. 
We begin with collecting some basic notations and definitions.  
\begin{dfn} \label{df22}   
   
\begin{enumerate}[ \rm(1)]
\item We denote by $\spec(R)$ the set of prime ideals of $R$.
\item For an $R$-module $X$, we denote by $\mu(X)$ the cardinality of a system of minimal generators of $X$. Also we denote by $\ell(X)$ the length of $X$ and let $\pd_R(X), \id_R(X)$ stand for projective and injective dimension of the $R$-module $X$ respectively. Let $X^*$ stand for $\Hom_R(X,R)$.   
\item Let $X$ be an $R$-module. We say that $X$ has constant rank if $X_\p$ is $R_\p$-free of a same rank for all $\p\in\Ass(R)$.
\item Let $X$ be an $R$-module.
We denote by $\supp_R(X)$ the set of prime ideals $\p$ with $X_\p\not=0$ and by $\Ass_R(X)$ the set of associated prime ideals of $X$.
Also, we denote by $\NF_R(X)$ the non-free locus of $X$, i.e., $\NF_R(X)=\{\p\in\spec(R)\mid X_\p\text{ is not $R_\p$-free}\}$. When the ring in question is clear, we drop the subscript $R$. It is well-known that $\NF_R(X)$ is a closed subset of $\spec(R)$. 
\item\label{sumin} If $\Min(R)\subseteq \supp(X)$, then $\supp(X)=\spec(R)$. Indeed, let $\p \in \spec(R)$, then there exists $\q\in \Min(R)$ with $\q\subseteq \p$. Since $\q\in \supp(X)$ and $\supp(X)$ is specialization closed, hence $\p \in \supp(X)$. 
\item Let $X$ be an $R$-module.  
A minimal free resolution $(F_n,\partial_n)$ of $X$ is a free resolution
$\cdots \to F_{n+1}\xrightarrow[]{\partial_{n+1}} F_n \xrightarrow[]{\partial_n} F_{n-1} \to \cdots \to F_0 \to 0$ of $X$ such that $\im(\partial_n)\subseteq \m F_{n-1}$ for each $n\ge 1$.
For each $n\ge 1$, we denote by $\syz^n M$ the image of $\partial_n$ and call it the $n$-th syzygy module of $M$.
For a convention, we put $\syz^0 M:=M$ and $\partial_0=0$. Note that the isomorphism class of $\syz^n M$ is independent of the choice of $(F_n,\partial_n)$. 
\item For an ideal $I$ of $R$, an $R$-module $X$ (not necessarily finitely generated) and a submodule $N$ of $X$, we always put $(N:_X I):=\{x\in X\mid Ix \subseteq N\}$.
Note that $(N:_XI)$ is always a submodule of $X$ containing $N$.
On the other hand, for $R$-modules $N\subseteq X$, we put $(N:_RX):=\{a\in R\mid aX\subseteq N\}$.
\item Let $X$ be an $R$-module.
The socle $\soc(X)$ of $X$ is defined to be the sum of simple submodules of $X$.
Then it is clear that $\soc(X)$ is equal to the set of elements of $X$ which is annihilated by $\m$.
Thus for a submodule $N$ of an $R$-module $X$, $\soc(X/N)$ is identified with $(N:_X \m)/N$.
\item Let $\partial\colon F\to G$ be a homomorphism between $R$-free modules $F$ and $G$.
If we fix $R$-basis of $F$ and $G$, $\partial$ can be viewed as a matrix $A$ with entries in $R$.
Then $I_1(\partial)$ denotes the ideal of $R$ generated by the entries of $A$.
It is well-known that $I_1(\partial)$ is independent of the choice of $A$.
Also if the cokernel of $\partial$ is $R$-free, then $I_1(\partial)$ is equal to either $R$ or $0$.
Indeed, we have an exact sequence $0\to \im (\partial) \to G\to \Cok(\partial)\to 0$. Suppose that $I_1(\partial)\not=R$.
Then $\im(\partial)\subseteq \m G$.
If $\Cok(\partial)$ is free, then we have $G\cong \im (\partial) \oplus \Cok(\partial)$, and so $\mu(G)=\mu(\im (\partial))+\mu(\Cok(\partial))$.
But, $\mu(\Cok(\partial))=\mu(G/\im (\partial))=\ell\left(\dfrac{G/\im (\partial)}{(\m G+\im (\partial))/\im (\partial)}\right)=\ell\left(\dfrac{G/\im (\partial)}{\m G/\im (\partial)}\right)=\ell(G/\m G)=\mu(G)$.
Hence we get $\mu(\im (\partial))=0$, i.e., $\im (\partial)=0$. 

\item  Given a positive integer $e\ge 1$, we say that a pair $(M, N)$ of  modules over a
local ring $R$ is $e$-rigid, provided the vanishing $\Tor_i^R(M,N)=0$ for $i=j,...,j+e-1$ (for $j\ge 1$) forces $\Tor_i^R(M,N)=0, \forall i\ge j$ (\cite{cw}). A module $M$ is called $e$-Tor rigid if the pair $(M,N)$ is $e$-Tor rigid for all $N$. We say that a module $M$ is a test module for projectivity (\cite[2.2]{rigid}) if for all $R$-module $N$, $\Tor_i^R(M,N)=0$ for all sufficiently large $i>0$ implies $\pd_R N<\infty$.  We say a module is $e$-Tor rigid test, if it is both $e$-Tor rigid, and test module for projectivity. We also abbreviate $1$-Tor rigid test modules simply as rigid test modules.  Notice that if $M$ is a module such that for any module $N$, $\Tor_i^R(M,N)=0$ implies $\pd N\le i$, then $M$ is rigid test module.

\end{enumerate}
\end{dfn}

\begin{chunk} \label{ch1}
We frequently use the fact that for a homomorphism $\partial:F\to G$ between $R$-free modules and for arbitrary modules $N\subseteq X$, we always have a commutative diagram 

\begin{center}
\begin{tikzcd}
0 \arrow[r] & F\otimes_R N \arrow[r] \arrow[d, "\partial\otimes N"] & F\otimes_R X \arrow[r] \arrow[d, "\partial\otimes X"] & F \otimes_R (X/N) \arrow[r] \arrow[d, "\partial\otimes (X/N)"] & 0\\
0 \arrow[r] & G\otimes_R N \arrow[r] & G\otimes_R X \arrow[r] & G \otimes_R (X/N) \arrow[r] & 0
\end{tikzcd}
\end{center}
with exact rows.
So in particular, $F\otimes_R N$ (resp. $G\otimes_R N$) can be regarded as a submodule of $F\otimes_R X$ (resp. $G\otimes_R X$), and we have $(\partial\otimes X)|_{F\otimes_R N}=\partial\otimes N:F\otimes_R N\to G\otimes_R N$.
A similar fact on $\Hom_R(-,-)$ also holds.
\end{chunk}  

We first record some useful lemmas concerning on vanishing of some homomorphisms.

\begin{lem} \label{l26}
Let $X$ be an $R$-module, and $\partial\colon F\to G$ be a homomorphism between $R$-free modules such that $\im(\partial)\subseteq \m G$.
Assume $\depth X=0$ (and hence $X\not=0$).
If $\partial\otimes X$ is injective, then $F=0$.
Similarly, if $\Hom_R(\partial,X)$ is injective, then $G=0$.
\end{lem}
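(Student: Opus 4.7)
Both parts are symmetric dualizations of the same simple idea: the hypothesis $\depth X = 0$ means $\soc(X) \neq 0$, so there exists $0 \neq x \in X$ with $\m x = 0$. I will use such an $x$ to manufacture an element in the kernel of $\partial \otimes X$ (resp.\ $\Hom_R(\partial, X)$) and conclude by contradiction that $F = 0$ (resp.\ $G = 0$).

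For the first assertion, I would argue by contrapositive: assume $F \neq 0$. Writing $F = R^{\oplus a}$ with $a \geq 1$ and choosing a basis element $e \in F$, the canonical isomorphism $F \otimes_R X \cong X^{\oplus a}$ sends $e \otimes x$ to a vector whose only nonzero coordinate is $x$, so $e \otimes x \neq 0$ in $F \otimes_R X$. On the other hand, since $\im(\partial) \subseteq \m G$, we may write $\partial(e) = \sum_i a_i g_i$ for a basis $\{g_i\}$ of $G$ and scalars $a_i \in \m$. Then
\[
(\partial \otimes X)(e \otimes x) \;=\; \partial(e) \otimes x \;=\; \sum_i g_i \otimes a_i x \;=\; 0,
\]
using $\m x = 0$. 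This produces a nonzero element of $\ker(\partial \otimes X)$, contradicting injectivity.

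For the second assertion, assume $G \neq 0$ and dualize the construction. Pick a basis element $g$ of $G$ and let $\phi \colon G \to X$ be the $R$-linear map sending $g \mapsto x$ and all other basis elements to $0$. Then $\phi \neq 0$. For any $e \in F$, writing $\partial(e) = \sum_i a_i g_i$ with $a_i \in \m$ as before,
\[
(\Hom_R(\partial, X)(\phi))(e) \;=\; \phi(\partial(e)) \;=\; \sum_i a_i \phi(g_i) \;=\; 0,
\]
because each $\phi(g_i)$ lies in $\{0, x\} \subseteq \soc(X)$ and is therefore annihilated by $a_i \in \m$. Hence $\phi \in \ker(\Hom_R(\partial, X))$ while $\phi \neq 0$, contradicting injectivity.

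I do not anticipate a substantive obstacle: the only nontrivial observation is that $\depth X = 0$ is precisely what guarantees a socle element, and the hypothesis $\im(\partial) \subseteq \m G$ is exactly what makes the tensor/composition vanish once one factor is socle. Conceptually this lemma is just the statement that tensoring or $\Hom$-ing a ``non-minimal-lifting'' map $\partial$ with a module of depth zero cannot be injective unless one of the free modules involved already vanishes.
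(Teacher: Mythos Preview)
Your proof is correct and follows essentially the same approach as the paper: both exploit a nonzero socle element of $X$ (guaranteed by $\depth X=0$) together with $\im(\partial)\subseteq \m G$ to force the relevant map to have nonzero kernel. The paper phrases this more functorially---restricting $\partial\otimes X$ to $\partial\otimes\soc(X)$ and observing its image lands in $\m(G\otimes_R\soc(X))=0$---while you work element-by-element with a chosen basis vector and socle element, but the substance is identical.
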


\begin{proof}
Suppose that $\partial\otimes X$ is injective.
Then by the observation in \ref{ch1}, $\partial\otimes \soc(X)$ is also injective.
Since $\im(\partial)\subseteq \m G$, it yields that $(\partial\otimes \soc(X))(F\otimes_R \soc(X))\subseteq \m(G\otimes_R \soc(X))=0$, hence $F\otimes_R \soc(X)=0$ since $\partial \otimes \soc(X)$ is injective.
And since $\depth X=0$, that is, $\soc(X)\not=0$, it implies that $F=0$. Similarly, if $\Hom(\partial,X):\Hom_R(G,X)\to \Hom_R(F,X)$ is injective, then $\Hom(G,\soc(X))=0$, so $G=0$. 
\end{proof}    

\begin{lem} \label{ll27}
Let $M$ and $ N$ be $R$-modules, and $t\ge 1$ be an integer.
Take a minimal free resolution $(F_n,\partial_n)$ of $M$.
Assume $I_1(\partial_{t})\subseteq \ann(N)$.
Then
\begin{enumerate}[ \rm(1)]
\item If $N$ is faithful, then $\partial_{t}=0$, i.e., $\pd M< t$.
\item $\supp(N)\cap \supp(\syz^t M)\subseteq \NF(\syz^{t-1} M)$.
\item If $\supp(N)\supseteq \Min(R)$ and $M$ is locally free on $\Ass(R)$, then $\pd M<t$. 
\item If  $\grade N=0$ and $M$ has constant rank, then $\pd M<t$.  
\item If $M$ has locally projective dimension less than $t$ on $\spec(R)\smallsetminus \{\m\}$ and constant rank, then either $N$ has finite length or $\pd M<t$.
\end{enumerate}  
\end{lem}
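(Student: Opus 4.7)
The plan is to apply, at suitable localizations, the observation from Definition \ref{df22}(9) that a map between free modules with free cokernel has $I_1$ equal to either $0$ or the whole ring. The key setup is that the short exact sequence $0 \to \syz^t M \to F_{t-1} \to \syz^{t-1} M \to 0$ identifies $\syz^{t-1} M$ with $\Cok(\partial_t)$, while the hypothesis $I_1(\partial_t) \subseteq \ann(N)$ translates, at any $\p \in \supp(N)$ (equivalently $\ann(N) \subseteq \p$), into $I_1((\partial_t)_\p) \subseteq \p R_\p$, a proper ideal.

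For (1), $N$ faithful forces $\ann(N) = 0$, hence $\partial_t = 0$, hence $\syz^t M = 0$, hence $\pd M < t$. For (2), take $\p \in \supp(N) \cap \supp(\syz^t M)$ and suppose for contradiction that $(\syz^{t-1} M)_\p$ is $R_\p$-free; then $\Cok((\partial_t)_\p)$ is free, so by (9) combined with $I_1((\partial_t)_\p) \subsetneq R_\p$, we obtain $(\partial_t)_\p = 0$, hence $(\syz^t M)_\p = 0$, a contradiction.

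For (3)--(5), the common strategy is to assume for contradiction that $\pd M \ge t$, so $\syz^t M \neq 0$, and to note that $\syz^t M \hookrightarrow F_{t-1}$ implies $\Ass(\syz^t M) \subseteq \Ass(R)$. For (3), Definition \ref{df22}(5) promotes $\supp(N) \supseteq \Min(R)$ to $\supp(N) = \spec(R)$; then any $\q \in \Ass(\syz^t M) \subseteq \Ass(R)$ lies in $\supp(N) \cap \supp(\syz^t M)$, so (2) forces $\q \in \NF(\syz^{t-1} M)$, contradicting the fact that local freeness of $M$ on $\Ass(R)$ yields $(\syz^{t-1} M)_\q$ free. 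For (4), $\grade N = 0$ furnishes $\q \in \Ass(R) \cap \supp(N)$, and constant rank of $M$ propagates (via the localized resolution) to a constant rank $r_t$ of $\syz^t M$; if $r_t > 0$ then $\q \in \supp(\syz^t M)$ and (2) again contradicts $M_\q$ being free, while $r_t = 0$ combined with $\Ass(\syz^t M) \subseteq \Ass(R)$ forces $\syz^t M = 0$.

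For (5), suppose $N$ does not have finite length and pick $\p \in \supp(N) \setminus \{\m\}$. The locally-$\pd$ hypothesis makes $(\syz^{t-1} M)_\p$ free, so (9) applied at $\p$ yields $(\partial_t)_\p = 0$, hence $(\syz^t M)_\p = 0$. Descending to a minimal prime $\q$ of $R$ contained in $\p$ gives $(\syz^t M)_\q = 0$; since $\q \in \Ass(R)$ and $\syz^t M$ has constant rank $r_t$, this forces $r_t = 0$, and the argument ending (4) then yields $\syz^t M = 0$, i.e., $\pd M < t$. The main obstacle across (4) and (5) is the gap between $\supp(N)$ and $\Ass(R)$: the hypothesis controls $\partial_t$ only at primes of $\supp(N)$, whereas the constant-rank conclusion must be read off at primes of $\Ass(R)$. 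Descent to a minimal prime of $R$ contained in a given prime of $\supp(N)$, together with propagation of constant rank through all syzygies, is what bridges the two.
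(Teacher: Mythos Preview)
Your proof is correct and follows essentially the same approach as the paper's: both use Definition~\ref{df22}(9) at localizations to force $(\partial_t)_\p=0$ whenever $I_1((\partial_t)_\p)$ is proper and the cokernel is locally free, and both propagate constant rank through the syzygies to control $\supp(\syz^t M)$. The only cosmetic differences are that for (4) your argument is slightly more streamlined (you work directly with a prime in $\Ass(R)\cap\supp(N)$, whereas the paper routes through a $\q\in\Min(N)$), and for (5) the paper argues the contrapositive more directly via (2) (observing $\NF(\syz^{t-1}M)\subseteq\{\m\}$ and $\supp(\syz^t M)=\spec(R)$ forces $\Ass(N)\subseteq\{\m\}$) rather than re-deriving the key step of (2) at a chosen prime and descending to a minimal prime.
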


\begin{proof}
(1) is clear.

(2): We set $\partial:=\partial_t$.
Take a prime ideal $\p\in\supp(N) \cap \supp(\syz^t M)$.
Since $\p\in \V(\ann(N))$, so $I_1(\partial_\p)\subseteq \ann(N)_\p\subseteq\p R_\p$.
Since $\im \partial_\p=(\syz^t M)_\p \ne 0$, so $\Cok \partial_\p=(\syz^{t-1}M)_\p$ cannot be $R_\p$-free by the remark in \ref{df22} (9).
It shows that $\p\in \NF(\syz^{t-1}M)$.

(3): Firstly, $\supp(N)=\spec(R)$ by \ref{df22} (5). Suppose that $\pd M\ge t$, that is, $\syz^t M\not=0$. Since $\syz^t M$ is a submodule of $F_{t-1}$, so $\Ass(\syz^t M)\subseteq \Ass(R)$.
Therefore there exists $\p\in \supp(\syz^t M)\cap \Ass(R)$.  Since $M$ is locally free on $\Ass(R)$, it follows that $(\syz^n M)_{\p}$ is $R_{\p}$-free for each $n$. In particular, $(\syz^t M)_{\p}$ is $R_{\p}$-free of positive rank and  $\p \notin \NF (\syz^{t-1} M)$. This now contradicts  (2) since $\p \in \supp(\syz^t M)=\supp(N) \cap \supp(\syz^t M)$ but $\p\notin \NF (\syz^{t-1} M)$.    
So we conclude that $\pd M<t$.

To prove (4): $\grade N=0$ implies that $\ann(N)$ consists of zero-divisors of $R$, in other words, $\ann(N)$ is contained in $\bigcup_{\p\in\Ass(R)}\p$.
By prime avoidance lemma, a prime ideal $\p\in\Ass(R)$ contains $\ann(N)$.
So there exists $\q\in \Min(N)$ such that $\q\subseteq \p$. 
The assumption that $M$ has constant rank yields that both $\syz^{t-1}M$ and $\syz^tM$ have constant rank. Now as $t\ge 1$, so $\Ass(\syz^t M)\subseteq \Ass(R)$. If $\supp(\syz^t M)\ne \emptyset$, then there exists $\p\in \Ass(\syz^t M)$, hence $\p \in \Ass(R)$, and $(\syz^t M)_{\p}$ is non-zero. As $\syz^t M$ has constant rank, hence $(\syz^t M)_{\q}$ is non-zero for every $\q \in \Ass(R)$. Consequently, $\supp(\syz^tM)\in\{\spec R,\emptyset\}$.      
Suppose that $\supp(\syz^tM)=\spec R$.
Then by (2) we have $\q\in \Ass(N)=\Ass(N)\cap \supp(\syz^t M)\subseteq \NF(\syz^{t-1}M)$.
On the other hand, since $\p\in \Ass(R)$ and $\syz^{t-1}M$ has constant rank
, it follows that $\syz^{t-1}_{R_{\p}}M_\p$  is $R_\p$-free.
Then by the containment $\q\subseteq \p$, $\syz^{t-1}_{R_{\q}}M_\q$ is $R_\q$-free, i.e., $\q\not\in  \NF(\syz^{t-1}M)$. 
Thus it shows a contradiction, and so $\supp(\syz^t M)$ must be empty.
It means the desired inequality $\pd M<t$.

(5): The condition that $M$ has locally projective dimension less than $t$ on $\spec(R)\smallsetminus \{\m\}$ says that $\NF(\syz^{t-1} M)\subseteq \{\m\}$. The condition that $M$ has constant rank implies that $\supp(\syz^t M)\in\{\spec(R),\emptyset\}$. 
If $\syz^t M\ne 0$, then $\supp(\syz^t M)=\spec(R)$, then by (2) we have $\Ass(N)=\Ass(N)\cap \supp(\syz^t M)\subseteq \{\m\}$.
In other words, $N$ has finite length.
\end{proof}

The proposition below gives a generalization of the observation given in \cite[Page 316]{lv}.

\begin{prop} \label{t}
Let $M$ and $N$ be $R$-modules, and $t\ge 0$ be an integer. 
Take a minimal free resolution $(F_n,\partial_n)$ of $M$.
Assume $\Tor_t^R(M,\m N)=0$.
Then the followings hold true:
\begin{enumerate}[ \rm(1)]
\item $\partial_{t+1}\otimes N=0$, i.e.,   $I_1(\partial_{t+1})\subseteq \ann(N)$.
\item $\partial_{t+1}\otimes (\m N)=0$ and $\partial_t\otimes (\m N)$ is injective.  
\item If $\depth (\m N)=0$, then $\pd M< t$.  
\item If $N$ is faithful, then $\pd M\le t$.
\item If $\supp(N)\supseteq \Min(R)$ and $M$ is locally free on $\Ass(R)$, then $\pd M\le t$.
\item If  $\grade N=0$ and $M$ has constant rank, then $\pd M\le t$.
\item If $M$ has locally projective dimension less than $t+1$ on $\spec(R)\smallsetminus \{\m\}$ and constant rank, then either $\m N=0$ or $\pd M\le t$.  
\item $\Tor^R_{t+1}(M,\m N)\cong \syz^{t+1}M\otimes_R (\m N)$. So if moreover $\Tor^R_{t+1}(M,\m N)=0$, then either $\pd M\le t$, or $\m N=0$.  
\end{enumerate}
\end{prop}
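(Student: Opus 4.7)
Parts (2)--(8) will all follow fairly quickly from part (1) using Lemmas \ref{l26}, \ref{ll27}, and standard homological algebra, so the plan is to concentrate on (1). For (1), I first note that $\partial_{t+1}\otimes N=0$ is equivalent to $I_1(\partial_{t+1})\subseteq \ann(N)$ by inspecting the matrix of $\partial_{t+1}$ coordinate-wise. Setting $I:=\im(\partial_{t+1}\otimes_R N)\subseteq F_t\otimes_R N$, the strategy is to prove $I=\m I$; since $I$ is a finitely generated $R$-submodule of $F_t\otimes N$, Nakayama then forces $I=0$.

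To obtain $I=\m I$, two ingredients are needed. First, the containment $I\subseteq \im(\partial_{t+1}\otimes \m N)$: by minimality $\partial_{t+1}(F_{t+1})\subseteq \m F_t$, so via \ref{ch1} one has $I\subseteq F_t\otimes \m N$, while $\partial_t\partial_{t+1}=0$ puts $I$ inside $\ker(\partial_t\otimes N)$, and the restriction identity of \ref{ch1} identifies the intersection with $\ker(\partial_t\otimes \m N)$; the hypothesis $\Tor_t^R(M,\m N)=0$ rewrites this kernel as $\im(\partial_{t+1}\otimes \m N)$. Second, the formal identity $F_{t+1}\otimes \m N=\m(F_{t+1}\otimes N)$ (valid because $F_{t+1}$ is free) combined with the $R$-linearity of $\partial_{t+1}\otimes N$ gives $\im(\partial_{t+1}\otimes \m N)=\m\cdot \im(\partial_{t+1}\otimes N)=\m I$. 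Chaining the two ingredients yields $I\subseteq \m I$ and Nakayama finishes (1). The main obstacle I foresee is spotting this Nakayama loop; once the identity $F_{t+1}\otimes \m N=\m(F_{t+1}\otimes N)$ is noticed, the rest is short bookkeeping.

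With (1) in hand, (2) is immediate: restriction gives $\partial_{t+1}\otimes \m N=0$, and then $\Tor_t^R(M,\m N)=0$ gives injectivity of $\partial_t\otimes \m N$. Part (3) applies Lemma \ref{l26} to this injective map; (4) follows because $\ann(N)=0$ forces $\partial_{t+1}=0$; and (5), (6) apply parts (3) and (4) of Lemma \ref{ll27} to $\partial_{t+1}$. For (7), suppose $\m N\ne 0$; Lemma \ref{ll27}(5) applied to $\partial_{t+1}$ yields ``$\pd M\le t$ or $N$ has finite length'', and in the second alternative $\m N$ is nonzero of finite length, so $\Ass(\m N)=\{\m\}$ and $\depth \m N=0$, whence (3) forces $\pd M<t$, so either way $\pd M\le t$. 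For (8), tensor $0\to \syz^{t+1}M\to F_t\to \syz^t M\to 0$ with $\m N$ and use the syzygy shift $\Tor_1(\syz^t M,\m N)\cong \Tor_{t+1}(M,\m N)$; the map $\syz^{t+1}M\otimes \m N\to F_t\otimes \m N$ vanishes because it factors $\partial_{t+1}\otimes \m N=0$ (from (2)) through the surjection $F_{t+1}\otimes \m N\twoheadrightarrow \syz^{t+1}M\otimes \m N$, and the resulting exact sequence produces the isomorphism $\Tor_{t+1}(M,\m N)\cong \syz^{t+1}M\otimes \m N$. The ``moreover'' clause follows by tensoring this vanishing with $k$, giving $\mu(\syz^{t+1}M)\cdot \mu(\m N)=0$ and hence either $\pd M\le t$ or $\m N=0$.
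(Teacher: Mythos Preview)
Your proof is correct and follows essentially the same approach as the paper: your Nakayama argument for (1) is precisely the content of the Levin--Vasconcelos lemma that the paper cites, and parts (2)--(7) are handled identically via Lemmas \ref{l26} and \ref{ll27}. The only cosmetic difference is in (8), where the paper uses the injectivity of $\partial_t\otimes \m N$ from (2) to show $\ker(F_t\otimes \m N\to \syz^t M\otimes \m N)=0$, while you use the vanishing $\partial_{t+1}\otimes \m N=0$ from (2) to show $\syz^{t+1}M\otimes \m N\to F_t\otimes \m N$ is zero; these are two sides of the same exact sequence and your version has the minor advantage of treating $t=0$ uniformly.
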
   

\begin{proof}
(1) Assume $\Tor_t^R(M,\m N)=0$.
There is nothing to prove if $t=0$, so assume $t\ge 1$.
Let $C$ be the complex $(F_n\otimes N,\partial_n\otimes N)$.
Then $H_t(\m C)=\Tor_t^R(M,\m N)=0$.
By the proof of \cite[Lemma, p.316]{lv} (see also \cite[2.2]{power}), we get $\partial_{t+1}^C=0$, i.e., $\partial_{t+1}\otimes N=0$. 
Then it is easy to verify that the ideal $I_1(\partial_{t+1})$ is contained in $\ann(N)$.

(2): Due to \ref{ch1}, we see $\partial_{t+1}\otimes_R (\m N)=0$.
Then the vanishing of $\Tor_t^R(M,\m N)=\Ker(\partial_t\otimes \m N)$ implies that $\partial_t\otimes_R (\m N)$ is injective.   

(3) follows by part (2) and Lemma \ref{l26}.

(4), (5) and (6) follow by Lemma \ref{ll27} (1), (3) and (4) respectively.

For (7), if $\pd M\ge t+1$, then by  Lemma \ref{ll27} (5), we have $N$ has finite length. Hence $\m N$ also have finite length. If $\m N\ne 0$, then $\depth(\m N)=0$, but then (3) would imply $\pd M\le t$, contradicting what we assume. Thus, $\m N=0$.

For (8): Denote $N':=\m N$. If $t=0$, then $M\otimes_R N'=0$, so either $M=0$ or $N'=0$, and there is nothing to prove. So assume $t\ge 1$. Since $\partial_{t}\otimes N'$ is injective by part (2), so decomposing the map $\partial_t \otimes N': F_t \otimes_R N' \to F_{t-1} \otimes_R N'$ into a composition of $f: F_t \otimes_R N' \to \syz^t M \otimes_R N'$ and $g: \syz^t M \otimes_R N' \to F_{t-1} \otimes_R N'$, we see that $f: F_t \otimes_R N' \to \syz^t M \otimes_R N'$ is injective. Now the exact sequence $0\to \syz^{t+1} M \to F_t \to \syz^t M \to 0$ gives the exact sequence $0 \to \Tor_1^R(\syz^t M, N')\to \syz^{t+1} M \otimes_R N' \to F_t \otimes N' \xrightarrow{f} \syz^t M \otimes N'\to 0$. Since  $\im (\syz^{t+1} M \otimes_R N' \to F_t \otimes_R N')=\Ker (f)=0$, so we get $\syz^{t+1} M \otimes_R N' \cong \Tor_1^R(\syz^t M, N') \cong \Tor_{t+1}^R(M,N').$ So in particular, if $\Tor_{t+1}^R(M,N')=0$, then $\syz^{t+1}M\otimes_R N'=0$, i.e., either $N'=\m N=0$, or else $\syz^{t+1} M=0$, i.e., $\pd M\le t$.    
\end{proof}  

We also note here an Ext version of Proposition \ref{t}.

\begin{prop} \label{tt}
Let $M$ and $N$ be $R$-modules, and $t\ge 1$ be an integer.
Take a minimal free resolution $(F_n,\partial_n)$ of $M$.
Assume $\Ext^t_R(M,\m N)=0$.
Then the followings hold true:
\begin{enumerate}[ \rm(1)]
\item $\Hom(\partial_t,N)=0$, .i.e.,   $I_1(\partial_t)\subseteq \ann(N)$.   
\item $\Hom(\partial_t,\m N)=0$ and $\Hom(\partial_{t+1},\m N)$ is injective.
\item If $\depth (\m N)=0$, then $\pd M< t$.
\item If $N$ is faithful, then $\pd M< t$.
\item If $\supp(N)\supseteq \Min(R)$ and $M$ is locally free on $\Ass(R)$, then $\pd M< t$.
\item If  $\grade N=0$ and $M$ has constant rank, then $\pd M< t$.
\item If $M$ has locally projective dimension less than $t$ on $\spec(R)\smallsetminus \{\m\}$ and constant rank, then either $\m N=0$ or $\pd M< t$. 
\end{enumerate}
\end{prop}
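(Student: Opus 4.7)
The proof will parallel Proposition \ref{t}, with (1) as the technical heart --- an Ext analogue of the Levin--Vasconcelos argument --- and (2)--(7) as formal consequences of (1), Lemma \ref{l26}, and Lemma \ref{ll27}. The plan for (1) is to run a Nakayama-type iteration on the cochain complex $C^n := \Hom_R(F_n, N)$ with coboundary $d^n := \Hom_R(\partial_{n+1}, N)$. Since each $F_n$ is finitely generated free, one has $\Hom_R(F_n, \m N) = \m C^n$, so the hypothesis $\Ext^t_R(M, \m N) = 0$ is precisely $H^t(\m C^\bullet) = 0$; and because $\im \partial_n \subseteq \m F_{n-1}$, the coboundary sends $\m^k C^{n-1}$ into $\m^{k+1} C^n$.

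For (1), fix $\phi \in C^{t-1}$. Then $d^{t-1}(\phi)$ lies in $\m C^t$ and is killed by $d^t$, so it represents a $t$-cocycle in $\m C^\bullet$. The vanishing of $H^t(\m C^\bullet)$ supplies $\psi_1 \in \m C^{t-1}$ with $d^{t-1}(\phi) = d^{t-1}(\psi_1)$. The bootstrap step is to write $\psi_1 = \sum a_i \phi_i$ with $a_i \in \m$ and $\phi_i \in C^{t-1}$, apply the same argument to each $\phi_i$, and combine to produce $\psi_2 \in \m^2 C^{t-1}$ with $d^{t-1}(\phi) = d^{t-1}(\psi_2)$. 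Iterating yields $\psi_n \in \m^n C^{t-1}$ with $d^{t-1}(\phi) = d^{t-1}(\psi_n) \in \m^{n+1} C^t$ for every $n$. Since $C^t$ is finitely generated, Krull's intersection forces $d^{t-1}(\phi) = 0$, which gives $\Hom_R(\partial_t, N) = 0$, equivalently $I_1(\partial_t) \subseteq \ann(N)$.

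From (1) the remaining parts fall out quickly. Part (2) is immediate: $\Hom_R(\partial_t, \m N) \subseteq \Hom_R(\partial_t, N) = 0$, so the identification $\Ext^t_R(M, \m N) = \Ker \Hom_R(\partial_{t+1}, \m N) / \im \Hom_R(\partial_t, \m N)$ forces $\Hom_R(\partial_{t+1}, \m N)$ to be injective. For (3), I would apply the $\Hom$-half of Lemma \ref{l26} to this injection, using $\im \partial_{t+1} \subseteq \m F_t$ and $\depth(\m N) = 0$, to deduce $F_t = 0$, whence $\pd M < t$. Parts (4), (5), (6) are direct applications of Lemma \ref{ll27}(1), (3), (4) respectively to the containment from (1). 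Finally, (7) mirrors the argument of Proposition \ref{t}(7): if $\pd M \ge t$, then Lemma \ref{ll27}(5) forces $N$ --- and hence $\m N$ --- to have finite length, so if $\m N \ne 0$ then $\depth(\m N) = 0$, and (3) produces the contradictory $\pd M < t$. The main obstacle is the cochain-level bootstrap in (1): one must ensure the $\m$-adic depth of $\psi_k$ strictly grows at each step so that Krull's intersection closes the argument; once that is set, the remaining parts unfold mechanically from the lemmas already in place.
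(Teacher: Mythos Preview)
Your proof is correct and follows essentially the same approach as the paper: the paper invokes the Levin--Vasconcelos lemma directly on the cochain complex $C' = (\Hom_R(F_{-n}, N), \Hom_R(\partial_{-n+1}, N))$ to obtain $\Hom_R(\partial_t, N) = 0$, while you spell out that iterative Nakayama argument in full; for (2)--(7) both you and the paper defer to the parallel arguments from Proposition \ref{t} via Lemma \ref{l26} and Lemma \ref{ll27}.
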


\begin{proof}
(1) Assume $\Ext^t_R(M,\m N)=0$.
Denote by $C'$ the complex $(\Hom_R(F_{-n},N),\Hom_R(\partial_{-n+1},N)$.
Then one has $H_{-t}(\m C')=\Ext^{t}_R(M,\m N)=0$.
Thus by the proof of \cite[Lemma, p.316]{lv}, one gets $\partial^{C'}_{-t+1}=0$, that is, $\Hom_R(\partial_t,N)=0$. Since $\Hom_R(\partial_{t},N)$ is represented by the transpose matrix of a matrix representation of $\partial_{t}$, we have the inclusion $I_1(\partial_t)\subseteq \ann(N)$.
Proofs of (2)-(7) are obtained by parallel arguments of the corresponding part of Proposition \ref{t}.  
\end{proof}

Taking $N:=\m^n X$, for some $n\ge 1$, in Proposition \ref{faithquot} (6) below, one recovers \cite[Theorem 3.1.]{iy}. 

\begin{prop}\label{faithquot}
Let $M$ and $X$ are $R$-modules, and $N$ is a submodule of $X$.
Assume there exists an integer $t> 0$ such that $\Tor_t^R(M,X/\m N)=0$.
\begin{enumerate}[ \rm(1)]
\item $I_1(\partial_t)\subseteq \ann(N)$, where $(F_n,\partial_n)$ is a minimal free resolution of $M$.
\item If $N$ is faithful, then $\pd M< t$.
\item If $\supp(N)\supseteq \Min(R)$ and $M$ is locally free on $\Ass(R)$, then $\pd M<t$.
\item If  $\grade N=0$ and $M$ has constant rank, then $\pd M<t$.
\item If $M$ has locally projective dimension less than $t$ on $\spec(R)\smallsetminus \{\m\}$ and constant rank, then either $N$ has finite length or $\pd M<t$.
\item If $\Tor_t^R(M,X)=0$, then $(N:_RX)((\syz^t M)\otimes_R X)=0$.
If moreover $X\not=0$ and $X/N$ has finite length, then $\m^m((\syz^t M)\otimes_R X)=0$ for some $m\ge 1$, so either $\depth X=0$ or $\pd M<t$.  
\end{enumerate} 
\end{prop}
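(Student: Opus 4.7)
The plan is to treat (1) as the main technical content, from which (2)--(5) follow directly via Lemma~\ref{ll27}, and then deduce (6) by combining (1) with the injection $(\syz^t M)\otimes_R X\hookrightarrow F_{t-1}\otimes_R X$ that the hypothesis $\Tor_t^R(M,X)=0$ yields. For (1), fix bases of $F_t,F_{t+1}$, and let $A=(A_{ij})$ and $B=(B_{jk})$ denote the matrices of $\partial_t,\partial_{t+1}$ (whose entries lie in $\m$ by minimality). For $n\in N$ and any basis vector $e_{j_0}$ of $F_t$, the element $e_{j_0}\otimes(n+\m N)$ is a cycle in the complex $F_\bullet\otimes_R(X/\m N)$ since $A_{ij_0}n\in\m N$ forces $A_{ij_0}(n+\m N)=0$. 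Using $\Tor_t^R(M,X/\m N)=0$, write this cycle as a boundary $(\partial_{t+1}\otimes X/\m N)(\sum_k e''_k\otimes(y_k+\m N))$ and match coefficients in the free module $F_t$ to obtain $n=\sum_k B_{j_0,k}y_k+m$ with $m\in\m N$, together with $z_j:=\sum_k B_{j,k}y_k\in\m N$ for each $j\ne j_0$. Substituting $A_{ij_0}B_{j_0,k}=-\sum_{j\ne j_0}A_{ij}B_{j,k}$ (an identity from $AB=0$, i.e.\ $\partial_t\partial_{t+1}=0$) yields $A_{ij_0}n=-\sum_{j\ne j_0}A_{ij}z_j+A_{ij_0}m$, an expression in which every summand is a product of an entry of $\partial_t$ with an element of $\m N$.

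At face value the above only gives $A_{ij_0}n\in\m^2 N$, which is the main obstacle: upgrading to $A_{ij_0}n=0$ requires iteration. I would induct on $k\ge 1$ to prove that $an\in\m^k N$ for every entry $a$ of $\partial_t$ and every $n\in N$; the base case $k=1$ is immediate, and in the inductive step one expands $z_j,m\in\m N$ as $\sum a_\ell n_\ell$ with $a_\ell\in\m,\ n_\ell\in N$, and applies the inductive hypothesis to each $A_{ij}n_\ell\in\m^k N$ to promote both $A_{ij}z_j$ and $A_{ij_0}m$ into $\m^{k+1}N$, giving $A_{ij_0}n\in\m^{k+1}N$. Krull's intersection theorem applied to the finitely generated module $N$ then forces $A_{ij_0}n=0$, so $I_1(\partial_t)\subseteq\ann(N)$. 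Parts (2)--(5) follow immediately from Lemma~\ref{ll27}(1),(3),(4),(5), respectively.

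For (6), $\Tor_t^R(M,X)=0$ together with the short exact sequence $0\to\syz^t M\to F_{t-1}\to\syz^{t-1}M\to 0$ produces an injection $\iota\colon(\syz^t M)\otimes_R X\hookrightarrow F_{t-1}\otimes_R X$. Given $a\in(N:_R X)$ and $\xi\in(\syz^t M)\otimes_R X$ lifted to $\sum f_i\otimes x_i\in F_t\otimes_R X$, one computes $\iota(a\xi)=\sum\partial_t(f_i)\otimes(ax_i)$ with each $ax_i\in N$; this equals $(\partial_t\otimes_R N)(\sum f_i\otimes ax_i)$, which vanishes by (1). Injectivity of $\iota$ therefore forces $(N:_R X)\cdot((\syz^t M)\otimes_R X)=0$. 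If $X/N$ has finite length then $\m^m\subseteq\ann(X/N)=(N:_R X)$ for some $m\ge 1$, giving $\m^m((\syz^t M)\otimes_R X)=0$; and if further $X\ne 0$ and $\pd M\ge t$, Nakayama ensures $(\syz^t M)\otimes_R X\ne 0$, so $\iota$ realises it as a nonzero finite-length submodule of $X^{\rank F_{t-1}}$, forcing $\m\in\Ass(X)$ and hence $\depth X=0$.
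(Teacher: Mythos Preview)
Your proof is correct and rests on the same underlying mechanism as the paper's for part~(1): the Tor vanishing forces $\partial_t\otimes N$ to land inside its own $\m$-multiple. The paper packages this more conceptually. Tensoring the presentation $F_t\to F_{t-1}\to\syz^{t-1}M\to 0$ with the short exact sequence $0\to\m N\to X\to X/\m N\to 0$ yields a $3\times 3$ commutative diagram from which one reads off
\[
\im(\partial_t\otimes N)\;\subseteq\;(F_{t-1}\otimes_R\m N)\cap\im(\partial_t\otimes X)\;=\;\im(\partial_t\otimes\m N)\;=\;\m\cdot\im(\partial_t\otimes N),
\]
and a single invocation of Nakayama's lemma gives $\partial_t\otimes N=0$. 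Your matrix computation in fact already proves the equivalent scalar inclusion $I_1(\partial_t)\,N\subseteq I_1(\partial_t)\cdot\m N=\m\cdot I_1(\partial_t)\,N$, since every summand $A_{ij}z_j$ and $A_{ij_0}m$ on the right of your identity is visibly an entry of $\partial_t$ times an element of $\m N$; Nakayama would then finish at once. The subsequent induction and appeal to Krull's intersection theorem are correct but more than necessary. Parts~(2)--(6) are handled identically in both arguments.
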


\begin{proof}
(1) Let $(F_n,\partial_n)$ be a minimal free resolution of $M$.
Put $M'=\syz^{t-1}M$.
Then there exists an exact sequence $F_t \xrightarrow[]{\partial_t} F_{t-1} \xrightarrow[]{p} M' \to 0$.
Also, we remark that $\Tor_1^R(M',X/\m N)=\Tor_t^R(M,X/\m N)=0$.
We have a commutative diagram

\begin{center}
\begin{tikzcd}
0 \ar[r] & F_t\otimes_R \m N \ar[r] \ar[d, "\partial_t\otimes \m N"] & F_t\otimes_R X \ar[r] \ar[d, "\partial_t\otimes X"] & F_t\otimes_R (X/\m N) \ar[r] \ar[d, "\partial_t\otimes (X/\m N)"] & 0\\
0 \ar[r] & F_{t-1}\otimes_R \m N \ar[r] \ar[d, "p\otimes \m N"] & F_{t-1}\otimes_R X \ar[r] \ar[d, "p\otimes X"] & F_{t-1}\otimes_R (X/\m N) \ar[r] \ar[d, "p\otimes (X/\m N)"] & 0\\
0 \ar[r] & M'\otimes_R \m N \ar[r] & M'\otimes_R X \ar[r] & M'\otimes_R (X/\m N) \ar[r] & 0
\end{tikzcd}
\end{center}

with exact rows and columns.
It yields equalities 
\begin{align*}
\im(\partial_t \otimes \m N)&=\Ker(p\otimes \m N)\\
&=(F_{t-1}\otimes_R \m N)\cap \Ker(p\otimes X)\\
&=(F_{t-1}\otimes_R \m N)\cap \im(\partial_t\otimes X)
\end{align*} of submodules of $F_{t-1}\otimes_R X$.
On the other hand, one has calculations
\begin{align*}
\im(\partial_t \otimes N) &\subseteq \m(F_{t-1}\otimes_R N)\cap \im(\partial_t\otimes X)\\
&=F_{t-1}\otimes_R \m N \cap \im(\partial_t\otimes X),
\end{align*}
and hence we obtain $\im(\partial_t \otimes N)\subseteq \im(\partial_t \otimes \m N)=\m\cdot\im(\partial_t \otimes N)$.
Therefore by Nakayama's lemma, $\im(\partial_t \otimes N)$ is a zero module, that is, $\partial_t \otimes N$ is a zero map.   

The assertions (2)-(5) follow by Lemma \ref{ll27}. (6): Now assume that $\Tor_t(M,X)=0$. Then we have a factorization $\partial_t\otimes X=\phi\psi$, where $\psi \colon F_t\otimes_R X \to \syz^t M\otimes_R X$ is surjective by right exactness of tensor products and $\phi\colon \syz^t M\otimes_R X \to F_{t-1}\otimes_R X$ is injective by $\Tor_t(M,X)=0$. It shows $\im(\partial_t\otimes X)\cong \im(\psi) \cong \syz^t M\otimes_R X$. Note that $\im(\partial_t\otimes N)$ may be considered as a submodule of $\im(\partial_t\otimes X)$.
Using (1), we have calculations
\[
(N:_R X)\im(\partial_t\otimes X)=\im(\partial_t\otimes((N :_RX)X))\subseteq \im(\partial_t\otimes N)=0.  
\]
Thus one gets $(N:_RX)(\syz^t M\otimes_R X)=0$.
If $X/N$ has finite length, then it means that $N:_RX$ contains some power of $\m$.
We already saw that $\syz^t M\otimes_R X$ is isomorphic to the submodule $\im(\partial_t\otimes X)$ of $F_{t-1}\otimes_R X$. Then it shows that either $\depth X=0$ or $\syz^t M\otimes_R X=0$. In latter case, we have $\syz^t M=0$, i.e., $\pd M<t$. 
\end{proof} 

We also give an Ext version of Proposition \ref{faithquot}.

\begin{prop} \label{ttt}
Let $M$ and $X$ be $R$-modules, and $N$ is a submodule of $X$.
Assume there exists an integer $t\ge 1$ such that $\Ext^t_R(M,X/\m N)=0$.
Then
\begin{enumerate}[\rm(1)]
\item $I_1(\partial_t)\subseteq \ann(N)$, where $(F_n,\partial_n)$ is a minimal free resolution of $M$.
\item If $N$ is faithful, then $\pd M< t$.
\item If $\supp(N)\supseteq \Min(R)$ and $M$ is locally free on $\Ass(R)$, then $\pd M<t$.
\item If  $\grade N=0$ and $M$ has constant rank, then $\pd M<t$.
\item If $M$ has locally projective dimension less than $t$ on $\spec(R)\smallsetminus \{\m\}$ and constant rank, then either $N$ has finite length or $\pd M<t$. 
\end{enumerate}  
\end{prop}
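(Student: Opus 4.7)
The plan is to prove (1) by the $\Hom$-analogue of the argument for Proposition \ref{faithquot}(1), and then to read off (2)--(5) from Lemma \ref{ll27}. For (1), set $M' := \syz^{t-1} M$, so that $F_t \xrightarrow{\partial_t} F_{t-1} \xrightarrow{p} M' \to 0$ is exact and the hypothesis rewrites as $\Ext^1_R(M', X/\m N) = 0$. First I would apply $\Hom_R(-,-)$ in the second argument to the short exact sequence $0 \to \m N \to X \to X/\m N \to 0$; projectivity of the $F_i$ makes the resulting rows short exact, giving the $\Hom$-version of the commutative diagram used in the tensor proof.

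The target is $\Hom(\partial_t, N) = 0$, equivalently $I_1(\partial_t) \subseteq \ann(N)$. For any $\varphi \in \Hom(F_{t-1}, N) \subseteq \Hom(F_{t-1}, X)$, minimality of $\partial_t$ places $\varphi \circ \partial_t$ in $\Hom(F_t, \m N) \cap \im(\Hom(\partial_t, X))$. The crux is the $\Hom$-analogue of the key equality in the tensor proof, namely
\[
\Hom(F_t, \m N) \cap \im(\Hom(\partial_t, X)) = \im(\Hom(\partial_t, \m N)),
\]
which, combined with $\Hom(F_{t-1}, \m N) = \m\,\Hom(F_{t-1}, N)$ (since $F_{t-1}$ is free), yields $L := \im(\Hom(\partial_t, N)) \subseteq \m L$. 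Applying Nakayama's lemma to the finitely generated submodule $L$ of $\Hom(F_t, N) \cong N^{\rank F_t}$ then forces $L = 0$. The nontrivial inclusion in the displayed equality is obtained by a dual diagram chase: given $g = h \circ \partial_t$ with $g(F_t) \subseteq \m N$, the reduction $\bar h: F_{t-1} \to X/\m N$ satisfies $\bar h \circ \partial_t = 0$ and descends through $p$ to some $\bar h_0: M' \to X/\m N$, and $\Ext^1_R(M', X/\m N) = 0$ is used to adjust $h$ within its coset $h + \Hom(M', X) \circ p$ so that the modified representative has image in $\m N$, producing the desired preimage in $\Hom(F_{t-1}, \m N)$.

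Parts (2)--(5) then follow directly from Lemma \ref{ll27}(1), (3), (4), and (5), respectively, applied with the containment $I_1(\partial_t) \subseteq \ann(N)$ from (1). The main obstacle is the $\Hom$-lifting step in the proof of (1): in the tensor case the analogous step uses flatness of $F_{t+1}$ to lift elements from $F_{t+1} \otimes X/\m N$ to $F_{t+1} \otimes X$ for free, while the $\Hom$-dual must contend with an obstruction living in $\Ext^1_R(M', \m N)$, and the delicate point is to exploit $\Ext^1_R(M', X/\m N) = 0$ via the long exact sequence in $\Ext$ to navigate this obstruction.
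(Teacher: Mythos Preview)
Your overall strategy matches the paper's: prove (1) by the $\Hom$-analogue of the diagram chase in Proposition~\ref{faithquot}(1), then read off (2)--(5) from Lemma~\ref{ll27}. The difficulty is that your key equality
\[
\Hom(F_t,\m N)\cap \im\bigl(\Hom(\partial_t,X)\bigr)=\im\bigl(\Hom(\partial_t,\m N)\bigr)
\]
is \emph{not} what the hypothesis $\Ext^1_R(M',X/\m N)=0$ delivers. Apply the snake lemma to the two short exact rows
\[
0\to \Hom(F_{t-1},\m N)\to \Hom(F_{t-1},X)\to \Hom(F_{t-1},X/\m N)\to 0
\]
and the analogous row for $F_t$, with vertical maps $\Hom(\partial_t,-)$. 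Your equality is exactly the injectivity of $\Cok(\Hom(\partial_t,\m N))\to \Cok(\Hom(\partial_t,X))$, which by the snake sequence is equivalent to the surjectivity of
\[
\Ker\bigl(\Hom(\partial_t,X)\bigr)\longrightarrow \Ker\bigl(\Hom(\partial_t,X/\m N)\bigr),
\quad\text{i.e.,}\quad
\Hom(M',X)\twoheadrightarrow \Hom(M',X/\m N).
\]
That surjectivity is controlled by $\Ext^1_R(M',\m N)$, not by $\Ext^1_R(M',X/\m N)$. Your own diagnosis in the final paragraph is correct---the obstruction to modifying $h$ within $h+\Hom(M',X)\circ p$ so that it lands in $\m N$ is precisely the image of $\bar h_0$ under the connecting map $\Hom(M',X/\m N)\to \Ext^1_R(M',\m N)$---but the claim that one can ``exploit $\Ext^1_R(M',X/\m N)=0$ via the long exact sequence'' to kill it is not substantiated: in that long exact sequence, the vanishing of $\Ext^1_R(M',X/\m N)$ only forces $\Ext^1_R(M',\m N)\to \Ext^1_R(M',X)$ to be surjective, which says nothing about whether your particular obstruction class is zero.

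In short, steps (a), (b), (d) of your outline and the deduction of (2)--(5) from (1) are fine, but the crucial step (c)---the displayed ``key equality''---is asserted without a valid justification from the given hypothesis. You have correctly located the obstacle but not overcome it; as written, the argument for (1) is incomplete.
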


\begin{proof}
The assertion (1) follows by a similar argument with the previous proposition.  
The assertions (2)-(5) follow by Lemma \ref{ll27}.
\end{proof}

\begin{rem} As immediate consequences of Proposition \ref{faithquot}(2), \ref{tt}(4) and \ref{ttt}(2) we may recover the result of Celikbas--Kobayashi \cite[2.14]{ot}; If $I$ is an ideal of $R$ containing a non zero-divisor of $R$ (hence $I$ is a faithful $R$-module) and either $\Tor_t^R(M,R/\m I)=0$, or $\Ext^t_R(M,\m I)=0$, or $\Ext^{t}_R(M,R/\m I)=0$, then $\pd_R M\le t$.  
\end{rem} 

We add some remarks on the assumption of Proposition \ref{t}, \ref{tt}, \ref{faithquot} and \ref{ttt} below.
First we recall the notion of trace modules.

\begin{chunk} 
Given $R$-modules $X$ and $M$, the \textit{trace (module)} $\tau_M(X)$ of $M$ in $X$ is defined to be the submodule $\sum_{f\in \Hom_R(M,X)}\im(f)$ of $X$.
Then there is an \textit{evaluation map} $\mathrm{ev}\colon M\otimes_R \Hom_R(M,X)\to X$ given by a correspondence $x\otimes f\mapsto f(x)$, and it is easy to see that $\im(\mathrm{ev})=\tau_M(X)$.
In particular, $\tau_M(X)$ commutes with $\m$-adic completions and localizations since hom sets and tensor products commute with them.
Also note that $\tau_M(X)=X$ if and only if there exists $n\ge 0$ and a surjective homomorphism $M^{\oplus n} \to X$ (\cite[Lemma 5.1]{kt}).
\end{chunk}   

\begin{rem} \label{r25}
An $R$-module $M$ is isomorphic to $\m N$ for some $R$-module $N$ if and only if there exists an integer $n\ge 0$ and a surjective homomorphism $\m^{\oplus n}\to M$, in other words, $\tau_\m(M)=M$.
Indeed, the ``only if" part is clear.
Suppose that we have a surjection $f\colon\m^{\oplus n}\to M$.
Then we get a submodule $C\subseteq \m^{\oplus n}\subseteq R^{\oplus n}$ and isomorphisms $M\cong \dfrac{\m^{\oplus n}}{C}= \m\left(\dfrac{R^{\oplus n}}{C}\right)$.
\end{rem}

The following lemma gives some natural class of examples of faithful modules.

\begin{lem} \label{l27}
Let $M$ be an $R$-module.
If $M$ is locally faithful at associated primes of $R$ (e.g. $M$ is locally free at associated primes of $R$ and $\supp N=\spec R$), then $M$ is faithful.
\end{lem}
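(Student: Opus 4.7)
The plan is to reduce the statement to a prime avoidance argument. Let $a \in \ann_R(M)$; I want to show $a = 0$. For each $\p \in \Ass(R)$, the hypothesis says $\ann_{R_\p}(M_\p) = 0$, and since $aM = 0$ implies $(a/1)M_\p = 0$, we get $a/1 = 0$ in $R_\p$. Hence there exists $s_\p \in R \setminus \p$ with $s_\p a = 0$, i.e., $s_\p \in \ann_R(a)$. This shows that $\ann_R(a) \not\subseteq \p$ for any $\p \in \Ass(R)$.

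Next, I would invoke prime avoidance: since $R$ is Noetherian, $\Ass(R)$ is finite, and from $\ann_R(a) \not\subseteq \p$ for every $\p \in \Ass(R)$ it follows that $\ann_R(a) \not\subseteq \bigcup_{\p \in \Ass(R)} \p$. The latter union is precisely the set of zerodivisors of $R$, so $\ann_R(a)$ contains a nonzerodivisor $t$. Then $ta = 0$ forces $a = 0$, proving $\ann_R(M) = 0$, as desired.

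Finally, to justify the parenthetical example, if $M$ is locally free at each $\p \in \Ass(R)$ and $\supp M = \spec R$, then $M_\p \neq 0$ and $M_\p$ is $R_\p$-free, hence $M_\p \cong R_\p^{\oplus n}$ with $n \ge 1$, which is faithful over $R_\p$; so the hypothesis of the lemma is met.

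There is no serious obstacle here; the argument is a direct prime-avoidance reduction, and the only thing to be careful about is to choose the element $s_\p$ witnessing the vanishing of $a/1$ in $R_\p$ (as opposed to in $M_\p$), so that one lands inside $\ann_R(a)$ rather than merely $\ann_R(M_\p)$-related data.
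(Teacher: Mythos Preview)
Your argument is correct, but it takes a somewhat more hands-on route than the paper. The paper observes that $\ann_R(M)$ is a submodule of $R$, so $\Ass(\ann_R(M))\subseteq \Ass(R)$; since (for finitely generated $M$) one has $\ann_R(M)_\p=\ann_{R_\p}(M_\p)=0$ for each $\p\in\Ass(R)$ by hypothesis, the ideal $\ann_R(M)$ localizes to zero at every one of its own associated primes and must therefore vanish. Your version unpacks this into an element-wise argument via prime avoidance and the characterization of zerodivisors as the union of associated primes. Both proofs rest on the same underlying fact---that localization at associated primes of $R$ controls the annihilator---but the paper's formulation is a one-line local-to-global principle, while yours makes the mechanism explicit at the cost of a few extra steps. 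Your parenthetical justification of the example is also fine and more detailed than the paper provides.
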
  

\begin{proof}
To show $\ann_R(M)=0$, we only need to check that $\ann_R(M)_\p=0$ for all $\p\in \Ass(\ann_R(M))$.
Since $\Ass(\ann_R(M))\subseteq \Ass(R)$, the assertion follows by the assumption.  
\end{proof}

\begin{rem}\label{fflat} The property of being faithful remains preserved under flat extension. Indeed, an $R$-module $M$ is faithful if and only if there is an injective map $R\to \End_R(M)$ if and only if there is an injective map $S\to \End_R(M)\otimes_R S\cong \End_S(M\otimes_R S)$ for every flat extension $S$ of $R$.   
\end{rem}

If we work on a Cohen--Macaulay ring with a canonical module, then we can detect the locally faithful property by some trace module.
Before explaining the statement, we prepare some notations.

\begin{chunk} \label{214}
Let $R$ be a Cohen--Macaulay local ring with a canonical module $\omega$.
Then for an $R$-module $M$, we denote by $M^\dag$ the canonical dual $\Hom_R(M,\omega)$.
Note that if $M$ is maximal Cohen--Macaulay then so does $M^\dag$.
For an integer $i>0$ and maximal Cohen--Macaulay $R$-modules $M$ and $N$, one has an isomorphism $\Ext^i_R(M,N)\cong \Ext^i_R(M^\dag,N^\dag)$. 
\end{chunk}

The following Lemma will be used in the proof of Theorem \ref{faithulhigh}. 

\begin{lem} \label{fs}
Let $R$ be a Cohen--Macaulay local ring with a canonical module $\omega$.
Let $M$ be an $R$-module.
We define $\sigma^R(M)$ to be the isomorphism class of $\omega/\tau_M(\omega)$.
\begin{enumerate}[\rm(1)]
\item $M$ is faithful if and only if so is $M^\dag$.
\item Assume $R$ is Artinian.
Then $M$ is faithful if and only if $\sigma^R(M)=0$.
\item $M$ is faithful if and only if $\sigma^R(M)$ locally vanishes on $\Ass(R)$.

Assume $M$ is maximal Cohen--Macaulay.
Then
\item Let $x\in \m$ be a non zerodivisor.
Then $\sigma^R(M)\otimes_R R/xR=\sigma^{R/xR}(M/xM)$.
\item Assume $\dim R\ge 2$ and $M$ is faithful.
Take a non zerodivisor $x\in \m$ such that $x\not\in\p$ for any $\p \in \Min(\sigma^R(M))\smallsetminus\{\m\}$.
Then the $R/xR$-module $M/x M$ is faithful.
\end{enumerate}
\end{lem}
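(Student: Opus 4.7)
I plan to tackle the parts in the order (2), (3), (1), (4), (5), because the general Cohen--Macaulay statement (3) reduces to the Artinian case (2) via localization at associated primes, (1) then follows from (3), and (4)--(5) build on these together with the standard behaviour of a canonical module under killing a regular element. For (2), with $R$ Artinian, $\omega$ is the Matlis dualizing module, so $D := \Hom_R(-,\omega)$ is an exact contravariant involution on finitely generated $R$-modules. Applying $D$ to the right exact sequence $M \otimes_R M^\dag \xrightarrow{\mathrm{ev}} \omega \to \sigma^R(M) \to 0$, and using $D(\omega)=R$, tensor-hom adjunction, and $M^{\dag\dag}\cong M$, yields a left exact sequence $0 \to D(\sigma^R(M)) \to R \to \End_R(M)$ whose last arrow is the structural map $r\mapsto r\cdot\mathrm{id}_M$. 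Thus $D(\sigma^R(M))\cong\ann_R(M)$, and since $D$ reflects vanishing, $\sigma^R(M)=0$ if and only if $M$ is faithful.

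For (3), I localize at $\p\in\Ass(R)$: by Cohen--Macaulayness $R_\p$ is Artinian with canonical module $\omega_\p$, and since $\Hom$ and tensor commute with localization, $\sigma^R(M)_\p = \sigma^{R_\p}(M_\p)$; part (2) then says this vanishes precisely when $M_\p$ is $R_\p$-faithful, and Lemma \ref{l27} promotes local faithfulness at $\Ass(R)$ to faithfulness. For (1), over the Artinian local ring $R_\p$ Matlis duality identifies multiplication by $r$ on $M_\p$ with multiplication by $r$ on $(M^\dag)_\p$, so $\ann_{R_\p}(M_\p)=\ann_{R_\p}((M^\dag)_\p)$; therefore $M_\p$ is faithful iff $(M^\dag)_\p$ is, and applying (3) to both $M$ and $M^\dag$ gives the equivalence.

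For (4), maximal Cohen--Macaulayness of $M$ yields $\Ext^1_R(M,\omega)=0$, so applying $\Hom_R(M,-)$ to $0\to\omega\xrightarrow{x}\omega\to\omega/x\omega\to 0$ gives $M^\dag/xM^\dag \cong \Hom_R(M,\omega/x\omega) \cong \Hom_{R/xR}(M/xM,\omega/x\omega)$, the last step because $\omega/x\omega$ is annihilated by $x$. Since $\omega/x\omega$ is the canonical module of $R/xR$, this identifies $M^\dag/xM^\dag$ with $(M/xM)^{\dag_{R/xR}}$, and by naturality the evaluation map for $M$ base-changes to the evaluation map for $M/xM$. Tensoring the defining sequence of $\sigma^R(M)$ with $R/xR$ therefore recovers the defining sequence of $\sigma^{R/xR}(M/xM)$, giving the desired isomorphism.

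Finally for (5), I apply (3) over $R/xR$: one must check $\sigma^{R/xR}(M/xM)_{\bar\p}=0$ for every $\bar\p\in\Ass(R/xR)$. Since $R$ is Cohen--Macaulay and $x$ is a non-zerodivisor, such $\bar\p$ correspond to primes $\p$ of $R$ minimal over $(x)$, hence of height exactly one; via (4) and Nakayama's lemma this reduces to $\p\notin\supp(\sigma^R(M))$. The main obstacle, and the place where the hypothesis $\dim R\ge 2$ together with the careful choice of $x$ enters, is a short prime-avoidance argument: if $\p\in\supp(\sigma^R(M))$ then $\p\supseteq\q$ for some $\q\in\Min(\sigma^R(M))$; faithfulness of $M$ and part (3) exclude $\q\in\Ass(R)=\Min(R)$, so $\Ht\q\ge 1$; since heights strictly increase along chains of primes, $\Ht\q=\Ht\p=1$ forces $\q=\p$; but $\Ht\p=1<\dim R$ rules out $\q=\m$, so $\q\in\Min(\sigma^R(M))\smallsetminus\{\m\}$, and our choice of $x$ forbids $x\in\q=\p$, contradicting $x\in\p$.
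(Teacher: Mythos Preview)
Your proposal is correct, and parts (3), (4), (5), and (1) follow essentially the same lines as the paper (localize to Artinian $R_\p$, use that trace commutes with localization, use Lemma~\ref{l27}, and for (5) the identical prime-avoidance argument showing $\Ass(R/xR)\cap\supp(\sigma^R(M))=\emptyset$). The only genuine difference is your treatment of (2): the paper first proves (1) directly via $\ann M\subseteq\ann M^\dag$ and Artinian biduality, and then derives (2) by arguing that faithfulness of $M$ gives an injection $R\hookrightarrow (M^\dag)^{\oplus n}$, whose $\omega$-dual is a surjection $M^{\oplus n}\twoheadrightarrow\omega$. You instead dualize the evaluation sequence $M\otimes_R M^\dag\to\omega\to\sigma^R(M)\to 0$ and identify the resulting map $R\to\End_R(M)$ with the structural one, obtaining the sharper statement $\Hom_R(\sigma^R(M),\omega)\cong\ann_R(M)$; this makes (2) independent of (1) and explains your reordering. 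Both arguments are short; yours gives a more precise Matlis-dual relationship between $\sigma^R(M)$ and $\ann_R(M)$, while the paper's makes the link with trace ideals ($\tau_M(\omega)=\omega$ iff some $M^{\oplus n}$ surjects onto $\omega$) more transparent.
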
  

\begin{proof} 
(1) The inclusion $\ann M\subseteq \ann M^\dag$ is obvious.
Thus if $M^\dag$ is faithful, then so is $M$.
Conversely, if $M$ is faithful, then $M$ is locally faithful on $\Ass(R)$.
For each $\p\in \Ass(R)$, $R_\p$ is Artinian, so we have $M_\p\cong (M_\p)^{\dag\dag}$ and hence $(M_\p)^\dag$ is faithful.
Since $(-)^\dag$ commutes with localizations, $M^\dag$ is locally faithful on $\Ass(R)$, which implies that $M^\dag$ is faithful by Lemma \ref{l27}.
(2) If $M$ is faithful, then $M^{\dagger}$ is faithful, then an injective map $R\to (M^{\dagger})^{\oplus n}$ is defined by assigning the generators of $M^{\dagger}$. Taking $\omega$-dual and remebering $M$ is maximal Cohen--Macaulay (since $R$ is Artinian) we see that $M^{\oplus n}$ surjects onto $\omega_R$.  Conversely, if there is a surjection $M^{\oplus n} \to \omega_R$, then there is an injection $R \to (M^{\dagger})^{\oplus n}$, so $M^{\dagger}$ is faithful, so $M$ is faithful.   
(3) It follows by (2) and Lemma \ref{l27} and since trace commutes with localizations.
(4) is a consequence of the fact: an isomorphism $\Hom_R(M,\omega)\otimes_R R/xR \cong \Hom_{R/xR}(M/xM,\omega_{R/xR})$ is given by a natural mapping. 
(5) By (3), it is enough to check that $\sigma^{R/xR}(M/xM)$ locally vanishes on $\Ass(R/xR)$.
Then by (4), we need to show $\Ass_R(R/xR)\cap\supp(\sigma^R(M))=\emptyset$.
If not, then there exists $\p \in \Ass_R(R/xR)\cap\supp(\sigma^R(M))$, so $x\in \p$, so $\depth (R/xR)_{\p}=0$, so $\depth R_{\p}=1=\Ht \p$  (hence $\p\ne \m$). Since $M$ is faithful, by (3) we have $\Ass(R)\cap \supp(\sigma^R(M))=\emptyset$, hence $\p \in \supp(\sigma^R(M))$ and $\Ht \p=1$ implies $\p \in \Min (\sigma^R(M))$, but this contradicts $x\not\in\p$ for any $\p \in \Min(\sigma^R(M))\smallsetminus\{\m\}$. Thus,  $\Ass_R(R/xR)\cap\supp(\sigma^R(M))=\emptyset$. 
\end{proof} 

\section{Burch submodules}  

In this section, we naturally extend the definition of Burch ideals \cite[Definition 2.1]{burch} to that of Burch submodules, and study some of their basic properties.

\begin{dfn}\label{burchdef}
Let $X$ be an $R$-module.
An $R$-submodule $N$ of $X$ is said to be \textit{Burch} if $\m(N:_X\m)\ne \m N$, i.e., $\m(N:_X\m)\not\subseteq \m N$. 
Note that since $\m(0:_X \m)=0$, so the $0$ submodule is never Burch. 
\end{dfn}

We note that Burch submodules of $R$ is exactly the Burch ideals in the sense of \cite[Definition 2.1]{burch}.


\begin{lem}\label{dep} Let $N$ be a Burch submodule of an $R$-module $X$. Then $\soc(X/N)\ne 0$, i.e., $\depth_R(X/N)=0$.  
\end{lem}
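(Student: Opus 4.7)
The proof should be essentially formal, using the identification $\soc(X/N) = (N:_X\m)/N$ recorded in Definition \ref{df22}(8). The plan is to argue by contrapositive: if the socle of $X/N$ vanishes, then $N$ fails to be Burch.

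More concretely, I would first note that if $N = X$, then $(N:_X\m) = X = N$, so $\m(N:_X\m) = \m N$, contradicting the Burch hypothesis; hence $X/N \neq 0$. Next, under the identification $\soc(X/N) \cong (N:_X\m)/N$, the vanishing $\soc(X/N) = 0$ is equivalent to the equality $(N:_X\m) = N$. But this equality immediately gives $\m(N:_X\m) = \m N$, which again contradicts the definition of a Burch submodule. Therefore $\soc(X/N) \neq 0$.

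Finally, since $X/N$ is a nonzero finitely generated module over the local ring $R$, the nonvanishing of its socle is equivalent to $\m \in \Ass_R(X/N)$, which is in turn equivalent to $\depth_R(X/N) = 0$. There is no real obstacle here; the entire content of the lemma is unpacking the definition of Burch together with the standard identification of the socle of a quotient module with a colon submodule.
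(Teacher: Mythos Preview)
Your proof is correct and follows essentially the same contrapositive argument as the paper: if $\soc(X/N)=0$ then $(N:_X\m)=N$, whence $\m(N:_X\m)=\m N$ and $N$ is not Burch. Your additional remarks ruling out $N=X$ and justifying the equivalence between nonvanishing socle and depth zero are a bit more detailed than the paper's one-line proof, but the core idea is identical.
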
  

\begin{proof}
If $\soc(X/N)=0$, then $(N:_X \m)=N$, and then $\m N=\m(N:_X \m)$, so $N$ is not Burch.   
\end{proof}

The following alternative characterization of Burch submodules will be used frequently without further reference.  

\begin{lem}\label{burchalter}
Let $X$ be an $R$-module.
A submodule $N$ of $X$ is Burch if and only if $(\m N:_X \m)\ne (N:_X \m)$.
\end{lem}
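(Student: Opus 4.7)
The plan is to unravel both inequalities as noninclusions and show they coincide. The two obvious containments to record first are
\[
\m N \subseteq \m(N:_X\m) \qquad\text{and}\qquad (\m N:_X\m)\subseteq (N:_X\m),
\]
which follow respectively from $N\subseteq (N:_X\m)$ and from $\m N\subseteq N$. Consequently $\m(N:_X\m)\ne \m N$ is equivalent to the strict noninclusion $\m(N:_X\m)\not\subseteq \m N$, and similarly $(\m N:_X\m)\ne (N:_X\m)$ is equivalent to $(N:_X\m)\not\subseteq(\m N:_X\m)$.

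Next I would prove the single key equivalence
\[
\m(N:_X\m)\not\subseteq \m N \;\Longleftrightarrow\; (N:_X\m)\not\subseteq(\m N:_X\m).
\]
For the forward direction, if $y\in (N:_X\m)$ and $a\in \m$ satisfy $ay\notin \m N$, then $\m y\subseteq N$ but $\m y\not\subseteq \m N$, so $y\in(N:_X\m)\setminus(\m N:_X\m)$. Conversely, given $y\in(N:_X\m)\setminus (\m N:_X\m)$, the containment $\m y\subseteq \m(N:_X\m)$ together with $\m y\not\subseteq \m N$ already witnesses $\m(N:_X\m)\not\subseteq \m N$.

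Combining this equivalence with the two preliminary observations yields the lemma. The reasoning is entirely formal manipulation of colons, so no step is really an obstacle; the only thing to be careful about is writing the containments in the correct direction (it is worth noting that $(\m N:_X\m)\subseteq(N:_X\m)$, not the other way around, which is the reason the characterization takes the shape stated).
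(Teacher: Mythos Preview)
Your proof is correct and follows essentially the same approach as the paper's: both arguments reduce to the two obvious containments $\m N\subseteq\m(N:_X\m)$ and $(\m N:_X\m)\subseteq(N:_X\m)$ and then verify the equivalence of the two strict noninclusions. The paper phrases both directions via their contrapositives and module-level containments (using $\m(\m N:_X\m)\subseteq\m N$ and applying $(-):_X\m$ to the equality $\m N=\m(N:_X\m)$), whereas you do the direct implications by element-chasing; the logical content is identical.
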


\begin{proof} If $(\m N:_X \m)= (N:_X \m)$, then $\m (\m N:_X \m)=\m(N:_X \m)$. But we always have $N\subseteq (\m N:_X \m)$, hence $\m N \subseteq \m (\m N:_X \m)\subseteq \m N$. Thus $\m (N:_X \m)=\m N$, and $N$ is not Burch.   

Conversely, if $N$ is not Burch, then $\m N=\m (N:_X \m)$.
Then, $(N:_X \m)\subseteq (\m N:_X \m)$.
Since the reverse inclusion is obvious, we get $(N:_X \m)=(\m N:_X \m)$.  
\end{proof}    

The following is a straightforward generalization of \cite[Example 2.2(2)]{burch}.

\begin{ex}\label{mN} Let $N$ be a submodule of an $R$-module $X$.
If $\m N\ne 0$, then $\m N$ is a Burch submodule of $X$.
Indeed, $\m(\m N:_X \m)=\m N$, and since $\m N\ne 0$, so $\m N\ne \m(\m N)$, thus $\m N$ is Burch.    
\end{ex}  

\if0    
The following captures some more sufficient equational criteria similar to Lemma \ref{burchalter}.

\begin{prop}\label{suffburch} Let $N$ be a submodule of $X$. If $(N:_R \m X)\ne (\m N:_R \m X)$, then $N$ is Burch in $X$.   
\end{prop}  

\begin{proof}  Since clearly $(\m N:_R \m X) \subseteq (N:_R \m X)$, so we must have $(N:_R \m X)\nsubseteq (\m N:_R \m X)$.
Then we find $r\in R$ such that $r\m X\subseteq N$ but $r\m X\nsubseteq \m N$.
So there exists $z\in X$ such that $r\m z\nsubseteq \m N$, so that $rz \notin (\m N:_X \m)$.
Moreover, $r\m z\subseteq r\m X\subseteq N$, so $rz\in (N:_X \m)$.
Thus $(N:_X \m)\ne (\m N:_X \m)$, and so $N$ is Burch in $X$ by Lemma \ref{burchalter}.   
\end{proof}
\fi 

The following proposition shows that if $M$ is a Burch submodule of some module, then $\Ext^1_R(k,M)\ne 0$.   

\begin{prop} Let $M$ be a submodule of an $R$-module $X$ such that $\Ext^1_R(k,M)=0$. Then, $(M:_X \m)=M+\soc(X)$. In particular, $M$ is not a Burch submodule of $X$. 
\end{prop}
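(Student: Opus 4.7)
The plan is to use the standard short exact sequence $0\to \m \to R \to k \to 0$ together with the hypothesis $\Ext^1_R(k,M)=0$ to control the extensions of $M$ by $k$, and then translate this into a statement about elements $x\in X$ with $\m x\subseteq M$.

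First, I would apply $\Hom_R(-,M)$ to the sequence $0\to \m \to R \to k \to 0$ and obtain the exact sequence
\[
0\to \Hom_R(k,M) \to M \to \Hom_R(\m,M) \to \Ext^1_R(k,M)=0.
\]
Thus the restriction map $M\to \Hom_R(\m,M)$, sending $m$ to the map $r\mapsto rm$, is \emph{surjective}. This is the crucial consequence of the hypothesis.

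Next, take any $x\in (M:_X\m)$. Multiplication by $x$ defines a homomorphism $\phi_x\colon \m \to M$ given by $\phi_x(r)=rx$. By the surjectivity above, there exists $m\in M$ such that $\phi_x(r)=rm$ for all $r\in \m$, i.e., $r(x-m)=0$ for every $r\in\m$. Hence $x-m\in (0:_X\m)=\soc(X)$ (using \ref{df22}(8)), and so $x\in M+\soc(X)$. The reverse inclusion $M+\soc(X)\subseteq (M:_X\m)$ is immediate since $\m M\subseteq M$ and $\m\cdot\soc(X)=0$. This proves the equality $(M:_X\m)=M+\soc(X)$.

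For the ``in particular'' part, I would just compute
\[
\m(M:_X\m) = \m(M+\soc(X)) = \m M + \m\cdot\soc(X) = \m M,
\]
which by Lemma \ref{burchalter} (or directly from Definition \ref{burchdef}) shows $M$ is not Burch in $X$. There is no real obstacle here; the only subtle point is recognizing that the surjectivity of $M\to \Hom_R(\m,M)$ is exactly the translation of the Ext-vanishing needed to lift a homomorphism $\m\to M$ coming from an element $x\in X$ to multiplication by a genuine element of $M$.
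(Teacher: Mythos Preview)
Your proof is correct. Both you and the paper use $\Ext^1_R(k,M)=0$ to produce, for a given $x\in (M:_X\m)$, an element $m\in M$ with $x-m\in\soc(X)$, but the implementations differ. The paper argues by contradiction: assuming $f\in (M:_X\m)\smallsetminus(M+\soc(X))$, it forms the extension $0\to M\to M+Rf\to (M+Rf)/M\cong k^{\oplus n}\to 0$, splits it using the Ext vanishing, and obtains $M+Rf=M\oplus N$ with $N\subseteq\soc(X)$, a contradiction. You instead read off directly from the long exact sequence that the restriction map $M\cong\Hom_R(R,M)\to\Hom_R(\m,M)$ is surjective, and then lift the multiplication map $\phi_x\colon\m\to M$ to multiplication by some $m\in M$. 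Your route is a bit more streamlined (no contradiction, no need to identify a splitting inside $X$), while the paper's argument makes the geometric picture of the complement $N\subseteq\soc(X)$ explicit. For the ``in particular'' part, your computation $\m(M:_X\m)=\m M$ verifies the failure of the Burch condition straight from Definition~\ref{burchdef}; the paper instead deduces $(M:_X\m)\subseteq(\m M:_X\m)$ and invokes Lemma~\ref{burchalter}, which amounts to the same thing.
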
   

\begin{proof}
We always have $M+\soc(X)\subseteq (M:_X \m)$, so just need to prove the other inclusion.
Let if possible there exists $f\in (M:_X \m)\smallsetminus (M+\soc(X))$. Since $f\notin M$, so $M+Rf\ne M$, so $(M+Rf)/M\ne 0$.
Moreover, $\m f\subseteq M$, so $(M+Rf)/M$ is annihilated by $\m$ hence, $(M+Rf)/M\cong k^{\oplus n}$ is a non-zero $k$-vector space. 
Since $\Ext^1_R(k,M)=0$, so the exact sequence $0\to M \to M+Rf \to k^{\oplus n}\to 0$ splits, hence $M+Rf=M\oplus N$, where $N$ is an $R$-submodule of $X$ and $N\cong k^{\oplus n}$, hence $\m N=0$, i.e., $N\subseteq (0:_X \m)=\text{Soc}(X)$.
So, $f\in M+Rf\subseteq M+\text{Soc}(X)$, contradicting $f\notin M+\soc(X)$.   
Thus, we must have $(M:_X \m)\subseteq M+\soc(X)$.
Moreover, since we always have $M+(0:_X \m)\subseteq (\m M:_X \m)$, so we also get $(M:_X \m)\subseteq (\m M:_X \m)$, hence $M$ is not Burch in $X$. 
\end{proof}  

The following lemma improves and generalizes one direction of \cite[Lemma 2.11]{burch}.  

\begin{lem}\label{burchsum}
Let $N$ be a Burch submodule of an 
$R$-module $X$. 
Then there exists an element $a\in \m$ such that $k$ is a direct summand of $N/aN$.
If moreover $\depth N>0$, then that $a\in \m$ can be chosen to be $N$-regular, and moreover such that $aN$ is a Burch submodule of $N$.  
\end{lem}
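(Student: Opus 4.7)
The plan is to exploit the defining inequality $\m(N:_X \m) \not\subseteq \m N$ to locate a specific ``good'' generator of $N$, then split off the cyclic summand it produces modulo $aN$ by constructing a $k$-valued functional on $N$ that factors through $N/aN$. The refinement for $\depth N>0$ will be handled by a single prime-avoidance argument, and the Burch-ness of $aN$ inside $N$ will follow by reusing the same pair $(a,y)$.

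First I would choose $a\in\m$ and $y\in(N:_X \m)$ with $v:=ay \notin \m N$; this is possible by the Burch hypothesis, and $v$ lies in $N$ because $\m y\subseteq N$. Since $v\notin \m N$, one can extend $v$ to a minimal generating sequence $v,u_2,\dots,u_n$ of $N$. Projecting $N/\m N = k\bar v \oplus \bigoplus_{i\ge 2} k\bar u_i$ onto the first summand and composing with $N\twoheadrightarrow N/\m N$ gives a map $\pi\colon N\to k$ with $\pi(v)=1$. Because $a\in\m$ kills $k$, the map $\pi$ annihilates $aN$ and descends to $\bar\pi\colon N/aN\to k$. On the other hand, $\m v = a(\m y)\subseteq aN$, so $\bar v \in N/aN$ generates a submodule $R\bar v$ that is killed by $\m$ and nonzero (because $\bar\pi(\bar v)=1$); thus $R\bar v\cong k$, and the composite $R\bar v\hookrightarrow N/aN\xrightarrow{\bar\pi}k$ is an isomorphism, exhibiting $R\bar v$ as a direct summand of $N/aN$.

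Now suppose $\depth N>0$, and set $T:=(\m N:_R (N:_X\m))$. Then $T$ is an ideal of $R$, and it is properly contained in $\m$: the inclusion $T\subseteq \m$ uses $N\ne 0$ (a unit in $T$ would force $(N:_X\m)\subseteq \m N$ and hence $N=0$ by Nakayama), while $\m\not\subseteq T$ is precisely the Burch condition. Applying the standard prime-avoidance lemma to the single ideal $T$ together with the primes in $\Ass_R N$---none of which contain $\m$ since $\depth N>0$---produces $a\in\m$ avoiding $T\cup\bigcup_{\p\in\Ass N}\p$. Such an $a$ is $N$-regular, and $a\notin T$ gives some $y\in(N:_X\m)$ with $ay\notin \m N$; rerunning the previous paragraph with this $a$ yields the first conclusion with $a$ additionally $N$-regular.

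Finally, to verify $aN$ is Burch in $N$, keep the same $v=ay$. Since $\m v = a(\m y)\subseteq aN$, we have $v\in (aN:_N\m)$, whence $av \in \m(aN:_N\m)$. If $av\in \m\cdot aN = a\m N$, then the $N$-regularity of $a$ forces $v\in \m N$, contradicting the choice of $v$. Therefore $\m(aN:_N\m)\not\subseteq \m(aN)$, i.e., $aN$ is Burch in $N$. The only delicate step I expect is the prime-avoidance application, which crucially requires $T$ to be a genuine ideal (not merely a submodule of $\m$) and strictly smaller than $\m$; both are immediate from the definitions, so the argument should go through without further complications.
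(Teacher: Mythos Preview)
Your proof is correct and follows essentially the same route as the paper: both arguments pick $a\in\m\smallsetminus(\m N:_R(N:_X\m))$ and $y\in(N:_X\m)$ with $ay\notin\m N$, observe that $\overline{ay}\in N/aN$ is a minimal generator annihilated by $\m$, and split off the resulting copy of $k$; the prime-avoidance refinement for $\depth N>0$ and the verification that $aN$ is Burch in $N$ are likewise identical. Your write-up is slightly more explicit about the splitting map and about why $T\subsetneq\m$, but there is no substantive difference.
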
  

\begin{proof}
Assume $N$ is Burch.
If $\m\subseteq (\m N:_R(N:_X \m))$, then $\m (N:_X\m)\subseteq \m N$, but then $N$ is not Burch.
Thus, $\m \not\subseteq (\m N:_R(N:_X \m))$.
Choose $a\in \m$ such that $a\notin (\m N:_R(N:_X \m))$ (and moreover, if $\depth N>0$, then $a\in \m\smallsetminus (\m N:_R(N:_X \m))$ can be chosen to be $N$-regular by prime avoidance).
Then $a(N:_X\m)\not\subseteq \m N$.
Hence, there exists $b\in (N:_X \m)$ such that $ab\notin \m N$.
Also, $ab\in \m (N:_X \m)\subseteq N$.
So, $ab\in N\smallsetminus \m N$.
Moreover, $\m b\subseteq \m(N:_X\m)\subseteq N$, thus $\m ab\subseteq aN$, so $\m\overline{ab}=0$ in $N/aN$.
Hence, we can define an $R$-linear map $f:R/\m \to N/aN$ by $f(\bar 1)=\overline{ab}$.
This is split injective since $\overline{ab}$ is part of a minimal system of generators of $N/aN$ (as $ab\in N\smallsetminus \m N$).
This shows $k$ is a direct summand of $N/aN$.

Now assume $\depth N>0$ and choose $a\in \m$ in the previous construction to be $N$-regular.
To show $aN$ is a Burch submodule of $N$, first note that $y :=ab\in N$ and $\m y=\m ab =a(\m b)\subseteq a \m (N:_X \m)\subseteq a N$.
Hence, $y\in (aN :_N \m)$, so $ay\in \m (aN:_N \m)$.
As $y\notin \m N$ and $a$ is $N$-regular, so $ay\notin a\m N=\m(aN)$.
Thus $\m(aN:_N \m)\neq \m(aN)$, hence $aN$ is Burch submodule of $N$.  
\end{proof}   

Note that if $\depth X$ were positive, then by prime avoidance, the $a\in \m$ in Lemma \ref{burchsum} could be chosen to be $X$-regular as well (in fact, by prime avoidance, $a\in \m\smallsetminus (\m N:_R(N:_X \m))$ can be chosen to avoid any finite set of non-maximal prime ideals). In view of this, the following can be seen as a partial converse to Lemma \ref{burchsum}.  

\begin{lem}\label{burchsumconv}
Let $R$ have positive depth.  
Let $N$ be an $R$-submodule of an 
$R$-module $X$.
Assume that there exists $X$-regular element $a\in \m$ such that $k$ is a direct summand of $N/aN$.
If $\depth X>1$, then $N$ is Burch submodule of $X$.  
\end{lem}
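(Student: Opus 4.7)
The plan is to use the direct-summand hypothesis to produce a concrete element witnessing $\m(N:_X\m) \not\subseteq \m N$, thereby verifying the Burch condition from Definition \ref{burchdef}. From a splitting $N/aN \cong k \oplus L$, pick $y \in N$ lifting a generator of the $k$-factor: then $\m y \subseteq aN$, and $y \notin \m N + aN$ because $\bar y$ cannot lie in $\m(N/aN)$ (otherwise it would map to zero in the $k$-vector space $(N/aN)/\m(N/aN)$, contradicting that it generates a $k$-summand). In particular $y \notin \m N$. The strategy is then to find $w \in (N:_X\m)$ with $aw = y$: this would immediately give $aw \in \m(N:_X\m) \setminus \m N$, proving $N$ is Burch.

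To produce such a $w$, I invoke $\depth X > 1$. Since $a$ is $X$-regular, $\depth(X/aX) \geq 1$, hence $\soc(X/aX) = 0$, equivalently $(aX :_X \m) = aX$. The inclusion $\m y \subseteq aN \subseteq aX$ then places $y$ in $(aX:_X\m)=aX$, so $y = aw$ for an element $w \in X$, unique by the $X$-regularity of $a$.

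It remains to verify $\m w \subseteq N$. For $r \in \m$, we have $arw = ry \in \m y \subseteq aN$, so $arw = an$ for some $n \in N$, and the $X$-regularity of $a$ (inherited by the submodule $N$) yields $rw = n \in N$. The only step in which the hypotheses are substantively used is $(aX:_X\m) = aX$, which needs exactly $\depth(X/aX) \geq 1$, i.e.\ $\depth X > 1$ combined with $a$ being $X$-regular; the rest is formal manipulation of the splitting and regularity. I anticipate no substantive obstacles beyond correctly locating where the depth hypothesis enters.
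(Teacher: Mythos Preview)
Your proof is correct and follows essentially the same argument as the paper: both pick a lift $y$ (the paper's $c$) of the generator of the $k$-summand, use $\depth(X/aX)>0$ to force $y\in aX$, divide by $a$ via $X$-regularity to obtain $w$ (the paper's $b$) in $(N:_X\m)$, and conclude $aw\in \m(N:_X\m)\setminus \m N$. The only cosmetic difference is that you phrase the depth step as $\soc(X/aX)=0$ while the paper invokes an $X/aX$-regular element in $\m$; these are equivalent.
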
 

\begin{proof}
Fix a split injection $f:R/\m\to N/aN$.
Let $c\in N$ be such that $f(\bar 1)=\overline c$.
Then $\m \overline c=\bar 0$ in $N/aN$, so $\m c\subseteq aN\subseteq aX$. So, $\bar c\in X/aX$ is annihilated by $\m$.
But $\depth X/aX=\depth X -1>0$ as $a$ is $X$-regular, $\m$ contains an $X/aX$-regular element.
Thus, $\bar c=\bar 0$ in $X/aX$, i.e., $c\in aX$.
Write $c=ab$ where $b\in X$.
Then, $\m ab=\m c\subseteq aN$.
Since $a$ is $X$-regular and $N$ is a submodule of $X$, so we get $\m b\subseteq N$, that is, $b\in (N:_X \m)$.
Since $f(\bar 1)=\bar c$, and $f$ is a split injection, $\bar c$ is part of a minimal system of generators of $N/aN$, thus $c\in N\smallsetminus \m N$.
Thus $c=ab\notin \m N$ but $ab\in \m (N:_X \m)$.
It means that $N$ is Burch.  
\end{proof}   

\begin{rem} Note that unlike \cite[Lemma 2.11]{burch}, Lemma \ref{burchsum} does not require $\depth X>1$.   
\end{rem}    

The following lemma can be regarded as an ``embedding-free" characterization of Burch submodules.

\begin{lem} \label{l310}
Let $N$ be an $R$-module.
Then the following conditions are equivalent to each other.
\begin{enumerate}[ \rm(1)]
\item There exists an $R$-module $X$ such that $N$ is isomorphic to a Burch submodule of $X$.
\item There exists a homomorphism $f\colon \m \to N$ such that $f\otimes_R k$ is nonzero.

\item $\tau_\m(N)\not\subseteq \m N$. 
 
\item The inclusion $\Hom_R(\m,\m N) \hookrightarrow \Hom_R(\m, N)$ induced by the natural inclusion $\m N \hookrightarrow N$
is not surjective. 

If moreover $\depth N>0$, then each of the conditions above is equivalent to any
of the following.   

\item There exists an element $a\in \m$ which is regular on $N$ such that $k$ is a direct summand of $N/aN$.
\item There exists an element $a\in \m$ which is regular on $N$ such that $aN$ is a Burch submodule of $N$.
\end{enumerate}
\end{lem}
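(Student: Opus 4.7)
The plan is to establish the equivalences in two stages: (1)--(4) will be proved equivalent with no depth hypothesis, and then (5), (6) will be brought into the cycle when $\depth N > 0$.

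First I would dispatch the formal equivalences (2) $\Leftrightarrow$ (3) $\Leftrightarrow$ (4). For (2) $\Leftrightarrow$ (3), unpacking the definition $\tau_\m(N) = \sum_{f \in \Hom_R(\m, N)} \im(f)$ shows that the condition $f \otimes_R k \ne 0$ is exactly $\im(f) \not\subseteq \m N$, so the existence of such $f$ is equivalent to $\tau_\m(N) \not\subseteq \m N$. For (3) $\Leftrightarrow$ (4), apply $\Hom_R(\m, -)$ to the short exact sequence $0 \to \m N \to N \to N/\m N \to 0$ and observe that the inclusion $\Hom_R(\m, \m N) \hookrightarrow \Hom_R(\m, N)$ fails to be surjective precisely when some $f \colon \m \to N$ has $f(\m) \not\subseteq \m N$.

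Next I would handle (1) $\Leftrightarrow$ (2). For (1) $\Rightarrow$ (2), given a Burch embedding $N \subseteq X$, pick $b \in (N :_X \m)$ with $\m b \not\subseteq \m N$ as furnished by the Burch hypothesis, and take $f \colon \m \to N$, $y \mapsto yb$. The reverse implication (2) $\Rightarrow$ (1) is the least formal step: I would form the pushout
\[
X := (R \oplus N)/\{(y, -f(y)) : y \in \m\}
\]
of the inclusion $\m \hookrightarrow R$ along $f$ (the set on the right is an $R$-submodule by $R$-linearity of $f$). The map $N \to X$, $n \mapsto [(0,n)]$, is injective because any relation $(0,n) = (y, -f(y))$ forces $y = 0$ and then $n = 0$. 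The element $e := [(1, 0)]$ satisfies $y e = [(y, 0)] = [(0, f(y))]$ for each $y \in \m$, so $\m e = f(\m) \subseteq N$, i.e., $e \in (N :_X \m)$, while $\m e \not\subseteq \m N$ by hypothesis, giving (1).

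Finally, when $\depth N > 0$, I would close the loop $(1) \Rightarrow (6) \Rightarrow (5) \Rightarrow (2)$. The implication (1) $\Rightarrow$ (6) is the moreover part of Lemma \ref{burchsum} applied to the given Burch embedding. For (6) $\Rightarrow$ (5), apply Lemma \ref{burchsum} once more to the Burch pair $aN \subseteq N$, noting that $\depth(aN) = \depth N > 0$ since multiplication by $a$ gives $aN \cong N$; the resulting $N$-regular $a' \in \m$ with $k$ a direct summand of $aN/a'(aN)$ then transports, via this isomorphism, to $k$ being a summand of $N/a'N$. For (5) $\Rightarrow$ (2), given $N$-regular $a \in \m$ and $c \in N \setminus \m N$ whose image generates the $k$-summand of $N/aN$ (so that $\m c \subseteq aN$), define $f \colon \m \to N$ by $f(x) = y$ where $xc = ay$; this is well-defined by $N$-regularity of $a$, and $f(a) = c \notin \m N$ shows $f \otimes_R k \ne 0$. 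The main obstacle is the pushout construction in (2) $\Rightarrow$ (1): one must verify both that $N$ genuinely embeds into $X$ and that the witness $e = [(1,0)]$ actually detects the Burch property; the rest reduces to diagram chasing and a careful bookkeeping of the Lemma \ref{burchsum} consequences.
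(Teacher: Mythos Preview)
Your proof is correct and follows essentially the same strategy as the paper: the equivalences (2)$\Leftrightarrow$(3)$\Leftrightarrow$(4) are dispatched formally, (1)$\Leftrightarrow$(2) is done via the pushout construction, and Lemma~\ref{burchsum} carries the depth-positive equivalences. The only difference is the direction of the cycle in the last part: the paper argues (1)$\Rightarrow$(5)$\Rightarrow$(6)$\Rightarrow$(1), proving (5)$\Rightarrow$(6) by a direct element computation (if $x\in N\smallsetminus\m N$ with $\m x\subseteq aN$, then $ax\in \m(aN:_N\m)\smallsetminus \m(aN)$) and noting (6)$\Rightarrow$(1) is trivial, whereas you argue (1)$\Rightarrow$(6)$\Rightarrow$(5)$\Rightarrow$(2), invoking Lemma~\ref{burchsum} a second time for (6)$\Rightarrow$(5) and then explicitly building the map $f\colon\m\to N$ via division by the regular element $a$ for (5)$\Rightarrow$(2). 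Your route is slightly less economical in that (6)$\Rightarrow$(5) re-uses the lemma rather than a one-line check, but your explicit (5)$\Rightarrow$(2) construction is a pleasant addition not spelled out in the paper.
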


\begin{proof}
The implications (2)$\iff$(3)$\iff$(4) are clear. 
(1)$\implies$(2): We may assume $N$ is a Burch submodule of $X$.
Therefore there is an element $a\in N:_X\m$ such that $\m a\not\subseteq \m N$.
Then an $R$-homomorphism $f\colon R \to X$ is given by a correspondence $1\mapsto a$.
Consider the restriction $f|_\m \colon \m \to X$ of $f$ on $\m$.
It is clear that $\im (f|_\m) \subseteq N$ and $\im (f|_\m) \not\subseteq \m N$.
Thus it gives an $R$-homomorphism $g\colon \m \to N$ such that $g\otimes_R k\not=0$.

(2)$\implies$(1): We consider a push-out diagram
$
\begin{tikzcd}
\m \ar[r, "i"] \ar[d, "f"] & R \ar[d, "g"] \\
N \ar[r, "j"] & X
\end{tikzcd}$    

of two maps $i$ and $f$, where $i$ is the natural inclusion.
Then, $X\cong \dfrac{N\oplus R}{\{(-f(y),i(y)):y\in \m\}}$, and $j(n):=\overline{(n,0)},\forall n\in N$. As $i$ is injective, so 
$j$ is injective, hence we may regard $N$ as a submodule of $X$.
Set an element $x=g(1)\in X$.
Then we have equalities $\m\cdot x=g(\m\cdot 1)=(jf)(\m)$.
It means that $\m\cdot x\subseteq N$.
On the other hand, the assumption $f\otimes_R k\not=0$ says that $g(a)\not\in \m N$ for some $a\in \m$.
It then implies that $a\cdot x=g(a)\not\in \m N$.
We achieve the inequality $\m N\not= \m(N:_X\m)$.

Now assume $\depth N>0$.
(1)$\implies$(5): It follows by Lemma \ref{burchsum} (and remembering that if $N\cong N'$, then $N/aN\cong N'/aN'$). (5)$\implies$(6): Take an element $a\in \m$ regular on $N$ such that $k$ is a direct summand of $N/aN$.
There is an element $x\in N$ whose image in $N/aN$ corresponds to $1\in k$ of the summand.  
Then $x\in N\smallsetminus \m N$ and $\m\cdot x\subseteq aN$, so $x\in (aN:_N \m)$.
In particular, $ax$ belongs to $\m(aN:_N\m)\smallsetminus a\m N$ ($ax\notin a\m N$ because $a$ is $N$-regular). It shows that $\m(aN:_N\m)\not\subseteq \m (aN)$, i.e., $aN$ is a Burch submodule of $N$. (6)$\implies$(1): This is trivial.    
\end{proof}   

\begin{rem} If $f:\m \to N$ is a homomorphism such that the map of $k$-vector spaces $f\otimes k : \m \otimes_R k \to N \otimes_R k$ is non-zero, then its $k$-dual is non-zero, and then by Tensor-hom adjunction, the natural map 

$\Hom(f,k): \Hom_R(N,k)\cong \Hom_k(N\otimes_R k, k) \to \Hom_R(\m ,k)\cong \Hom_k(\m \otimes_R k, k)$ is non-zero. This implies that $N$ as in Lemma \ref{l310} has extremal complexity and curvature by \cite[Theorem 8]{a}.   
\end{rem}

The following result will be used in the proof of Lemma \ref{l521}.  

\begin{cor}\label{313} Let $R$ have positive depth. Let $M,N$ be $R$-modules of positive depth. Then, the following holds:

\begin{enumerate}[\rm(1)]
\item  If $M$ is a Burch submodule of some $R$-module, $\depth_R N=1$ and $\Ext^1_R(M,N)=0$, then $\Hom_R(M,N)$ is isomorphic to a Burch submodule of some $R$-module.  
\item If $R$ is Cohen--Macaulay, admits a canonical module $\omega$ and $M$ is maximal Cohen--Macaulay and a Burch submodule of some $R$-module, then $M^\dagger$ is isomorphic to a Burch submodule of some $R$-module (and $\dim R=1$).    
\item If $N$ be a Burch submodule of some $R$-module, $M\not=0$ and $\Ext^1_R(M,N)=0$, then $\Hom_R(M,N)$ is isomorphic to a Burch submodule of some $R$-module.  
\end{enumerate}
\end{cor}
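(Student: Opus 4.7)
The plan is to verify in each part condition (5) of Lemma \ref{l310}, i.e.\ that for a suitably chosen $a\in\m$ which is regular on $\Hom_R(M,N)$, the residue field $k$ is a direct summand of $\Hom_R(M,N)/a\Hom_R(M,N)$; the implication (5)$\Rightarrow$(1) of Lemma \ref{l310} then produces the desired Burch embedding.

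For (3), I would apply Lemma \ref{l310}(6) to $N$ to produce an $N$-regular element $a\in\m$ for which $N/aN\cong k\oplus L$ for some $L$. Applying $\Hom_R(M,-)$ to $0\to N\xrightarrow{a} N\to N/aN\to 0$ and using $\Ext^1_R(M,N)=0$ yields
\[
\Hom_R(M,N)/a\Hom_R(M,N)\;\cong\;\Hom_R(M,N/aN)\;\cong\;\Hom_R(M,k)\oplus \Hom_R(M,L).
\]
Since $\Hom_R(M,k)\cong k^{\mu(M)}$ is nonzero (as $M\neq 0$), a copy of $k$ splits off the left-hand side; and $a$ is automatically regular on $\Hom_R(M,N)$ because it is regular on $N$, so Lemma \ref{l310} applies.

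For (1), the roles of $M$ and $N$ are reversed and $N$ is no longer assumed Burch, only $\depth N=1$. I would apply Lemma \ref{l310}(6) to $M$ together with prime avoidance to pick $a\in\m$ that is simultaneously regular on $M$ and $N$ and such that $M/aM\cong k\oplus L'$ for some $L'$. Applying $\Hom_R(-,N)$ to $0\to M\xrightarrow{a} M\to M/aM\to 0$ and using $\Ext^1_R(M,N)=0$ gives
\[
\Hom_R(M,N)/a\Hom_R(M,N)\;\cong\;\Ext^1_R(M/aM,N)\;\cong\;\Ext^1_R(k,N)\oplus \Ext^1_R(L',N).
\]
The hypothesis $\depth N=1$ forces $\Ext^1_R(k,N)\neq 0$, and since this module is annihilated by $\m$ it is a nonzero $k$-vector space and therefore contains $k$ as a direct summand.

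For (2), the key preliminary observation is that the hypotheses force $\dim R=1$: since $\depth M>0$, Lemma \ref{l310}(6) yields an $M$-regular $a\in\m$ with $aM$ a Burch submodule of $M$, and Lemma \ref{dep} then forces $\depth(M/aM)=0$; combined with $\depth M=\dim R$ this gives $\dim R=1$. Consequently $\depth\omega=1$, and $\Ext^1_R(M,\omega)=0$ holds automatically because $M$ is maximal Cohen--Macaulay, so part (1) applied with $N=\omega$ gives that $M^\dagger=\Hom_R(M,\omega)$ is isomorphic to a Burch submodule.

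The one mildly delicate point is the simultaneous choice of $a$ in (1): after fixing an embedding $M\cong M'\hookrightarrow X$ realizing $M$ as a Burch submodule, one needs $a\in\m$ outside the ideal $(\m M':_R (M':_X \m))$ (which does not contain $\m$ by the Burch condition) and also outside $\bigcup_{\p\in\Ass(M)\cup\Ass(N)}\p$; since none of these ideals contains $\m$, generalized prime avoidance (one non-prime ideal together with finitely many primes) produces the required $a$.
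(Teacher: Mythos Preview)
Your proof is correct and follows essentially the same strategy as the paper's: in each part, choose a suitable regular element $a$ via the Burch hypothesis and prime avoidance, identify $\Hom_R(M,N)/a\Hom_R(M,N)$ using $\Ext^1_R(M,N)=0$, exhibit $k$ as a direct summand, and invoke Lemma~\ref{l310}. Parts (2) and (3) match the paper almost verbatim.

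The only notable variation is in part (1). The paper applies the \emph{covariant} functor $\Hom_R(M,-)$ to $0\to N\xrightarrow{a}N\to N/aN\to 0$, obtaining $\Hom_R(M,N)/a\Hom_R(M,N)\cong \Hom_R(M,N/aN)\cong \Hom_{R/aR}(M/aM,N/aN)$ via change of rings, and then uses $\Hom_{R/aR}(k,N/aN)=\soc(N/aN)\neq 0$ from $\depth(N/aN)=0$. You instead apply the \emph{contravariant} functor $\Hom_R(-,N)$ to $0\to M\xrightarrow{a}M\to M/aM\to 0$ and land on $\Ext^1_R(k,N)\neq 0$ from $\depth N=1$. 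These are two sides of the same coin (indeed $\Ext^1_R(k,N)\cong \Hom_{R/aR}(k,N/aN)$ by Rees' shifting, since $a$ is $R$- and $N$-regular), so the difference is cosmetic rather than substantive. One small remark: where you cite Lemma~\ref{l310}(6) to produce an $a$ with $k$ splitting off $M/aM$ (resp.\ $N/aN$), the relevant condition is (5); this is harmless since the lemma asserts the equivalence of all conditions when the module has positive depth.
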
  

\begin{proof} (1) By Lemma \ref{burchsum} and its proof (and the discussion following it), there exists $a\in \m$ which is regular on $R, M$ and $N$ such that $k$ is a direct summand of $M/aM$. Write $M/aM\cong k\oplus M'$ for some $R/aR$-module $M'$.
The short exact sequence $0\to N \xrightarrow{a} N\to N/aN\to 0$ gives an exact sequence $0\to \Hom_R(M,N)\xrightarrow{a} \Hom_R(M,N)\to \Hom_R(M,N/aN)\to \Ext^1_R(M,N)(=0)$. Hence, $\Hom_R(M,N)/a\Hom_R(M,N)\cong \Hom_R(M,N/aN)$. By \cite[Lemma 2(ii), page 140]{mat}, we get
\[
\Hom_R(M,N/aN)\cong \Hom_{R/aR}(M/aM,N/aN)\cong \Hom_{R/aR}(k,N/aN)\oplus \Hom_{R/aR}(M',N/aN).
\] Since $\depth_R N=1$, so $\depth_{R/aR} N/aN=0$, so $\Hom_{R/aR}(k,N/aN)$ is a non-zero $k$-vector space. It shows that $k$ is a direct summand of $\Hom_R(M,N)/a\Hom_R(M,N)$.
Since $\Hom_R(M,N)$ can be embedded into $N^{\oplus \mu(M)}$, $a$ is also regular on $\Hom_R(M,N)$, hence $\Hom_R(M,N)$ is a Burch submodule of some $R$-module by Lemma \ref{l310}. 

(2) By Lemma \ref{burchsum} and its proof (and the discussion following it), there exists $a\in \m$ which is regular on both $R$ and $M$, such that $k$ is a direct summand of $M/aM$, hence $\depth_{R/aR} M/aM=0$, so $\depth_R M=1$. Since $M$ is maximal Cohen--Macaulay, so $\dim R=\depth_R M=1$. Hence $\depth_R \omega=1$. The claim now follows by part (1) since $\Ext^1_R(M,\omega)=0$.  

(3) By Lemma \ref{burchsum} and its proof (and the discussion following it), there exists $a\in \m$ which is regular on $R, M$ and $N$ such that $k$ is a direct summand of $N/aN$. Write $N/aN\cong k\oplus N'$ for some $R$-module $N'$. The short exact sequence $0\to N \xrightarrow{a} N\to N/aN\to 0$ gives an exact sequence $0\to \Hom_R(M,N)\xrightarrow{a} \Hom_R(M,N)\to \Hom_R(M,N/aN)\to \Ext^1_R(M,N)(=0)$. Hence we get 
\[
\Hom_R(M,N)/a\Hom_R(M,N)\cong \Hom_R(M,N/aN)\cong \Hom_R(M,k)\oplus \Hom_R(M,N').
\]
Since $M\not=0$, $\Hom_R(M,k)$ is a nonzero $k$-vector space.
It shows that $k$ is a direct summand of $\Hom_R(M,N)/a\Hom_R(M,N)$.
Since $\Hom_R(M,N)$ can be embedded into $N^{\oplus \mu(M)}$, $a$ is also regular on $\Hom_R(M,N)$, hence $\Hom_R(M,N)$ is a Burch submodule of some $R$-module by Lemma \ref{l310}.    
\end{proof}

In view of Example \ref{mN}, Propositions \ref{t} and  \ref{faithquot} tell us that some special type of Burch submodules and their quotients fulfils the rigid test property.  So we next investigate test module properties of Burch submodules (and quotients by them) of a module in general.    

\begin{prop}\label{finpd2}
Let $N$ be a Burch submodule of an $R$-module $X$. Let $t\ge 1$ be an integer, and $M$ be an $R$-module such that 
\begin{enumerate}[\rm(1)]
\item $\Tor^R_t(M,X/N)=0$, and
\item the map $\Tor^R_{t+1}(M,X)\to \Tor^R_{t+1}(M,X/N)$ induced by the canonical surjection $X\to X/N$ is surjective.
\end{enumerate} Then, it holds that $\pd M<t$. 
\end{prop}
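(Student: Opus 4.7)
The plan is first to combine hypotheses~(1) and~(2) via the long exact sequence of $\Tor$ for $0 \to N \to X \to X/N \to 0$. Hypothesis~(1) forces surjectivity of the inclusion-induced map $\Tor^R_t(M, N) \to \Tor^R_t(M, X)$, since its cokernel $\Tor^R_t(M, X/N)$ vanishes; hypothesis~(2), rephrased via the long exact sequence, makes the connecting map $\Tor^R_{t+1}(M, X/N) \to \Tor^R_t(M, N)$ zero, yielding injectivity. Together these give a canonical isomorphism $\Tor^R_t(M, N) \cong \Tor^R_t(M, X)$ induced by $N \hookrightarrow X$.

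Take a minimal free resolution $(F_\bullet, \partial_\bullet)$ of $M$ and assume, for contradiction, $F_t \ne 0$. By the Burch condition choose $b \in (N :_X \m)$ with $\m b \not\subseteq \m N$; in particular $\bar b \in \soc(X/N)$ is nonzero. For any basis element $e$ of $F_t$, the element $e \otimes \bar b \in F_t \otimes_R (X/N)$ is a cycle of $F_\bullet \otimes X/N$, since $\partial_t$ has entries in $\m$ and $\m \bar b = 0$. By hypothesis~(1) it is a boundary, $e \otimes \bar b = (\partial_{t+1} \otimes X/N)(\alpha)$ for some $\alpha$; lifting $\alpha$ to $\tilde\alpha \in F_{t+1} \otimes X$ gives $(\partial_{t+1} \otimes X)(\tilde\alpha) - e \otimes b \in F_t \otimes N$. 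Since $(\partial_{t+1} \otimes X)(\tilde\alpha) \in \m F_t \otimes X$, reducing modulo $\m F_t \otimes X$ forces $b \in N + \m X$; replacing $b$ by $b - n$ for a suitable $n \in N$, we may assume $b \in \m X$ while retaining $\m b \not\subseteq \m N$.

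Next, I exploit the vanishing of the connecting map $\Tor^R_{t+1}(M, X/N) \to \Tor^R_t(M, N)$ from hypothesis~(2). For each basis element $e'$ of $F_{t+1}$, the element $e' \otimes \bar b \in F_{t+1} \otimes (X/N)$ is likewise a cycle, and vanishing of the connecting map forces its lifted image $\partial_{t+1}(e') \otimes b \in F_t \otimes N$ to be a boundary in $F_\bullet \otimes N$, hence in $\m F_t \otimes N = F_t \otimes \m N$. Reading off components gives $ab \in \m N$ for every entry $a$ of the matrix of $\partial_{t+1}$, i.e.\ $I_1(\partial_{t+1}) \cdot b \subseteq \m N$; running the argument for every element of $(N :_X \m)$ in place of $b$ yields $I_1(\partial_{t+1}) \cdot (N :_X \m) \subseteq \m N$.

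The main obstacle is converting these conclusions --- $b \in \m X$ with $\m b \not\subseteq \m N$, together with $I_1(\partial_{t+1}) \cdot (N :_X \m) \subseteq \m N$ --- into a contradiction with the Burch condition. I expect to close the proof by iterating the argument of the second paragraph: deepen $b \in \m X$ step-by-step, using both hypotheses together, to $b \in N + \m^k X$ for every $k \ge 1$, and then apply Krull's intersection theorem to the finitely generated $R$-module $X/N$ to conclude $b \in N$; this forces $\m b \subseteq \m N$, contradicting the choice of $b$. The delicate technical step --- showing that each iteration actually improves the $\m$-adic depth of $b$ by making essential use of the extra constraint $I_1(\partial_{t+1}) \cdot (N :_X \m) \subseteq \m N$ coming from hypothesis~(2) --- is where I anticipate the core combinatorial work of the proof lies.
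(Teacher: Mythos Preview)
Your paragraphs 2 and 3 contain correct computations, but the finishing plan via iteration and Krull's intersection theorem is a genuine gap that you yourself flag as unfinished, and in fact it does not close: the step $b\in N+\m X$ does not bootstrap to $b\in N+\m^2 X$, because nothing controls the $\m$-adic depth of the lift $\tilde\alpha$.

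The missing idea is that hypothesis~(2) should be applied to the lift $\tilde\alpha$ from your second paragraph, not to a fresh element $e'\otimes\bar b$. Since $\m(e\otimes\bar b)=0$ in $F_t\otimes(X/N)$, you get $\m\bar{\tilde\alpha}\subseteq\Ker(\partial_{t+1}\otimes(X/N))$. Hypothesis~(2), read as surjectivity of $\Ker(\partial_{t+1}\otimes X)\to\Tor_{t+1}(M,X/N)$, lets you lift this to $\m\tilde\alpha\subseteq\Ker(\partial_{t+1}\otimes X)+F_{t+1}\otimes_R N$. Applying $\partial_{t+1}\otimes X$ then yields
\[
\m\,(\partial_{t+1}\otimes X)(\tilde\alpha)\subseteq(\partial_{t+1}\otimes N)(F_{t+1}\otimes_R N)\subseteq \m(F_t\otimes_R N)=F_t\otimes_R\m N,
\]
so $(\partial_{t+1}\otimes X)(\tilde\alpha)\in F_t\otimes_R(\m N:_X\m)$. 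Combined with $e\otimes b-(\partial_{t+1}\otimes X)(\tilde\alpha)\in F_t\otimes_R N\subseteq F_t\otimes_R(\m N:_X\m)$ from your second paragraph, you get $b\in(\m N:_X\m)$ directly. Since $b\in(N:_X\m)$ was arbitrary with $F_t\ne 0$, this gives $(N:_X\m)\subseteq(\m N:_X\m)$, contradicting the Burch condition via Lemma~\ref{burchalter}. No iteration is needed; this is exactly the paper's route.
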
   

\begin{proof}   
Let $(F_n,\partial_n)$ be a minimal free resolution of $M$. Let us put, for the rest of the proof,  $N':=X/N$. Note that for $i\ge 1$, we have $\partial_i(F_i)\subseteq \m F_{i-1}$. So in particular, $(\partial_t\otimes X)(F_t\otimes_R (N:_X \m))\subseteq F_{t-1} \otimes \m (N:_X\m)\subseteq F_{t-1}\otimes_R N$.
Hence $(\partial_t \otimes N')(F_t \otimes_R \soc_R(N'))=0$ in $F_{t-1}\otimes_R N'$.
Thus, $F_t \otimes_R \soc_R(N')\subseteq \Ker (\partial_t \otimes N')=\im(\partial_{t+1}\otimes N')$, where the last equality holds since $\Tor^R_t(M,N')=0$.
Take $a\in F_t \otimes_R (N:_X \m) \subseteq F_t \otimes_R X$. 
Then, $\bar a \in F_t \otimes_R \soc_R(N')=\im(\partial_{t+1}\otimes N')$.
So, there exists $b\in F_{t+1}\otimes_R X$ such that $\bar a=(\partial_{t+1}\otimes N')(\bar b)$ (here $\overline{(-)}$ denotes images in $(F_i \otimes_R X)/(F_i\otimes_R N)\cong F_i\otimes_R N'$).
Thus, $a-(\partial_{t+1}\otimes X)(b) \in F_t\otimes_R N$ $(*)$.
Since $\bar a \in F_t \otimes_R \soc_R(N')$, that is, $\m \bar a=0$ in $F_t\otimes N'$, it says $(\partial_{t+1}\otimes N')(\m \bar b)=0$, i.e., $\m \bar b \in \Ker (\partial_{t+1} \otimes N')$.
Now, the assumption (2) says that we have surjections $\Ker(\partial_{t+1} \otimes X)\to \dfrac{\Ker(\partial_{t+1} \otimes X)}{\im (\partial_{t+2} \otimes X)}\to \dfrac{\Ker(\partial_{t+1} \otimes N')}{\im (\partial_{t+2} \otimes N')}$.
So, $\m \bar b\subseteq \Ker(\partial_{t+1} \otimes N') \subseteq \overline{\Ker(\partial_{t+1} \otimes X)}+\im (\partial_{t+2} \otimes N')$.
Lifting the inclusions to $F_{t+1}\otimes_R X$ one obtains
$\m b \subseteq \Ker(\partial_{t+1} \otimes X)+\im(\partial_{t+2} \otimes X)+ F_{t+1}\otimes_R N\subseteq \Ker(\partial_{t+1} \otimes X)+ F_{t+1}\otimes_R N$.
Then one has inclusions
$\m(\partial_{t+1}\otimes X)(b)=(\partial_{t+1}\otimes X)(\m b)\subseteq (\partial_{t+1}\otimes X)(F_{t+1}\otimes_R N)=(\partial_{t+1} \otimes N)(F_{t+1}\otimes_R N)\subseteq \m F_t \otimes_R N=F_t\otimes_R \m N$. Hence, $(\partial_{t+1}\otimes X)(b)\in F_t \otimes (\m N:_X \m)$. This along with the equation $(*)$ yields $a\in (\partial_{t+1}\otimes X)(b)+F_t\otimes_R N\subseteq F_t \otimes (\m N:_X \m)+F_t\otimes_R N=F_t\otimes_R (\m N:_X\m)$.
Since $a\in F_t\otimes_R (N:_X \m)$ was arbitrary we get $F_t \otimes_R (N:_X \m)\subseteq F_t\otimes_R (\m N:_X\m)$.
Since $(\m N:_X \m)\subseteq (N:_X \m)$ always holds, so we also have $F_t\otimes_R (\m N:_X \m)\subseteq F_t\otimes_R (N:_X \m)$.
Thus, $F_t\otimes_R (\m N:_X \m)= F_t\otimes_R (N:_X \m).$  So, if the free module $F_t$ is non-zero, then we get $(\m N:_X \m)=(N:_X \m)$, contradicting $N$ is a Burch submodule of $X$ (see Lemma \ref {burchalter}).  Thus we must have $F_t=0$, i.e., $\pd M<t$.
\end{proof}    

We obtain the following two immediate corollaries.
The latter one generalize Burch's result \cite[Theorem 5 (ii)]{b} by letting $X=R$.

\begin{cor} \label{c1}
Let $N$ be a Burch submodule of an $R$-module $X$. Let $t\ge 1$ be an integer, and $M$ be an $R$-module such that $\Tor_t(M,N)=\Tor_t(M,X/N)=0$.
Then $\pd M<t$.
\end{cor}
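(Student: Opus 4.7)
The plan is to reduce the corollary to Proposition \ref{finpd2} by extracting the required surjectivity from the long exact Tor sequence. The hypothesis of Proposition \ref{finpd2} has two parts: (i) $\Tor^R_t(M, X/N)=0$, which is given directly, and (ii) surjectivity of the map $\Tor^R_{t+1}(M,X) \to \Tor^R_{t+1}(M, X/N)$ induced by $X \twoheadrightarrow X/N$. So the only thing to verify is (ii).

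To obtain (ii), I would apply $M \otimes_R -$ to the short exact sequence $0 \to N \to X \to X/N \to 0$. This yields the long exact sequence
\[
\cdots \to \Tor^R_{t+1}(M,X) \to \Tor^R_{t+1}(M, X/N) \to \Tor^R_{t}(M, N) \to \Tor^R_{t}(M, X) \to \cdots
\]
The hypothesis $\Tor^R_t(M,N) = 0$ makes the connecting map $\Tor^R_{t+1}(M, X/N) \to \Tor^R_{t}(M,N)$ zero, whence, by exactness, $\Tor^R_{t+1}(M,X) \to \Tor^R_{t+1}(M,X/N)$ is surjective. This establishes (ii).

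With both conditions of Proposition \ref{finpd2} verified, we conclude $\pd M < t$. There is no serious obstacle here; the proof is a direct application of the preceding proposition, and its content is essentially that the hypothesis on $\Tor^R_t(M, X/N)$ alone plus a mild surjectivity condition suffices, and this surjectivity is automatic once $\Tor^R_t(M,N)$ vanishes.
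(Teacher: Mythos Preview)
Your proof is correct and follows exactly the same approach as the paper: use the long exact Tor sequence for $0\to N\to X\to X/N\to 0$ and the vanishing of $\Tor^R_t(M,N)$ to get the required surjectivity, then invoke Proposition~\ref{finpd2}.
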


\begin{proof} The exact sequence $\Tor^R_{t+1}(M,X)\to \Tor^R_{t+1}(M,X/N)\to \Tor^R_t(M,N)=0$ shows that the map $\Tor^R_{t+1}(M,X)\to \Tor^R_{t+1}(M,X/N)$ is surjective hence we can apply Proposition \ref{finpd2}.     
\end{proof}

\begin{cor}\label{c2}
Let $N$ be a Burch submodule of an $R$-module $X$. Let $t\ge 1$ be an integer, and $M$ be an $R$-module such that $\Tor_t(M,X/N)=\Tor_{t+1}(M,X/N)=0$.
Then $\pd M<t$.
\end{cor}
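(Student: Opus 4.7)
The plan is to deduce this directly from Proposition \ref{finpd2}. Recall that Proposition \ref{finpd2} requires two hypotheses: (a) $\Tor^R_t(M,X/N)=0$, and (b) the connecting map $\Tor^R_{t+1}(M,X)\to \Tor^R_{t+1}(M,X/N)$ induced by the quotient $X\to X/N$ is surjective.

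Hypothesis (a) is exactly one half of what we are given. For hypothesis (b), observe that the other half of our assumption is $\Tor^R_{t+1}(M,X/N)=0$, so the map in question lands in the zero module and is trivially surjective. Thus both hypotheses of Proposition \ref{finpd2} hold, and we conclude $\pd M < t$.

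No further ingredients are needed; in particular, we never have to invoke the Burch condition directly at this stage, since that work has already been absorbed into Proposition \ref{finpd2}. There is no real obstacle: the content of the corollary is simply that $\Tor^R_{t+1}(M,X/N)=0$ is a convenient way to force surjectivity of the comparison map onto $\Tor^R_{t+1}(M,X/N)$, a hypothesis which otherwise requires access to $N$ through the long exact sequence (as was done in Corollary \ref{c1}, where $\Tor^R_t(M,N)=0$ produced the same surjectivity via the sequence $\Tor^R_{t+1}(M,X)\to \Tor^R_{t+1}(M,X/N)\to \Tor^R_t(M,N)$).
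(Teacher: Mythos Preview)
Your proposal is correct and matches the paper's own proof essentially verbatim: the paper also notes that $\Tor^R_{t+1}(M,X/N)=0$ makes the map $\Tor^R_{t+1}(M,X)\to \Tor^R_{t+1}(M,X/N)$ trivially surjective, and then applies Proposition~\ref{finpd2}.
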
  

\begin{proof}
Since $\Tor_{t+1}(M,X/N)=0$, the required map $\Tor_{t+1}^R(M,X)\to \Tor_{t+1}(M,X/N)$ is automatically surjective.
Hence we may apply Proposition \ref{finpd2}.
\end{proof}

\if0
Using Corollary \ref{c1}, we can give another proof of  Proposition \ref{finpd}(1):  

\begin{proof}{Another proof of Proposition \ref{finpd}(1)}:  By Lemma \ref{burchsum}, there exists $N$-regular element $a\in \m$ such that $aN$ is a Burch submodule of $N$. 
The exact sequence $0 \to a N \to N \to N/a N \to 0$ and an isomorphism $N\cong a N$ yields that $\Tor_{l+1}(M,N/aN)=0=\Tor_{l+1}(M,aN)$. 
Since $l+1\ge 1$, thus by Corollary \ref{c1}, one has $\pd M<l+1$.  
\end{proof}
\fi

The following Ext version of Proposition \ref{finpd2} follows by a parallel argument of Proposition \ref{finpd2}.  

\begin{prop}
Let $N$ be a Burch submodule of an $R$-module $X$.
Let $t\ge 1$ be an integer, and $M$ be an $R$-module such that 
\begin{enumerate}[\rm(1)]
\item $\Ext^{t+1}_R(M,X/N)=0$, and
\item the map $\Ext_R^t(M,X)\to \Ext_R^{t}(M,X/N)$ induced by the canonical surjection $X\to X/N$ is surjective.
\end{enumerate} 
Then, it holds that $\pd M\le t$.  
\end{prop}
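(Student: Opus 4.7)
The plan is to dualize the proof of Proposition \ref{finpd2}: replace the tensor complex $F_\bullet\otimes_R (-)$ by the Hom cochain complex $\Hom_R(F_\bullet,-)$ and swap the role of elements of a socle submodule of $X/N$ with cochains \emph{valued} in $(N:_X \m)$. Fix a minimal free resolution $(F_n,\partial_n)$ of $M$, write $N':=X/N$, and denote the Hom differentials by $d^i\colon \Hom_R(F_i,X)\to \Hom_R(F_{i+1},X)$, $\phi\mapsto \phi\circ \partial_{i+1}$, and similarly $e^i$ for the version with $N'$-coefficients. The minimality condition $\partial_{i+1}(F_{i+1})\subseteq \m F_i$ ensures that both $d^i$ and $e^i$ produce cochains whose images lie in $\m X$ and $\m N'$ respectively, which is the dual of the fact exploited in the Tor proof.

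Start with an arbitrary $\phi\in \Hom_R(F_{t+1},(N:_X\m))\subseteq \Hom_R(F_{t+1},X)$. Its image $\bar\phi\in \Hom_R(F_{t+1},N')$ takes values in $\soc(N')=(N:_X\m)/N$, so $e^{t+1}(\bar\phi)=\bar\phi\circ\partial_{t+2}$ has image in $\m\cdot\soc(N')=0$; hence $\bar\phi$ is a cocycle, and hypothesis (1) yields $\bar\phi=e^t(\bar\psi)$ for some $\bar\psi\in \Hom_R(F_t,N')$. Lift $\bar\psi$ to $\psi\in \Hom_R(F_t,X)$; then $\eta:=\psi\circ \partial_{t+1}-\phi$ lies in $\Hom_R(F_{t+1},N)$, the Ext-analogue of equation $(*)$ in the Tor proof. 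Next, for each $a\in\m$, observe $e^t(a\bar\psi)=a\bar\phi=0$, so $a\bar\psi\in\Ker(e^t)$; hypothesis (2) then supplies a cocycle $\gamma_a\in \Ker(d^t)\subseteq \Hom_R(F_t,X)$ and a cochain $\theta_a\in \Hom_R(F_{t-1},X)$ together with $\rho_a:=\gamma_a-a\psi-d^{t-1}(\theta_a)\in \Hom_R(F_t,N)$. Applying $d^t$ and using $d^t\gamma_a=0=d^td^{t-1}$ leaves $a\cdot d^t(\psi)=-d^t(\rho_a)=-\rho_a\circ\partial_{t+1}$, whose image lies in $\rho_a(\m F_{t+1})\subseteq \m N$. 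Since $d^t(\psi)=\phi+\eta$ and $a\eta\in \Hom_R(F_{t+1},\m N)$ automatically, this forces $a\phi\in \Hom_R(F_{t+1},\m N)$ for every $a\in\m$, i.e., $\phi$ takes values in $(\m N:_X\m)$.

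Thus $\Hom_R(F_{t+1},N:_X\m)=\Hom_R(F_{t+1},\m N:_X\m)$; evaluating at a basis element of $F_{t+1}$ shows that $F_{t+1}\ne 0$ would force $(N:_X\m)=(\m N:_X \m)$, contradicting Burchness of $N$ via Lemma \ref{burchalter}. Hence $F_{t+1}=0$, i.e., $\pd_R M\le t$. The main obstacle, exactly as in the Tor case, is the careful bookkeeping in the step using hypothesis (2): one must extract from the surjectivity of $\Ext^t_R(M,X)\to \Ext^t_R(M,N')$ the concrete inclusion $\m\psi\subseteq \Ker(d^t)+\im(d^{t-1})+\Hom_R(F_t,N)$, which is precisely what lets us upgrade the image of $d^t(\psi)$ from $N$ to $\m N$ after multiplication by elements of $\m$.
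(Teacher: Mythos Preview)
Your proof is correct and follows essentially the same route as the paper's intended argument: dualize the Tor proof by replacing $F_\bullet\otimes_R(-)$ with $\Hom_R(F_\bullet,-)$, use hypothesis (1) to write a cochain valued in $(N:_X\m)$ as a coboundary modulo $N$, then use hypothesis (2) to push $\m\psi$ into $\Ker(d^t)+\Hom_R(F_t,N)$ and conclude $\Hom_R(F_{t+1},N:_X\m)=\Hom_R(F_{t+1},\m N:_X\m)$. One trivial slip: where you write ``$\rho_a(\m F_{t+1})$'' you mean $\rho_a(\partial_{t+1}(F_{t+1}))\subseteq \rho_a(\m F_t)\subseteq \m N$, since $\rho_a$ is defined on $F_t$; the conclusion is unaffected.
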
  

\if0   
\begin{proof} Let $(F_{\bullet},\partial_{\bullet})$ be a minimal free resolution of $M$. Let us put, for the rest of the proof,  $N':=X/N$.
Note that $(\Hom_R(\partial_{t+2},N'))(\Hom_R(F_{t+1},\soc_R(N')))=0$ in $\Hom_R(F_{t+2},N')$.
Thus, $\Hom_R(F_{t+1},\soc_R(N'))\subseteq \Ker \Hom_R(\partial_{t+2},N'))=\im(\Hom_R(\partial_{t+1},N'))$, where the last equality holds because of the vanishing of $\Ext^{t+1}_R(M,X/N)$.
Take $a\in \Hom_R(F_{t+1},N:_X\m) \subseteq \Hom_R(F_{t+1},X)$. 
Then, $\bar a \in \Hom_R(F_{t+1}, \soc(N'))\subseteq \im(\Hom_R(\partial_{t+1},N'))$.
So, there exists $b\in \Hom_R(F_t,X)$ such that $\bar a=(\Hom_R(\partial_{t+1},N'))(\bar b)$ (here $\overline{(-)}$ denotes images in $\Hom_R(F_{t+1},N')$).
Thus, $a-(\Hom_R(\partial_{t+1},X))(b) \in \Hom_R(F_{t+1},N)$ $(*)$.
Since $\bar a \in \soc\Hom_R(F_{t+1},N')$, that is, $\m \bar a=0$ in $\Hom_R(F_{t+1},N')$, it says $(\Hom_R(\partial_{t+1},N'))(\m \bar b)=0$, i.e., $\m \bar b \subseteq \Ker (\Hom_R(\partial_{t+1},N'))$.
The assumption (2) says that we have surjections $\Ker(\Hom_R(\partial_{t+1},X))\to \dfrac{\Ker(\Hom_R(\partial_{t+1},X))}{\im (\Hom_R(\partial_{t},X))}\to \dfrac{\Ker(\Hom_R(\partial_{t+1},N'))}{\im (\Hom_R(\partial_{t},N'))}$.
So, $\m \bar b\subseteq \Ker(\Hom_R(\partial_{t+1},N')) \subseteq \overline{\Ker(\Hom_R(\partial_{t+1},X)}+\im (\Hom_R(\partial_t,N'))$.
Lifting the inclusions to $\Hom_R(F_t,X)$ one obtains
$\m b \subseteq \Ker(\Hom_R(\partial_{t+1},X))+\im(\Hom_R(\partial_t,X))+ \Hom_R(F_t,N)\subseteq \Ker(\Hom_R(\partial_{t+1},X))+\Hom_R(F_t,N)$.
Then one has inclusions
$\m(\Hom_R(\partial_{t+1},X))(b)=(\Hom_R(\partial_{t+1},X))(\m b)\subseteq (\Hom_R(\partial_{t+1},X))(\Hom_R(F_t,N))\subseteq \m\Hom_R(F_{t+1},N)$.
Hence, $(\Hom_R(\partial_{t+1},X))(b)\in \Hom_R(F_{t+1},\m N:_X\m)$.
This along with the equation $(*)$ yields $a\in (\Hom_R(\partial_{t+1},X))(b)+\Hom_R(F_{t+1},N)\subseteq \Hom_R(F_{t+1},\m N:_X\m)+\Hom_R(F_{t+1},N)=\Hom_R(F_{t+1},\m N:_X\m)$.
Since $a\in \Hom_R(F_{t+1},N:_X\m)$ was arbitrary we get $\Hom_R(F_{t+1},N:_X\m)\subseteq \Hom_R(F_{t+1},\m N:_X\m)$.
Thus, if the free module $F_{t+1}$ is non-zero, then we get $(\m N:_X \m)=(N:_X \m)$, contradicting $N$ is a Burch submodule of $X$ (see Lemma \ref {burchalter}). 
Thus we must have $F_{t+1}=0$, i.e., $\pd M\le t$.
\end{proof}  
\fi

The following result shows that Burch submodules are 2-Tor-rigid test. In view of Example \ref{mN}, Proposition \ref{finpd3}(1) generalizes part of Proposition \ref{t}(8).    

\begin{prop}\label{finpd3}
Let $N$ be a Burch submodule of an $R$-module $X$.
Let $t\ge 1$ be an integer and $M$ an $R$-module such that either of the following conditions hold:
\begin{enumerate}[\rm(1)]
\item $\Tor^R_{t}(M,N)=\Tor^R_{t-1}(M,N)=0$, or
\item $\Ext^{t+1}_R(M,N)=\Ext^t_R(M,N)=0$.
\end{enumerate}
Then $\pd M<t$.
\end{prop}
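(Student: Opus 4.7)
The overall strategy, uniform for both parts, is to reduce to showing vanishing of $\Tor^R_t(M, N/aN)$ (respectively $\Ext^t_R(M, N/aN)$) for a suitable element $a \in \m$ such that the residue field $k$ is a direct summand of $N/aN$. Granting this, the summand property transfers the vanishing to $\Tor^R_t(M, k)$ (resp. $\Ext^t_R(M, k)$), which over the local ring $R$ is precisely equivalent to $\pd_R M < t$.

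To set up the argument, I first reduce to the case $\depth_R N > 0$, since Lemma \ref{burchsum} only supplies the needed $N$-regular $a \in \m$ in positive depth. If $\depth_R N = 0$, I pass to the faithfully flat extension $R' = R[u]_{(\m, u)}$ and replace $M$, $X$, $N$ by their tensor products with $R'$: Burch-ness of $N$ in $X$ is preserved, because flat base change commutes with the colons and products involved and faithful flatness reflects strict inequalities of submodules; one has $\depth_{R'}(N \otimes R') = \depth_R N + 1 \ge 1$ by the regular element $u$; the Tor- and Ext-vanishing hypotheses are preserved by flat base change; and $\pd$ is faithfully flat invariant. With $\depth_R N > 0$ secured, Lemma \ref{burchsum} yields an $N$-regular $a \in \m$ such that $k$ is a direct summand of $N/aN$.

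Now consider the short exact sequence $0 \to aN \to N \to N/aN \to 0$. Since $a$ is $N$-regular, multiplication by $a$ gives an isomorphism $N \xrightarrow{\cong} aN$, and under the resulting identifications $\Tor^R_i(M, aN) \cong \Tor^R_i(M, N)$ and $\Ext^i_R(M, aN) \cong \Ext^i_R(M, N)$, the maps induced by the inclusion $aN \hookrightarrow N$ become multiplication by $a$. Reading off the long exact sequence of $\Tor$ (respectively $\Ext$) at degree $t$, the hypothesis $\Tor^R_{t-1}(M, N) = \Tor^R_t(M, N) = 0$ (respectively $\Ext^t_R(M, N) = \Ext^{t+1}_R(M, N) = 0$) sandwiches $\Tor^R_t(M, N/aN)$ (resp. $\Ext^t_R(M, N/aN)$) between zeros and forces it to vanish. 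The $k$-summand property of $N/aN$ then yields $\Tor^R_t(M, k) = 0$ (resp. $\Ext^t_R(M, k) = 0$), so $\pd_R M < t$.

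The main obstacle is the $\depth_R N = 0$ case, handled by the flat-extension reduction; the remaining technical ingredients are the verification that flat base change preserves Burch-ness and the bookkeeping that identifies the inclusion-induced maps in the long exact sequences with multiplication by $a$.
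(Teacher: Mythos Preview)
Your reduction to $\depth_R N>0$ via the extension $R'=R[u]_{(\m,u)}$ does not work: Burch-ness is \emph{not} preserved under this base change. The maximal ideal of $R'$ is $\m'=\m R'+uR'$, not $\m R'$, so ``flat base change commutes with the colons'' is beside the point---the colon you need is against $\m'$. Concretely, for any $R$-module $Y$ the element $u$ is regular on $Y\otimes_R R'$ (this is immediate from $Y\otimes_R R[u]\cong Y[u]$). Applying this with $Y=X/N$ gives $\depth_{R'}(X'/N')\ge 1$, and by Lemma~\ref{dep} a Burch submodule must have $\depth$ of the quotient equal to $0$. Hence $N'$ is \emph{never} Burch in $X'$, and the reduction collapses. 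The case $\depth_R N=0$ is therefore left completely unhandled (and it does occur, e.g.\ over any Artinian ring).

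When $\depth_R N>0$ your argument is correct and pleasant: Lemma~\ref{burchsum} gives an $N$-regular $a$ with $k\mid N/aN$, and the long exact sequence for $0\to N\xrightarrow{a}N\to N/aN\to 0$ forces $\Tor_t^R(M,k)=0$ (resp.\ $\Ext^t_R(M,k)=0$). This is exactly the mechanism the paper uses later in Proposition~\ref{321}, where the hypothesis $\depth N>0$ is explicitly assumed. For the general statement, however, the paper argues directly with the minimal free resolution $(F_n,\partial_n)$ of $M$: setting $N'=N:_X\m$, one shows from $\Tor_{t-1}^R(M,N)=0$ that $\im(\partial_t\otimes N')\subseteq \im(\partial_t\otimes N)$, and then from $\Tor_t^R(M,N)=0$ that $F_t\otimes_R \m N'=F_t\otimes_R \m N$, forcing $F_t=0$ by the Burch inequality $\m N'\ne\m N$. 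No depth hypothesis on $N$ is needed. To repair your approach you would need a genuinely different treatment of the $\depth N=0$ case; simply adjoining a variable cannot work, since any element regular on $N\otimes_R R'$ that arises this way is equally regular on $(X/N)\otimes_R R'$.
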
 

\begin{proof}  
(1): If $t=1$, then $M\otimes_R N=0$, so $M=0$ as $N\ne 0$ since $N$ is Burch. So, we may assume $t\ge 2$. Consider a minimal free resolution $(F_n,\partial_n)$ of $M$. Set $N':=N:_X \m$. So we have $\m N'\subseteq N\subseteq N'$. Then we get
\begin{align*}
\im (\partial_t\otimes N') & \subseteq \Ker (\partial_{t-1} \otimes N') \cap \m(F_{t-1} \otimes_R N') \\
& = \Ker (\partial_{t-1} \otimes N') \cap (F_{t-1}\otimes_R \m N') \\
& \subseteq \Ker (\partial_{t-1} \otimes N') \cap (F_{t-1}\otimes_R N) \\
& \subseteq \Ker (\partial_{t-1} \otimes N)\\
& = \im (\partial_t\otimes N).
\end{align*}  
Here the equality in fifth line follows from the assumption $\Tor_{t-1}^R(M,N)=0$.
Therefore, for any element $x\in F_t\otimes_R N'$, there exists $y\in F_t\otimes_R N$ such that $(\partial_t\otimes N')(x)=(\partial_t \otimes_R N)(y)=(\partial_t \otimes N')(y)$.  
It means that $x-y$ is in $\Ker(\partial_t\otimes N')$.
Then since $\m(x-y)\subseteq \m(F_t\otimes_R N')+\m(F_t\otimes_R N)\subseteq F_t\otimes N$, we have     
\begin{align*} 
\m x =\m(x-y)+\m y  &\subseteq \left(\Ker(\partial_t\otimes N'\right)\cap (F_t\otimes_R N)) +\m (F_t\otimes_R N)\\
& \subseteq \Ker(\partial_t\otimes N)+F_t\otimes_R \m N\\
& = \im(\partial_{t+1} \otimes N)+F_t\otimes_R\m N\\
& \subseteq F_t\otimes_R \m N.
\end{align*}
Here the equality in third line follows from the assumption $\Tor_t^R(M,N)=0$.
It follows that $\m(F_t\otimes_R N')\subseteq F_{t}\otimes_R \m N$, i.e., $F_t\otimes_R \m N'\subseteq F_t \otimes_R \m N$, hence $F_t\otimes_R \m N=F_t \otimes_R \m N'$. 
Suppose that $F_t\not=0$. 
Then it implies that $\m N'=\m N$, a contradiction to $N$ being Burch.
Thus we obtain $F_t=0$, which shows that $\pd M<t$.

(2): The assertion (2) can be shown by a parallel argument of (1).  
\if0    
\new{
Similarly, we get inclusions
\begin{align*}
\im(\Hom_R(\partial_{t+1},N')) &\subseteq \Ker(\Hom_R(\partial_{t+2},N'))\cap \m \Hom_R(F_{t+2},N')\\
&\subseteq\Ker(\Hom_R(\partial_{t+2},N'))\cap \m \Hom_R(F_{t+2},N)\\
&=\Ker(\Hom_R(\partial_{t+2},N))\\
&=\im(\Hom_R(\partial_{t+1},N)).
\end{align*}
Here the equality in fourth line follows from the assumption $\Ext^{t+1}_R(M,N)=0$.
It shows that for any element $x\in \Hom_R(F_t,N')$ there exists an element $y\in \Hom_R(F_t,N)$ such that $\Hom_R(\partial_{t+1},N')(x)=\Hom_R(\partial_{t+1},N)(y)$, in other words, $x-y$ belongs to $\Ker(\Hom_R(\partial_{t+1},N'))$.
Then since $\m(x-y)\subseteq \m\Hom_R(F_t,N')+\m \Hom_R(F_t,N)\subseteq \Hom_R(F_t,N)$, we have calculations
\begin{align*}
\m x= \m(x-y)+\m y &\subseteq \Ker(\Hom_R(\partial_{t+1},N'))\cap \Hom_R(F_t,N)+\m \Hom_R(F_t,N)\\
&\subseteq \Ker(\Hom_R(\partial_{t+1},N))+\Hom_R(F_t,\m N)\\
&= \im(\Hom_R(\partial_t,N))+\Hom_R(F_t,\m N)\\
&\subseteq \Hom_R(F_t,\m N).
\end{align*}
Here the equality in third line follows from the assumption $\Ext^t_R(M,N)=0$.
Thus it yields that $\m \Hom_R(F_t,N')\subseteq \Hom_R(F_t,\m N)$.
Suppose that $F_t\not=0$.
Then it means that $\m N'=\m N$, a contradiction to $N$ being Burch.
Thus we must have $F_t=0$, i.e., $\pd M<t$.
}
\fi  
\end{proof}   

As a consequence of our previous results, we can derive the following corollary, part (2) and (3) of which recovers \cite[Theorem 3.2]{iy} and \cite[Lemma, page 316]{lv}.  

\begin{cor}\label{9}
Let $N$ be a submodule of an $R$-module $X$. Let $t\ge 1, n\ge 1$ be integers, such that $\m^n N\ne 0$. Let $M$ be an $R$-module such that one of the following conditions hold:   
\begin{enumerate}[\rm(1)]
\item $\Tor^R_t(M,X/\m^nN)=\Tor^R_{t+1}(M,X/\m^n N)=0$,
\item  $\Tor^R_t(M,X/\m^nN)=\Tor^R_t(M,\m^n N)=0$, or
\item $\Tor^R_{t-1}(M,\m^n N)=\Tor^R_t(M,\m^n N)=0$.
\end{enumerate} 
Then, it holds that $\pd M< t$.      
\end{cor}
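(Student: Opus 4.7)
The plan is to observe that all three cases reduce immediately to results already proved for Burch submodules, once we notice that $\m^n N$ is itself a Burch submodule of $X$. Indeed, by Example~\ref{mN}, applied to the submodule $\m^{n-1} N$ of $X$, the module $\m \cdot (\m^{n-1} N) = \m^n N$ is a Burch submodule of $X$, since the hypothesis $\m^n N \ne 0$ is precisely what Example~\ref{mN} requires.

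With this in hand, each of the three numbered hypotheses matches the hypothesis of an earlier result, with $\m^n N$ playing the role of ``$N$'' in that result. Specifically:
\begin{enumerate}[\rm(1)]
\item The vanishing $\Tor^R_t(M,X/\m^n N) = \Tor^R_{t+1}(M,X/\m^n N) = 0$ is exactly the hypothesis of Corollary~\ref{c2} applied to the Burch submodule $\m^n N \subseteq X$, which concludes $\pd M < t$.
\item The vanishing $\Tor^R_t(M,\m^n N) = \Tor^R_t(M,X/\m^n N) = 0$ is the hypothesis of Corollary~\ref{c1} applied to the Burch submodule $\m^n N \subseteq X$, again yielding $\pd M < t$.
\item The vanishing $\Tor^R_{t-1}(M,\m^n N) = \Tor^R_t(M,\m^n N) = 0$ is the hypothesis of Proposition~\ref{finpd3}(1) applied to $\m^n N$, which again yields $\pd M < t$.
\end{enumerate}

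There is no real obstacle here; the only point to check is the applicability of Example~\ref{mN}, which needs $\m^n N \ne 0$, and this is exactly assumed. So the proof will be essentially a three-line invocation of Example~\ref{mN} together with Corollaries~\ref{c1} and \ref{c2} and Proposition~\ref{finpd3}(1).
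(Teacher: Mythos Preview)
Your proof is correct and is essentially identical to the paper's own argument: the paper also observes via Example~\ref{mN} that $\m^n N$ is a Burch submodule of $X$ (since $\m^n N\ne 0$), and then invokes Corollary~\ref{c1}, Corollary~\ref{c2}, and Proposition~\ref{finpd3} (mentioning Proposition~\ref{t}(8) as an alternative for case (3)). Your slightly more explicit justification that $\m^n N=\m(\m^{n-1}N)$ is a minor elaboration, not a different approach.
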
   

\begin{proof} Since $\m^n N\ne 0$, so $\m^n N$ is a Burch (Example \ref{mN}) submodule of $X$. 
Thus, the claim follows from Corollary \ref{c1}, \ref{c2}, and Proposition \ref{finpd3} (or Proposition \ref{t}(8)).  
\end{proof}    

We add here a short proposition, which tells us that Burch submodules of positive depth can be used to test finiteness of injective dimensions of modules via some vanishings of Ext.   

\begin{prop} \label{321}
Let $N$ be a Burch submodule of an $R$-module $X$ such that $\depth N>0$.
Let $M$ be an $R$-module.
Assume there exists an integer $t\ge \depth M$ such that $\Ext^t_R(N,M)=\Ext^{t-1}_R(N,M)=0$.
Then $\id M<\infty$.
\end{prop}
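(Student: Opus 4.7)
The plan is to leverage the Burch structure of $N$, together with the assumption $\depth N>0$, to reduce the two Ext-vanishings $\Ext^{t-1}_R(N,M)=\Ext^t_R(N,M)=0$ to a single Ext-vanishing against the residue field $k$, and then to finish by invoking a classical criterion for finite injective dimension.

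First, since $N$ is a Burch submodule of $X$ with $\depth N>0$, Lemma \ref{burchsum} (equivalently, condition (5) of Lemma \ref{l310}) supplies an element $a\in\m$ that is $N$-regular and such that $k$ is a direct summand of $N/aN$; write $N/aN\cong k\oplus L$ for some $R$-module $L$. I would then apply $\Hom_R(-,M)$ to the short exact sequence
\[
0\to N\xrightarrow{\ a\ }N\to N/aN\to 0
\]
to obtain the long exact sequence in Ext. Its relevant segment reads
\[
\Ext^{t-1}_R(N,M)\longrightarrow \Ext^t_R(N/aN,M)\longrightarrow \Ext^t_R(N,M),
\]
and since both outer terms vanish by hypothesis, we deduce $\Ext^t_R(N/aN,M)=0$; extracting the direct summand $k$ yields $\Ext^t_R(k,M)=0$.

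To conclude, I would combine the vanishing $\Ext^t_R(k,M)=0$ with the lower bound $t\ge\depth_R M$ to deduce $\id_R M<\infty$ via a classical criterion for finite injective dimension based on a single vanishing of Ext against $k$ at or above the depth of $M$ (the hypothesis $t\ge \depth_R M$ is precisely what prevents the vanishing from being trivially forced by $t$ lying below the depth threshold). The main obstacle is exactly this last step, which rests on a nontrivial classical injective-dimension criterion; by contrast, the reduction from $N$ to $k$ is a clean and routine application of Lemma \ref{burchsum} together with the long exact sequence of Ext.
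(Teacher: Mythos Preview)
Your proposal is correct and follows essentially the same approach as the paper: invoke Lemma \ref{burchsum} to produce an $N$-regular $a\in\m$ with $k$ a summand of $N/aN$, use the long exact sequence of $\Ext$ from $0\to N\xrightarrow{a}N\to N/aN\to 0$ to force $\Ext^t_R(k,M)=0$, and conclude by the classical criterion. The paper makes the last step precise by noting that $\Ext^t_R(k,M)=0$ together with $t\ge\depth M$ actually forces $t>\depth M$ (since $\Ext^{\depth M}_R(k,M)\ne 0$ for $M\ne 0$), and then cites Roberts \cite[II.~Theorem~2]{p} for the implication $\id M<\infty$.
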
   

\begin{proof} We may assume $M\ne 0$. It follows from Lemma \ref{burchsum} that $N/aN$ has the residue field as a direct summand for some $N$-regular element $a\in R$. From the exact sequence $0\to N\xrightarrow{\cdot a} N \to N/aN\to 0$, we see an exact sequence $\Ext^{t-1}_R(N,M) \to \Ext^t_R(N/a N,M) \to \Ext^t_R(N,M)$. Thus by our assumption, we obtain $\Ext^t_R(N/aN,M)=0$, and so $\Ext^t_R(k,M)=0$. As $t\ge \depth M$, so $t>\depth M$. Then we may apply \cite[II. Theorem 2]{p} to obtain $\id M<\infty$.  
\end{proof}

In light of the definition of a Burch submodule of some module, the following generalizes \cite[Corollary 1(ii), page 947]{b}.  

\begin{cor}\label{bpd}
Let $R$ be a singular local ring. Let $N$ be an $R$-module. If either $\pd N<\infty$, or $\id N<\infty$, then $N$ cannot be isomorphic to a Burch submodule of any $R$-module.   
\end{cor}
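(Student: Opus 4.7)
The plan is to prove the contrapositive: if $N$ is isomorphic to a Burch submodule of some $R$-module $X$, then both $\pd_R N$ and $\id_R N$ must be infinite. Since the Burch property is preserved under isomorphism, we may assume $N$ itself is a Burch submodule of $X$. The key ingredient is Proposition \ref{finpd3}, which says Burch submodules are $2$-Tor rigid test (and satisfy the analogous Ext-version). We specialize this to $M = k$ and trigger a contradiction with the singularity of $R$.

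More concretely, suppose first that $\pd_R N = p < \infty$. Then $\Tor_i^R(k,N) = 0$ for every $i \ge p+1$. Choosing $t = p+2 \ge 1$, we have simultaneously $\Tor_{t-1}^R(k,N) = 0$ and $\Tor_t^R(k,N) = 0$. Proposition \ref{finpd3}(1) applied to $M = k$ then forces $\pd_R k < t < \infty$, so $R$ is regular, contradicting the hypothesis that $R$ is singular.

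For the injective case, suppose $\id_R N = q < \infty$. Then $\Ext_R^i(k,N) = 0$ for every $i \ge q+1$. Choose $t = q+1 \ge 1$, so that $\Ext_R^t(k,N) = 0$ and $\Ext_R^{t+1}(k,N) = 0$. Proposition \ref{finpd3}(2) with $M = k$ yields $\pd_R k < t < \infty$, again forcing $R$ to be regular and producing the same contradiction.

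There is essentially no obstacle here, since both implications follow by the same pattern of invoking the $2$-rigid test property against the residue field. The only thing to verify carefully is that the index $t$ one chooses is at least $1$, which is trivially guaranteed by the choices $t = p+2$ and $t = q+1$. Thus, whenever $R$ is singular, a module of finite projective or injective dimension cannot embed as a Burch submodule of any $R$-module.
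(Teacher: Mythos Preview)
Your proof is correct and follows essentially the same approach as the paper: both arguments apply Proposition~\ref{finpd3} with $M=k$, using that finite projective (resp.\ injective) dimension of $N$ forces the vanishing of $\Tor^R_l(k,N)$ (resp.\ $\Ext^l_R(k,N)$) for $l\gg 0$, and then conclude $\pd_R k<\infty$, contradicting the singularity of $R$. Your version is slightly more explicit in the choice of the index $t$, but the substance is identical.
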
  

\begin{proof} $\pd N<\infty$ (resp. $\id N<\infty$) implies $\Tor^R_l(k,N)=0$ (resp. $\Ext^l_R(k,N)=0$) for all $l\gg 0$. If $N$ were isomorphic to a Burch submodule of some $R$-module, then Proposition \ref{finpd3} would imply that $\pd k <\infty$, so $R$ is regular, contradicting $R$ is singular. This proves the claim.  
\end{proof}    

For our next result, we record the following easy observation.

\begin{lem}\label{ten}
Let $M$ be an $R$-module and $I$ an ideal of $R$.
If $\Tor^R_1(M,R/I)=0$, then $I\otimes_R M\cong IM$.
\end{lem}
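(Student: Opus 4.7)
The plan is to exploit the short exact sequence $0 \to I \to R \to R/I \to 0$ and the associated long exact sequence in $\Tor^R_*(M,-)$. Tensoring with $M$ yields
\[
\Tor^R_1(M,R) \to \Tor^R_1(M, R/I) \to M \otimes_R I \xrightarrow{\varphi} M \otimes_R R \to M \otimes_R (R/I) \to 0,
\]
where $\varphi$ is induced by the inclusion $I \hookrightarrow R$. Since $R$ is free, $\Tor^R_1(M,R) = 0$, and by hypothesis $\Tor^R_1(M, R/I) = 0$. Hence $\varphi$ is injective.

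Next I would identify the target and image of $\varphi$. Under the canonical isomorphism $M \otimes_R R \cong M$, the map $\varphi$ sends $m \otimes a \mapsto am$ for $a \in I$, $m \in M$. By definition of $IM$, its image is precisely $IM \subseteq M$. Combining this with the injectivity of $\varphi$, we obtain the desired isomorphism $I \otimes_R M \cong IM$.

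There is essentially no obstacle here: the argument is a direct application of the long exact sequence of Tor, together with the observation that the natural multiplication map $I \otimes_R M \to M$ has image $IM$. The whole proof is a one-line diagram chase.
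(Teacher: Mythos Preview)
Your proof is correct and follows essentially the same approach as the paper: both tensor the short exact sequence $0 \to I \to R \to R/I \to 0$ with $M$, use the vanishing of $\Tor^R_1(M,R/I)$ to conclude that the map $I \otimes_R M \to M$ is injective, and identify its image with $IM$.
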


\begin{proof}
Tensoring the exact sequence $0\to I\to R \to R/I \to 0$ with $M$ we get the exact sequence $0=\Tor^R_1(M,R/I)\to I\otimes_R M\to M\to M/IM\to 0$ which gives the required isomorphism.  
\end{proof}

The following generalizes part of \cite[2.2]{cors}.

\begin{prop} 
Let $R$  be a singular local ring. Let $X$ be an $R$-module. Let $x_1,\cdots,x_n$ be an $R$ and $X$-regular sequence. Assume $\pd_R X<\infty$. Put $L=\left((x_1,\cdots,x_n)X:_X \m\right)$. Then, it holds that $\m L=\m(x_1,\cdots,x_n)X$, and that $\mu(L)=n\cdot\mu(X)+\ell_R \Ext^n_R(k,X)$. 
\end{prop}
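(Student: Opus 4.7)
The plan is to split the result into two parts: first establishing $\m L = \m IX$ (where $I := (x_1, \ldots, x_n)$) by applying Corollary \ref{bpd} to the singular ring $R$, and then computing $\mu(L)$ from a short exact sequence of $k$-vector spaces.

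For the first part, the inclusion $\m IX \subseteq \m L$ is immediate since $IX \subseteq L$. Suppose towards a contradiction that $\m L \ne \m IX$. By definition of $L = (IX:_X\m)$, this says $\m(IX:_X\m)\not\subseteq \m IX$, i.e., $IX$ is a Burch submodule of $X$ (we may assume $X\ne 0$, in which case $IX\supseteq x_1X\ne 0$ since $x_1$ is $X$-regular, so $IX$ is nonzero and the Burch condition makes sense). On the other hand, $x_1,\ldots,x_n$ being both $R$- and $X$-regular forces $\Tor_i^R(R/I,X)=0$ for all $i\ge 1$ (Koszul homology on a regular sequence), which combined with $\pd_R X<\infty$ gives $\pd_R(X/IX)\le \pd_R X + n<\infty$; then from the exact sequence $0\to IX\to X\to X/IX\to 0$ we conclude $\pd_R(IX)<\infty$. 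But Corollary \ref{bpd} now contradicts that $IX$ is a Burch submodule of $X$, so indeed $\m L=\m IX$.

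For the second part, the equality $\m L = \m IX$ reduces $\mu(L)=\dim_k L/\m L$ to $\dim_k L/\m IX$, and factoring the short exact sequence $0\to IX\to L\to L/IX\to 0$ by $\m IX$ gives a short exact sequence of $k$-vector spaces
\[
0\to IX/\m IX\to L/\m L\to L/IX\to 0,
\]
so $\mu(L)=\mu(IX)+\dim_k L/IX$. For the first summand: $\Tor_1^R(R/I,X)=0$ lets me apply Lemma \ref{ten} to get $IX\cong I\otimes_R X$, so $\mu(IX)=\mu(I)\mu(X)=n\mu(X)$, the last equality using that an $R$-regular sequence $x_1,\ldots,x_n\in\m$ minimally generates the ideal it defines. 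For the second summand: $L/IX = (IX:_X\m)/IX = \soc(X/IX)$, and Rees's theorem applied to the $X$-regular sequence $x_1,\ldots,x_n$ contained in $\ann_R(k)=\m$ gives $\Ext^n_R(k,X)\cong \Hom_R(k,X/IX)=\soc(X/IX)$, so $\dim_k L/IX=\ell_R\Ext^n_R(k,X)$.

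The main conceptual point is recognizing that the identity $\m L=\m IX$ is exactly the negation of the Burch condition on $IX\subseteq X$, which in turn is ruled out over a singular ring by the finite projective dimension of $IX$ via Corollary \ref{bpd}. The remaining computation is routine bookkeeping with socles combined with the classical Rees $\Ext$-formula.
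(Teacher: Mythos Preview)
Your proof is correct and follows essentially the same route as the paper's: both reduce $\m L=\m IX$ to showing that $IX$ is not Burch via Corollary~\ref{bpd} and $\pd_R(IX)<\infty$, and both compute $\mu(L)$ by splitting it as $\mu(IX)+\ell(L/IX)$ using Lemma~\ref{ten} and the Rees isomorphism $\Ext^n_R(k,X)\cong\soc(X/IX)$. The only cosmetic differences are your explicit contradiction framing and writing out the short exact sequence of $k$-vector spaces.
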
  

\begin{proof}
To show $\m\left((x_1,\cdots,x_n)X:_X \m\right)=\m(x_1,\cdots,x_n)X$ (i.e., that $(x_1,...,x_n)X$ is not Burch in $X$), by Corollary \ref{bpd}, it is enough to show $\pd_R  (x_1,\cdots,x_n)X<\infty$. Since $x_1,\cdots,x_n$ is $R$ and $X$-regular sequence, so $\Tor^R_{>0}(R/(x_1,\cdots,x_n),X)=0$, thus $\pd_R(X/(x_1,\cdots,x_n)X)=\pd_R(X\otimes_R R/(x_1,\cdots,x_n))=\pd_R(X)+\pd_R(R/(x_1,\cdots,x_n))<\infty$.  So along with $\pd_R(X)<\infty$, we now get $\pd_R (x_1,\cdots,x_n)X<\infty$. Thus we get $\m L=\m(x_1,\cdots,x_n)X$. This implies $\mu(L)=\mu((x_1,...,x_n)X)+\ell\left(\dfrac{L}{(x_1,...,x_n)X}\right)$. By Lemma \ref{ten}, $\mu\left((x_1,\cdots,x_n)X \right)=\mu((x_1,\cdots,x_n)\otimes_R X)=\mu\left((x_1,\cdots,x_n)\right)\cdot \mu(X)=n\cdot\mu(X)$. Moreover, $L/(x_1,...,x_n)X=\soc(X/(x_1,...,x_n)X)\cong \Hom_{R/(x_1,...,x_n)}(k,X/(x_1,...,x_n)X)\cong \Ext^n_R(k,X)$, where the last isomorphism holds since $x_1,\cdots,x_n$ is $R$ and $X$-regular. This completes the proof.   
\end{proof}    

\section{Weakly $\m$-full submodules}   

In this section, we naturally extend the Definition of weakly $\m$-full ideals \cite[Definition 3.7]{two} to submodules of a module in general.

\begin{dfn} Let $M$ be an $R$-module. A submodule $N$ of $M$ is called \textit{weakly $\m$-full} if $(\m N:_M \m)=N$.  
\end{dfn}

We note that weakly $\m$-full submodules of $R$ are exactly the weakly $\m$-full ideals in the sense of \cite[Definition 3.7]{two}. For $\m$-primary submodules, these are the same as Basically full submodules (\cite[Theorem 2.12]{basic}).  

\begin{rem}\label{subre} Note that if $N\subseteq Y\subseteq X$ are submodules, then $N\subseteq (\m N:_Y \m)\subseteq (\m N:_X \m)$. Thus, if $N$ is weakly $\m$-full in $X$, then $N$ is weakly $\m$-full in $Y$.
\end{rem}

The following generalizes \cite[3.11]{two}, see also \cite[Corollary 2.4]{burch}.

\begin{lem}\label{wb}
Let $N$ be weakly $\m$-full submodule of an $R$-module $M$.
If $\depth (M/N)=0$ (e.g. $M/N$ has finite length), then $N$ is Burch. In particular, if $0$ is a weakly $\m$-full submodule of an $R$-module $M$, then $\depth M>0$. 
\end{lem}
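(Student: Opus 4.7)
The plan is to invoke the reformulation of the Burch condition provided by Lemma \ref{burchalter}, which says that $N$ is Burch in $M$ if and only if $(\m N :_M \m) \ne (N :_M \m)$. So the whole proof reduces to comparing these two colons under the given hypotheses.

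First I would observe that the weakly $\m$-full hypothesis gives $(\m N :_M \m) = N$ by definition. Next, I would use the identification $(N :_M \m)/N \cong \soc(M/N)$ noted in Definition \ref{df22}(8). The assumption $\depth(M/N) = 0$ forces $M/N \ne 0$ and $\soc(M/N) \ne 0$, hence $(N :_M \m) \supsetneq N$. Combining these, $(\m N :_M \m) = N \subsetneq (N :_M \m)$, and Lemma \ref{burchalter} immediately yields that $N$ is a Burch submodule of $M$.

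For the second assertion, I would simply specialize the definition of weakly $\m$-full to $N = 0$: it says $(0 :_M \m) = 0$, i.e., $\soc(M) = 0$. If $M \ne 0$, this means $\m$ is not contained in any associated prime of $M$, so by prime avoidance $\m$ contains an $M$-regular element, giving $\depth M > 0$; if $M = 0$, the conclusion holds vacuously under the usual convention $\depth(0) = \infty$.

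I do not anticipate any real obstacle: the argument is purely a chase through the defining equalities, and everything needed is already isolated in Definition \ref{df22}(8) and Lemma \ref{burchalter}. The only point that warrants a line of care is the compatibility of the socle identification with the assumption $\depth(M/N) = 0$ (in particular, that $M/N$ is genuinely nonzero), but this is immediate from the definition of depth.
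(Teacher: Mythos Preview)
Your proof is correct and essentially matches the paper's approach: both hinge on combining the weakly $\m$-full equality $(\m N:_M\m)=N$ with the fact that $\depth(M/N)=0$ forces $(N:_M\m)\supsetneq N$. The only cosmetic difference is that the paper argues by contrapositive directly from Definition~\ref{burchdef} (reproving the relevant direction of Lemma~\ref{burchalter} inline), whereas you invoke Lemma~\ref{burchalter} explicitly; and for the ``in particular'' clause the paper appeals to the first part together with the observation that $0$ is never Burch, while you unwind the definition directly---both are fine.
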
    

\begin{proof}  If $N$ is not Burch, then $\m(N:_M\m)=\m N$. Then, $(N:_M\m)\subseteq (\m N:_M \m)=N$, where the last equality holds because $N$ is weakly $\m$-full.
Thus, $N=(N:_M \m)$.
Hence $\depth(M/N)>0$, a contradiction.
Thus $N$ is Burch. In particular, if $0$ is weakly $\m$-full submodule of an $R$-module $M$ and we have $\depth M=0$, then $0$ would be Burch in $M$, contradicting the Remark made in Definition \ref{burchdef}; so this proves the last part of the claim.  
\end{proof} 

Note that if $N$ is a submodule of an $R$-module $M$ such that $\depth(M/N)>0$, i.e., $(N:_M\m)=N$, then obviously $(\m N:_M\m)=N$, so $N$ is a weakly $\m$-full submodule of $M$. Hence the condition $\depth(M/N)=0$ cannot be dropped from  Lemma \ref{wb} in view of Lemma \ref{dep}.  

We recall the definition of $\m$-full submodules.

\begin{dfn}(\cite[2.1]{pu}) A submodule $N$ of $M$ is called $\m$-full if $(\m N:_M x)=N$ for some $x\in \m$.   
\end{dfn}  
 
We have a strata of classes of submodules
\[
\text{$\m$-full} \implies \text{weakly $\m$-full} \overset{\text{ when } \depth (M/N)=0} \implies \text{Burch},
\]
where the first implication is by definition, and the second one is due to Lemma \ref{wb}.

\if0
\begin{rem} Let $M,N$ be submodules of some $R$-modules $X,Y$ respectively. 

Since $\left(\m(M\oplus N):_{X\oplus Y}\m\right)=(\m M:_X \m )\oplus(\m N:_Y \m)$, so $M\oplus N$ is a weakly $\m$-full submodule of $X\oplus Y$ if and only if $M,N$ are weakly $\m$-full submodules of $X,Y$ respectively. 
\end{rem} 
\fi    

 The following two lemmas give other large class of examples of (weakly) $\m$-full submodules. 

\begin{lem} \label{l}
Let $N$ be a submodule of an $R$-module $X$.
Then $N:_X \m$ is a weakly $\m$-full submodule of $X$.
\end{lem}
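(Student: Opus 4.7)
The plan is to prove the equality $(\m L :_X \m) = L$ directly, where I set $L := N :_X \m$. One containment is automatic: since $\m L \subseteq \m L$, every element of $L$ trivially lies in $(\m L :_X \m)$, giving $L \subseteq (\m L :_X \m)$.

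For the reverse inclusion, the key observation is that $\m L \subseteq N$ as a direct consequence of the definition of $L$. Indeed, $\m L$ is generated (as an $R$-module) by elements of the form $ry$ with $r \in \m$ and $y \in L = N :_X \m$, and each such $ry$ lies in $N$ since $\m y \subseteq N$. So any $R$-linear combination stays inside $N$.

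Given this, the reverse containment is immediate: if $x \in (\m L :_X \m)$, then $\m x \subseteq \m L \subseteq N$, which by definition means $x \in N :_X \m = L$. Combining the two inclusions yields $(\m L :_X \m) = L$, i.e., $N :_X \m$ is weakly $\m$-full in $X$. There is no real obstacle here; the only subtlety worth stating explicitly is the containment $\m L \subseteq N$, which is what makes the colon ideal collapse back onto $L$ after taking one more colon with $\m$.
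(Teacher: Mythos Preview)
Your proof is correct and takes essentially the same approach as the paper's: both observe that $\m L \subseteq N$ (where $L = N:_X\m$), then deduce $(\m L:_X\m) \subseteq (N:_X\m) = L$, with the other inclusion being automatic. The paper's version is simply more terse, writing this as the chain $N' \subseteq \m N':_X\m \subseteq N:_X\m = N'$.
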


\begin{proof}
Set $N':=N:_X \m$.
Then $\m N'\subseteq N$.
Thus one has $N'\subseteq \m N':_X\m \subseteq N:_X\m=N'$, which shows that $N'=\m N':_X \m$.
\end{proof}

\begin{lem}\label{highpower}
Let $R$ be a local ring with infinite residue field.
Let $N$ be a submodule of an $R$-module $M$.
If $\depth M>0$, then $\m^n N$ is an $\m$-full submodule of $M$ for all $n\gg 0$.   
\end{lem}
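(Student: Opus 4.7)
The plan is to choose a single element $x\in\m$ that is sufficiently generic in three distinct ways, and then to combine Artin--Rees with the classical superficial-element argument. Since $R/\m$ is infinite and $\depth M>0$, standard prime avoidance in the $k$-vector space $\m/\m^2$ (using that $(\p+\m^2)/\m^2\subsetneq\m/\m^2$ for every prime $\p\subsetneq\m$) lets me pick $x\in\m$ with the following three properties: (a) $x\notin\q$ for every $\q\in\Ass(M)$, so that $x$ is $M$-regular and hence also $N$-regular; (b) $x\notin\q$ for every $\q\in\Ass(M/N)\setminus\{\m\}$, so that the submodule $L:=(0:_{M/N}x)$ satisfies $\supp L\subseteq\{\m\}$ and is therefore of finite length; (c) the image $x^*\in\m/\m^2$ is a degree-one superficial element for the $\m$-adic filtration on $N$, i.e.~$x^*$ avoids the finitely many homogeneous primes of $\bigoplus_{n\ge0}\m^n/\m^{n+1}$ that are associated to $\bigoplus_{n\ge0}\m^n N/\m^{n+1}N$ other than the irrelevant ideal. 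Conditions (a) and (c) together yield, via the classical superficial-element theorem, that $(\m^{n+1}N:_N x)=\m^n N$ for all $n\gg 0$.

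Now I fix $n$ large and let $y\in M$ satisfy $xy\in\m^{n+1}N$. Artin--Rees applied to the submodule $xM\subseteq M$, combined with the $M$-regularity of $x$, produces a constant $c_0$ (independent of $n$) such that $(\m^{n+1}M:_M x)\subseteq\m^{n+1-c_0}M$; in particular $y\in\m^{n+1-c_0}M$, so the image $\bar y\in M/N$ lies in $\m^{n+1-c_0}(M/N)$. Because $xy\in N$ forces $x\bar y=0$ in $M/N$, I also have $\bar y\in L$. Applying Artin--Rees to $L\subseteq M/N$ writes $L\cap\m^k(M/N)=\m^{k-c_1}\bigl(L\cap\m^{c_1}(M/N)\bigr)$ for $k\ge c_1$; since $L$ has finite length by (b), some power $\m^p$ annihilates it, and therefore $L\cap\m^k(M/N)=0$ once $k\ge c_1+p$. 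Hence for $n$ sufficiently large, $\bar y=0$, i.e.~$y\in N$, and then the superficial equality $(\m^{n+1}N:_N x)=\m^n N$ forces $y\in\m^n N$. The reverse inclusion $\m^n N\subseteq(\m^{n+1}N:_M x)$ is automatic because $x\in\m$, so $(\m^{n+1}N:_M x)=\m^n N$; that is, $\m^n N$ is $\m$-full in $M$ for all $n\gg 0$.

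The main subtlety is the simultaneous achievability of the three genericity conditions (a), (b), (c) on a single element $x$; each is individually a routine ``open'' condition on $\m/\m^2$, but their conjunction really depends on the hypothesis that $R/\m$ is infinite, since over a finite field avoiding finitely many proper $k$-subspaces of $\m/\m^2$ at once can fail. Once $x$ is in hand, everything else is a textbook combination of Artin--Rees together with the elementary observation that a finite-length submodule is annihilated by a power of $\m$.
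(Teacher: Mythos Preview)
Your proof is correct and is essentially the argument behind the result the paper cites. The paper's own proof is a one-line appeal to \cite[Lemma 2.1]{da} (with \cite[Lemma 8.5.3, Proposition 8.5.7(2)]{hs} covering the special case $M=N$), which establishes exactly the equality $\m^{n+1}N:_M x=\m^n N$ for $n\gg 0$ and a suitable $x$; you have reconstructed the content of that lemma. The strategy is the same in both: pick $x\in\m$ that is simultaneously $M$-regular, superficial for the $\m$-adic filtration on $N$, and avoids the non-maximal associated primes of $M/N$; then combine the superficial-element equality $(\m^{n+1}N:_N x)=\m^n N$ with two Artin--Rees arguments (one on $xM\subseteq M$ to place $y$ deep in the $\m$-adic filtration of $M$, one on $L=(0:_{M/N}x)\subseteq M/N$ to force $\bar y=0$ since $L$ has finite length). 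So there is no genuine difference in approach---you have simply unpacked the cited reference.

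Two minor remarks. First, your parenthetical ``other than the irrelevant ideal'' in (c) should really read ``not containing the irrelevant ideal''; this is what guarantees each such prime meets $\m/\m^2$ in a proper subspace. Second, the equality $(\m^{n+1}N:_N x)=\m^n N$ you invoke from (a) and (c) itself uses an Artin--Rees step on $N$ (to go from the raw superficial condition $(\m^{n+1}N:_N x)\cap\m^c N=\m^n N$ to the full equality via $N$-regularity of $x$); this is standard and packaged inside the ``classical superficial-element theorem'' you cite, but it is worth being aware that Artin--Rees is really being used three times, not twice.
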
 

\begin{proof} 
By \cite[Lemma 2.1]{da} (or \cite[Lemma 8.5.3, Proposition 8.5.7.(2)]{hs} when $M=N$) we have that for some $x\in \m$, $\m^nN:_Mx=\m^{n-1}N$ for all $n\gg 0$.
Hence, $\m^{n-1}N$ is $\m$-full for all $n\gg 0$. 
\end{proof}  

In view of Lemma \ref{burchsum}, the following observation generalizes \cite[Lemma 2.1]{GH} which was the key ingredient for proving \cite[Theorem 1.1]{GH}.    

\begin{prop}
Let $N\subseteq M$ be $R$-submodules of an $R$-module $X$ such that $(N:_X \m)\nsubseteq M$.
If $M$ is a weakly $\m$-full submodule of $X$, then $N$ is Burch in $X$.  
\end{prop}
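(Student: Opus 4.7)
The goal is to show $\m(N:_X\m)\not\subseteq \m N$, using that $(N:_X\m)\not\subseteq M$ and that $(\m M:_X\m)=M$. My plan is to argue by contradiction: assume $N$ is not Burch and derive that every element of $(N:_X\m)$ must land in $M$, contradicting the hypothesis $(N:_X\m)\not\subseteq M$.

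More concretely, pick an element $x\in (N:_X\m)\smallsetminus M$, which exists by assumption. By definition $\m x\subseteq N\subseteq M$. If $N$ were not Burch, then by Lemma \ref{burchalter} we would have $\m(N:_X\m)\subseteq \m N$, so in particular $\m x\subseteq \m N\subseteq \m M$. This places $x$ in $(\m M:_X \m)$, and the weakly $\m$-full hypothesis on $M$ gives $(\m M:_X\m)=M$, forcing $x\in M$ — contradicting the choice of $x$.

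Therefore $N$ must be Burch in $X$. The argument is short and the only ``obstacle'' is recognizing that the defining property of weakly $\m$-full submodules is exactly what is needed to turn the containment $\m x\subseteq \m M$ into membership $x\in M$; everything else is a direct unraveling of definitions.
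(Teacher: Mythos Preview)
Your proof is correct and follows essentially the same contrapositive strategy as the paper: assume $N$ is not Burch, use $N\subseteq M$ to pass from $\m N$ to $\m M$, and invoke the weakly $\m$-full condition $(\m M:_X\m)=M$ to force $(N:_X\m)\subseteq M$. The only quibble is that the containment $\m(N:_X\m)\subseteq \m N$ you use is the negation of Definition~\ref{burchdef} itself rather than Lemma~\ref{burchalter}; the paper instead invokes Lemma~\ref{burchalter} to get $(N:_X\m)=(\m N:_X\m)\subseteq(\m M:_X\m)=M$ directly at the level of submodules, but the two arguments are interchangeable.
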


\begin{proof} If $N$ is not Burch in $X$, then $(N:_X \m)=(\m N:_X \m)$ (Lemma \ref{burchalter}), and then $(N:_X \m)\subseteq (\m M:_X \m)=M$, where the latter equality holds since $M$ is weakly $\m$-full in $X$.
This contradicts $(N:_X \m)\nsubseteq M$. Thus, $N$ is Burch in $X$.  
\end{proof}

Now we proceed to investigate rigid test property of weakly $\m$-full submodules. Motivated by \cite{ot}, we prove the next two results in the greater generality of the condition $(N:_X J)=(\m N:_X \m J)$, so that specializing to $J=R$, we can conclude rigid test property of weakly $\m$-full submodules.  Note that since $(\m N:_X \m J)=(\m N:_X \m):_X J$, so if $N$ is weakly $\m$-full submodule of $X$, then it satisfies $(N:_X J)=(\m N:_X \m J)$ for all $J$.  

\begin{lem} \label{ll}
Let $M$ and $X$ be $R$-modules, $J$ be an ideal of $R$, and $N$ be a submodule of $X$ such that $N:_X J=\m N:_X \m J$.
Also let $N'$ be a submodule of $X$ such that $N:_XJ\subseteq N'\subseteq (N:_X \m J)$.
Assume there exists an integer $t\ge 1$ such that $J\Ker(\partial_t\otimes (N:_XJ))\subseteq \im(\partial_{t+1}\otimes N)$, where $(F_n,\partial_n)$ is a minimal free resolution of $M$.
Then $\Ker(\partial_t \otimes N')=\Ker(\partial_t\otimes (N:_XJ))$ (and hence $J\Ker(\partial_t \otimes N')\subseteq \im(\partial_{t+1}\otimes N)\subseteq \im(\partial_{t+1}\otimes (N:_X\m))$). 
\end{lem}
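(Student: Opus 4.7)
The plan is to prove the two containments separately. Since $(N:_XJ)\subseteq N'$, the inclusion $\Ker(\partial_t\otimes(N:_XJ))\subseteq\Ker(\partial_t\otimes N')$ is automatic from the compatibility recorded in \ref{ch1}, so the work is in the reverse direction. Setting $L:=(N:_XJ)$ and fixing $x\in\Ker(\partial_t\otimes N')\subseteq F_t\otimes_RN'\subseteq F_t\otimes_RX$, I will expand $x=\sum_ie_i\otimes a_i$ in a basis $\{e_i\}$ of $F_t$ with $a_i\in N'$, and aim to show each $a_i\in L$. By the colon identity $L=(\m N:_X\m J)$, this reduces to establishing $\m Ja_i\subseteq\m N$ for every $i$, which is the coefficient-wise translation of the submodule containment $\m Jx\subseteq F_t\otimes_R\m N$ inside $F_t\otimes_RX$.

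The route to $\m Jx\subseteq F_t\otimes_R\m N$ is a two-step amplification using both hypotheses in succession. First, the assumption $N'\subseteq(N:_X\m J)$ gives $\m JN'\subseteq N$, hence $\m N'\subseteq(N:_XJ)=L$; combined with $(\partial_t\otimes X)(\m x)=\m(\partial_t\otimes X)(x)=0$, this places $\m x$ inside $\Ker(\partial_t\otimes X)\cap(F_t\otimes_RL)=\Ker(\partial_t\otimes L)$. Second, the standing hypothesis $J\cdot\Ker(\partial_t\otimes L)\subseteq\im(\partial_{t+1}\otimes N)$ then yields $\m Jx\subseteq\im(\partial_{t+1}\otimes N)$, and minimality of the resolution $\partial_{t+1}(F_{t+1})\subseteq \m F_t$ forces this image into $\m F_t\otimes_RN=F_t\otimes_R\m N$. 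Reading coefficients in the basis $\{e_i\}$ gives $\m Ja_i\subseteq\m N$ for every $i$, and the colon identity converts this into $a_i\in L$, so $x\in F_t\otimes_RL$. Together with $(\partial_t\otimes X)(x)=0$ this lands $x$ in $\Ker(\partial_t\otimes L)$, as desired.

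For the parenthetical refinement, the equality just proved allows the hypothesis to be restated as $J\cdot\Ker(\partial_t\otimes N')=J\cdot\Ker(\partial_t\otimes L)\subseteq\im(\partial_{t+1}\otimes N)$, and the further containment $\im(\partial_{t+1}\otimes N)\subseteq\im(\partial_{t+1}\otimes(N:_X\m))$ is immediate from functoriality of $\partial_{t+1}\otimes-$ applied to $N\subseteq(N:_X\m)$.

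The delicate point I expect is the bookkeeping around different copies of tensor products viewed as submodules of $F_t\otimes_RX$: one must consistently identify $\Ker(\partial_t\otimes(-))$ with $\Ker(\partial_t\otimes X)\cap(F_t\otimes_R(-))$ and recall that $\m F_t\otimes_RN$ and $F_t\otimes_R\m N$ coincide as submodules of $F_t\otimes_RX$, so that the two hypotheses can be chained cleanly at the level of submodule containments. Every step is a routine application of flatness of $F_t$ together with \ref{ch1}, but the chaining has to be done in exactly this order for the colon identity $(N:_XJ)=(\m N:_X\m J)$ to do its work.
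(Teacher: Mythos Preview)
Your proof is correct and follows essentially the same approach as the paper: both arguments use $\m N'\subseteq L$ to push $\m x$ into $\Ker(\partial_t\otimes L)$, then invoke the hypothesis to get $\m Jx\subseteq\im(\partial_{t+1}\otimes N)\subseteq F_t\otimes_R\m N$, and finally apply the colon identity $L=\m N:_X\m J$ to conclude $x\in F_t\otimes_R L$. The only cosmetic difference is that the paper packages this via the exact sequence $0\to\Ker(\partial_t\otimes L)\to\Ker(\partial_t\otimes N')\to\Ker(\partial_t\otimes(N'/L))$ and argues by contradiction, whereas you work directly with coordinates in a basis of $F_t$.
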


\begin{proof}
Set $L=N:_XJ$.
Note that $\m N'\subseteq L\subseteq N'$.
Then we have a commutative diagram
\[
\begin{tikzcd}
0 \ar[r] & F_t\otimes_R L \ar[r] \ar[d, "\partial_t\otimes L"] & F_t\otimes_R N' \ar[r] \ar[d,"\partial_t\otimes N'"] & F_t\otimes_R (N'/L) \ar[r] \ar[d,"\partial_t\otimes (N'/L)"] & 0 \\
0 \ar[r] & F_{t-1}\otimes_R L \ar[r] & F_{t-1}\otimes_R N' \ar[r] & F_{t-1}\otimes_R (N'/L) \ar[r] & 0
\end{tikzcd}
\]
with exact rows.
It induces an exact sequence $0 \to \Ker(\partial_t\otimes L) \xrightarrow[]{\alpha} \Ker(\partial_t\otimes N') \xrightarrow[]{\beta} \Ker(\partial_t\otimes (N'/L))$.
Suppose that $\beta$ is nonzero.
Take an element $x\in \Ker(\partial_t\otimes N')$ such that $\beta(x)\not=0$.
Since $N'/L$ is annihilated by $\m$ and $\partial_t(F_t)\subseteq \m F_{t-1}$, $\partial_t\otimes (N'/L)$ is a zero map, i.e., $\Ker(\partial_t\otimes (N'/L))=F_t\otimes_R (N'/L)$.
Also, by commutativity of the diagram, $\beta\colon\Ker(\partial_t\otimes N')\to \Ker(\partial_t\otimes (N'/L))=F_t\otimes_R (N'/L)$ is just the restriction of the canonical surjection $F_t\otimes_R N' \to F_t \otimes_R (N'/L)$.  
So, if $x\in F_t \otimes_R L$, we get $\beta(x)=0$ in $F_t \otimes_R (N'/L)$.
Thus it yields that $x\in F_t\otimes_R N' \smallsetminus F_t \otimes_R L$.
Since $\m x\in F_t\otimes_R L$, we have
\begin{align*}
J\m x & \subseteq J\left(\Ker(\partial_t\otimes N')\cap (F_t\otimes_R L)\right)\\
& = J\Ker(\partial_t\otimes L)\\
& \subseteq \im(\partial_{t+1} \otimes N)\\
& \subseteq F_t\otimes_R \m N.  
\end{align*}
Here at the third line we use the assumption $J\Ker(\partial_t\otimes (N:_XJ))\subseteq \im(\partial_{t+1}\otimes N)$.  
It follows that $x\in F_t\otimes_R (\m N:_X \m J)=F_t\otimes_R L$, a contradiction.
Thus we see that $\beta$ is a zero map, i.e., $\alpha$ is an isomorphism, i.e., $\Ker(\partial_t \otimes L)=\Ker(\partial_t\otimes N')$.
The remaining assertion is just a direct calculation.
\if0
Note that $\m N'\subseteq N\subseteq N'$. 
We may assume $t\ge 1$.
Consider a minimal free resolution 
\[
\cdots F_{t+1} \xrightarrow[]{\partial_{t+1}} F_t \xrightarrow[]{\partial_t} F_{t-1} \xrightarrow[]{\partial_{t-1}} \cdots \to F_0 \to M \to 0
\]
of $M$.
Then we have a commutative diagram
\[
\xymatrix{
0 \ar[r] & F_t\otimes_R N \ar[r] \ar[d]^{\partial_t\otimes_R N} & F_t\otimes_R N' \ar[r] \ar[d]^{\partial_t\otimes_R N'} & F_t\otimes_R (N'/N) \ar[r] \ar[d]^{\partial_t\otimes_R (N'/N)} & 0 \\
0 \ar[r] & F_{t-1}\otimes_R N \ar[r] & F_{t-1}\otimes_R N' \ar[r] & F_{t-1}\otimes_R (N'/N) \ar[r] & 0
}
\]
with exact rows.  
It induces an exact sequence $0 \to \Ker(\partial_t\otimes_R N) \xrightarrow[]{\alpha} \Ker(\partial_t\otimes_R N') \xrightarrow[]{\beta} \Ker(\partial_t\otimes_R (N'/N))$.
Suppose that $\beta$ is nonzero.
Take an element $x\in \Ker(\partial_t\otimes_R N')$ such that $\beta(x)\not=0$.
Since $N'/N$ is annihilated by $\m$, and $\partial_t(F_t)\subseteq \m F_{t-1}$, so $\partial_t\otimes_R (N'/N)$ is a zero map, i.e., $\Ker(\partial_t\otimes_R (N'/N))=F_t\otimes_R (N'/N)$.
Also, by commutativity of the diagram, $\beta:\Ker(\partial_t\otimes_R N')\to \Ker(\partial_t\otimes_R (N'/N))=F_t\otimes_R (N'/N)$ is just the restriction of the canonical surjection $F_t\otimes_R N \to F_t \otimes_R (N'/N)$. So, if $x\in F_t \otimes_R N$, we would get $\beta(x)=0$ in $F_t \otimes_R (N'/N)$. Thus it yields that $x\in F_t\otimes_R N' \smallsetminus F_t \otimes_R N$.   
Since $\m x\in F_t\otimes_R N$, we have
\begin{align*}
\m x & \subseteq \Ker(\partial_t\otimes_R N')\cap (F_t\otimes_R N)\\
& = \Ker(\partial_t\otimes_R N)\\
& = \im(\partial_{t+1} \otimes_R N)\\
& \subseteq F_t\otimes_R \m N.
\end{align*}
Here at the third line we use the assumption $\Tor_t(M,N)=0$.
It follows that $x\in F_t\otimes_R (\m N:_X \m)=F_t\otimes_R N$, a contradiction.

Thus we see that $\beta$ is a zero map, in other words, $\alpha$ is an isomorphism.
By the assumption $\Tor_t(M,N)=0$, we also have an equality $\im(\partial_{t+1}\otimes_R N)=\Ker(\partial_t \otimes_R N)$.
In view of the inclusions $\Ker(\partial_t \otimes_R N)=\im(\partial_{t+1} \otimes_R N)\subseteq \im(\partial_{t+1}\otimes_R N') \subseteq \Ker(\partial_t\otimes_R N')=\Ker(\partial_t \otimes_R N)$, one gets an equality $\im(\partial_{t+1}\otimes_R N')=\Ker(\partial_t\otimes_R N')$.
It is equivalent to say that $\Tor_t(M,N')=0$.
\fi
\end{proof}  

\begin{lem} \label{pp}
Let $M$ and $X$ be $R$-modules, $J$ be an ideal of $R$ and $N$ be a submodule of $X$ such that $N:_XJ=\m N:_X\m J$.
Assume $X/(N:_XJ)$ has finite length and there exists an integer $t\ge 1$ such that $J\Ker(\partial_t\otimes (N:_XJ))\subseteq \im(\partial_{t+1}\otimes N)$, where $(F_n,\partial_n)$ is a minimal free resolution of $M$.  
Then $\Ker(\partial_t\otimes (N:_XJ))=\Ker(\partial_t\otimes X)$.
\end{lem}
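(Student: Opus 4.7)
The plan is to iterate Lemma \ref{ll} along the filtration built by repeatedly taking the colon with $\m$, ascending from $L := N:_X J$ up to $X$. Set $L_0 := L$ and inductively $L_{i+1} := L_i :_X \m$. A short calculation shows $L_i = L :_X \m^i = N :_X J\m^i$, so in particular $L_i \subseteq L_{i+1}$ with $\m L_{i+1} \subseteq L_i$. Since $X/L$ has finite length, we may pick $n \geq 0$ with $\m^n X \subseteq L$, so that $L_n = X$.

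I will show by induction on $i \in \{0,1,\dots,n\}$ that $\Ker(\partial_t \otimes L_i) = \Ker(\partial_t \otimes L)$. The case $i=0$ is trivial. For the inductive step from $i$ to $i+1$, I apply Lemma \ref{ll} with the ideal $J' := J\m^i$ in place of $J$ and with $N' := L_{i+1}$. The three things to verify are: first, that $N :_X J' = \m N :_X \m J'$ — both sides equal $(N :_X J) :_X \m^i$, using the hypothesis $N:_X J = \m N :_X \m J$; second, that $L_i = N:_X J' \subseteq L_{i+1} \subseteq N:_X \m J' = L_{i+1}$, which is immediate; and third, the technical hypothesis $J' \Ker(\partial_t \otimes L_i) \subseteq \im(\partial_{t+1}\otimes N)$. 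This last hypothesis is the key point: by the inductive hypothesis, $\Ker(\partial_t \otimes L_i) = \Ker(\partial_t \otimes L)$, so
\[
J' \Ker(\partial_t \otimes L_i) = \m^i J \Ker(\partial_t \otimes L) \subseteq \m^i \im(\partial_{t+1}\otimes N) \subseteq \im(\partial_{t+1}\otimes N),
\]
using the original hypothesis of the lemma. Lemma \ref{ll} then yields $\Ker(\partial_t \otimes L_{i+1}) = \Ker(\partial_t \otimes L_i)$, closing the induction.

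Taking $i = n$ gives $\Ker(\partial_t \otimes X) = \Ker(\partial_t \otimes L)$, which is the desired equality. The main obstacle I anticipate is not a technical difficulty but making sure the inductive hypothesis propagates: the condition $J' \Ker(\partial_t \otimes L_i) \subseteq \im(\partial_{t+1}\otimes N)$ gets harder to satisfy as $J'$ shrinks to $J\m^i$, but it is precisely the induction hypothesis (equality of kernels at level $i$) that provides the extra room, since the factor $\m^i$ can then be absorbed into the submodule $\im(\partial_{t+1}\otimes N)$. Once this interplay is set up correctly, the proof is essentially a clean iteration of Lemma \ref{ll}.
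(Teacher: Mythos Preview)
Your proof is correct and follows essentially the same strategy as the paper: both iterate Lemma~\ref{ll} along the filtration $L=L_0\subseteq L_1\subseteq\cdots\subseteq L_n=X$ with $L_{i+1}=L_i:_X\m$. The only cosmetic difference is bookkeeping---the paper keeps $J$ fixed and replaces $N$ by $N:_X\m$ at each step (using the parenthetical output of Lemma~\ref{ll} to feed the next hypothesis), whereas you keep $N$ fixed and replace $J$ by $J\m^i$; since $(N:_X\m^i):_XJ=N:_XJ\m^i$, these are two parameterizations of the same iteration.
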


\begin{proof} Set $L=N:_XJ$. Since $X/L$ has finite length, we have $\m^l X \subseteq L$ for some $l$.
Thus we get $X=(\cdots (L:_X\m):_X \m)\cdots):_X \m)$, where the colon operation is taken $l$-times.
By Lemma \ref{ll}, we get $\Ker(\partial_t\otimes L)=\Ker(\partial_t\otimes (N:_X\m J))=\Ker(\partial_t\otimes (L:_X\m))$ and $J\Ker(\partial_t\otimes L)\subseteq \im(\partial_{t+1}\otimes N)\subseteq \im(\partial_{t+1}\otimes (N:_X\m))$.  
Observe inclusions
\[
(N:_X\m):_XJ\subseteq \left(\m(N:_X\m)\right):_X\m J\subseteq N:_X\m J=(N:_X\m):_XJ
\]
and so an equality $(N:_X\m):_XJ=\left(\m(N:_X\m)\right):_X\m J$.
Since $L:_X\m=(N:_X\m):_XJ$, we may replace $N$ with $N:_X\m$ and obtain $\Ker(\partial_t\otimes ((L:_X\m):_X\m))=\Ker(\partial_t\otimes (L:_X\m))$ and 

$J\ker(\partial_t\otimes((L:_X\m):_X\m))\subseteq \im(\partial_{t+1}\otimes (N:_X\m))\subseteq \im(\partial_{t+1}((N:_X\m):_X\m))$ by applying Lemma \ref{ll} again. 
Continuing this way, we get $\Ker(\partial_t\otimes (N:_XJ))=\Ker(\partial_t\otimes X)$.
\if0
We have $\m^n X \subseteq N$ for some $n$. So that $\m^{n-1}X\subseteq (N:_X\m), \m^{n-2}X\subseteq ((N:_X\m):_X \m)$ and so on we get $X=(\cdots (N:_X\m):_X \m)\cdots):_X \m)\cdots)$, where the colon operation is taken $n$-times. By Lemma \ref{ll} we get $\Tor^R_t(M,(N:_X\m))=0$. Since $(N:_X\m)$ is again weakly $\m$-full submodule of $X$ by Lemma \ref{l}, so we get $\Tor^R_t(M,\left((N:_X\m):_X\m\right))=0.$ Continuing this way  we get $\Tor_t^R(M,X)=0$.
\fi 
\end{proof}

\begin{thm} \label{t412}
    Let $M$ and $X$ be $R$-modules, $J$ be an ideal of $R$, and $N$ be a submodule of $X$ such that $N\subseteq \m J X$, $X/(N:_XJ)$ has finite length, and $N:_XJ=\m N:_X\m J$.
Assume there exists an integer $t\ge 0$ such that $\Tor_t^R(M,N)=0$.
Then the following holds:
\begin{enumerate}[\rm(1)]
\item For a minimal free resolution $(F_n,\partial_n)$ of $M$, $\partial_{t+1}\otimes(JX)=0$, i.e., $I_1(\partial_{t+1})\subseteq \ann_R(JX)$.
\item If $X$ is faithful and $J$ contains a non zero-divisor, then $\pd M\le t$.
\item  If $\supp(JX)=\spec(R)$ and $M$ is locally free on $\Ass(R)$, then $\pd M\le t$.
\end{enumerate} 
\end{thm}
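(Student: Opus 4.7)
The plan is to deduce part (1) by combining Lemma \ref{pp} with a Nakayama argument modeled on the proof of Proposition \ref{t}(1), and then to read off parts (2) and (3) as immediate consequences via Lemma \ref{ll27}. The degenerate case $t=0$ is disposed of at once: $M\otimes_R N=0$ over a local ring forces $M=0$ or $N=0$, and in each case the conclusions are direct.

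For $t\ge 1$, first I would verify the hypothesis of Lemma \ref{pp}: given $x\in\Ker(\partial_t\otimes(N:_XJ))$ and $j\in J$, the product $jx$ lies in $F_t\otimes_R N$ (since $J\cdot(N:_XJ)\subseteq N$) and stays in the kernel of $\partial_t\otimes X$, so it sits in $\Ker(\partial_t\otimes N)=\im(\partial_{t+1}\otimes N)$ by $\Tor_t^R(M,N)=0$. Lemma \ref{pp} then gives $\Ker(\partial_t\otimes X)=\Ker(\partial_t\otimes(N:_XJ))$, hence $\im(\partial_{t+1}\otimes X)\subseteq F_t\otimes(N:_XJ)$. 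Reading off matrix entries yields $I_1(\partial_{t+1})\cdot X\subseteq N:_XJ$, and multiplying by $J$ produces $I_1(\partial_{t+1})\cdot JX\subseteq J(N:_XJ)\subseteq N\subseteq\m JX$.

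The hard step is to promote this containment to the annihilation $I_1(\partial_{t+1})\cdot JX=0$. Following the template of Proposition \ref{t}(1), I would pass to the minimal complex $C:=F_\bullet\otimes_R JX$; the previous paragraph shows $\im(\partial_{t+1}\otimes JX)\subseteq F_t\otimes_R N\subseteq\m(F_t\otimes_R JX)=\m C_t$, so the Levin--Vasconcelos-style Nakayama argument reduces the problem to showing $H_t(\m C)=\Tor_t^R(M,\m JX)=0$. To verify this, I would apply the long exact sequence from $0\to N\to\m JX\to\m JX/N\to 0$ together with $\Tor_t^R(M,N)=0$, reducing to $\Tor_t^R(M,\m JX/N)=0$; the latter should follow from $\m JX/N$ being a subquotient of the finite-length module $X/(N:_XJ)$, with the weakly--$\m J$--full condition $N:_XJ=\m N:_X\m J$ ensuring the $J$-torsion comparisons go through. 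Nakayama then yields $I_1(\partial_{t+1})\subseteq\ann_R(JX)$.

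With (1) in hand, part (2) falls out of Lemma \ref{ll27}(1) applied at degree $t+1$ with $N\rightsquigarrow JX$: if $X$ is faithful and $j\in J$ is a non-zerodivisor, then $JX$ is faithful (any $a$ annihilating $JX$ annihilates $jX$, hence $X$, hence vanishes), so $\partial_{t+1}=0$ and $\pd M\le t$. Part (3) follows analogously from Lemma \ref{ll27}(3): $\supp(JX)=\spec R\supseteq\Min R$ together with local freeness of $M$ on $\Ass R$ forces $\pd M<t+1$. The principal obstacle throughout is the reduction $\Tor_t^R(M,\m JX)=0$ inside (1), whose careful bookkeeping with the weakly-full hypothesis is the technical heart of the theorem.
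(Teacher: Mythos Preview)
Your argument has a genuine gap in the ``hard step'' of part (1). You correctly verify the hypothesis of Lemma~\ref{pp} by showing $J\Ker(\partial_t\otimes L)\subseteq\im(\partial_{t+1}\otimes N)$ (where $L=N:_XJ$), and you correctly apply Lemma~\ref{pp} to obtain $\Ker(\partial_t\otimes X)=\Ker(\partial_t\otimes L)$. But then you discard most of this information, retaining only $I_1(\partial_{t+1})\cdot JX\subseteq\m JX$, and attempt to finish by proving $\Tor_t^R(M,\m JX)=0$. Your justification for the latter reduces to $\Tor_t^R(M,\m JX/N)=0$, which you assert ``should follow'' from finite length and the weakly-full condition. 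This is not an argument: finite length of $\m JX/N$ gives no control over $\Tor_t$ against $M$, and nothing in the hypothesis forces this vanishing.

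The paper's route is both simpler and avoids this detour entirely. After Lemma~\ref{pp} gives $\Ker(\partial_t\otimes X)=\Ker(\partial_t\otimes L)$, you should \emph{reuse} the inclusion $J\Ker(\partial_t\otimes L)\subseteq\im(\partial_{t+1}\otimes N)$ that you already established. This yields the chain
\[
J\im(\partial_{t+1}\otimes X)\subseteq J\Ker(\partial_t\otimes X)=J\Ker(\partial_t\otimes L)\subseteq\im(\partial_{t+1}\otimes N)\subseteq\im(\partial_{t+1}\otimes\m JX)=\m J\im(\partial_{t+1}\otimes X),
\]
where the penultimate inclusion uses $N\subseteq\m JX$. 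Nakayama applied to the module $J\im(\partial_{t+1}\otimes X)$ then gives $J\im(\partial_{t+1}\otimes X)=0$, i.e., $\partial_{t+1}\otimes(JX)=0$, with no further Tor-vanishing needed. Your treatments of (2) and (3), and of the case $t=0$, are fine (for $t=0$ with $N=0$, one also checks $JX=0$ via $X=(0:_XJ)$, as in the paper).
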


\begin{proof} For $t=0$, we get $M\otimes_R N=0$, so either $M=0$, or $N=0$. If $N=0$, then $0:_XJ=0:_X\m J=(0:_X J):_X\m$, so $\soc(X/(0:_X J))=0$. Therefore, $X/(0:_X J)$ having finite length implies $X=(0:_X J)$, so $JX=0$, and we are done. If $M=0$, then we are obviously done. For the rest of the proof, we concentrate on $t\ge 1$.   

(1): We may assume $JX\not=0$; in the case $JX=0$, the assertion is trivial.
Set $L:=N:_XJ$.
Observe the following inclusions
\begin{align*}
J\Ker(\partial_t\otimes L)\subseteq \Ker(\partial_t\otimes L)\cap F_t\otimes_R JL&\subseteq \Ker(\partial_t\otimes L)\cap F_t\otimes_R N\\ &=\Ker(\partial_t\otimes N)= \im(\partial_{t+1}\otimes N).
\end{align*}
Here the second equality follows by the assumption $\Tor_t^R(M,N)=0$.
By Lemma \ref{pp}, we obtain that $\Ker(\partial_t\otimes L)=\Ker(\partial_t\otimes X)$.
Then one gets the following calculations
\begin{align*}
J\im(\partial_{t+1}\otimes X)\subseteq J\Ker(\partial_t\otimes X)=J\Ker(\partial_t\otimes L) &\subseteq \Ker(\partial_t\otimes L)\cap F_t\otimes_R JL\\
&\subseteq \Ker(\partial_t\otimes L)\cap F_t\otimes_R N
=\Ker(\partial_t\otimes N)\\
&=\im(\partial_{t+1}\otimes N)\\
&\subseteq \im(\partial_{t+1}\otimes \m JX)= \m J\im(\partial_{t+1}\otimes X).
\end{align*}
Here the third equality follows by the assumption $\Tor_t^R(M,N)=0$.
By Nakayama's lemma, $J\im(\partial_{t+1}\otimes X)$ must be zero, that is, $\partial_{t+1}\otimes (JX)=0$.

(2) Assume $X$ is faithful and $J$ contains a non zero-divisor.
Then $\ann_R(JX)=\ann_R(X):_RJ=0:_RJ=0$.
Therefore, by (1), we see that $\partial_{t+1}=0$, i.e., $\pd M\le t$.

(3) Follows by (1) and Lemma \ref{ll27}(3).
\end{proof}   

Taking $J=R$ in Theorem \ref{t412} and remembering $(N:_XR)=N$, we conclude the following test-rigidity property of weakly $\m$-full submodules. 

\begin{cor}\label{wfinpd}  
Let $M$ and $X$ be $R$-modules, and $N$ be a  weakly $\m$-full submodule of $X$.
Assume $N\subseteq \m X$, $X/N$ has finite length, and there exists an integer $t\ge 0$ such that $\Tor_t^R(M,N)=0$.  
Then, the following holds:
\begin{enumerate}[\rm(1)]
\item For a minimal free resolution $(F_n,\partial_n)$ of $M$, $I_1(\partial_{t+1})\subseteq \ann(X)$.
\item If $X$ is faithful, then $\pd M \le t$.
\item  If $\supp(X)=\spec(R)$ and $M$ is locally free on $\Ass(R)$, then $\pd M\le t$.  
\end{enumerate} 
\end{cor}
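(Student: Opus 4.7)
The plan is to obtain this corollary as a direct specialization of Theorem \ref{t412} with $J = R$. I will verify that, under this choice, the hypotheses of Theorem \ref{t412} reduce exactly to the hypotheses of the corollary, and its conclusions yield the three assertions.

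First I would check the hypotheses: with $J = R$, the condition $N \subseteq \m J X$ becomes $N \subseteq \m X$, which is assumed; the module $X/(N:_X J) = X/(N :_X R) = X/N$ has finite length by hypothesis; and the colon identity $(N:_X J) = (\m N :_X \m J)$ specializes to $N = (\m N :_X \m)$, which is precisely the weakly $\m$-full condition. The Tor-vanishing $\Tor_t^R(M, N) = 0$ is the same in both statements.

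Next I would translate the conclusions of Theorem \ref{t412} under $J = R$. For (1), one has $\ann_R(JX) = \ann_R(X)$, so the containment $I_1(\partial_{t+1}) \subseteq \ann_R(JX)$ from part (1) of Theorem \ref{t412} gives $I_1(\partial_{t+1}) \subseteq \ann(X)$. For (2), since $J = R$ trivially contains the non zero-divisor $1$, the hypothesis of part (2) of Theorem \ref{t412} is satisfied as soon as $X$ is faithful, and we conclude $\pd M \le t$. For (3), $JX = X$, so the assumption $\supp(JX) = \spec(R)$ becomes $\supp(X) = \spec(R)$, and together with $M$ being locally free on $\Ass(R)$ we obtain $\pd M \le t$ by part (3) of Theorem \ref{t412}.

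There is essentially no obstacle here: once Theorem \ref{t412} has been proved in the general ``$J$-colon'' form, the corollary is a transparent specialization. The only thing to double-check is that each instance of $J$ or $JX$ in Theorem \ref{t412} collapses correctly to the desired expression, which is straightforward bookkeeping.
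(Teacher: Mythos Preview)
Your proposal is correct and matches the paper's own argument exactly: the paper simply states that Corollary \ref{wfinpd} follows by taking $J=R$ in Theorem \ref{t412} and noting $(N:_X R)=N$. Your verification that each hypothesis and conclusion specializes correctly is more detailed than the paper's one-line remark, but the approach is identical.
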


The result of Celikbas and Kobayashi \cite[Theorem 2.8]{ot} is the special case $X=R$ of the following Theorem \ref{Jfull}(3).   

\begin{thm}\label{Jfull} Let $M$ and $X$ be $R$-modules and $J$ be an ideal of $R$.
Let $N$ be a submodule of $X$ such that $N\subseteq \m J X$,  $X/(N:_X J)$ has finite length and $(N:_X J)=(\m N:_X \m J)$.
Assume there exists an integer $t\ge 1$ such that $\Tor_t^R(M,X/N)=0$. Then, the following holds:   
\begin{enumerate}[\rm(1)]
\item For a minimal free resolution $(F_n,\partial_n)$ of $M$, $\partial_t \otimes (JX)=0$, i.e., $I_1(\partial_t)\subseteq \ann_R(JX)$.  
\item If $X$ is faithful and $J$ contains a non zero-divisor, then $\pd M<t$. 
\item  If $\supp(JX)=\spec(R)$ and $M$ is locally free on $\Ass(R)$, then $\pd M< t$.
\item If $\Tor_t^R(M,X)=0$, then $J((\syz^t M)\otimes_R X)=0$. 
\end{enumerate}  
\end{thm}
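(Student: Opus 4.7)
The plan is to follow the template of the proof of Theorem \ref{t412}(1), shifted down by one homological degree because the hypothesis $\Tor_t^R(M, X/N) = 0$ now controls $\im(\partial_t \otimes_R X)$ rather than $\im(\partial_{t+1} \otimes_R X)$. Let $(F_n,\partial_n)$ be a minimal free resolution of $M$, put $L := N :_X J$ (equal to $\m N :_X \m J$ by hypothesis), and $M' := \syz^{t-1} M$, so that $\Tor_1^R(M', X/N) = 0$.

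The first step is the identity
\[
\im(\partial_t \otimes_R X) \cap (F_{t-1} \otimes_R N) = \im(\partial_t \otimes_R N), \tag{$\ast$}
\]
obtained by the $3\times 3$ diagram chase used in the proof of Proposition \ref{faithquot}(1): tensor the exact sequence $F_t \to F_{t-1} \to M' \to 0$ against $0 \to N \to X \to X/N \to 0$, where the bottom row remains exact by $\Tor_1^R(M', X/N) = 0$. Next, the containment $JL \subseteq N$ immediately gives $J \cdot \im(\partial_t \otimes_R L) \subseteq \im(\partial_t \otimes_R N)$. To upgrade $L$ on the left to all of $X$, iterate along the tower $L \subseteq L :_X \m \subseteq (L :_X \m) :_X \m \subseteq \cdots \subseteq X$, which terminates in finitely many steps because $X/L$ has finite length. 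At the $k$-th rung, set $N^{(k)} := N :_X \m^k$ and $L^{(k)} := L :_X \m^k$; the chain of equalities in the proof of Lemma \ref{pp} shows that the weakly-$\m J$-full condition persists, so $L^{(k)} = N^{(k)} :_X J$ and $JL^{(k)} \subseteq N^{(k)}$. Pairing this with $(\ast)$ at each stage should yield, after all climbs,
\[
J \cdot \im(\partial_t \otimes_R X) \subseteq \im(\partial_t \otimes_R N).
\]

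Finally, using $N \subseteq \m JX$,
\[
\im(\partial_t \otimes_R N) \subseteq (\partial_t \otimes_R X)(F_t \otimes_R \m JX) = \m J \cdot \im(\partial_t \otimes_R X) = \m \cdot \bigl(J \cdot \im(\partial_t \otimes_R X)\bigr),
\]
so Nakayama's lemma forces $J \cdot \im(\partial_t \otimes_R X) = 0$, i.e., $\partial_t \otimes_R JX = 0$, proving (1). Parts (2) and (3) then follow immediately from (1): (2) uses $\ann_R(JX) = (0 :_R X) :_R J = 0$ under faithfulness and the non-zerodivisor hypothesis, giving $\pd_R M < t$; (3) applies Lemma \ref{ll27}(3) with $N$ replaced by $JX$. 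For (4), the vanishing $\Tor_t^R(M, X) = 0$ makes the inclusion $\syz^t M \hookrightarrow F_{t-1}$ remain injective after $\otimes_R X$, so $\im(\partial_t \otimes_R X) \cong \syz^t M \otimes_R X$, and (1) then delivers $J \cdot (\syz^t M \otimes_R X) = 0$.

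The main obstacle is the climb in the middle step, which is an image-analog of the kernel iteration in Lemma \ref{pp}. The subtlety is that $(\ast)$ is initially only known at level $N$ and not at the higher rungs of the tower, so in order to propagate $J \cdot \im(\partial_t \otimes_R L^{(k)}) \subseteq \im(\partial_t \otimes_R N^{(k)})$ past the first rung one must verify that the diagram-chase identity persists after replacing $N$ by $N :_X \m,\, N :_X \m^2, \ldots$. Carrying this out cleanly is likely to require formulating and proving an image-version of Lemma \ref{ll} before the iteration can be made to run.
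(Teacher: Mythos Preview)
Your approach is genuinely different from the paper's and does work, but you stopped short of the key insight that closes the climb. The paper never iterates through the tower $L\subseteq L:_X\m\subseteq\cdots$; instead it proves $J\cdot\im(\partial_t\otimes(X/N))=0$ in one shot by contradiction: if the containment fails, intersect $\im(\partial_t\otimes X)$ with the socle of $F_{t-1}\otimes_R(X/L)$ (using that $X/L$ has finite length), take a preimage $a=(\partial_t\otimes X)(b)$, and use $\Tor_t^R(M,X/N)=0$ together with $\m N:_X\m J=L$ to force $a\in F_{t-1}\otimes_R L$, a contradiction. It then applies Nakayama at the level of $F_t\otimes_R X$ rather than $\im(\partial_t\otimes_R X)$.

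Your worry that $(\ast)$ must be reproved at each rung is unfounded: $(\ast)$ at level $N$ alone is enough. The correct invariant to carry is
\[
\im\bigl(\partial_t\otimes_R L^{(k)}\bigr)\subseteq F_{t-1}\otimes_R L,\qquad L^{(k)}:=L:_X\m^k.
\]
For $k=0$ this is minimality. Assuming it for $k$, multiply by $J$ and use $JL\subseteq N$ to land in $F_{t-1}\otimes_R N$; since this also sits inside $\im(\partial_t\otimes_R X)$, $(\ast)$ upgrades the containment to $\im(\partial_t\otimes_R N)\subseteq F_{t-1}\otimes_R\m N$. Now for $x\in F_t\otimes_R L^{(k+1)}$ one has $\m x\subseteq F_t\otimes_R L^{(k)}$, hence $\m J\cdot(\partial_t\otimes X)(x)\subseteq F_{t-1}\otimes_R\m N$, and therefore $(\partial_t\otimes X)(x)\in F_{t-1}\otimes_R(\m N:_X\m J)=F_{t-1}\otimes_R L$. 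This closes the induction with no image-analogue of Lemma~\ref{ll} needed. Once $\im(\partial_t\otimes_R X)\subseteq F_{t-1}\otimes_R L$, your Nakayama endgame is correct: $J\cdot\im(\partial_t\otimes_R X)\subseteq\im(\partial_t\otimes_R N)\subseteq\m J\cdot\im(\partial_t\otimes_R X)$, so $\partial_t\otimes(JX)=0$. Parts (2)--(4) are fine as you wrote them. Compared with the paper's socle argument, your route is a bit longer but makes the parallel with Theorem~\ref{t412} and Lemma~\ref{pp} more transparent.
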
 

\begin{proof} (1):We may assume $JX\not=0$; in the case $JX=0$, the assertion is trivial.
First we prove $J\im(\partial_t \otimes (X/N))=0$, or equivalently, $J\im(\partial_t \otimes X)\subseteq F_{t-1}\otimes_R N$.
If this is not true, then $\im(\partial_t \otimes X)\nsubseteq F_{t-1}\otimes_R (N:_X J)$, so the image $\overline{\im(\partial_t \otimes X)}$ of $\im(\partial_t \otimes X)$ in $(F_{t-1}\otimes_R X) /\left(F_{t-1}\otimes_R (N:_X J)\right)$ is non-zero.
Since $(F_{t-1}\otimes_R X)/(F_{t-1}\otimes_R (N:_X J))\cong F_{t-1}\otimes_R \left(X/(N:_X J)\right)$ has finite length, and socle of a finite length module is essential submodule, so $\soc \left((F_{t-1}\otimes_R X)/(F_{t-1}\otimes_R (N:_X J))\right)\cap  \overline{\im(\partial_t \otimes X)}\ne 0$ (Remark that $X/(N:_X J)\not=0$ as $JX\not\subseteq N$ by the assumption $N\subseteq \m J X$ and $JX\ne 0$ with the Nakayama's lemma).  
Take a preimage $a\in \im(\partial_t \otimes X)\subseteq F_{t-1}\otimes X$ 
of a nonzero element $\bar{a}$ of $\soc \left((F_{t-1}\otimes_R X)/(F_{t-1}\otimes_R (N:_X J))\right)\cap  \overline{\im(\partial_t \otimes X)}$.
By the choice of $a$, $a\notin F_{t-1}\otimes_R (N:_X J)$ and $\m a \in F_{t-1}\otimes_R (N:_X J)$.
So, $\m J a \in F_{t-1}\otimes_R N$, that is, $\m J\bar a=0$ in $(F_{t-1}\otimes_R X)/(F_{t-1}\otimes_R N)=F_{t-1}\otimes_R (X/N)$.
Now, $a$ belongs to $\im(\partial_t \otimes X)$, which means that $a=(\partial_t \otimes X)(b)$ for some $b\in F_t\otimes_R X$.
Then the image $\bar{b}$ of $b$ in $F_t\otimes_R (X/N)$ satisfies $(\partial_t\otimes(X/N))(\m J\bar b)=\m J\bar a=0$.
By using the assumption $\Tor_t^R(M,X/N)=0$, get $\m J\bar b\subseteq \Ker(\partial_t\otimes(X/N))=\im(\partial_{t+1}\otimes(X/N))$. Lifting the inclusion, one has $\m J b\subseteq \im(\partial_{t+1}\otimes X)+F_t\otimes_R N$, and so $\m J (\partial_t\otimes X)(b)=(\partial_1\otimes X)(\m Jb)\subseteq (\partial_t\otimes X)(F_t\otimes_RN)\subseteq \m F_{t-1}\otimes_R N= F_{t-1}\otimes_R \m N$.
Thus $a=(\partial_t\otimes X)(b)\in F_{t-1}\otimes_R(\m N:_X \m J)=F_{t-1}\otimes_R (N:_X J)$, contradicting our choice of $a$.
Thus, we must have $J\im(\partial_t \otimes (X/N))=0$.
It means that $J(F_t\otimes_R(X/N))\in \Ker(\partial_t\otimes(X/N))=\im(\partial_{t+1}\otimes(X/N))$, and hence $J(F_t\otimes_R X)\subseteq \im (\partial_{t+1}\otimes X)+F_t\otimes_R N$. Since $N\subseteq \m JX\subseteq JX$, We get
\begin{align*}
J(F_t\otimes_R X) &=\left(\im (\partial_{t+1}\otimes X)+F_t\otimes_R N\right)\cap J(F_t\otimes_R X)\\
&=\left(\im (\partial_{t+1}\otimes X)\right)\cap J(F_t\otimes_R X))+F_t\otimes_R N\\
&\subseteq\left(\im (\partial_{t+1}\otimes X)\right)\cap J(F_t\otimes_R X))+F_t\otimes_R \m JX\\
&\subseteq F_t\otimes_R JX. 
\end{align*}  
Here the second equality follows by modular low.
Thus, $J(F_t\otimes_R X)=\left(\im (\partial_{t+1}\otimes X)\cap J(F_t\otimes_R X)\right)+ \m J(F_t\otimes_R X)$.
By Nakayama's lemma, we have $J(F_t\otimes_R X)=\im (\partial_{t+1}\otimes X)\cap J(F_t\otimes_R X)$.
So, $J(F_t\otimes_R X)\subseteq \im (\partial_{t+1}\otimes X)\subseteq \Ker (\partial_{t}\otimes X)$, which shows $J(\partial_t \otimes X)=0$. 

(2) We have $\ann_R(JX)=(\ann_R(X):_R J)$, so if $X$ is faithful, then $\ann_R(JX)=(0:_R J)$, and the latter is $0$ if $J$ contains a non zero-divisor.
So we get $\partial_t=0$, and so $\pd M<t$. 

(3) Follows by (1) and Lemma \ref{ll27}(3).

(4) If $\Tor_t^R(M,X)=0$, then $\im(\partial_t\otimes X)\cong (\syz^t M) \otimes_R X$ as in the proof of Proposition \ref{faithquot}(6), hence the claim follows.     
\end{proof}

\begin{cor} Let $M$ and $X$ be $R$-modules and $J$ be an ideal of $R$.
Let $N$ be a submodule of $X$ such that $N\subseteq \m J X$,  $X/(N:_X J)$ has finite length and $(N:_X J)=(\m N:_X \m J)$. Assume there exists an integer $t\ge 1$ such that either $\Ext^t_R(M,N)=0$ or $\Ext^t_R(M,X/N)=0$. Also assume that one of the following holds: \begin{enumerate}[\rm(1)]

\item If $X$ is faithful and $J$ contains a non-zero divisor. 

\item $\supp(JX)=\spec(R)$ and $M$ is locally free on $\Ass(R)$.
\end{enumerate}

Then, $\pd M\le t$.  
\end{cor}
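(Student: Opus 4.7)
The plan is to establish Hom-analogs of Theorems \ref{t412} and \ref{Jfull}, from which the corollary follows at once. For a minimal free resolution $(F_n,\partial_n)$ of $M$, replace the complex $F_\bullet\otimes_R-$ with the cochain complex $\Hom_R(F_\bullet,-)$; as in the proof of Proposition \ref{tt}, the latter can be reinterpreted as a negatively-indexed chain complex, so the Nakayama-type arguments of the cited theorems transfer in a structure-preserving way. A convenient feature is that $\Hom_R(F_n,-)$ is exact on short exact sequences of modules (since each $F_n$ is finitely generated free), so the diagram-chases dualize cleanly.

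For the case $\Ext^t_R(M,N)=0$, I would mimic Theorem \ref{t412}. Setting $L:=N:_XJ$ and using $L=\m N:_X\m J$, first prove the Hom-analog of Lemma \ref{ll}: for any submodule $N'$ with $L\subseteq N'\subseteq(N:_X\m J)$, one has $\Ker\Hom_R(\partial_{t+1},L)=\Ker\Hom_R(\partial_{t+1},N')$. Iterating, using that $X/L$ has finite length (so $X$ is reached from $L$ by finitely many $(-:_X\m)$-operations), yields the Hom-analog of Lemma \ref{pp}: $\Ker\Hom_R(\partial_{t+1},L)=\Ker\Hom_R(\partial_{t+1},X)$. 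A Nakayama-type argument driven by the containment $N\subseteq\m JX$ then gives $I_1(\partial_{t+1})\subseteq\ann(JX)$ (or the sharper $I_1(\partial_t)\subseteq\ann(JX)$, in line with the index-shift pattern between Propositions \ref{t} and \ref{tt}).

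For the case $\Ext^t_R(M,X/N)=0$, I would parallel the proof of Theorem \ref{Jfull}, again working with the Hom complex. The finite-length hypothesis on $X/L$ now plays the role of the socle argument in that proof, producing a nonzero class in the appropriate Hom-quotient that is annihilated by $\m J$; an analogous Nakayama step then reduces matters to the same containment $I_1(\partial_{t+1})\subseteq\ann(JX)$.

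In either case, under hypothesis (1) we have $\ann(JX)=\ann(X):_R J=0$ (since $X$ is faithful and $J$ contains a non zero-divisor), so $\partial_{t+1}=0$, i.e., $\pd M\le t$. Under hypothesis (2), Lemma \ref{ll27}(3) applied with $JX$ in place of $N$ and $t+1$ in place of $t$ gives the same conclusion. The main obstacle I anticipate is careful bookkeeping of the reversed arrows when dualizing the Tor proofs, in particular tracking how the inclusions $L\subseteq N'\subseteq(N:_X\m J)$ translate into inclusions of Hom-kernels; I do not expect any new conceptual difficulty beyond this.
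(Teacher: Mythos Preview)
Your approach is correct but genuinely different from the paper's. You propose to dualize the proofs of Theorems \ref{t412} and \ref{Jfull} directly, establishing Hom-analogs of Lemmas \ref{ll} and \ref{pp} and then running the Nakayama argument with $\Hom_R(F_\bullet,-)$ in place of $F_\bullet\otimes_R-$. This works: since each $F_n$ is finitely generated free, the identifications $\Hom_R(F_n,\m N)=\m\Hom_R(F_n,N)$ and the exactness of $\Hom_R(F_n,-)$ on short exact sequences carry the entire argument over, and one obtains $I_1(\partial_t)\subseteq\ann(JX)$ (the index shift you anticipated is indeed the correct one, so your method in fact yields the sharper bound $\pd M<t$).

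The paper instead takes a shortcut via the Auslander transpose: from $\Ext^t_R(M,N)=0$ (resp.\ $\Ext^t_R(M,X/N)=0$) one deduces $\Tor^R_1(\Tr\syz^t M,N)=0$ (resp.\ $\Tor^R_1(\Tr\syz^t M,X/N)=0$) using a standard Auslander--Bridger type sequence, then applies Theorems \ref{t412} and \ref{Jfull} themselves (in degree $1$) to the module $\Tr\syz^t M$ to conclude $\pd\Tr\syz^t M\le 1$; combining this with $\Ext^1_R(\Tr\syz^t M,R)=0$ forces $\Tr\syz^t M$ to be free, whence $\syz^t M$ is free. This avoids redoing any of the technical lemmas and is a three-line proof, at the cost of invoking the transpose machinery and getting only $\pd M\le t$. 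Your route is longer but self-contained, parallels what the paper already does for Propositions \ref{tt} and \ref{ttt}, and gives a slightly stronger conclusion.
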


\begin{proof} By \cite[4.1(i)]{two}, $\Ext^t_R(M,N)=0$  (or $\Ext^t_R(M,X/N)=0$) implies $\Tor^R_1(\Tr\syz^tM,N)=0$  (or $\Tor^R_1(\Tr\syz^tM,X/N)=0$ respectively), where $\Tr(-)$ denotes Auslander transpose. Note that if $M$ is locally free on $\Ass(R)$, then so is $\Tr\syz^t M$. By Theorem \ref{t412} and  \ref{Jfull} (2) and (3), it follows that $\pd \Tr \syz^t M \le 1$.  As $t\ge 1$, we also have $\Ext^1_R(\Tr \syz^t M,R)=0$ by \cite[4.1(ii)]{two}. Hence, $\Tr \syz^t M$ is free. Thus, $\syz^t M$ is free, i.e., $\pd M\le t$. 
\end{proof}   

Taking $J=R$ in Theorem \ref{Jfull} and remembering $(N:_XR)=N$, we immediately obtain the corollary below.  

\begin{cor}\label{th411}
Let $M$ and $X$ be $R$-modules, and $N$ be a weakly $\m$-full submodule of $X$.
Assume $N\subseteq \m X$, $X/N$ has finite length, and there exists an integer $t>0$ such that $\Tor_t^R(M,X/N)=0$.
Then, the following holds:
\begin{enumerate}[\rm(1)]
\item For a minimal free resolution $(F_n,\partial_n)$ of $M$, $I_1(\partial_t)\subseteq \ann(X)$.

\item If $X$ is faithful, then $\pd M<t$.
\item  If $\supp(X)=\spec(R)$ and $M$ is locally free on $\Ass(R)$, then $\pd M< t$.
\item If $\Tor^R_t(M,X)=0$ and $X\not=0$, then $\pd M<t$.  
\end{enumerate}  
\end{cor}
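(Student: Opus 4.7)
The plan is to deduce Corollary \ref{th411} as the specialization $J = R$ of Theorem \ref{Jfull}. The first step is to verify that the hypotheses translate correctly under this substitution. With $J = R$ we have $\m J X = \m X$, so the assumption $N \subseteq \m X$ is exactly $N \subseteq \m J X$; also $(N :_X J) = N$, so the finite length condition on $X/(N :_X J)$ becomes the finite length condition on $X/N$; and $(\m N :_X \m J) = (\m N :_X \m)$, so the equality $(N :_X J) = (\m N :_X \m J)$ is precisely the defining condition of weak $\m$-fullness of $N$.

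With these translations in place, part (1) is immediate from Theorem \ref{Jfull}(1) since $\ann_R(RX) = \ann_R(X)$. Part (2) follows from Theorem \ref{Jfull}(2) because the unit ideal $J = R$ trivially contains a non zero-divisor. Part (3) follows from Theorem \ref{Jfull}(3) since $\supp(RX) = \supp(X)$.

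For part (4), Theorem \ref{Jfull}(4) applied with $J = R$ gives $(\syz^t M) \otimes_R X = 0$. To conclude $\syz^t M = 0$, and hence $\pd M < t$, I would tensor once more with the residue field to obtain
\[
0 = (\syz^t M \otimes_R X) \otimes_R k \cong (\syz^t M / \m \syz^t M) \otimes_k (X / \m X)
\]
as $k$-vector spaces. Since $X \neq 0$, Nakayama's lemma gives $X/\m X \neq 0$, so $\syz^t M / \m \syz^t M = 0$, and a second application of Nakayama to the finitely generated module $\syz^t M$ yields $\syz^t M = 0$.

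There is no genuine obstacle here: the substantive work is already carried out in Theorem \ref{Jfull}, and the proof reduces to bookkeeping of the $J = R$ specialization plus one standard Nakayama argument for part (4). The only minor care needed is to observe that part (4) cannot be obtained simply by reading off Theorem \ref{Jfull}(2) or (3), since we are neither assuming $X$ faithful nor assuming any support condition on $X$; the extra input is the vanishing $\Tor^R_t(M,X) = 0$, which is what makes the conclusion $(\syz^t M) \otimes_R X = 0$ available in the first place.
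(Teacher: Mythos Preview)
Your proof is correct and follows exactly the paper's approach: the paper simply states ``Taking $J=R$ in Theorem \ref{Jfull} and remembering $(N:_XR)=N$, we immediately obtain the corollary below,'' and you have spelled out precisely those details, including the Nakayama step for part (4) that the paper leaves implicit.
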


\if0
\begin{proof}  \old{Consider a minimal free resolution $(F_\bullet,\partial_\bullet)$ of $M$.
First, we claim that $\partial_t\otimes_R (X/N)=0$.
Actually, suppose that $\partial_t\otimes_R (X/N)\not=0$.
Then since $F_{t-1}\otimes_R(X/N)$ has finite length, there exists a nonzero element $x\in \im(\partial_t\otimes_R (X/N))\cap \soc(F_{t-1}\otimes_R (X/N))$.
Take an element $y\in F_t\otimes_R (X/N)$ such that $(\partial_t\otimes_R (X/N))(y)=x$.
Then $(\partial_t\otimes_R (X/N))(\m\cdot y)=\m\cdot x=0$.
So we have $\m\cdot y\subseteq \Ker(\partial_t\otimes_R (X/N))$, and by the assumption $\Tor_t(M,X/N)=0$, it says $\m \cdot y\subseteq \im(\partial_{t+1}\otimes_R (X/N))$.
Let $z\in F_t\otimes_R X$ be a lift of $y$.
Then $\m\cdot z \subseteq \im(\partial_{t+1}\otimes_R X)+F_t\otimes_R N$.
Then we get $(\partial_t\otimes_R X)(\m\cdot z)\subseteq (\partial_t\otimes_R X)(F_t\otimes_R N)\subseteq \m\cdot(F_{t-1}\otimes_R N)=F_{t-1}\otimes_R (\m N)$.
It follows that $(\partial_t\otimes_R X)(z)$ belongs to $F_{t-1}\otimes_R (\m N:_X\m)=F_{t-1}\otimes_R N$. Thus, $0=(\partial_t\otimes_R(X/N))(\bar z)=(\partial_t\otimes_R(X/N))(y)=x$, a contradiction.  
Now we achieve the claim $\partial_t\otimes_R(X/N)=0$. Then using the assumption $\Tor_t(M,X/N)=0$ again, one has $F_t\otimes_R(X/N)\subseteq \Ker(\partial_t\otimes_R(X/N))=\im(\partial_{t+1}\otimes_R(X/N))$. 
It yields that $F_t\otimes_R X\subseteq \im(\partial_{t+1}\otimes_R X)+F_t\otimes_R N$.
Since $N\subseteq \m X$, by Nakayama's lemma, we obtain that $F_t\otimes_R X=\im(\partial_{t+1}\otimes_R X)$. But also, $\im(\partial_{t+1}\otimes_R X)\subseteq \Ker (\partial_t\otimes_R X)\subseteq F_t\otimes_R X$.
So in particular, $F_t\otimes_R X$ is equal to $\Ker(\partial_t\otimes_R X)$, that is, $\partial_t\otimes_R X$ is the zero map.
It means that the entries of $\partial_t$ is contained in $\ann X$.
Thus we conclude that $\partial_t=0$, i.e., $\pd M<t$.}  
\end{proof}
\fi

As a consequence, we can recover the result of Celikbas, Goto, Takahashi and Taniguchi \cite{hw}.

\begin{cor}[Celikbas--Goto--Takahashi--Taniguchi]
Let $I$ be an $\m$-primary weakly $\m$-full ideal of $R$, and $M$ be an $R$-module.
Assume there exists an integer $t\ge 0$ such that $\Tor_t^R(M,R/I)=0$.
Then $\pd M<t$.  
\end{cor}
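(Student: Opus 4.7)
The plan is to deduce this as a direct application of Corollary \ref{th411}, since the hypotheses line up once one translates the classical setting (ideals of $R$) into the submodule language of the paper. Specifically, I would take $X = R$ and $N = I$, and verify that all the hypotheses of Corollary \ref{th411} are satisfied.

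First I would check the easy hypotheses: since $I$ is $\m$-primary, it is proper, so $I \subseteq \m = \m X$; also $R/I = X/N$ has finite length because $I$ is $\m$-primary; and $X = R$ is trivially faithful. The assumption that $I$ is weakly $\m$-full is identical to saying $N = I$ is a weakly $\m$-full submodule of $X = R$, by the remark after Definition 4.1.

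Next, I would split into two cases based on $t$. If $t \geq 1$, I apply Corollary \ref{th411}(2) directly with the vanishing $\Tor_t^R(M, R/I) = \Tor_t^R(M, X/N) = 0$ to conclude $\pd M < t$. If $t = 0$, then $\Tor_0^R(M, R/I) = M \otimes_R R/I = M/IM = 0$; since $I \subseteq \m$, this gives $M/\m M = 0$, and Nakayama's lemma forces $M = 0$, so $\pd M = -\infty < 0 = t$.

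There is no real obstacle here; the only subtlety is the borderline case $t = 0$, which is not covered by the inequality $t > 0$ in Corollary \ref{th411} and must be dispatched by hand via Nakayama. The whole argument is essentially a hypothesis-check, since the heavy lifting has already been done in Theorem \ref{Jfull} and its corollary.
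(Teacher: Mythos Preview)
Your proposal is correct and is precisely the approach the paper intends: the corollary is stated immediately after Corollary~\ref{th411} with no separate proof, as the specialization $X=R$, $N=I$ (with $R$ faithful) is meant to be immediate. Your explicit handling of the boundary case $t=0$ via Nakayama's lemma is a good detail, since Corollary~\ref{th411} is stated only for $t>0$.
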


\section{Vanishing of (co)homology over Cohen--Macaulay local rings}  

Let $(R,\m)$ be a local ring and $M$ is a Cohen--Macaulay $R$-module. Recall that $M$ is called \textit{Ulrich} if $e(M)=\mu(M)$, where $e(M)$ is the multiplicity of $M$ with respect to the ideal $\m$ \cite[Definition 2.1]{agl}. Note that when $R$ is moreover Cohen--Macaulay and $M$ is maximal Cohen--Macaulay, this coincides with the definition of Ulrich modules as introduced in \cite{bhu}.   

\begin{lem}\label{tenfaith} Let $R\to S$ be a flat extension of  rings. Let $M$ be an $R$-module and $I$ an ideal of $R$. Then, $S \otimes_R (IM)\cong (IS)(S \otimes_R M)$.  
\end{lem}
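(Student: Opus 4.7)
The plan is to identify $S\otimes_R IM$ with the submodule $(IS)(S\otimes_R M)$ of $S\otimes_R M$ by a direct generators-and-relations argument, with the flatness of $S$ used only to ensure that $S\otimes_R IM$ really sits as a submodule of $S\otimes_R M$ in the first place.

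First, I would apply $S\otimes_R-$ to the short exact sequence $0\to IM\to M\to M/IM\to 0$. Since $S$ is $R$-flat, this yields a short exact sequence whose leftmost arrow $S\otimes_R IM\hookrightarrow S\otimes_R M$ is injective. This allows me to view $S\otimes_R IM$ unambiguously as an $S$-submodule of $S\otimes_R M$ via the map sending $s\otimes(im)$ to $s\otimes(im)$. Without flatness, this identification need not hold, and the statement of the lemma could fail, so this is the pivotal step.

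Next, using that $IM$ is the $R$-submodule of $M$ generated by the products $im$ with $i\in I$ and $m\in M$, and that tensor products commute with taking the submodule generated by such elements, I would describe the image of $S\otimes_R IM$ inside $S\otimes_R M$ as the $S$-span of the elements $1\otimes(im)$. Using the $S$-module structure on $S\otimes_R M$ coming from the left factor, one computes
\[
s\otimes(im)=(si)\cdot(1\otimes m),
\]
and $si\in IS$. This gives the inclusion $S\otimes_R IM\subseteq (IS)(S\otimes_R M)$ inside $S\otimes_R M$. For the reverse inclusion, every element of $(IS)(S\otimes_R M)$ is a finite sum of terms of the form $(si)\cdot(t\otimes m)$ with $s,t\in S$, $i\in I$, $m\in M$, and each such term rewrites as $st\otimes(im)$, which lies in the image of $S\otimes_R IM$.

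The main (very mild) obstacle is bookkeeping: making sure the $S$-module structure on $S\otimes_R M$ used to interpret the product $(IS)(S\otimes_R M)$ is the same one that appears when identifying $S\otimes_R IM$ with its image in $S\otimes_R M$. Once flatness is invoked to secure that identification, the rest is a direct check on generators.
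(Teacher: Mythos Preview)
Your proof is correct and follows essentially the same approach as the paper: both use flatness of $S$ to embed $S\otimes_R IM$ into $S\otimes_R M$ via the short exact sequence $0\to IM\to M\to M/IM\to 0$, and then identify the image with $(IS)(S\otimes_R M)$. The only minor difference is that the paper identifies the cokernel $S\otimes_R(M/IM)$ with $(S\otimes_R M)/(IS)(S\otimes_R M)$ via associativity of tensor products and reads off the kernel, whereas you check the equality of submodules directly on generators; both are equally valid and elementary.
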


\begin{proof} Consider the exact sequence $0\to IM \to M \to M/IM\to 0$ which after tensoring with $S$ gives $0\to S \otimes_R (IM) \to S\otimes_R M \to S\otimes_R M/IM \to 0$. Now $S \otimes_R M/IM \cong S \otimes_R (M\otimes_R R/I)\cong (S\otimes_R M)\otimes_R R/I\cong (S\otimes_R M)/I(S \otimes_R M)$. Now by the natural $S$-module structure on $S\otimes_R M$, we see that $I(S \otimes_R M)=(IS)(S \otimes_R M)$. Thus we get $0\to S \otimes_R (IM) \to S\otimes_R M \to (S \otimes_R M)/(IS)(S \otimes_R M)\to 0$, and by naturality of the isomorphisms, we see that the map $S\otimes_R M \to (S \otimes_R M)/(IS)(S \otimes_R M)$ in the exact sequence has kernel $(IS)(S \otimes_R M)$, hence $S \otimes_R (IM)\cong (IS)(S \otimes_R M).$  
\end{proof}

In one--dimensional case, we see the following characterization of Ulrich modules.
\begin{lem} \label{l51}
Let  $M$ be a Cohen--Macaulay $R$-module of dimension one.
Then $M$ is Ulrich if and only if $M\cong \m M$.
\end{lem}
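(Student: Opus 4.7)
The plan is to prove both implications separately.

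For the ``if'' direction, starting from an isomorphism $\varphi \colon M \xrightarrow{\sim} \mathfrak{m}M$, I would observe that $\varphi$ restricts to an isomorphism $\mathfrak{m}M \xrightarrow{\sim} \mathfrak{m}^2 M$, since $\varphi(\mathfrak{m}M) = \mathfrak{m}\varphi(M) = \mathfrak{m}^2 M$. Iterating yields $M \cong \mathfrak{m}^n M$ for every $n \ge 0$, so in particular $\mu(\mathfrak{m}^n M) = \mu(M)$ for all $n$. Since $M$ is Cohen--Macaulay of dimension one, the Hilbert--Samuel function $n \mapsto \ell(M/\mathfrak{m}^{n+1}M)$ is eventually linear with leading coefficient $e(M)$, so its first difference $\dim_k(\mathfrak{m}^n M/\mathfrak{m}^{n+1} M) = \mu(\mathfrak{m}^n M)$ stabilizes at $e(M)$ for $n \gg 0$. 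Combining gives $\mu(M) = e(M)$, so $M$ is Ulrich.

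For the ``only if'' direction, suppose $M$ is Ulrich. I would first reduce to the case where the residue field $k$ is infinite by passing to the faithfully flat extension $R' := R[t]_{\mathfrak{m} R[t]}$ and setting $M' := M \otimes_R R'$. Both properties in question are preserved and reflected by this extension: $\mu_{R'}(M') = \mu_R(M)$, $e_{\mathfrak{m}'}(M') = e_{\mathfrak{m}}(M)$, and $\mathfrak{m}' M' \cong (\mathfrak{m} M) \otimes_R R'$ by Lemma \ref{tenfaith}, together with descent of isomorphism classes of finitely generated modules along this particular flat local extension. Assuming now that $k$ is infinite, I would pick $x \in \mathfrak{m}$ superficial for $\mathfrak{m}$ on $M$. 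Such an $x$ is $M$-regular (as $\depth M = \dim M = 1$) and generates a minimal reduction of $\mathfrak{m}$ on $M$, giving the standard identity $e(M) = \ell(M/xM)$. The Ulrich hypothesis then yields
\[
\ell(M/xM) = e(M) = \mu(M) = \ell(M/\mathfrak{m}M),
\]
and since $xM \subseteq \mathfrak{m}M$ this forces $xM = \mathfrak{m}M$. Multiplication by $x$ is then an injective $R$-linear map $M \to M$ with image $\mathfrak{m}M$, yielding the desired isomorphism $M \cong \mathfrak{m}M$.

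The main obstacle is the descent step in the ``only if'' direction, i.e., justifying that an isomorphism $M' \cong \mathfrak{m}' M'$ over $R'$ descends to $M \cong \mathfrak{m}M$ over $R$. One route is to invoke a general descent principle for isomorphism classes of finitely generated modules along the particular extension $R \to R[t]_{\mathfrak{m} R[t]}$. Alternatively, one could aim for a proof that avoids the residue field reduction entirely, exploiting the fact (derivable by base change) that $\mu(\mathfrak{m}M) = \mu(M)$ and that $\mathfrak{m}M$ is itself Ulrich Cohen--Macaulay of dimension one, and then extracting an isomorphism directly from this structural coincidence.
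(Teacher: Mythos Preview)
Your proposal is correct and follows essentially the same route as the paper. For the ``if'' direction your argument is identical to the paper's (iterate $M\cong \m^i M$, compute $\ell(M/\m^n M)=n\mu(M)$, read off $e(M)=\mu(M)$). For the ``only if'' direction the paper also reduces to infinite residue field and then invokes \cite[Proposition 2.2(2)]{agl} to get $\m M=fM$ for some $f\in\m$, which is $M$-regular; your superficial-element argument reaches the same conclusion $xM=\m M$ by a slightly more explicit path.

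The descent obstacle you flag is exactly the point the paper handles: it cites \cite[Proposition 2.5.8]{Gro} (together with Lemma~\ref{tenfaith}) to conclude that $M\cong \m M$ over $R$ if and only if $S\otimes_R M\cong (\m S)(S\otimes_R M)$ over $S=R[X]_{\m[X]}$. So your first proposed route---invoke a descent principle for this particular extension---is precisely what the paper does, and that reference is the missing ingredient in your write-up.
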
  

\begin{proof}
Consider the faithfully flat extension $S=R[X]_{\m[X]}$. Then, due to Lemma \ref{tenfaith} and \cite[Proposition 2.5.8]{Gro}, $M\cong \m M$ if and only if $S\otimes_R M \cong (\m S)(S\otimes_R M)$. Moreover, $S\otimes_R M$ is Ulrich $S$-module if and only if $M$ is Ulrich $R$-module (\cite[Proposition 2.2(3)]{agl}). Thus, passing to $S$, we can assume that the residue field is infinite.  

If $M$ is Ulrich, then since $\dim M=1$, so $\m M=fM$ for some $f\in \m$ by \cite[Proposition 2.2(2)]{agl}. Then, $f$ is a system of parameters for $M$, hence $f$ is $M$-regular by \cite[theorem 2.1.2(d)]{bh1}. Hence, $M\cong f M=\m M$. 

Now assume $M\cong \m M$.
Then for each $i\ge 0$, $M\cong \m^i M$, in particular, $\mu(M)=\mu(\m^i M)$.
Thus we have
\[
\ell(M/\m^n M)=\sum_{i=0}^{n-1}\ell(\m^i M/\m^{i+1} M)=\sum_{i=0}^{n-1}\mu(\m^iM)=n\cdot \mu(M).
\]
Since $\dim M=1$, so it follows that $e(M)=\lim\nolimits_{n\to \infty} \frac{1}{n}\ell(M/\m^n M)=\mu(M)$, that is, $M$ is Ulrich.  
\end{proof}

\begin{lem}\label{Ulrichfull} Let $R$ be a local ring  with an infinite residue field. Let $M$ be an Ulrich $R$-module of dimension one. Then, $M$ has a weakly $\m$-full submodule $N$ such that $N\subseteq \m M$, $M/N$ has finite length and $M\cong N$. 
\end{lem}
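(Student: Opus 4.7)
The plan is to take $N := \m M$ itself. By Lemma \ref{l51} (applied to the Ulrich module $M$ of dimension one), we already have an isomorphism $M \cong \m M = N$, which handles the condition $M \cong N$. The containment $N \subseteq \m M$ is trivial, and $M/N = M/\m M$ is a finite-dimensional $k$-vector space, so $M/N$ has finite length. The only substantive point to verify is therefore that $N = \m M$ is weakly $\m$-full in $M$.

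For that, I would exploit the hypothesis that the residue field is infinite. By \cite[Proposition 2.2(2)]{agl} (or by a standard minimal-reduction argument, as used in the proof of Lemma \ref{l51}), the fact that $M$ is Ulrich of dimension one implies $\m M = fM$ for some $f \in \m$, and this $f$ is a system of parameters for $M$ and hence $M$-regular since $M$ is Cohen--Macaulay of dimension one (cf.\ \cite[Theorem 2.1.2(d)]{bh1}). This reduces the weakly $\m$-full condition $(\m \cdot \m M :_M \m) = \m M$ to showing $(f^2 M :_M \m) = fM$, since $\m \cdot \m M = \m(fM) = f\,\m M = f^2 M$.

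The inclusion $fM \subseteq (f^2 M :_M \m)$ is immediate from $\m \cdot fM = f\,\m M = f^2 M$. For the reverse inclusion, let $x \in M$ with $\m x \subseteq f^2 M$; then in particular $fx = f^2 y$ for some $y \in M$, and since $f$ is $M$-regular we may cancel $f$ to obtain $x = fy \in fM$. This gives $(f^2 M :_M \m) = fM = \m M$, so $N = \m M$ is weakly $\m$-full in $M$, completing the proof.

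The main (and in fact only) obstacle is the weakly $\m$-full verification, and the key tool for that is the existence of the principal reduction $\m M = fM$ with $f$ an $M$-regular element, which is guaranteed by the Ulrich hypothesis together with the infinite residue field.
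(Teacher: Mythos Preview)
Your proof is correct, but it follows a different and in fact more direct route than the paper. The paper does not take $N=\m M$; instead it invokes Lemma~\ref{highpower} (an asymptotic statement that $\m^n N$ is $\m$-full for $n\gg 0$ when $\depth M>0$) to conclude that some high power $\m^l M=x^l M$ with $l>1$ is weakly $\m$-full, and then sets $N=\m^l M$. Your argument bypasses Lemma~\ref{highpower} entirely: using the principal reduction $\m M=fM$ with $f$ $M$-regular, you verify directly that $(\m\cdot\m M:_M\m)=(f^2M:_M\m)=fM=\m M$, so $\m M$ itself is weakly $\m$-full. This is more elementary and yields the explicit choice $N=\m M$ rather than an unspecified high power. (In fact your computation even shows the stronger $\m$-full property $(\m\cdot\m M:_M f)=\m M$, since the only element of $\m$ you used in the reverse inclusion was $f$.) Both approaches rely on the infinite residue field only through the existence of the principal reduction $\m M=fM$.
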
  

\begin{proof} We have $\m M=xM$ for some $M$-regular element $x\in \m$ (\cite[Proposition 2.2(2)]{agl}). So, $\m^n M=x^n M, \forall n\ge 1$. Since $\depth M=1>0$, so by Lemma \ref{highpower}, $\m^lM=x^lM$ is a weakly $\m$-full submodule of $M$ for some $l>1$. Put $N:=\m^l M$. Then $N\subseteq \m M$, $N=x^lM\cong M$ and $M/N$ has finite length.  
\end{proof}   

The lemma below gives a variant of \cite[3.1]{gp}.

\begin{lem} \label{l52}
Let  $M,N$ be $R$-modules, where $N$ is Ulrich, and has dimension $1$. Let $t\ge 0$ be an integer. 
Assume one of the following conditions hold:

\begin{enumerate}[\rm(1)]
\item $N$ is faithful.
    
\item $\supp(N)=\spec(R)$ and $M$ is locally free on $\Ass(R)$. 
\end{enumerate}

If either $\Ext^{t+1}_R(M,N)=0$, or $\Tor^R_t(M,N)=0$ holds, then $\pd M\le \min\{t,1\}$.  
\end{lem}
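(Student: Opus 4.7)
The plan is to reduce the hypothesis to the setting of Proposition \ref{t} (resp.\ Proposition \ref{tt}) via the characterization of Ulrich modules of dimension one given in Lemma \ref{l51}, and then sharpen the resulting bound on $\pd M$ using the Auslander--Buchsbaum formula once we know $\dim R = 1$.

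First I would invoke Lemma \ref{l51} to obtain the isomorphism $N \cong \m N$. This immediately gives $\Tor^R_t(M, N) \cong \Tor^R_t(M, \m N)$ and $\Ext^{t+1}_R(M, N) \cong \Ext^{t+1}_R(M, \m N)$, which reduces either hypothesis to the form treated in Propositions \ref{t} and \ref{tt}. In case (1), where $N$ is faithful, Proposition \ref{t}(4) applied with our $N$ playing the role of ``their $N$'' (so ``their $\m N$'' is our $\m N \cong N$) gives $\pd M \le t$ from the Tor hypothesis, and Proposition \ref{tt}(4) applied with ``their $t$''$= t+1$ gives $\pd M < t+1$, i.e.\ $\pd M \le t$, from the Ext hypothesis. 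In case (2), since $\supp N = \spec R \supseteq \Min R$ and $M$ is locally free on $\Ass R$, the analogous parts (5) of these propositions likewise yield $\pd M \le t$. In every case $\pd M < \infty$.

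Next, I would observe that $\supp N = \spec R$ in both scenarios (by faithfulness in case (1), by assumption in case (2)), so that $\dim R = \dim N = 1$. The Auslander--Buchsbaum formula now applies and yields
\[
\pd M \;=\; \depth R - \depth M \;\le\; \depth R \;\le\; \dim R \;=\; 1.
\]
Combining this with $\pd M \le t$ gives $\pd M \le \min\{t, 1\}$, as required. (In the edge case $t = 0$ one can alternatively note directly that $M \otimes_R N = 0$ together with $\supp N = \spec R$ forces $M = 0$, and that Proposition \ref{tt}(4)/(5) at $t = 1$ still returns $\pd M \le 0$ on the Ext side.)

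The proof is essentially mechanical once one sees how to wire the preliminaries together: there is no real obstacle, only the observation that the dimension/support hypothesis on $N$ forces $\dim R = 1$, so Auslander--Buchsbaum can cut the crude bound $\pd M \le t$ down to $\pd M \le 1$. This is exactly why the statement sharpens the conclusion of Proposition \ref{t}/\ref{tt} from ``$\le t$'' to ``$\le \min\{t, 1\}$'' under the Ulrich-of-dimension-one hypothesis.
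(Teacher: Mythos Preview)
Your proof is correct and follows essentially the same approach as the paper: invoke Lemma \ref{l51} to get $N\cong \m N$, apply Proposition \ref{t}(4),(5) or Proposition \ref{tt}(4),(5) to obtain $\pd M\le t$, then use full support of $N$ to conclude $\dim R=1$ and finish with Auslander--Buchsbaum. The paper's argument is slightly terser but otherwise identical.
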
  

\begin{proof}  
Since $N$ is Ulrich, and has dimension $1$, so $N\cong \m N$ by Lemma \ref{l51}.
Thus $\Tor^R_t(M,\m N)$ (resp. $\Ext^{t+1}_R(M,\m N)$) vanishes.
Thanks to Proposition \ref{t}(4),(5) and Proposition \ref{tt}(4),(5) we see an inequality $\pd M\le t$. In either of the case (1) or (2), $N$ has full support, so $1=\dim N=\dim R\ge \depth R$. Hence, by the Auslander-Buchsbaum formula, it follows that $\pd M\le  1$. 
\end{proof}    

\begin{rem}\label{ulrem}  
The Tor vanishing part of Lemma \ref{l52} also follow from Lemma \ref{Ulrichfull} and Corollary \ref{wfinpd}(2) and (3).   
\end{rem}


\begin{rem}
Let $R$ be a Cohen--Macaulay local ring with a canonical module $\omega$.
If $M$ is a maximal Cohen--Macaulay Ulrich $R$-module then so is $M^\dag$ \cite[4.1]{kt}. 
\end{rem}

The result below is proven by Auslander and Buchweitz \cite{ab}, which allows us to replace arbitrary modules by maximal Cohen--Macaulay
modules when dealing with vanishing of some Ext modules; see the
subsequent remark.

\begin{thm}[Auslander--Buchweitz] \label{mcmapp}
Let $R$ be a Cohen--Macaulay local ring with a canonical module $\omega$ and
let $N$ be an $R$-module.
Then there exist $R$-modules $Y$ and $L$, such that $L$ is maximal Cohen--Macaulay, $Y$ has
finite injective dimension, and they form a short exact sequence $0 \to Y \to L \to N \to 0$.
\end{thm}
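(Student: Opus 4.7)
The plan is to argue by induction on the depth defect $\delta(N) := d - \depth_R N$, where $d = \dim R$. For the base case $\delta(N) \le 0$, i.e., when $N$ is itself maximal Cohen--Macaulay (including the trivial case $N = 0$), the sequence $0 \to 0 \to N \to N \to 0$ works with $L = N$ and $Y = 0$.

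For the inductive step, suppose $\depth N < d$. I would first construct an auxiliary short exact sequence
\[
0 \to J \to N'' \to N \to 0, \qquad (\ast)
\]
in which $J$ has finite injective dimension and $\depth N'' > \depth N$, so that $\delta(N'') < \delta(N)$. Granting $(\ast)$, applying the inductive hypothesis to $N''$ yields a short exact sequence $0 \to Y'' \to L \to N'' \to 0$ with $L$ maximal Cohen--Macaulay and $Y''$ of finite injective dimension. The composite surjection $L \twoheadrightarrow N'' \twoheadrightarrow N$ then has a kernel $Y$ fitting into a short exact sequence $0 \to Y'' \to Y \to J \to 0$ by the snake lemma (or a direct diagram chase). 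Since both $Y''$ and $J$ have finite injective dimension, so does $Y$, and we obtain the desired approximation $0 \to Y \to L \to N \to 0$.

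The main obstacle is the construction of $(\ast)$. My approach is to exploit the canonical module via local duality: writing $t = \depth N < d$, one has
\[
\Ext^{d-t}_R(N, \omega) \cong \Hom_R(\mathrm{H}^t_{\m}(N), E_R(k)) \ne 0,
\]
so there is a nontrivial Yoneda extension class of length $d-t$ from $N$ to $\omega$. By splicing this into short exact sequences and replacing intermediate terms by direct sums of copies of $\omega$ where necessary, one should be able to produce a one-step extension $(\ast)$ in which $J$ is an iterated extension of copies of $\omega$ (hence of finite injective dimension) and the extension class is nontrivial. The strict depth inequality $\depth N'' \ge \depth N + 1$ then follows from the Depth Lemma applied to $(\ast)$: since $\depth J = d > \depth N$, one has $\depth N'' \ge \min(\depth J,\, \depth N + 1) = \depth N + 1$. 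The technical heart of the argument is in arranging the Yoneda splicing so that finite injective dimension of $J$ and nontriviality of the extension class are simultaneously preserved throughout the reduction.
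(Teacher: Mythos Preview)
The paper does not give its own proof of this theorem; it is quoted from Auslander--Buchweitz \cite{ab} and used as a black box. So there is no ``paper's proof'' to compare against, and the relevant question is simply whether your argument is correct.

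It is not, and the gap is at the depth step. For the short exact sequence $0 \to J \to N'' \to N \to 0$, the Depth Lemma gives
\[
\depth N'' \ge \min(\depth J,\ \depth N),
\]
not $\min(\depth J,\ \depth N + 1)$; you have written down the inequality for the \emph{left} term in the position of the \emph{middle} term. So the Depth Lemma alone yields only $\depth N'' \ge \depth N$, and ``nontriviality of the extension class'' does not by itself upgrade this to a strict inequality.

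In fact the obstruction is structural, not merely a misquoted inequality. If $J$ is an iterated extension of copies of $\omega$ as you propose, then $J$ is maximal Cohen--Macaulay, so $\Ext^i_R(k,J)=0$ for all $i<d$. The long exact sequence in $\Ext^\ast_R(k,-)$ then gives isomorphisms $\Ext^i_R(k,N'')\cong \Ext^i_R(k,N)$ for every $i<d-1$, regardless of which extension class you chose. Hence $\depth N'' = \depth N$ whenever $\depth N < d-1$, and your inductive step cannot get off the ground once the depth defect exceeds~$1$. The Yoneda-splicing idea cannot repair this: any $J$ assembled from $\omega$'s by short exact sequences is forced to have depth $d$, which is exactly what blocks the connecting map $\Ext^{\depth N}_R(k,N)\to \Ext^{\depth N+1}_R(k,J)$ from being injective.

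The standard construction avoids this by working from the other side: take the $d$-th syzygy $\syz^d N$, which \emph{is} maximal Cohen--Macaulay; build a finite $\omega$-coresolution of $\syz^d N$ (possible because for MCM modules $M$ one has embeddings $M\hookrightarrow \omega^{a}$ with MCM cokernel, by dualizing a free cover of $M^\dag$); and then splice this coresolution with the free resolution of $N$ via iterated pushouts. Each pushout produces an MCM middle term and a finite-injective-dimension kernel, and after $d$ steps one lands on $N$ itself.
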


\begin{rem} \label{mcmapp1}
Let $R$ be a Cohen--Macaulay local ring with a canonical module $\omega$ and
let $M$ and $N$ be $R$-modules.
Assume $M$ is maximal Cohen--Macaulay.
Take a short exact sequence $0 \to Y \to L \to N \to 0$ as in the previous theorem.
Since $\id Y<\infty$, $\Ext^i_R(M,Y)=0$ for any $i>0$.
Thus for any $i>0$ the vanishing of $\Ext^i_R(M,N)$ implies that of $\Ext^i_R(M,L)$. 
\end{rem}

\begin{rem}\label{faithcm} When $N$ is a faithful Ulrich $R$-module, then it follows that $N$ is maximal Cohen--Macaulay. Indeed, $N$ is faithful and Cohen--Macaulay implies $\dim R=\dim N=\depth N$. 
\end{rem} 

When testing against faithful Ulrich modules, the following theorem shows that the $(d+1)$-consecutive vanishing as given in \cite[Theorem 3.1, Corollary 3.4]{gp} can be improved to $d$-many consecutive vanishing. For consideration regarding number of $\Tor$ and $\Ext$ vanishing against not necessarily maximal Cohen--Macaulay Ulrich modules, the reader may also see \cite[Proposition 2.5]{dg}.   

\begin{thm}\label{faithulhigh}
Let $R$ be a local Cohen--Macaulay ring of dimension $d\ge 1$.  Let $M,N$ be $R$-modules such that $N$ is Ulrich and faithful. Then the following hold:  

\begin{enumerate}[\rm(1)]
\item 
If there exists an integer $n\ge 1$ such that $\Tor^R_i(M,N)=0$ for all $n\le i\le n+d-1$, then $\pd_R M<\infty$.

\item Assume $\depth M\ge d-1$. If there exists an integer $n\ge 1$ such that $\Ext^i_R(M,N)=0$ for $n\le i\le n+d-1$, then $\pd_R M<\infty$.

\item Assume $\depth M\le d-1$. If there exists an integer $n\ge 0$ such that $\Ext^i_R(M,N)=0$ for $n+d-\depth M \le i\le n+2d-\depth M-1$, then $\pd_R M<\infty$.

\item If there exists an integer $n\ge 1$ such that $\Ext^i_R(N,M)=0$ for $n\le i\le n+d-1$, then $\id_R M<\infty$.

\end{enumerate}

\end{thm}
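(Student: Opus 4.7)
The strategy is induction on $d$. For $d=1$, parts (1)--(3) reduce directly to Lemma \ref{l52}: a single Tor or Ext vanishing against a one-dimensional faithful Ulrich module already forces $\pd M<\infty$. Part (4) with $d=1$ is handled by first noting that a faithful Ulrich module of dimension one is, after flat extension to an infinite residue field, isomorphic to a weakly $\m$-full, hence Burch, submodule via Lemma \ref{Ulrichfull} and Lemma \ref{wb}. The key technical lever for the inductive step is Lemma \ref{fs}(5): since $N$ is faithful Ulrich, hence maximal Cohen--Macaulay by Remark \ref{faithcm}, we can choose an $R$-regular (hence $N$-regular) element $x \in \m$ such that $\bar N := N/xN$ is faithful over $\bar R := R/xR$. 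Since the Ulrich property is preserved modulo a superficial element, $\bar N$ is a faithful Ulrich module over the $(d-1)$-dimensional Cohen--Macaulay ring $\bar R$, which sets up the inductive descent.

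For the inductive step of part (1), I would first replace $M$ by a syzygy $M'$ of sufficient order to ensure $\depth M' \ge 1$; then by prime avoidance, $x$ can be chosen to be additionally $M'$-regular. The change-of-rings isomorphism $\Tor_i^R(M', \bar N) \cong \Tor_i^{\bar R}(M'/xM', \bar N)$, valid because $x$ is $R$- and $M'$-regular and $\bar N$ is an $\bar R$-module, combined with the long exact sequence from $0 \to N \xrightarrow{x} N \to \bar N \to 0$, translates the $d$ consecutive Tor vanishings for $(M,N)$ into $d-1$ consecutive Tor vanishings for $(M'/xM', \bar N)$ over $\bar R$. The inductive hypothesis yields $\pd_{\bar R}(M'/xM') < \infty$, which lifts to $\pd_R M' < \infty$ since $x$ is $R$- and $M'$-regular, and hence to $\pd_R M < \infty$. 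Parts (2) and (3) proceed on the same scheme with Ext in place of Tor; part (3) reduces to part (2) by first replacing $M$ with $\syz^{d - \depth M} M$, which is maximal Cohen--Macaulay and whose Ext vanishings shift by exactly $d-\depth M$ to match the hypothesis of part (2), explaining the numerical shift in the statement.

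For part (4), I would begin with MCM approximation (Theorem \ref{mcmapp}): write $0 \to Y \to L \to M \to 0$ with $L$ maximal Cohen--Macaulay and $\id_R Y < \infty$. By Ischebeck's theorem, $\Ext^i_R(N, Y) = 0$ for $i \ge 1$ since $N$ is MCM and $\id_R Y < \infty$, so the long exact sequence yields $\Ext^i_R(N, L) \cong \Ext^i_R(N, M) = 0$ in the range $n \le i \le n+d-1$. Canonical duality (\ref{214}) combined with MCM double-duality then relates these Ext modules to Ext groups between the MCM modules $N^\dag$ and $L^\dag$, where $N^\dag$ is again faithful Ulrich (by Lemma \ref{fs}(1) and preservation of the Ulrich property under $\dag$). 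A parallel inductive argument in the spirit of parts (1)--(2), now reducing through the first argument rather than the second, then forces $\pd_R L^\dag < \infty$, so $L^\dag$ is free, $L \cong \omega^{\oplus r}$, and consequently $\id_R L < \infty$, yielding $\id_R M < \infty$. The main obstacle I expect is the precise bookkeeping of vanishing ranges through the two reductions (syzygy shift and reduction modulo $x$), each of which can drop the number of consecutive vanishings by one; edge cases with small $n$ combined with low $\depth M$ will require either a more refined choice of syzygy or the use of a two-row change-of-rings spectral sequence permitting $x$ to fail to be $M$-regular.
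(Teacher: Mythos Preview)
Your approach is essentially the paper's: induction on $d$ for (1) and (2) via a superficial element and Lemma~\ref{fs}(5), reduction of (3) to (2) by a syzygy shift, and for (4) maximal Cohen--Macaulay approximation plus canonical duality reducing to (2). Three points to tighten.

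First, your separate $d=1$ argument for (4) via the Burch property does not close: knowing that $N$ is (isomorphic to) a Burch submodule together with a single vanishing $\Ext^n_R(N,M)=0$ does not force $\id M<\infty$; Proposition~\ref{321} requires two consecutive vanishings. Drop this and let your general argument for (4) handle all $d$ uniformly, as the paper does---but note that canonical duality requires a canonical module, so you must first pass to the completion (the paper does this explicitly, along with the extension $R[X]_{\m[X]}$ to secure an infinite residue field so that superficial elements exist and $N/xN$ remains Ulrich).

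Second, your syzygy shift in (3) is off by one: take $\syz^{d-\depth M-1}M$ rather than $\syz^{d-\depth M}M$. With your choice the shifted vanishing window is $n\le i\le n+d-1$, and when $n=0$ this starts at $i=0$, so you cannot invoke (2), which requires the starting index to be at least $1$. The paper's choice yields the window $n+1\le i\le n+d$, which is safe for all $n\ge 0$. Finally, in (4) there is no need to rerun an induction ``in the spirit of (1)--(2)'': once you have $\Ext^i_R(L^\dag,N^\dag)=0$ for $n\le i\le n+d-1$ with $L^\dag$ maximal Cohen--Macaulay and $N^\dag$ faithful Ulrich, part (2) applies verbatim.
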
  
\begin{proof}

We may pass to the faithfully flat extension $S:=R[X]_{\m[X]}$ and assume the residue field is infinite and then again to the faithfully flat extension of completion to assume $R$ has a canonical module (the property of being a faithful module is preserved under passing to flat extension by Remark \ref{fflat}).  

We note that as $N$ is maximal Cohen--Macaulay by Remark \ref{faithcm}, hence $\dim N=d$.

(1)  We prove the claim by induction on $d$.
Lemma \ref{l52}(1) deals with the base case $d=1$. Suppose that $d\ge 2$. 
Since $\m$ is not contained in any associated prime of $R$, so by \cite[8.5.9]{hs}, choose a superficial element  $x\in \m$ for $\m$ with respect to $N$ and which does not belong to $\bigcup_{\p\in \Ass(R)\cup \Min(\sigma^R(N))\smallsetminus \{\m \}}\p$, so that $x$ is $R$-regular, so also $N$-regular, and $N/xN$ is Ulrich over $R/xR$ (\cite[Proposition 11.1.9]{hs}). By Lemma \ref{fs}, $N/xN$ is faithful. Applying $M\otimes_R(-)$ to the exact sequence $0\to N \xrightarrow{x} N \to N/xN\to 0$ gives us vanishings of $\Tor^R_{i}(M,N/x N)$ for $i=n+1,\dots,n+d-1$.
Hence, $\Tor^R_i(M',N/xN)=0$ for $i=n,\dots,n+d-2$, where $M':=\syz_R M$.
Then, $x$ is $M'$-regular, hence \cite[Lemma 2, page 140]{mat} gives us $\Tor^{R/xR}_i(M'/xM',N/xN)=0$ for $i=n,\dots,n+d-2$.
Since $\dim R/xR=d-1\ge 1$, so now the induction hypothesis yields that $\pd_{R/xR} M'/xM'<\infty$, thus $\pd_R M'<\infty$, so $\pd_R M<\infty$.    

(2) Again we go by induction on $d$.
The base case $d=1$ follows by Lemma \ref{l52}(1) as $N$ is faithful.
Now suppose that $d\ge 2$ and $\Ext^i_R(M,N)=0$ for $n\le i\le n+d-1$. Then, $\depth M\ge d-1\ge 1$, so $\m$ is not contained in any associated prime of $M$. By \cite[Corollary 8.5.9]{hs}, choose a superficial element  $x\in \m$ for $\m$ with respect to $N$ and which does not belong to $\bigcup_{\p\in  \Ass(R)\cup\Ass(M)\cup \Min(\sigma^R(N))\smallsetminus \{\m \}}\p$, so that $x$ is $R$ and $M$-regular, so also $N$-regular, and $N/xN$ is Ulrich over $R/xR$ (\cite[Proposition 11.1.9]{hs}).
By Lemma \ref{fs}, $N/xN$ is faithful. 
Applying $\Hom_R(M,-)$ to the exact sequence $0\to N \xrightarrow{x} N \to N/xN\to 0$ gives us $\Ext^i_R(M,N/xN)=0$ for $n\le i\le n+d-2$.
Since $x$ is $M$-regular, so  \cite[Lemma 2, page 140]{mat} gives $\Ext^i_{R/xR}(M/xM,N/xN)=0$ for $n\le i\le n+d-2=n+(d-1)-1$. Now since $\depth M/xM=\depth M-1\ge d-2=\dim R/xR -1$, we get by induction hypothesis that $\pd_{R/xR} M/xM<\infty$, hence $\pd_R M<\infty$.   

(3) We are done by part (2) after noticing that $d-\depth M-1\ge 0$, and $\depth \syz^{d-\depth M-1}_R M\ge d-1$ and $\Ext^i_R(\syz^{d-\depth M-1}_R M, N)=0$ for all $1\le n+1\le i\le n+d$. 

(4) Suppose that $\Ext^i_R(N,M)=0$ for $n\le i\le n+d-1$.
Take a short exact sequence $0 \to Y \to L \to M \to 0$ such that $L$ is maximal Cohen--Macaulay and $\id Y<\infty$ by using Theorem \ref{mcmapp}.
Then it follows that $\Ext^i_R(N,L)=0$ for $n\le i\le n+d-1$ (Remark \ref{mcmapp1}).
Since both $N$ and $L$ are maximal Cohen--Macaulay, it then provides vanishings of $\Ext^i_R(L^\dag,N^\dag)\cong \Ext^i_R(N,L)$ for $n\le i\le n+d-1$.
Remembering $L^{\dag}$ is maximal Cohen--Macaulay and $N^\dag$ is Ulrich and faithful, we now obtain by (2) that $L^\dag$ is free.
It shows that $\id L<\infty$ and hence $\id M<\infty$.
\end{proof}     

\begin{rem}
By \cite[Lemma 2.2]{yo} and Theorem \ref{faithulhigh}(1) it follows that if $N$ is Ulrich and faithful and $M$ is such that there exists an integer $n\ge 1$ such that $\Tor^R_i(M,N)=0$ for all $n\le i\le n+d-1$, then $\Tor^R_i(M,N)=0$ for all $i\ge 1$. Thus, any faithful Ulrich module is $d$-Tor-rigid test module.    
\end{rem}

When the Ulrich module is not faithful but the module we are trying to test finite projective, or injective dimension is locally free at minimal primes, we can still get results similar to Theorem \ref{faithulhigh}. For this, we first record a general preliminary Lemma.  

\begin{lem}\label{minfree} Let $R$ be a local ring of positive depth and dimension $\ge 2$. Let $M$ be an $R$-module locally free on $\Min(R)$. Let $x\in \m$ be an $R$-regular element such that $x\notin \p$ for all $\p\in \Min(\NF_R(M))\smallsetminus\{\m\}$. Then, $M/xM$ is locally free on $\Min(R/xR)$. Moreover, if locally $M$ has the same rank at each minimal prime of $R$, then $M/xM$ has same rank at each minimal prime of $R/xR$.  
\end{lem}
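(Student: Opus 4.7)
My plan is to fix an arbitrary $\q\in\Min(R/xR)$, pull it back to a prime of $R$ containing $x$, and use a height computation together with specialization-closure of $\NF_R(M)$ to show $\q\notin\NF_R(M)$; freeness of $(M/xM)_\q$ then follows immediately from $(M/xM)_\q\cong M_\q/xM_\q$, and the rank statement drops out by comparing with a minimal prime contained in $\q$.

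The first step is a height count. Since $x$ is $R$-regular it lies in no associated (hence no minimal) prime of $R$, while Krull's principal ideal theorem bounds any prime minimal over $(x)$ by height one; so $\Ht_R\q=1$, and $\dim R\ge 2$ then gives $\q\neq\m$. Next, suppose for contradiction that $\q\in\NF_R(M)$. The set $\NF_R(M)$ is closed in $\spec(R)$ by Definition \ref{df22}(4), hence specialization-closed, so there is $\p\in\Min(\NF_R(M))$ with $\p\subseteq\q$. Local freeness of $M$ on $\Min(R)$ means $\Min(\NF_R(M))\cap\Min(R)=\emptyset$, so $\Ht_R\p\ge 1$; combined with $\p\subseteq\q$ and $\Ht_R\q=1$, this forces $\p=\q$. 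Since $\q\neq\m$, $\p$ lies in $\Min(\NF_R(M))\smallsetminus\{\m\}$, and the hypothesis on $x$ yields $x\notin\p=\q$, contradicting $x\in\q$. Therefore $M_\q$ is $R_\q$-free, and hence $(M/xM)_\q\cong M_\q/xM_\q$ is free over $(R/xR)_\q\cong R_\q/xR_\q$.

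For the rank part, assuming $M$ has constant rank $r$ on $\Min(R)$, I would choose any $\p_0\in\Min(R)$ with $\p_0\subseteq\q$ (possible since $\spec(R_\q)$ meets $\Min(R)$); then $M_{\p_0}$ is a further localization of the free module $M_\q$, so $\rank_{R_\q}M_\q=\rank_{R_{\p_0}}M_{\p_0}=r$, which is the rank of $(M/xM)_\q$ over $(R/xR)_\q$. The only subtle point in the whole argument is ensuring $\q\neq\m$, and this is precisely where the hypothesis $\dim R\ge 2$ enters; without it, the argument would fail at the closed point because one would have no way to exclude $\m$ from the chain of primes produced by specialization-closure.
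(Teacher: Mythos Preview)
Your proof is correct and follows essentially the same approach as the paper's: both argue by contradiction that no prime in $\Min(R/xR)$ can lie in $\NF_R(M)$, using the height-one computation, the closedness of $\NF_R(M)$, and the hypothesis on $x$ to force a contradiction with local freeness on $\Min(R)$. The only cosmetic difference is the order in which the contradiction is reached (you force $\p=\q$ and then violate $x\notin\p$, whereas the paper first rules out $\q\in\Min(\NF_R(M))$ and then produces a height-zero prime in $\NF_R(M)$); the rank argument is identical.
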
  

\begin{proof} We show $\NF_R(M)\cap \Min(R/xR)=\emptyset$. 
Indeed, if not, choose $\p \in \NF(M)\cap \Min(R/xR)$. Then $\Ht \p=1$ as $x$ is $R$-regular. In particular, $\p\neq \m$.
Then by our choice of $x$, $\p\notin \Min(\NF_R(M))$. Since $\NF_R(M)$ is a closed subset and $\p \in \NF_R(M)\smallsetminus \Min(\NF_R(M))$ so choose $\q\in \Min(\NF_R(M))$ contained in $\p$, hence $\q\subsetneq \p$.
But $\Ht \q=0$ as $\Ht \p=1$; this contradicts that $M$ is locally free on $\Min(R)$.
Thus $\NF_R(M)\cap \Min(R/xR)$ is an empty set, hence $M_\p$ is $R_\p$-free for each $\p \in \Min(R/xR)$.
Then we see that $(M/xM)_\p$ is $(R/xR)_\p$-free for each $\p \in \Min(R/xR)$. 
Now if we moreover had assumed that there exists $r$ such that $\rank_{R_\q} M_{\q}=r$ for each $\q\in \Min(R)$, then in the proof above, since we had seen $M_\p$ is $R_\p$-free for each $\p\in \Min(R/xR)$, so choose $\q\in \Min(R)$ such that $\q\subseteq \p$, then $\rank_{R_\p} M_{\p}=\rank_{R_\q} M_{\q}=r$, hence $\rank_{(R/xR)_{\p}}(M/xM)_{\p}=r$.
\end{proof}  

Now using this we can prove the following variant of Theorem \ref{faithulhigh}. 

\begin{thm}\label{ulloc}  Let $R$ be a local Cohen--Macaulay ring of dimension $d\ge 1$. Let $M,N$ be $R$-modules such that $N$ is Ulrich, $\supp(N)=\spec(R)$, and $M$ is locally free on $\Min(R)$. Then the following hold: 
\begin{enumerate}[\rm(1)]
\item 
If there exists an integer $n\ge 1$ such that $\Tor^R_i(M,N)=0$ for all $n\le i\le n+d-1$, then $\pd_R M<\infty$.

\item Assume $\depth M\ge d-1$. If there exists an integer $n\ge 1$ such that $\Ext^i_R(M,N)=0$ for $n\le i\le n+d-1$, then $\pd_R M<\infty$.

\item Assume $\depth M\le d-1$. If there exists an integer $n\ge 0$ such that $\Ext^i_R(M,N)=0$ for $n+d-\depth M \le i\le n+2d-\depth M-1$, then $\pd_R M<\infty$.

\item Assume $\widehat{R}$ is generically Gorenstein. If there exists an integer $n\ge 1$ such that $\Ext^i_R(N,M)=0$ for $n\le i\le n+d-1$, then $\id_R M<\infty$.

\end{enumerate}
\end{thm}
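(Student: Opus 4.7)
The plan is to mirror the inductive proof of Theorem \ref{faithulhigh}, replacing the faithfulness hypothesis on $N$ by the combination of full support of $N$ and local freeness of $M$ on $\Min(R)$. Lemma \ref{l52}(2) will serve as the base case of the induction on $d$, and Lemma \ref{minfree} will be the key propagation tool guaranteeing that the ``locally free on $\Min$'' property survives reduction modulo a suitably chosen regular element.

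First I would reduce to the case of an infinite residue field via the faithfully flat extension $R[X]_{\m[X]}$; all relevant hypotheses (Cohen--Macaulay, Ulrich, $\supp N = \spec R$, local freeness on $\Min R$, and Tor/Ext vanishing) are preserved. Since $\supp N = \spec R$ and $N$ is Cohen--Macaulay of dimension $d$, $N$ is maximal Cohen--Macaulay. For (1) I induct on $d$: the base $d=1$ is Lemma \ref{l52}(2). For $d \ge 2$, by prime avoidance choose a superficial element $x \in \m$ for $\m$ with respect to $N$ lying outside every prime of the finite set $\Ass(R) \cup \bigl(\Min(\NF_R(\syz M)) \smallsetminus \{\m\}\bigr)$ of non-maximal primes. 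Then $x$ is $R$-regular, $N$-regular (as $N$ is MCM with $d \ge 2$), and $\syz M$-regular (as $\Ass(\syz M) \subseteq \Ass(R)$); $N/xN$ is Ulrich over $R/xR$ with full support by \cite[Proposition 11.1.9]{hs}, and $(\syz M)/x(\syz M)$ is locally free on $\Min(R/xR)$ by Lemma \ref{minfree}. The short exact sequence $0 \to N \xrightarrow{x} N \to N/xN \to 0$, combined with the Tor vanishings for $(M, N)$, forces Tor vanishings for $(\syz M, N/xN)$ in a window of length $d-1$, which \cite[Lemma 2, page 140]{mat} transfers to $R/xR$; the induction hypothesis yields $\pd_R M < \infty$. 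Part (2) runs identically with Ext in place of Tor, enlarging the avoidance set to include $\Ass(M)$ (possible since $\depth M \ge d-1 \ge 1$) so that $x$ is also $M$-regular, and the depth inequality $\depth(M/xM) \ge d-2 = \dim(R/xR) - 1$ persists.

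Part (3) reduces to (2) by passing to $\syz^{k} M$, where $k = d - \depth M - 1 \ge 0$: this module has depth $\ge d-1$, remains locally free on $\Min(R)$, and has $\Ext^i_R(\syz^k M, N) = \Ext^{i+k}_R(M, N)$ vanishing in a window of length $d$ starting at $n+1$. For (4), pass to $\widehat R$ so as to have a canonical module $\omega$, and take an MCM approximation $0 \to Y \to L \to M \to 0$ via Theorem \ref{mcmapp}. By Remark \ref{mcmapp1}, $\Ext^i_R(N, L) = 0$ in the vanishing range; since $L$ and $N$ are both maximal Cohen--Macaulay, canonical duality gives $\Ext^i_R(L^\dag, N^\dag) \cong \Ext^i_R(N, L) = 0$ in the same range. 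Now $N^\dag$ is Ulrich with full support and $L^\dag$ is MCM of depth $d \ge d-1$. The generically Gorenstein hypothesis on $\widehat R$ makes $\omega$ locally free of rank one at each $\p \in \Min \widehat R$, from which one seeks to deduce that $L^\dag$ is locally free on $\Min \widehat R$ (using that $L^\dag_\p \cong L_\p^{*}$ over the Gorenstein Artinian ring $\widehat R_\p$). Applying (2) to $(L^\dag, N^\dag)$ then gives $\pd L^\dag < \infty$, whence $\id L < \infty$ and finally $\id M < \infty$.

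The main obstacle lies in part (4), specifically in extracting the local-freeness of $L^\dag$ on $\Min \widehat R$ from the generically Gorenstein hypothesis; this is the subtle point of the argument and may require a finer analysis than the naive identification $L^\dag_\p \cong L_\p^*$, since Matlis duality over Gorenstein Artinian rings preserves length but not freeness. The inductive machinery of (1)--(3) is by contrast a direct adaptation of Theorem \ref{faithulhigh} once Lemma \ref{minfree} is in place.
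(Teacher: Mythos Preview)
Your treatment of (1)--(3) is essentially identical to the paper's proof, including the use of Lemma \ref{l52}(2) as the base case, the choice of superficial element avoiding $\Ass(R)\cup\Min(\NF(\syz M))\smallsetminus\{\m\}$ (resp.\ $\Ass(R)\cup\Ass(M)\cup\Min(\NF(M))\smallsetminus\{\m\}$), and the propagation of local freeness via Lemma \ref{minfree}.

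For (4), you have the right outline but your ``main obstacle'' is a phantom. The paper's argument is simply this: since $R$ is generically Gorenstein and $\id Y<\infty$, for each $\p\in\Min(R)$ the ring $R_\p$ is Artinian Gorenstein and $Y_\p$ has finite injective dimension, hence $Y_\p$ is $R_\p$-free. As $M$ is locally free on $\Min(R)$ by hypothesis, the short exact sequence $0\to Y\to L\to M\to 0$ forces $L$ to be locally free on $\Min(R)$. Then $L^\dag_\p=\Hom_{R_\p}(L_\p,\omega_\p)$ is a finite direct sum of copies of $\omega_\p\cong R_\p$, hence free. Your worry that ``Matlis duality preserves length but not freeness'' is misplaced: you are not dualising an arbitrary module but one already known to be free, and the dual of a free module is free once $\omega_\p\cong R_\p$. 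The missing step in your sketch is not a subtle analysis of Matlis duality but simply the observation that $Y$ itself is locally free on $\Min(R)$, which you did not record and which is exactly where the generically Gorenstein hypothesis enters.
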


\begin{proof} Note that $\Min(R)=\Ass(R)$ as $R$ is Cohen--Macaulay. Also, $N$ has full support, so $\dim N=d$, so $N$ is maximal  Cohen--Macaulay.
For part (1) and (2), we may pass to a faithfully flat extension of $R$ to ensure that $R$ has infinite residue field.

(1) The proof is similar to that of Theorem \ref{faithulhigh} (1), where the $d=1$ case follows by Lemma \ref{l52}(2), and for the $d\ge 2$ case, we now remember that $M':=\syz_R M$ is locally free on $\Min(R)$, and we choose a superficial element $x\in \m$ for $\m$ with respect to $N$ and which does not belong to $\bigcup_{\p\in\Ass(R)\cup\Min(\NF(\syz_R M))\smallsetminus \{\m \}}\p$ and hence $M'/xM'$ is locally free on $\Min(R/xR)$ by Lemma \ref{minfree}.
Combining with $\supp_{R/xR}(N/xN)=\spec(R/xR)$, we can apply induction.   

(2)   The proof is similar to that of Theorem \ref{faithulhigh} (2), where the $d=1$ case follows by Lemma \ref{l52}(2), and for the $d\ge 2$ case, choose a superficial element  $x\in \m$ for $\m$ with respect to $N$ and which does not belong to $\bigcup_{\p\in\Ass(R)\cup\Ass(M)\cup\Min(\NF(M))\smallsetminus \{\m \}}\p$ and hence $M/xM$ is locally free on $\Min(R/xR)$ by Lemma \ref{minfree}. Combining with $\supp_{R/xR}(N/xN)=\spec(R/xR)$, we can apply induction.   

(3) We are done by part (2) after noticing that $d-\depth M-1\ge 0$, $\syz^{d-\depth M-1}_R M$ is locally free on $\Min(R)$ and $\depth \syz^{d-\depth M-1}_R M\ge d-1$ and $\Ext^i_R(\syz^{d-\depth M-1}_R M, N)=0$ for all $1\le n+1\le i\le n+d$. 

(4) We may pass to completion $\widehat{R}$ of $R$ and assume that $R$ has canonical module and is generically Gorenstein.
Suppose that $\Ext^i_R(N,M)=0$ for $n\le i\le n+d-1$.
Take a short exact sequence $0 \to Y \to L \to M \to 0$ such that $L$ is maximal Cohen--Macaulay and $\id Y<\infty$ by using Theorem \ref{mcmapp}. Then the module $Y$ is locally free on $\Min(R)$ since $R$ is generically Gorenstein, and  $M$ is locally free on $\Min(R)$ by hypothesis. Hence, $L$ is locally free on $\Min(R)$, so $L^{\dagger}$ is locally free on $\Min(R)$. 
Then it follows that $\Ext^i_R(N,L)=0$ for $n\le i\le n+d-1$ (Remark \ref{mcmapp1}).
Since both $N$ and $L$ are maximal Cohen--Macaulay, it then provides vanishings of $\Ext^i_R(L^\dag,N^\dag)\cong \Ext^i_R(N,L)$ for $n\le i\le n+d-1$.
Remembering $L^{\dag}$ is maximal Cohen--Macaulay and $N^\dag$ is Ulrich and $\supp(N^{\dagger})=\supp(N)=\spec(R)$, we now obtain by (2) that $L^\dag$ is free.
It shows that $\id L<\infty$ and hence $\id M<\infty$.  
\end{proof}

For the proof of the next lemma,  see \cite[3.6]{kt}. 

\begin{lem} \label{l53}
Let $R$ be a Cohen--Macaulay local ring and $M$ be a nonfree maximal Cohen--Macaulay $R$-module.
Assume $R$ has minimal multiplicity.
Then $\syz M$ is Ulrich.
\end{lem}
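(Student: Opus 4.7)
The plan is to establish the defining equality for Ulrich modules directly, namely that $\mathfrak{m}(\syz M) = \mathbf{x}(\syz M)$ for some (minimal reduction) parameter ideal $(\mathbf{x})$. First, since both the minimal multiplicity condition and the property of being Ulrich are preserved under the faithfully flat extension $R \to R[X]_{\mathfrak{m}[X]}$ (and a syzygy commutes with such a base change up to free summands), one may replace $R$ by $R[X]_{\mathfrak{m}[X]}$ and assume the residue field is infinite. Under this assumption, minimal multiplicity gives a system of parameters $\mathbf{x} = x_1, \dots, x_d$ for $R$ that generates a minimal reduction of $\mathfrak{m}$ with the property $\mathfrak{m}^2 = \mathbf{x}\mathfrak{m}$.

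Next, fix a minimal free cover $0 \to \syz M \to F \to M \to 0$, so that $\syz M \subseteq \mathfrak{m}F$. From this containment we immediately obtain
\[
\mathfrak{m}(\syz M) \subseteq \mathfrak{m}^2 F = \mathbf{x}\mathfrak{m} F \subseteq \mathbf{x}F.
\]
On the other hand, since $M$ is maximal Cohen--Macaulay, $\mathbf{x}$ is also an $M$-regular sequence, hence $\Tor_1^R(R/(\mathbf{x}),M) = 0$. Tensoring the defining short exact sequence with $R/(\mathbf{x})$ then yields that the induced map $\syz M/\mathbf{x}\syz M \hookrightarrow F/\mathbf{x}F$ is injective, which is exactly the statement $\syz M \cap \mathbf{x}F = \mathbf{x}\syz M$. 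Combining these two observations gives $\mathfrak{m}(\syz M) \subseteq \mathbf{x}\syz M$, and the reverse inclusion is trivial, so $\mathfrak{m}(\syz M) = \mathbf{x}(\syz M)$.

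Finally, since $M$ is nonfree and maximal Cohen--Macaulay, $\syz M$ is nonzero and maximal Cohen--Macaulay, and the parameter sequence $\mathbf{x}$ is regular on $\syz M$. Therefore the multiplicity computes as $e(\syz M) = e_{\mathbf{x}}(\syz M) = \ell(\syz M/\mathbf{x}\syz M) = \ell(\syz M/\mathfrak{m}\syz M) = \mu(\syz M)$, which is precisely the Ulrich condition. I do not foresee a significant obstacle: the only subtlety is verifying that the reduction to infinite residue field is legitimate for this particular statement, and the core calculation rests on the two elementary containments $\syz M \subseteq \mathfrak{m}F$ and (via $M$ being MCM) $\syz M \cap \mathbf{x}F = \mathbf{x}\syz M$.
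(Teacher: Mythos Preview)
Your proof is correct and is the standard direct argument: reduce to infinite residue field, use $\mathfrak{m}^2=(\mathbf{x})\mathfrak{m}$ together with $\syz M\subseteq\mathfrak{m}F$ to get $\mathfrak{m}\,\syz M\subseteq(\mathbf{x})F$, and then use $\Tor_1^R(R/(\mathbf{x}),M)=0$ (from $M$ being MCM) to intersect down to $(\mathbf{x})\syz M$. The paper does not give its own proof here; it simply cites \cite[3.6]{kt}, whose argument is essentially the one you have written.
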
  

Now for (deformations of) Cohen--Macaulay local rings of minimal multiplicity, we deduce the following consequences. We note that our vanishing interval always has length one less than the results of \cite{gp}.  

\begin{thm}\label{newmin}
Let $R$ be a Cohen--Macaulay local ring of dimension $d\ge 1$ and $M,N$ be $R$-modules.  
Assume
\begin{enumerate}[ \rm(1)]
\item $R$ has minimal multiplicity,
\item $M$, or $N$  has constant rank, and
\item there exists an integer $t\ge d+2-\max\{\depth M,\depth N\}$ such that $\Tor_i^R(M,N)=0$ for $t\le i\le t+d-1$. 
\end{enumerate}
Then either $\pd M<\infty$ or $\pd_R N<\infty$.  
\end{thm}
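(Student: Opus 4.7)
The strategy is to syzygize the deeper module to produce an Ulrich module and then invoke one of the Tor-rigidity statements from Theorem \ref{faithulhigh} or \ref{ulloc}. By the symmetry of $\Tor$, I may assume $\depth M\ge \depth N$; setting $s:=d-\depth M\ge 0$, the hypothesis becomes $t\ge s+2$, and $\syz^s M$ is maximal Cohen--Macaulay. Either $\syz^sM$ is free -- in which case $\pd M\le s<\infty$ -- or, by the minimal multiplicity hypothesis and Lemma \ref{l53}, $\syz^{s+1}M$ is Ulrich; a shift of indices then yields $\Tor_j^R(\syz^{s+1}M,N)=\Tor_{j+s+1}^R(M,N)=0$ for $d$ consecutive values of $j$ starting at $j_0=t-s-1\ge 1$.

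If $M$ has constant rank, then $\syz^{s+1}M$ inherits positive constant rank (being nonzero), so it is faithful by Lemma \ref{l27}; applying Theorem \ref{faithulhigh}(1) with the Ulrich faithful module $\syz^{s+1}M$ yields $\pd N<\infty$. The symmetric swap (interchanging $M$ and $N$) handles the case where $N$ has constant rank and $\depth N=\depth M$. The remaining configuration, where only $N$ has constant rank and $\depth N<\depth M$, is the delicate one: the Ulrich module $\syz^{s+1}M$ need not be faithful or of full support, so neither Theorem \ref{faithulhigh}(1) nor Theorem \ref{ulloc}(1) applies directly. I handle this sub-case by induction on $d$.

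For the base case $d=1$, the sub-case forces $\depth N=0$, $\depth M=1$, $t\ge 2$, and $\Tor_t^R(M,N)=0$. Then $\syz M$ is Ulrich of dimension one, and Lemma \ref{l51} gives the key identity $\syz M\cong \m\,\syz M$. Hence $\Tor_{t-1}^R(N,\m\,\syz M)\cong \Tor_{t-1}^R(N,\syz M)\cong \Tor_t^R(N,M)=0$; since $\syz M$ is maximal Cohen--Macaulay of the same dimension as $R$, $\grade(\syz M)=0$, and Proposition \ref{t}(6) applied with the constant-rank module $N$ delivers $\pd N\le t-1<\infty$. For the inductive step $d\ge 2$, I pass to $R/xR$ for a sufficiently generic $x\in\m\setminus\m^2$ chosen (by prime avoidance, after a flat base change to enlarge the residue field if necessary) to be regular on $R$, $M$, and $N$, and to avoid the non-maximal primes in $\Min(\NF_R N)$. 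Then $R/xR$ is Cohen--Macaulay of dimension $d-1$ with minimal multiplicity, $N/xN$ has constant rank over $R/xR$ by Lemma \ref{minfree}, and transporting the Tor vanishings via the long exact sequence of $0\to N\xrightarrow{x}N\to N/xN\to 0$ together with the change-of-rings isomorphism $\Tor_i^R(M,N/xN)\cong \Tor_i^{R/xR}(M/xM,N/xN)$ produces $d-1$ consecutive vanishings of $\Tor_i^{R/xR}(M/xM,N/xN)$ starting at $t+1$; a direct check confirms the needed bound $t+1\ge (d-1)+2-\max\{\depth_{R/xR}M/xM,\depth_{R/xR}N/xN\}$, so the inductive hypothesis applies and the conclusion lifts back to $R$ along $x$-regularity. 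The residual corner cases, where $\depth M$ or $\depth N$ vanishes and no such $x$ exists, are absorbed into the base case and into direct syzygy manipulations permitted by the slack $+2$ in the hypothesis $t\ge d+2-\max\{\depth M,\depth N\}$.

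The main obstacle is the final sub-case, where the Ulrich syzygy produced on the deeper side need not be faithful, blocking the direct invocation of Theorem \ref{faithulhigh}(1). Resolving this requires the descent on $d$, and its base case hinges crucially on the identity $M\cong \m M$ for Ulrich modules in dimension one (Lemma \ref{l51}) combined with the grade-zero Tor-rigidity statement in Proposition \ref{t}(6).
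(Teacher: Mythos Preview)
Your proof follows essentially the same route as the paper: syzygize the deeper module to make it Ulrich, use faithfulness (via constant rank and Lemma~\ref{l27}) together with Theorem~\ref{faithulhigh}(1) in the easy case, and handle the asymmetric case (only $N$ has constant rank) by induction on $d$ with the base case resting on Lemma~\ref{l51} and Proposition~\ref{t}(6). The case split and the base case are correct.

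There is one genuine soft spot in your inductive step. You propose to choose $x$ regular on $R$, $M$, and $N$, but in the sub-case you are treating you have $\depth N<\depth M$, and nothing prevents $\depth N=0$ when $d\ge 2$; then no such $x$ exists. Your remark that this is ``absorbed into direct syzygy manipulations permitted by the slack $+2$'' is not a proof: if you simply replace $N$ by $\syz N$ and try to re-enter the \emph{theorem} at the same $d$, the index shift $t\mapsto t-1$ can fall below the required bound $d+2-\depth M$, so there is no slack. The correct fix---and what the paper does---is to syzygize before descending: work with $M'':=\syz^{d-\depth M}M$ (which is MCM, hence automatically $x$-regular) and with $\syz_R N$ (which sits in a free module, hence is also $x$-regular), and only then pass to $R/xR$. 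With that adjustment the index bookkeeping closes exactly at the hypothesis $t\ge d+2-\depth M$. Also, make explicit that $x$ must be \emph{superficial} for $\m$ (not merely in $\m\smallsetminus\m^2$) so that $R/xR$ again has minimal multiplicity; and apply Lemma~\ref{minfree} to $\syz_R N$ rather than to $N$ when choosing the primes to avoid.
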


\begin{proof} We may assume $\max\{\depth M,\depth N\}=\depth M$ and $\pd M=\infty$ and show $\pd_R N<\infty$.   
Then $\pd M=\infty$ yields that $M':=\syz_R^{d+1-\depth M}M$ is (nonzero) Ulrich.

Case I: $M$ has constant rank: In this case, $M'$ has positive constant rank and hence faithful by Lemma \ref{l27}.  
Then we get $\Tor^R_j(M',N)=0$ for $t-d-1+\depth M\le j\le t-2+\depth M$.
Note that $t-d-1+\depth M\ge 1$, and hence it follows by Theorem \ref{faithulhigh}(1) that $\pd_R N<\infty$. 

Case II: $N$ has constant rank: We proceed  by induction on $d$. For $d=1$, we have $\Tor_j^R(M',N)=0$ for $1\le t-d-1+\depth_R M\le j\le t-2+\depth_R M$. Since $d+1-\depth_R M>0$, so $M'$ is a torsionless non-zero Ulrich module, hence $\grade M'=0$ and $M'\cong \m M'$  (by Lemma \ref{l51}). Thus the assumption that $N$ has constant rank now implies $\pd N<\infty$ by Proposition \ref{t}(6). Now let $d\ge 2$. Choose an $R$-superficial element $x\in \m$ which does not belong to $\bigcup_{\p\in\Ass(R)\cup\Min(\NF(\syz_R N))\smallsetminus \{\m \}}\p$. Since $\syz_R N$ has constant rank, so $\syz_R N/x\syz_R N$ has constant rank by Lemma \ref{minfree}. Since $R$ has minimal multiplicity and $x$ is $R$-superficial, hence $R/xR$ also has minimal multiplicity. Let $M'':=\syz_R^{d-\depth_R M} M$, which is a non-free maximal Cohen--Macaulay $R$-module.  Since $x$ is $R$-regular, so $M''$-regular. Now $\Tor_j^R(M'',N)=0$ for $2\le t-d-\depth_R M\le j\le t-1+\depth_R M$. In view of the exact sequence $0\to M''\xrightarrow{x} M''\to M''/xM''\to 0$ we get $\Tor^R_j(M''/xM'',N)=0$ for $3\le t-d-\depth_R M+1\le j \le t-1+\depth_R M$, so $\Tor_j^R(M''/xM'',\syz_R N)=0$ for $2\le t-d-\depth_R M\le j \le t-2+\depth_R M$. Since $x$ is $\syz_R N$-regular, so by \cite[Lemma 2, page 140]{mat} we get $\Tor_j^{R/xR}(M''/xM'',\syz_R N/x\syz_RN)=0$ for $2\le t-d-\depth_R M\le j \le t-2+\depth_R M$. Now, $M''/xM''$ is maximal Cohen--Macaulay over $R/xR$, so $\depth_{R/xR}M''/xM''=\dim(R/xR)$. Since  $t-d-\depth_R M$ to $t-2+\depth_R M$ are $d-1=\dim(R/xR)$-many elements and $t-d-\depth_R M\ge 2=\dim(R/xR)+2-\depth_{R/xR}(M''/xM'')$, and $\pd_{R/xR} M''/xM''=\infty$ (since $\pd_R M''=\infty$), so by induction hypothesis, $\pd_{R/xR} \syz_R N/x\syz_R N<\infty$, hence $\pd_R \syz_R N<\infty$, so $\pd_R N<\infty$. 
\end{proof}  

\begin{thm}\label{newminext}  Let $R$ be a Cohen--Macaulay local ring of dimension $d\ge 1$ and $M,N$ be $R$-modules.  
Assume
\begin{enumerate}[ \rm(1)]
\item $R$ has minimal multiplicity,
\item $M$  has constant rank, and
\item there exists an integer $t\ge d+2-\depth M$ such that $\Ext^i_R(M,N)=0$ for $t\le i\le t+d-1$.  
\end{enumerate}
Then either $\pd M<\infty$ or $\id_R N<\infty$. 
\end{thm}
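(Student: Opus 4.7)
The plan is to mirror Case I in the proof of Theorem \ref{newmin}, swapping Theorem \ref{faithulhigh}(1) for its Ext counterpart Theorem \ref{faithulhigh}(4).

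First I would reduce to the case $\pd_R M = \infty$ and aim to show $\id_R N < \infty$. Setting $k := d + 1 - \depth M$, observe that $k \ge 1$ since $\depth M \le \dim R = d$. Because $\pd M = \infty$, every syzygy of $M$ is non-zero, and by the standard depth computation over a Cohen--Macaulay ring, $\syz^{k-1} M$ is a non-free maximal Cohen--Macaulay $R$-module. The minimal multiplicity hypothesis then lets me invoke Lemma \ref{l53} to conclude that $M' := \syz^k M$ is Ulrich.

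Next I would verify that $M'$ is faithful. Since $M$ has constant rank, so does each of its syzygies (localizing at $\p \in \Ass(R)$ makes $M_\p$ free, hence the defining short exact sequences of the syzygies split). Over a Cohen--Macaulay local ring, a module whose constant rank equals its minimal number of generators is necessarily free, so $\pd M = \infty$ forces $\mu(M) > \rank(M)$; this means $\syz M$, and hence every higher syzygy, has \emph{positive} constant rank. Thus $M'$ is locally free of positive rank on $\Ass(R)$, so $\supp(M') = \spec(R)$, and $M'$ is faithful by Lemma \ref{l27}. Therefore $M'$ is a faithful Ulrich $R$-module.

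To finish, I would dimension-shift: for $j \ge 1$, $\Ext^j_R(M', N) \cong \Ext^{j+k}_R(M, N)$. The hypothesized $d$ consecutive vanishings $\Ext^i_R(M, N) = 0$ for $t \le i \le t + d - 1$ then translate to $\Ext^j_R(M', N) = 0$ for $n \le j \le n + d - 1$, where $n := t - d - 1 + \depth M$. The inequality $t \ge d + 2 - \depth M$ is precisely what ensures $n \ge 1$, so Theorem \ref{faithulhigh}(4), applied to the faithful Ulrich module $M'$ and the module $N$, yields $\id_R N < \infty$.

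I do not expect any serious obstacle; the translation from the Tor case is mechanical. The only bookkeeping to watch is (i) upgrading constant rank to \emph{positive} constant rank so that $M'$ is genuinely faithful, and (ii) lining up the index shift $k = d + 1 - \depth M$ so that the vanishing range for $M'$ begins at $n \ge 1$, exactly matching the hypothesis of Theorem \ref{faithulhigh}(4).
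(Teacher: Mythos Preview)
Your proposal is correct and follows essentially the same approach as the paper's own proof: reduce to $\pd M=\infty$, set $M':=\syz^{d+1-\depth M}M$, use Lemma \ref{l53} to see $M'$ is Ulrich, use constant rank plus Lemma \ref{l27} to get faithfulness, shift indices so the Ext vanishing window starts at $n=t-d-1+\depth M\ge 1$, and conclude via Theorem \ref{faithulhigh}(4). You supply a bit more justification for why $M'$ has \emph{positive} constant rank than the paper does, but the structure is identical.
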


\begin{proof}  We may assume $\pd M=\infty$ and show $\id_R N<\infty$.
Then $\pd M=\infty$ yields that $M':=\syz^{d+1-\depth M}M$ is (nonzero) Ulrich.
Moreover, $M'$ has positive constant rank and hence faithful by Lemma \ref{l27}.
Also we get $\Ext_R^j(M',N)=0$ for $t-d-1+\depth M\le j\le t-2+\depth M$.
Note that $t-d-1+\depth M\ge 1$, and hence it follows by Theorem \ref{faithulhigh}(4) that $\id_R N<\infty$. 
\end{proof}

For deformations of Cohen--Macaulay local rings of minimal multiplicity, one gets the following consequence by systematically using the long exact sequence of Tor as described in \cite[Lemma 2.1]{hw1}.  

\begin{cor}\label{defmin} Let $R$ be a local Cohen--Macaulay ring of dimension $d\ge 1$ and minimal multiplicity.
Let $f_1,\cdots,f_c$ be an $R$-regular sequence in $\m$, where $c\ge 1$.
Set $S=R/(f_1,\cdots,f_c)R$.
Let $M,N$ be $S$-modules such that $M$ is maximal Cohen--Macaulay over $S$ and $\Tor^S_i(M,N)=0$ for $t\le i\le t+d+c-1$, where $t\ge 2$ is some integer.
Then, $\Tor^S_i(M,N)=0$ for all $i\ge 1$, and either $\pd_R M<\infty$, or $\pd_R N<\infty$.  
\end{cor}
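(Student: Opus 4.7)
The plan is to execute a three-step reduction. First, translate the given $\Tor^S$-vanishings into $\Tor^R$-vanishings using the long exact sequence of \cite[Lemma 2.1]{hw1}. Second, apply Theorem \ref{newmin} over $R$ to deduce that $\pd_R M<\infty$ or $\pd_R N<\infty$. Third, feed this finite projective dimension back through the change-of-rings long exact sequences to extract full vanishing of $\Tor^S_i(M,N)$ for every $i\ge 1$.

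For the first step, write $R_0=R$ and $R_j=R/(f_1,\dots,f_j)R$ for $1\le j\le c$, so $R_c=S$. Each $\bar f_j\in R_{j-1}$ is a nonzerodivisor, and $M,N$ are $R_j$-modules for every $j$. The standard change-of-rings long exact sequence
\[
\cdots\to\Tor^{R_j}_{n+1}(M,N)\xrightarrow{\chi_j}\Tor^{R_j}_{n-1}(M,N)\to\Tor^{R_{j-1}}_n(M,N)\to\Tor^{R_j}_n(M,N)\xrightarrow{\chi_j}\Tor^{R_j}_{n-2}(M,N)\to\cdots
\]
shows that $\Tor^{R_j}_{n-1}=\Tor^{R_j}_n=0$ forces $\Tor^{R_{j-1}}_n=0$. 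Iterating this implication from $j=c$ down to $j=1$ loses exactly one index at the lower end per step, so the initial block $t\le i\le t+d+c-1$ of $\Tor^S$ vanishings yields $\Tor^R_i(M,N)=0$ for $t+c\le i\le t+d+c-1$, i.e.\ $d$ consecutive indices.

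Next I verify the constant-rank hypothesis of Theorem \ref{newmin}. Because $(f_1,\dots,f_c)$ is $R$-regular and $R$ is Cohen--Macaulay, no prime of $\Ass(R)=\Min(R)$ contains $f_1$. Since $(f_1,\dots,f_c)$ annihilates both $M$ and $N$, we get $M_\p=N_\p=0$ for every $\p\in\Ass(R)$, so both $M$ and $N$ have constant rank zero over $R$. Maximal Cohen--Macaulayness of $M$ over $S$ gives $\depth_R M=\depth_S M=d-c$, which combined with $t\ge 2$ produces $t+c\ge c+2=d+2-\depth_R M$, the numerical bound required by Theorem \ref{newmin}. That theorem (applied over $R$, which has minimal multiplicity) then yields $\pd_R M<\infty$ or $\pd_R N<\infty$.

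The final step is to upgrade the initial block of $d+c$ consecutive $\Tor^S$ vanishings to vanishing in every positive degree. The finiteness of $\pd_R M$ or $\pd_R N$ forces $\Tor^R_i(M,N)=0$ for every sufficiently large $i$; plugging this into each long exact sequence for $R_{j-1}\to R_j$ makes the Eisenbud operator $\chi_j$ injective on $\Tor^{R_j}_n$ and surjective on $\Tor^{R_j}_{n+1}$ for all $n$ large enough. Injectivity of the $\chi_j$ extends the initial vanishing block upward to cover all $i\gg 0$, while surjectivity of the $\chi_j$ is used to extend the vanishing downward two indices at a time through the tower $R_c,R_{c-1},\dots,R_0$, eventually exhausting the range $i\ge 1$. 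I expect the main obstacle to lie in this downward propagation: one must carefully track in which degrees the injectivity/surjectivity of each $\chi_j$ is available and verify that the hypothesis $t\ge 2$ together with the initial block length $d+c$ is exactly enough for the descent through all $c$ ring changes to reach $i=1$ without stalling. Once that bookkeeping is in place, we obtain $\Tor^S_i(M,N)=0$ for every $i\ge 1$.
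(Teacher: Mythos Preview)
Your Steps 1 and 2 are correct; the paper follows essentially the same route, though by induction on $c$ rather than by descending to $R$ in one pass (and in the base case it applies Theorem~\ref{newmin} to $\syz_R M$, which is maximal Cohen--Macaulay over $R$, rather than to $M$ directly---both choices work numerically).

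The gap is in Step 3. The eventual vanishing of $\Tor^R_i(M,N)$ feeds only into the change-of-rings sequence for $R_0\to R_1$: it makes $\chi_1\colon\Tor^{R_1}_{n+1}\to\Tor^{R_1}_{n-1}$ an isomorphism for large $n$, hence yields eventual $2$-periodicity of $\Tor^{R_1}$, but \emph{not} eventual vanishing. To control $\chi_2$ you would need $\Tor^{R_1}_i=0$ for large $i$, and that only follows after you combine the periodicity with the intermediate vanishing block over $R_1$ produced in Step~1. So the ``bookkeeping'' you defer is not mere index-tracking: it is precisely the inductive mechanism the paper executes. One must first establish full $\Tor^{R_{j-1}}$-vanishing for all $i\ge 2$ (in the case where it is $\pd_R N$ rather than $\pd_R M$ that is finite, this requires the rigidity result \cite[Lemma 2.2]{yo} to upgrade finitely many consecutive vanishings to all of them), and only then does the sequence for $R_{j-1}\to R_j$ give isomorphisms $\Tor^{R_j}_{n+1}\cong\Tor^{R_j}_{n-1}$ for every $n\ge 2$; combined with two consecutive vanishings in the original block (here is where $d+c\ge 2$ is used), this yields $\Tor^{R_j}_i=0$ for all $i\ge 1$. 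Your sketch omits both the appeal to \cite[Lemma 2.2]{yo} and the fact that the $\chi_j$ must be handled one level at a time, each step requiring the previous level's full vanishing as input.
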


\begin{proof} 
We prove by induction on $c$.
First we do the $c=1$ case.
In this case, $S=R/(f_1)$ and $\Tor^S_i(M,N)=0$ for $t\le i\le t+d$, where $t\ge 2$ and $M$ is a maximal Cohen--Macaulay $S$-module.
We note that both $M$ and $N$ have constant rank zero over $R$ (since they are killed by the $R$-regular element $f_1$).
The part of the change of rings long exact sequence of Tor: $\Tor^{R/(f_1)}_{n-1}(M,N)\to \Tor^{R}_{n}(M,N)\to \Tor^{R/(f_1)}_n(M,N)$ (for all $n\ge 1$) yields $\Tor^R_i(M,N)=0$ for $t+1\le i\le t+d$.
Hence $\Tor^R_i(\syz_R M,N)=0$ for $2\le t\le i\le t+d-1$.
Since $\depth_R M=\depth_S M\ge \dim S=\dim R-1$, so $\syz_R M$ is maximal Cohen--Macaulay over $R$.
Moreover, $\syz_R M$ has constant rank over $R$.
Thus by Theorem \ref{newmin} either $\pd_R \syz_R M<\infty$, or $\pd_R N<\infty$, i.e., either $\syz_R M$ is free over $R$ (as it is maximal Cohen--Macaulay over $R$), or $\pd_R N<\infty$.  
And then $\Tor^R_i(\syz_R M,N)=0$ for all $i\ge 1$ (because either $\syz_R M$ is free, or $\pd_R N<\infty$, in which case, we apply \cite[Lemma 2.2]{yo}), i.e., $\Tor^R_i(M,N)=0$ for all $i\ge 2$. Again, the part of the change of rings long exact sequence of Tor: $\Tor^R_{n+1}(M,N)\to \Tor^{R/(f_1)}_{n+1}(M,N)\to \Tor^{R/(f_1)}_{n-1}(M,N)\to \Tor^{R}_{n}(M,N)$ (for all $n\ge 1$) yields $\Tor^{R/(f_1)}_{n+1}(M,N)\cong \Tor^{R/(f_1)}_{n-1}(M,N)$ for all $n\ge 2$.
Since $\Tor^{R/(f_1)}_i(M,N)=0$ for some $(d+c)$-many consecutive values of $i\ge 2$ and since $d+c\ge 2$, so we have vanishing for at least two consecutive values, thus we get $\Tor^{R/(f_1)}_i(M,N)=0$ for all $i\ge 1$. 
 
Now for the inductive step: assume $c\ge 2$,  $\Tor^S_i(M,N)=0$ for $t\le i\le t+d+c-1$, where $t\ge 2$ is some integer, and $M,N$ are $S$-modules with $M$ being maximal Cohen--Macaulay over $S$.
Set $S':=R/(f_1,...,f_{c-1})R$.
Then, $S=S'/(f_c)S'$.
The part of the change of rings long exact sequence of Tor: $\Tor^{S'/(f_c)S'}_{n-1}(M,N)\to \Tor^{S'}_{n}(M,N)\to \Tor^{S'/(f_c)S'}_n(M,N)$ (for all $n\ge 1$) yields $\Tor^{S'}_n(M,N)=0$ for $t+1\le n\le t+d+c-1$.
Hence $\Tor^{S'}_n(\syz_{S'} M,N)=0$ for $t\le n\le t+d+c-1-1=t+d+(c-1)-1$. Now $\depth_{S'} M=\depth_S M\ge \dim S=\dim S'-1$.
Thus $\syz_{S'} M$ is a maximal Cohen--Macaulay $S'$-module.
Since $c-1\ge 1$, thus by induction hypothesis applied on $S'$, we get $\pd_R \syz_{S'} M<\infty$, or $\pd_R N<\infty$, and moreover $\Tor^{S'}_i(\syz_{S'}M,N)=0$ for all $i\ge 1$.
So, $\Tor^{S'}_i(M,N)=0$ for all $i\ge 2$.
Since $\pd_R S'<\infty$, so $\pd_R \syz_{S'} M<\infty$ would imply $\pd_R M<\infty$, this concludes the claim about finiteness of projective dimension over $R$.
Now the vanishing $\Tor^{S'}_i(M,N)$ for all $i\ge 2$ and the part of the change of rings long exact sequence of Tor: $\Tor^{S'}_{n+1}(M,N)\to \Tor^{S'/(f_c)S'}_{n+1}(M,N)\to \Tor^{S'/(f_c)S'}_{n-1}(M,N)\to \Tor^{S'}_{n}(M,N)$ (for all $n\ge 1$) yields $\Tor^{S'/(f_c)S'}_{n+1}(M,N)\cong \Tor^{S'/(f_c)S'}_{n-1}(M,N)$ for all $n\ge 2$.
Since $S=S'/(f_c)S'$ and $\Tor_n^{S}(M,N)=0$ for some $(d+c)$-many consecutive values of $n\ge 2$ and $d+c\ge 2$, so we get $\Tor^{S}_i(M,N)=0$ for all $i\ge 1$.
This finishes the inductive step and the proof. 
\end{proof}

\begin{rem} Note that unlike Theorem \ref{newmin}, there is no assumption of constant rank in the hypothesis of Corollary \ref{defmin}. As one can see from the proof, the fact that $d + c \ge 2$, i.e., $c \ge 1$ is crucial for the
argument.  
\end{rem}   

If one requires $1$ more vanishing, namely $(d+c+1)$-many vanishing, then with a simpler proof (and including $c=0$), one gets a similar result to Corollary \ref{defmin}, see \cite[Theorem 5.1]{dg}. For $c\ge 1$, our number of vanishing is more refined than \cite[Theorem 5.1]{dg}. 

The following is an immediate consequence of Corollary \ref{defmin}.  

\begin{cor} \label{519}
Let $R$ be a local Cohen--Macaulay ring of dimension $d\ge 1$ and minimal multiplicity. Let $c\ge 1$ be an integer. Let $f_1,\cdots,f_c$ be an $R$-regular sequence in $\m$. Set $S=R/(f_1,\cdots,f_c)R$. Let $M,N$ be $S$-modules. Let $i_0:=\dim S-\depth M$. Assume $\Tor^S_i(M,N)=0$ for $t\le i\le t+d+c-1$, where $t\ge i_0+2$ is some integer. Then, $\Tor^S_i(M,N)=0$ for all $i\ge i_0+1$, and either $\pd_R M<\infty$, or $\pd_R N<\infty$.   
\end{cor}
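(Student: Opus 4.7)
The plan is to reduce directly to Corollary \ref{defmin} by replacing $M$ with a high enough syzygy over $S$ so that the hypothesis ``maximal Cohen--Macaulay over $S$'' is satisfied, then translating the conclusions back to $M$.

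More concretely, set $M' := \syz_S^{i_0} M$. Since $\depth_S M = \depth M$, the depth lemma applied iteratively along a minimal $S$-free resolution of $M$ gives $\depth_S M' \ge \min\{\depth_S M + i_0, \depth S\} = \dim S$, so $M'$ is a maximal Cohen--Macaulay $S$-module. Using the standard isomorphisms $\Tor_j^S(M',N) \cong \Tor_{j+i_0}^S(M,N)$, the hypothesis $\Tor_i^S(M,N)=0$ for $t \le i \le t+d+c-1$ translates to $\Tor_j^S(M',N)=0$ for $t' \le j \le t'+d+c-1$, where $t' := t - i_0 \ge 2$ by the assumption $t \ge i_0 + 2$.

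Now I would invoke Corollary \ref{defmin} with $M'$ in place of $M$: this yields $\Tor_i^S(M',N) = 0$ for all $i \ge 1$, together with the dichotomy $\pd_R M' < \infty$ or $\pd_R N < \infty$. The first of these, rewritten in terms of $M$, gives $\Tor_j^S(M,N) = 0$ for all $j \ge i_0+1$, which is one of the desired conclusions.

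It remains to transfer the finiteness statement $\pd_R M' < \infty$ back to $\pd_R M < \infty$. For this I would use the short exact sequences $0 \to \syz_S^{k+1} M \to F_k \to \syz_S^k M \to 0$ arising from a minimal $S$-free resolution of $M$, together with the fact that each free $S$-module $F_k$ satisfies $\pd_R F_k = \pd_R S = c < \infty$ (since $f_1,\ldots,f_c$ is an $R$-regular sequence). Starting from $\pd_R M' = \pd_R \syz_S^{i_0} M < \infty$ and walking up these exact sequences, the standard ``two out of three'' principle for finite projective dimension yields $\pd_R \syz_S^{k} M < \infty$ for each $k \le i_0$, and in particular $\pd_R M < \infty$. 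The argument is essentially routine once Corollary \ref{defmin} is in hand; the only point requiring any care is verifying that $M'$ is indeed maximal Cohen--Macaulay over $S$, which is the depth-lemma computation above.
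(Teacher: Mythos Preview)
Your proposal is correct and follows essentially the same approach as the paper: replace $M$ by $\syz_S^{i_0}M$, apply Corollary~\ref{defmin}, and then use $\pd_R S<\infty$ to pull the conclusion back to $M$. The paper's proof is simply a terser version of what you wrote, omitting the routine depth-lemma verification and the explicit walk along the syzygy short exact sequences.
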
 

\begin{proof} We apply Corollary \ref{defmin} on $\syz^{i_0}_SM$ and $N$ to get $\Tor^S_i(\syz^{i_0}_S M, N)=0$ for all $i\ge 1$, and moreover, either $\pd_R \syz^{i_0}_S M<\infty$, or $\pd_R N<\infty$. If $\pd_R \syz^{i_0}_S M<\infty$, then $\pd_R S<\infty$ would also give us $\pd_R M<\infty$.  
\end{proof}

The following is an Ext version of Corollary \ref{defmin}.

\begin{cor}\label{defminext} Let $R$ be a local Cohen--Macaulay ring of dimension $d\ge 1$ and minimal multiplicity.
Let $f_1,\cdots,f_c$ be an $R$-regular sequence in $\m$, where $c\ge 1$.
Set $S=R/(f_1,\cdots,f_c)R$.
Let $M,N$ be $S$-modules such that $M$ is maximal Cohen--Macaulay over $S$ and $\Ext_S^i(M,N)=0$ for $t\le i\le t+d+c-1$, where $t\ge 2$ is some integer.
Then, $\Ext_S^i(M,N)=0$ for all $i\ge 1$, and either $\pd_R M<\infty$, or $\id_R N<\infty$.  
\end{cor}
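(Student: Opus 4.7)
The plan is to mimic the inductive structure of Corollary \ref{defmin} and proceed by induction on $c \ge 1$, replacing the change-of-rings long exact sequence of Tor with its Ext analog. For $f \in R$ an $R$-regular element and $M, N$ both $R/(f)$-modules, the Cartan--Eilenberg spectral sequence for the change $R \to R/(f)$ collapses to two columns (since $\Ext^q_R(R/(f), N)$ equals $N$ for $q = 0, 1$ and vanishes for $q \ge 2$), yielding the long exact sequence
\begin{equation*}
\cdots \to \Ext^n_{R/(f)}(M,N) \to \Ext^n_R(M,N) \to \Ext^{n-1}_{R/(f)}(M,N) \xrightarrow{\chi} \Ext^{n+1}_{R/(f)}(M,N) \to \Ext^{n+1}_R(M,N) \to \cdots,
\end{equation*}
with $\chi$ the degree-$2$ Eisenbud operator. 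Throughout, $M$ and $N$ have constant rank $0$ over $R$ since they are annihilated by the $R$-regular sequence $f_1, \ldots, f_c$; maximal Cohen--Macaulayness of $M$ over $S$ yields $\depth_R M = \dim R - c$, so finite $\pd_R M$ forces $\pd_R M = c$ by the Auslander--Buchsbaum formula.

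For the base case $c = 1$, set $S = R/(f_1)$. The displayed sequence combined with the hypothesis $\Ext^i_S(M,N) = 0$ for $t \le i \le t+d$ gives $\Ext^i_R(M,N) = 0$ for $t+1 \le i \le t+d$, hence $\Ext^i_R(\syz_R M, N) = 0$ for $t \le i \le t+d-1$. Since $\syz_R M$ is maximal Cohen--Macaulay over $R$ and inherits constant rank from $M$, Theorem \ref{newminext} applied with $t \ge 2 = d + 2 - \dim R$ produces either $\pd_R M < \infty$ or $\id_R N < \infty$. Either conclusion upgrades the derived vanishing to $\Ext^n_R(M, N) = 0$ for all $n \ge 2$ (immediately in the first case, since $\pd_R M = 1$; and, in the second case, by an Ext-rigidity argument paralleling the use of \cite[Lemma 2.2]{yo} in the Tor proof). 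Then $\chi$ becomes an isomorphism $\Ext^j_S(M,N) \cong \Ext^{j+2}_S(M,N)$ for $j \ge 1$, and the given range of length $d + 1 \ge 2$ (covering both parities at degrees $\ge 2$) propagates via $2$-periodicity to $\Ext^i_S(M,N) = 0$ for all $i \ge 1$.

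For the inductive step $c \ge 2$, set $S' = R/(f_1, \ldots, f_{c-1})$, so that $S = S'/(f_c)S'$ with $f_c$ an $S'$-regular element. The $S' \to S$ change-of-rings sequence converts the hypothesized $d + c$ consecutive vanishings of $\Ext^i_S(M,N)$ into $d + c - 1$ consecutive vanishings of $\Ext^i_{S'}(M, N)$ in $[t+1, t+d+c-1]$; shifting by one $S'$-syzygy (which is maximal Cohen--Macaulay over $S'$, since $\depth_{S'} M = \dim S' - 1$) yields vanishing of $\Ext^i_{S'}(\syz_{S'} M, N)$ in $[t, t + d + (c-1) - 1]$, exactly the hypothesis for $c - 1 \ge 1$. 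The inductive hypothesis then gives $\pd_R \syz_{S'} M < \infty$ (hence $\pd_R M < \infty$, since $\pd_R S' = c - 1 < \infty$) or $\id_R N < \infty$, together with $\Ext^i_{S'}(\syz_{S'} M, N) = 0$ for all $i \ge 1$, i.e.\ $\Ext^i_{S'}(M, N) = 0$ for all $i \ge 2$. Consequently, $\chi$ is a degree-$2$ isomorphism on $\Ext^\ast_S(M, N)$ for $j \ge 1$, and the $d + c \ge 2$ consecutive given vanishings propagate via $2$-periodicity to $\Ext^i_S(M, N) = 0$ for all $i \ge 1$.

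The main obstacle is the base-case rigidity step: establishing the Ext analog of \cite[Lemma 2.2]{yo} that allows one to upgrade the derived consecutive vanishing of $\Ext^n_R(M,N)$ in $[t+1, t+d]$ to vanishing for all $n \ge 2$ when $\id_R N < \infty$ and $d \ge 2$ (so that $\id_R N = d$ alone does not already force vanishing in that range). Once this is in hand, the induction is driven entirely by the two change-of-rings sequences and the $2$-periodicity from $\chi$, mirroring the Tor argument in Corollary \ref{defmin}; the delicate bookkeeping is that the degree lost at the bottom of each change-of-rings step is exactly recovered by the syzygy shift, keeping the hypothesis $t \ge 2$ alive through the induction.
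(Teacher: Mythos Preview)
Your inductive scheme is exactly the paper's: run Corollary \ref{defmin} with the change-of-rings long exact sequence for Ext, invoke Theorem \ref{newminext} at the base, and use $2$-periodicity to propagate. The only place you diverge is in treating the ``Ext analog of \cite[Lemma 2.2]{yo}'' as an obstacle. It is not: in the base case, $\syz_R M$ is maximal Cohen--Macaulay over $R$, and once Theorem \ref{newminext} delivers $\id_R N<\infty$, the standard Ischebeck-type fact (over a Cohen--Macaulay local ring, $\Ext^{i}_R(X,Y)=0$ for all $i>\depth R-\depth X$ whenever $\id_R Y<\infty$) gives $\Ext^{i}_R(\syz_R M,N)=0$ for every $i\ge 1$ outright, hence $\Ext^{n}_R(M,N)=0$ for all $n\ge 2$. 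No rigidity is needed---this direct vanishing is precisely what the paper substitutes for \cite[Lemma 2.2]{yo}. With that gap closed, your argument matches the paper's proof.
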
 

\begin{proof} The proof is parallel to the proof of Corollary \ref{defmin} by using long exact sequence of Ext (see for instance \cite[Lemma 2.6]{gp}) instead of long exact sequence of Tor, using Theorem \ref{newminext} in place of Theorem \ref{newmin} and using the fact that $\Ext^i_R(X,Y)=0$ for all $i\ge 1$ when $X$ is Maximal Cohen--Macaulay and $Y$ has finite injective dimension in place of \cite[Lemma 2.2]{yo}.    
\end{proof}

If we consider a Cohen--Macaulay local ring of dimension one, we have another source of rigid test property. To see this, we first put a lemma on the structure of syzygy modules with assuming some technical assumption. 

\begin{lem} \label{l521}
Let $R$ be a Cohen--Macaulay local ring  with a canonical module $\omega$ such that there exists a surjective homomorphism $\m^{\oplus n}\to \m^\dag$ for some $n$. Then $\dim R\le 1$. Let $M$ be a nonfree maximal Cohen--Macaulay $R$-module.
Then there exists an $R$-module $L$ such that $\m L\cong (\syz^1 M)^\dag$, so $(\syz^1M)^\dag$ is isomorphic to a Burch submodule of some $R$-module.
If moreover $\dim R=1$, then $\syz^1 M$ is isomorphic to a Burch submodule of some $R$-module.   
\end{lem}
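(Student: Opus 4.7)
The plan is to prove the three assertions of the lemma — the dimension bound $\dim R \le 1$, the existence of $L$, and the Burch conclusions — in turn.

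First I would show $\dim R \le 1$. Suppose toward a contradiction that $\dim R \ge 2$. Since $R$ is Cohen--Macaulay, $\depth\omega = \dim R \ge 2$, and applying $\Hom_R(-,\omega)$ to $0 \to \m \to R \to k \to 0$ together with $\Hom_R(k,\omega) = 0 = \Ext^1_R(k,\omega)$ yields a natural isomorphism $\omega \xrightarrow{\sim} \m^\dag$ sending $x \in \omega$ to the map $r \mapsto rx$. Under this correspondence the image of the homomorphism $\m \to \omega$ associated to $x$ is $\m x$, so $\tau_\m(\omega) = \sum_{x \in \omega} \m x = \m\omega$. The hypothesized surjection $\m^{\oplus n} \twoheadrightarrow \m^\dag$ forces $\tau_\m(\omega) = \omega$, hence $\m\omega = \omega$; Nakayama's lemma gives $\omega = 0$, a contradiction.

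Next I would construct $L$ such that $\m L \cong (\syz^1 M)^\dag$. Choose a minimal presentation $R^{\oplus b} \to R^{\oplus a} \to M \to 0$; by minimality $\syz^1 M \subseteq \m^{\oplus a}$. Set $Q := \m^{\oplus a}/\syz^1 M$, which sits in the short exact sequence $0 \to Q \to M \to k^{\oplus a} \to 0$. Since $M$ is maximal Cohen--Macaulay, $\Ext^i_R(M,\omega) = 0$ for all $i \ge 1$, and since $\id_R \omega = \dim R \le 1$ we have $\Ext^2_R(k,\omega) = 0$; the long exact sequence of Ext associated to this short exact sequence then yields $\Ext^1_R(Q,\omega) \cong \Ext^2_R(k^{\oplus a},\omega) = 0$. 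Dualizing $0 \to \syz^1 M \to \m^{\oplus a} \to Q \to 0$ therefore shows that the natural map $(\m^{\oplus a})^\dag \to (\syz^1 M)^\dag$ is surjective. Composing with the surjection $\m^{\oplus an} \twoheadrightarrow (\m^\dag)^{\oplus a} \cong (\m^{\oplus a})^\dag$ induced by the one given in the hypothesis, we obtain a surjection $\m^{\oplus an} \twoheadrightarrow (\syz^1 M)^\dag$, and Remark \ref{r25} produces an $R$-module $L$ with $\m L \cong (\syz^1 M)^\dag$.

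For the Burch conclusions, I would note that since $M$ is nonfree and maximal Cohen--Macaulay, $\syz^1 M$ is a nonzero maximal Cohen--Macaulay module, so $(\syz^1 M)^\dag \ne 0$ and hence $\m L \ne 0$. Example \ref{mN} then says $\m L$ is a Burch submodule of $L$, so $(\syz^1 M)^\dag$ is isomorphic to a Burch submodule of some $R$-module. If in addition $\dim R = 1$, then $(\syz^1 M)^\dag$ is itself maximal Cohen--Macaulay with positive depth, so Corollary \ref{313}(2) applied to it yields that its canonical dual — which is $\syz^1 M$ by reflexivity of MCM modules — is isomorphic to a Burch submodule of some $R$-module. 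The main obstacle will be the vanishing $\Ext^1_R(Q,\omega) = 0$ that keeps the dualized restriction map surjective; this argument depends crucially on both $M$ being MCM (so $\Ext^1_R(M,\omega) = 0$) and on the dimension bound $\dim R \le 1$ just proved (so $\id_R \omega \le 1$ and $\Ext^2_R(k,\omega) = 0$), which interact through the auxiliary sequence extracted from the minimal presentation.
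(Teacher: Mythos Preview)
Your proof is correct and follows the same overall architecture as the paper's: establish $\dim R\le 1$ via $\m^\dag\cong\omega$ when $\dim R\ge 2$, factor the syzygy inclusion through $\m F$, dualize to get a surjection from copies of $\m^\dag$ onto $(\syz^1 M)^\dag$, and finish with Remark~\ref{r25}, Example~\ref{mN}, and Corollary~\ref{313}(2). Two minor differences are worth noting. For the dimension bound, the paper argues that the surjection makes $\omega$ a Burch submodule and then invokes Lemma~\ref{burchsum} to get $k$ as a summand of $\omega/a\omega$, contradicting $\depth\omega\ge 2$; your computation $\tau_\m(\omega)=\m\omega$ combined with Nakayama is more direct and avoids the Burch machinery entirely. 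For the vanishing needed to make $(\m^{\oplus a})^\dag\to(\syz^1 M)^\dag$ surjective, the paper observes that your module $Q$ is exactly $\m M$ and checks it is maximal Cohen--Macaulay (trivial in dimension $0$, and in dimension $1$ because $\m M\subseteq M$ has positive depth), whence $\Ext^1_R(\m M,\omega)=0$; your long-exact-sequence argument from $0\to Q\to M\to k^{\oplus a}\to 0$ reaches the same vanishing by a slightly different route. Both variations are sound, and your version of the first step is arguably cleaner.
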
   

\begin{proof} If $\dim R\ge 2$, then as in the proof of \cite[Lemma 3.1.(1)]{k}, we see $\m^\dag\cong \omega$. Since $\omega \ne 0$, so by Remark \ref{r25} and Example \ref{mN}, we would get $\omega$ is isomorphic to a Burch submodule of some $R$-module, so by Lemma \ref{burchsum} and its proof, $k$ is a direct summand of $\omega/a\omega$ for some $R$-regular element $a$, contradicting $\depth_{R/aR}\omega/a\omega=\depth_R\omega-1\ge 1$. Thus $\dim R\le 1$. Now let $M$ be a non-free maximal Cohen--Macaulay $R$-module.  Consider a short exact sequence $0 \to \syz^1 M \xrightarrow[]{\iota} F \to M \to 0$ which comes from a minimal free resolution of $M$. Since $\im(\iota)\subseteq \m F$, we get a short exact sequence $0 \to \syz^1 M \to \m F \to \m M \to 0$. If $\dim R=0$, then $\m M$ is maximal Cohen--Macaulay. If $\dim R=1$, then since $\m M$ is a submodule of $M$, hence $\depth \m M>0$, so $\m M$ is maximal Cohen--Macaulay. So in any case, $\m M$ is maximal Cohen--Macaulay, so taking the canonical duals, we obtain a short exact sequence $0\to (\m M)^\dag \to (\m F)^\dag \xrightarrow[]{p} (\syz^1M)^\dag \to 0$. Note that $(\m F)^\dag$ is isomorphic to a direct sum $(\m^{\oplus s})^{\dag}$ of $s=\rank F$ many copies of $\m$. Thus the map $p$ gives a surjection $\m^{\oplus ns} \to (\syz^1 M)^\dag$. Remark \ref{r25} says that $(\syz^1 M)^\dag\cong \m L$ for some $R$-module $L$. Since $(\syz^1 M)^\dagger\ne 0$ (as $M$ is non-free), so by Example \ref{mN}, $(\syz^1 M)^\dag$ is isomorphic to a Burch submodule of some $R$-module. If $\dim R=1$, then by Corollary \ref{313}(2), $\syz^1 M$ is also isomorphic to a Burch submodule of some $R$-module.     
\end{proof}     

Now we obtain two results on Tor rigid test property under the assumption as in Lemma \ref{l521}. In the first one, we assume 2-consecutive vanishing of Tor.
On the other hand, in latter one, we only assume vanishing of Tor at one index, while we restrict one of the considering modules to have constant rank.  

\begin{thm}\label{trig}  
Let $R$ be a Cohen--Macaulay local ring  with a canonical module such that there exists a surjective homomorphism $\m^{\oplus n}\to \m^\dag$ for some $n$. 
Let $M,N$ be $R$-modules.
Assume one of the following holds:

\begin{enumerate}[\rm(1)]
    \item  $\dim R=0$ and $\Tor_{t+1}^R(M,N)=0$ for some integer $t\ge 1$.
    
    \item $\dim R=1$ and  $\Tor_t^R(M,N)=\Tor_{t+1}^R(M,N)=0$ for some $t\ge 3-\max\{\depth M,\depth N\}$.
\end{enumerate}

Then either $\pd M\le 1$ or $\pd N\le 1$.
\end{thm}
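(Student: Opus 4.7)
The plan is to reduce each case to a situation where Lemma~\ref{l521} supplies an explicit Burch module (or a module of the form $\m L$), and then invoke a rigidity result from the earlier sections.

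For case $(2)$ with $\dim R=1$: since the hypothesis and conclusion are symmetric in $M$ and $N$, I may assume $\depth M\ge \depth N$, and set $d:=\depth M\in\{0,1\}$. If $\pd M\le 1$ there is nothing to show, so assume $\pd M\ge 2$; the goal is $\pd N\le 1$, and since $R$ is Cohen--Macaulay of dimension $1$, by Auslander--Buchsbaum it is enough to prove $\pd N<\infty$. If $d=1$, then $M$ is a nonfree maximal Cohen--Macaulay module, so Lemma~\ref{l521} gives that $\syz^1 M$ is (isomorphic to) a Burch submodule of some $R$-module; standard Tor shifts turn the hypothesis into $\Tor_{t-1}^R(\syz^1 M,N)=\Tor_t^R(\syz^1 M,N)=0$ (using $t\ge 3-d=2$), and Proposition~\ref{finpd3}(1) yields $\pd N<t$. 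If $d=0$ (so also $\depth N=0$ and $t\ge 3$), then $\pd M\ge 2$ makes $\syz^1 M$ a nonfree maximal Cohen--Macaulay module, and Lemma~\ref{l521} applied to $\syz^1 M$ produces $\syz^2 M$ as a Burch submodule of some $R$-module; shifting twice gives $\Tor_{t-2}^R(\syz^2 M,N)=\Tor_{t-1}^R(\syz^2 M,N)=0$ with $t-2\ge 1$, and Proposition~\ref{finpd3}(1) yields $\pd N<t-1$. In either subcase $\pd N$ is finite.

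For case $(1)$ with $\dim R=0$: if $M$ is free there is nothing to show, so assume $M$ is nonfree. Every finitely generated $R$-module is maximal Cohen--Macaulay, so Lemma~\ref{l521} gives an $R$-module $L$ with $(\syz^1 M)^\dag\cong \m L$, and $\m L\ne 0$ because $\syz^1 M\ne 0$. Shifting the hypothesis one step, $\Tor_t^R(\syz^1 M,N)=0$. Because $\omega\cong E_R(k)$ is injective over the Artinian ring $R$, the functor $(-)^\dag=\Hom_R(-,\omega)$ is an exact contravariant duality on $\mod R$, and the standard computation with a free resolution of $N$ provides an isomorphism $\Tor_t^R(N,\syz^1 M)^\dag\cong \Ext_R^t(N,(\syz^1 M)^\dag)=\Ext_R^t(N,\m L)$. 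Thus $\Ext_R^t(N,\m L)=0$. Since $R$ is Artinian and $\m L\ne 0$, we have $\depth(\m L)=0$, and Proposition~\ref{tt}(3) forces $\pd N<t$; finite projective dimension over an Artinian local ring gives $\pd N=0\le 1$.

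The main obstacle is case $(1)$: we are handed only a single Tor vanishing, which blocks a direct application of Proposition~\ref{finpd3}. The key maneuver is to dualize through $(-)^\dag$, which turns $\Tor$ into $\Ext$ because $\omega=E_R(k)$ is injective in the Artinian case, thereby transferring the problem to a single Ext vanishing against the depth-zero module $\m L$; Proposition~\ref{tt}(3) then does the rest. Case $(2)$ is then an almost-direct corollary of the Burch-module rigidity of Proposition~\ref{finpd3}(1), with the role of Lemma~\ref{l521} being to manufacture a Burch syzygy out of $M$.
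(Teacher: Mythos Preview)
Your proof is correct and follows essentially the same approach as the paper: in case~(1) you dualize via Matlis duality to convert the single Tor vanishing into an Ext vanishing against $\m L\cong(\syz^1 M)^\dag$ and invoke Proposition~\ref{tt}(3), and in case~(2) you apply Lemma~\ref{l521} to obtain $\syz^{2-d}M$ as a Burch submodule and then invoke Proposition~\ref{finpd3}(1). The only cosmetic difference is that the paper treats case~(2) uniformly via $\syz^{2-s}M$ with $s=\max\{\depth M,\depth N\}$, whereas you split into the subcases $d=0$ and $d=1$; your explanation of the duality step in case~(1) is in fact slightly more detailed than the paper's.
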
   

\begin{proof} (1): We may assume $\pd M=\infty$ and aim to show $\pd N \le 1$. By Matlis duality, we have $\Tor_t^R(N,\syz^1 M)^{\dagger} \cong \Ext^t_R(N,(\syz^1 M)^{\dagger})$. As $\Tor^R_t(N,\syz^1 M)\cong \Tor^R_{t+1}(N,M)=0$, so $\Ext^t_R(N,(\syz^1 M)^{\dagger})=0$. By Lemma \ref{l521} we have $(\syz^1 M)^{\dagger}\cong \m L$ for some $R$-module $L$. As $\dim R=0$, and $\syz^1 M\ne 0$, so $\depth (\m L)=0$. Now $\Ext^t_R(N,(\syz^1 M)^{\dagger})=0$ implies $\pd N\le 1$ by Proposition \ref{tt}(3).

(2): Set $s:=\max\{\depth M,\depth N\}$.
We may assume both $\pd M=\infty$ and $s=\depth M$, and aim to show $\pd N\le 1$.
Then $\syz^{1-s} M$ is nonfree and maximal Cohen--Macaulay.
By Lemma \ref{l521}, $\syz^1(\syz^{1-s} M)(=\syz^{2-s} M)$ is a Burch submodule of some $R$-module.
By our assumption, since $t-2+s\ge 1$, so we have vanishings of both $\Tor_{t-2+s}(\syz^{2-s} M, N)$ and $\Tor_{t-1+s}(\syz^{2-s} M, N)$.
Applying Proposition \ref{finpd3} (1), we obtain that $\pd N<\infty$, and then $\pd N\le 1$ by Auslander--Buchsbaum formula.  
\end{proof}

\begin{rem} Let $R$ be Artinian and as in Theorem \ref{trig}. Taking $M$ to be nonfree and maximal Cohen--Macaulay in Theorem \ref{trig}(1), we see that $\Tor^R_{t}(\syz M,N)=0$ for some $t\ge 1$ implies $N$ is free. Thus $\syz M$ is $1$-Tor rigid test. Similarly, when $\dim R=1$, $\syz M$ is $2$-Tor rigid test.   

\end{rem}     

\begin{thm} \label{t55}
Let $R$ be a Cohen--Macaulay local ring of dimension one with a canonical module, and $M,N$ be $R$-modules.
Assume
\begin{enumerate}[\rm(1)]
\item there exists a surjective homomorphism $\m^{\oplus n}\to \m^\dag$ for some $n$,
\item $M$ or $N$ has constant rank, and
\item there exists an integer $t\ge 3-\max\{\depth M,\depth N\}$ such that $\Tor_t^R(M,N)=0$.
\end{enumerate}
Then either $\pd M\le 1$ or $\pd N\le 1$. 
\end{thm}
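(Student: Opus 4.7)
My plan is to mimic the $d=1$ case of Theorem \ref{newmin} (specifically Case II of its proof, where a non-free maximal Cohen--Macaulay syzygy $M'$ is shown to satisfy $M'\cong \m M'$ by the Ulrich property), replacing the Ulrich/minimal-multiplicity input with the canonical-dual structure supplied by Lemma \ref{l521}. By symmetry of $\Tor$, I may assume $\max\{\depth M,\depth N\}=\depth M$, so $s:=t-2+\depth M\ge 1$, and I argue by contradiction: assume $\pd M=\infty$, aiming to show $\pd N<\infty$ (from which $\pd N\le 1$ follows by the Auslander--Buchsbaum formula since $\depth R=1$).

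Set $M':=\syz^{1-\depth M}M$ and $M'':=\syz M'=\syz^{2-\depth M}M$; both are non-free maximal Cohen--Macaulay $R$-modules (non-zero, and non-free because $\pd M=\infty$). The hypothesis translates into $\Tor_s^R(M'',N)=0$. Now Lemma \ref{l521}, applied to $M'$, produces an $R$-module $L$ with $(M'')^\dagger\cong \m L$, so $(M'')^\dagger$ is a non-zero Burch submodule of $L$.

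The heart of the argument is to transfer the vanishing $\Tor_s^R(M'',N)=0$ to a vanishing $\Tor_s^R(N,\m L)=0$ that Proposition \ref{t} can process. I plan to do this by working with the short exact sequence $0\to M''\to \m F\to \m M'\to 0$ (from the proof of Lemma \ref{l521}, where $F$ is the free cover of $M'$ and all three terms are maximal Cohen--Macaulay since $\dim R=1$), together with its canonical dual $0\to (\m M')^\dagger\to (\m F)^\dagger\to \m L\to 0$, and the surjection hypothesis, which presents $(\m F)^\dagger\cong (\m^\dagger)^{\oplus \mu(M')}$ as a quotient of $\m^{\oplus n\mu(M')}$. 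This is the analogue of the Matlis-duality step in the Artinian case of Theorem \ref{trig}(1), where $\omega=E(k)$ made the identification $\Tor_t^R(N,\syz^1M)^\dagger\cong \Ext^t_R(N,(\syz^1M)^\dagger)$ automatic. I expect this transfer to be the principal obstacle of the proof, since in the one-dimensional (non-Artinian) case $(-)^\dagger$ is not Matlis duality, so the passage from $\Tor$ with $M''$ to $\Tor$ with the $\m$-module $\m L\cong (M'')^\dagger$ requires delicate bookkeeping through the two sequences above and the surjection $\m^{\oplus n}\twoheadrightarrow \m^\dagger$.

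Once $\Tor_s^R(N,\m L)=0$ is in hand, Proposition \ref{t} closes both cases. If $M$ has constant rank, then $M''$ inherits constant rank, which must be positive since $M''$ is a nonzero maximal Cohen--Macaulay module; hence $M''$ is faithful (Lemma \ref{l27}), so $\m L\cong (M'')^\dagger$ is faithful, so $L$ is faithful; Proposition \ref{t}(4) then yields $\pd N\le s$. If instead $N$ has constant rank, I verify $\grade L=0$: since $\m L=(M'')^\dagger$ is a nonzero maximal Cohen--Macaulay $R$-module, $\Ass(\m L)\subseteq \Ass R$ is nonempty, so picking $\p\in \Ass(\m L)$ gives $L_\p\supseteq (\m L)_\p\ne 0$ and hence $\ann L\subseteq \p$; Proposition \ref{t}(6) then yields $\pd N\le s$. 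Either way $\pd N<\infty$, completing the contradiction and the proof.
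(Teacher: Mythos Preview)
The setup is fine, but the ``transfer'' step is a genuine gap and, as you yourself flag, the crux of the whole argument. There is simply no mechanism by which $\Tor_s^R(M'',N)=0$ forces $\Tor_s^R(N,\m L)=0$ with $\m L\cong (M'')^\dagger$: the short exact sequences $0\to M''\to \m F\to \m M'\to 0$ and its canonical dual, together with the surjection $\m^{\oplus n}\twoheadrightarrow \m^\dagger$, relate $M''$ to $\m L$ only through $(-)^\dagger$, and $(-)^\dagger$ does not intertwine $\Tor^R_s(-,N)$ with itself. Chasing those sequences gives you information about $\Tor$ of $N$ with $\m F$, $(\m F)^\dagger$, etc., but never a comparison of $\Tor_s(M'',N)$ with $\Tor_s((M'')^\dagger,N)$.

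What the paper does instead is pass from $\Tor$ to $\Ext$. The constant-rank hypothesis on $M$ or $N$ guarantees that $\Tor_q^R(M'',N)$ has finite length for all $q>0$; then, since $M''$ is maximal Cohen--Macaulay, the isomorphism from \cite[3.1]{cd} gives
\[
\Ext^{s+1}_R\bigl(N,(M'')^\dagger\bigr)\;\cong\;\Ext^1_R\bigl(\Tor_s^R(M'',N),\omega\bigr)\;=\;0,
\]
so one obtains $\Ext^{s+1}_R(N,\m L)=0$. From here the proof closes with Proposition \ref{tt} (the $\Ext$ version), not Proposition \ref{t}: if $M$ has constant rank then $M''$ and hence $\m L$ and $L$ are faithful, and \ref{tt}(4) applies; if $N$ has constant rank then $N$ is locally free on the punctured spectrum (dimension one), and \ref{tt}(7) applies since $\m L\ne 0$. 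Your endgame with Proposition \ref{t}(4),(6) would be correct \emph{if} you had the $\Tor$ vanishing with $\m L$, but that vanishing is not available; the right target is the $\Ext$ vanishing, and the bridge is the Celikbas--Dao duality isomorphism, which is also precisely where the constant-rank hypothesis enters.
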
    

\begin{proof}
By symmetry on $M$ and $N$, we may assume $\max\{\depth M,\depth N\}=\depth N$ and call this quantity $s$. 
First we note that $\syz^{1-s} N$ is maximal Cohen--Macaulay and so is $\syz^{2-s}N=\syz(\syz^{1-s} N)$.
In particular, by Lemma \ref{l521}, $(\syz^{2-s} N)^\dag \cong \m L$ for some $R$-module $L$.
Since $t-2+s\ge 1$, so the assumption (3) is equivalent to $\Tor_{t-2+s}(M,\syz^{2-s} N)=0$.
The assumption (2) yields that $\Tor_q(M,\syz^{2-s} N)$ has finite length for all $q>0$.
Thus along with the proof of \cite[3.1]{cd}, one has isomorphisms $\Ext^{1+t-2+s}_R(M,(\syz^{2-s} N)^\dag)\cong \Ext^1_R(\Tor^R_{t-2+s}(M,\syz^{2-s} N),\omega)=0$.
Thus we have a vanishing of $\Ext^{t-1+s}_R(M,\m L)$.
Now we may also assume $\pd N=\infty$, and show $\pd M<\infty$.
We divide the argument in two cases.

Case I: We assume that $N$ has constant rank. Since $\syz^{2-s} N\ne 0$ (as $\pd N=\infty$), so this implies that $\syz^{2-s}N$ has positive constant rank, hence is faithful by  Lemma \ref{l27}. By Lemma \ref{fs} (1), the faithfulness of $\syz^{2-s} N$ implies that of $(\syz^{2-s} N)^\dag(\cong \m L)$ which further implies that of $L$.
Thanks to Proposition \ref{tt} (4), we achieve the inequality $\pd M<\infty$. By the Auslander--Buchsbaum formula, we conclude that $\pd M\le 1$.    

Case II: We assume that $M$ has constant rank.
As $\dim R=1$, it also means that $M$ is locally free on $\spec (R) \smallsetminus\{\m\}$.
Thus by applying Proposition \ref{tt} (7) and remarking that $(\syz^{2-s} N)^\dagger\cong \m L\not=0$, we conclude that $\pd M<\infty$.
The assertion $\pd M \le 1$ is now follows by the Auslander--Buchsbaum formula.    
\end{proof}   

\begin{rem} Taking $M$ to be nonfree and maximal Cohen--Macaulay in Theorem \ref{t55}, we see that $\Tor^R_{t-1}(\syz M,N)=0$ for some $t\ge 2$ implies $\pd N\le 1$, hence $\Tor^R_i(M,N)=0$ for all $i\ge 1$ by \cite[Lemma 2.2]{yo}. Thus $\syz M$ is $1$-Tor rigid test provided that $M$ has constant rank. 
\end{rem}  

We also have the following results.
They can be said Ext versions of Theorem \ref{t55} and Theorem \ref{trig} in some sense.

\begin{thm} \label{t525}
Let $R$ be a Cohen--Macaulay local ring  with a canonical module such that there exists a surjective homomorphism $\m^{\oplus n}\to \m^\dag$ for some $n$.
Let $M,N$ be $R$-modules.
Assume one of the following holds:

\begin{enumerate}[\rm(1)] 

\item $\dim R=0$ and $\Ext^{t+1}_R(M,N)=0$ for some integer $t\ge 1$. 

    \item  $\dim R=1$ and   $\Ext^t_R(M,N)=\Ext^{t+1}_R(M,N)=0$ for some $t\ge 3-\depth M$.
    
\end{enumerate}

Then either $\pd M\le 1$ or $\id N\le 1$.
\end{thm}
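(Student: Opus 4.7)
The plan is to mirror the proof of Theorem \ref{trig}, replacing its Tor-based inputs by the Ext-vanishing criteria from Propositions \ref{tt} and \ref{321}. In both parts I shall assume $\pd M = \infty$ and aim to conclude $\id N \le 1$; the case $N = 0$ is trivial, so take $N$ nonzero throughout.

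For part (1), with $\dim R = 0$, note that $\syz M$ is nonzero and maximal Cohen--Macaulay, as is $N$, since $R$ is Artinian. Lemma \ref{l521} applied to $M$ furnishes an $R$-module $L$ with $(\syz M)^\dagger \cong \m L$, whence $\depth (\m L) = 0$. Shifting one syzygy and then using the canonical-duality isomorphism from \ref{214} (applicable since both $\syz M$ and $N$ are MCM), the vanishing $\Ext^{t+1}_R(M, N) = 0$ becomes $\Ext^t_R(N^\dagger, \m L) = 0$. Proposition \ref{tt}(3) then forces $\pd N^\dagger < t$, and the Auslander--Buchsbaum formula over the Artinian ring $R$ makes $N^\dagger$ free. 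Consequently $N \cong \omega^{\oplus s}$, giving $\id N = 0 \le 1$.

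For part (2), with $\dim R = 1$, set $s := \depth M \in \{0, 1\}$, so that $\syz^{1-s} M$ is a non-free maximal Cohen--Macaulay module. The $\dim R = 1$ conclusion of Lemma \ref{l521} applied to $\syz^{1-s} M$ shows that $\syz^{2-s} M$ is isomorphic to a Burch submodule of some $R$-module, with $\depth (\syz^{2-s} M) = 1 > 0$. The hypothesis $t \ge 3 - s$ makes the syzygy shift valid and supplies
\begin{equation*}
\Ext^{t-2+s}_R(\syz^{2-s} M, N) = \Ext^{t-1+s}_R(\syz^{2-s} M, N) = 0,
\end{equation*}
while also yielding $t - 1 + s \ge 2 \ge \depth N$ (using $\depth N \le \dim R = 1$). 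Proposition \ref{321} then produces $\id N < \infty$, and Bass's theorem upgrades this to $\id N = \depth R = 1$.

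The main bookkeeping obstacle will be lining up the indices so that the two consecutive Ext vanishings sit at the correct positions after the syzygy shift and so that the resulting index dominates $\depth N$; both requirements are exactly captured by the hypothesis $t \ge 3 - \depth M$ together with $\depth N \le \dim R$.
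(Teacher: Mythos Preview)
Your proof is correct and follows essentially the same route as the paper's own argument: for part (1) you use Lemma \ref{l521} together with the canonical-duality isomorphism of \ref{214} and Proposition \ref{tt}(3) to force $N^\dagger$ free, and for part (2) you pass to $\syz^{2-s}M$ via Lemma \ref{l521} and invoke Proposition \ref{321}, exactly as the paper does. The index bookkeeping (in particular the need for $t\ge 3-s$ to make the syzygy shift valid and to guarantee $t-1+s\ge \depth N$) is handled correctly.
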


\begin{proof} We may assume $\pd M=\infty$, $N\ne 0$ and aim to show $\id N\le 1$.  

 (1): In this case, every module is maximal Cohen--Macaulay. We have isomorphisms of modules $0=\Ext^{t+1}_R(M,N)\cong \Ext^t_R(\syz^1 M,N)\cong \Ext^t_R(N^{\dagger},(\syz^1 M)^{\dagger})$. As $M$ is non-free, so $(\syz^1 M)^{\dagger} \cong \m L$ for some $R$-module $L$ by Lemma \ref{l521}. As $\dim R=0$, and $\syz^1 M\ne 0$ so $\depth (\m L)=0$. Now $N^{\dagger}$ is free by Proposition \ref{tt}(3).  Hence $N$ is a direct sum of copies of $\omega$, so it has finite injective dimension.    

(2): Set $s=\depth M$. Note that $\syz^{1-s} M$ is nonfree and maximal Cohen--Macaulay.
By Lemma \ref{l521}, $\syz^1(\syz^{1-s} M)(=\syz^{2-s} M)$ is a Burch submodule of some $R$-module.
By our assumption, we have vanishings of both $\Ext^{t-2+s}_R(\syz^{2-s} M, N)$ and $\Ext^{t-1+s}_R(\syz^{2-s} M, N)$.
Applying Proposition \ref{321} (remark $t-1+s \ge 1\ge \depth N$), we obtain that $\id N<\infty$, and then $\id N\le 1$ by Bass's formula.
\end{proof}

\begin{rem} Theorem \ref{t525} shows that local Cohen--Macaulay rings  with a canonical module such that there exists a surjective homomorphism $\m^{\oplus n}\to \m^\dag$ for some $n$, satisfy the uniform Auslander's condition (see \cite[pp. 24]{uac}), hence the Auslander-Reiten conjecture holds for such rings, see \cite[Theorem A]{uac}.      
\end{rem}  

\begin{thm} \label{522}
Let $R$ be a Cohen--Macaulay local ring of dimension one with a canonical module, and $M,N$ be $R$-modules.
Assume
\begin{enumerate}[ \rm(1)]
\item there exists a surjective homomorphism $\m^{\oplus n}\to \m^\dag$ for some $n$,
\item either $M$ has constant rank or $R$ is generically Gorenstein and $N$ has constant rank, and
\item there exists an integer $t\ge 3-\depth M$ such that $\Ext^t_R(M,N)=0$.    
\end{enumerate}
Then either $\pd M\le 1$ or $\id N\le 1$. 
\end{thm}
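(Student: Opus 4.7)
The plan is to mirror the proof of Theorem \ref{t55} in the Ext setting: in place of the Tor-to-Ext conversion via \cite[3.1]{cd} used there, we insert an Auslander--Buchweitz maximal Cohen--Macaulay approximation of $N$ (Theorem \ref{mcmapp}) together with the $\omega$-duality of \ref{214}, and then apply Proposition \ref{tt} to deduce $\pd L_0^\dag<\infty$ for the maximal Cohen--Macaulay cover $L_0$. Since over a Cohen--Macaulay local ring with canonical module this forces $\id L_0<\infty$, combining with $\id Y<\infty$ and the defining sequence $0\to Y\to L_0\to N\to 0$ of the approximation gives $\id N<\infty$, whence Bass's formula yields $\id N\le \dim R=1$.

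After passing to the completion, assume $R$ has a canonical module $\omega$, and assume $\pd M=\infty$. Set $s:=\depth M$ and $M':=\syz^{1-s}_R M$, so that $M'$ is a nonfree maximal Cohen--Macaulay module; Lemma \ref{l521} provides an $R$-module $L$ with $(\syz M')^\dag\cong \m L$, where $\syz M'$ is itself maximal Cohen--Macaulay. The hypothesis $\Ext^t_R(M,N)=0$ with $t\ge 3-s$ propagates through the shifts across free resolutions to $\Ext^{t-2+s}_R(\syz M',N)=0$ with $t-2+s\ge 1$. Take an Auslander--Buchweitz sequence $0\to Y\to L_0\to N\to 0$ with $L_0$ maximal Cohen--Macaulay and $\id Y<\infty$; Remark \ref{mcmapp1} gives $\Ext^i_R(\syz M',Y)=0$ for $i>0$, so $\Ext^{t-2+s}_R(\syz M',L_0)=0$, and the duality of \ref{214} yields
\[
\Ext^{t-2+s}_R(L_0^\dag,\m L)\;\cong\;\Ext^{t-2+s}_R(L_0^\dag,(\syz M')^\dag)\;\cong\;\Ext^{t-2+s}_R(\syz M',L_0)\;=\;0.
\]

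In Case I ($M$ has constant rank), $M'$ has positive constant rank and so does $\syz M'$; Lemma \ref{l27} makes $\syz M'$ faithful, and Lemma \ref{fs}(1) then makes $\m L\cong (\syz M')^\dag$---and hence $L$---faithful, so Proposition \ref{tt}(4) applied to the displayed vanishing yields $\pd L_0^\dag<t-2+s$. In Case II ($R$ generically Gorenstein, $N$ has constant rank), localizing the approximation at each $\p\in \Min(R)$ (where $R_\p$ is Artinian Gorenstein) shows $Y_\p$ is $R_\p$-free (as $\id Y_\p<\infty$) and $N_\p$ is $R_\p$-free (by constant rank), so $(L_0)_\p$ is $R_\p$-free; combined with the local freeness of $\omega$ at $\Min(R)$, this makes $L_0^\dag$ locally free on $\Ass(R)$. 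Arranging the approximation so that $L_0^\dag$ additionally has constant rank, Proposition \ref{tt}(7) applied to the vanishing yields $\m L=0$ or $\pd L_0^\dag<t-2+s$; the first is excluded because $\m L\cong(\syz M')^\dag$ is a nonzero maximal Cohen--Macaulay module, and so of dimension one, precluding finite length. The principal obstacle is exactly this last step in Case II: the auxiliary $Y$ in an arbitrary MCM approximation may have non-constant local rank at minimal primes, so transferring the constant-rank hypothesis on $N$ to $L_0^\dag$ requires either a careful selection of the minimal MCM approximation (exploiting the gen Gor hypothesis) or a reduction to the subcase when $N$ is already maximal Cohen--Macaulay, absorbing the finite-length torsion $t(N)$ into the approximation.
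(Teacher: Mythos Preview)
Your approach is essentially identical to the paper's, and Case I is correct as written. The ``principal obstacle'' you flag in Case II is not an obstacle at all: the generic Gorenstein hypothesis makes $Y$ automatically have constant rank, with no special choice of approximation needed.

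Here is why. Over a one-dimensional Cohen--Macaulay local ring with canonical module $\omega$, any module $Y$ with $\id Y<\infty$ admits a short $\add(\omega)$-resolution $0\to \omega^{b_1}\to \omega^{b_0}\to Y\to 0$. (If $\depth Y=1$ then $Y$ is MCM of finite injective dimension, hence already in $\add(\omega)$; if $\depth Y=0$, take an MCM approximation $0\to Y_1\to X_0\to Y\to 0$, observe $X_0$ is MCM of finite injective dimension so $X_0\in\add(\omega)$, and the depth lemma forces $\depth Y_1\ge 1$, whence $Y_1\in\add(\omega)$ too.) When $R$ is generically Gorenstein, $\omega_\p\cong R_\p$ for each $\p\in\Min(R)$, so localizing the $\add(\omega)$-resolution at $\p$ shows $Y_\p$ is free of rank $b_0-b_1$, independent of $\p$. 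Thus $Y$ has constant rank; combined with the constant rank of $N$, the sequence $0\to Y\to L_0\to N\to 0$ gives $L_0$ constant rank, and since $\omega$ is locally free of rank one on $\Min(R)$, so is $L_0^\dag$. You may then invoke Proposition \ref{tt}(7) exactly as you outlined. This is precisely how the paper proceeds (with $X$ in place of your $L_0$): it simply asserts ``$Y$ also has constant rank'' from the generic Gorenstein hypothesis and moves on.

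A minor point: your exclusion of the alternative $\m L=0$ is correct but the phrase ``precluding finite length'' is a red herring---what matters is simply that $\m L\cong(\syz^{2-s}M)^\dag\ne 0$ because $\syz^{2-s}M$ is nonzero MCM.
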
 

\begin{proof}
Set $s:=\depth M$.
We may assume $\pd M=\infty$ (hence $\syz^s M\ne 0$) and $N\not=0$, and aim to show $\id N<\infty$.
Note that $\syz^{1-s} M$ is nonfree and maximal Cohen--Macaulay, and $\syz^{2-s} M=\syz^1(\syz^{1-s} M)$.
By Lemma \ref{l521}, $(\syz^{2-s} M)^\dag\cong \m L$ for some $R$-module $L$. By using Theorem \ref{mcmapp}, take a short exact sequence $0 \to Y \to X \to N \to 0$ such that $X$ is maximal Cohen--Macaulay and $\id Y<\infty$.
To see $\id N<\infty$, it is enough to check the inequality $\id X<\infty$.
In view of Remark \ref{mcmapp1} and \ref{214}, we get isomorphisms  
\begin{align*}
\Ext^t_R(M,N)\cong \Ext^{t-2+s}_R(\syz^{2-s}M,N)\cong \Ext^{t-2+s}_R(\syz^{2-s} M,X) &\cong \Ext^{t-2+s}_R(X^\dag,(\syz^{2-s} M)^\dag)\\
&\cong \Ext^{t-2+s}_R(X^\dag,\m L).
\end{align*}
If $M$ has constant rank, then $\syz^{2-s} M$ has positive constant rank, so is faithful by Lemma \ref{l27}. Thus so is $\m L(\cong (\syz^{2-s} M)^\dag)$ by Lemma \ref{fs} (1).
In this case, we may apply Proposition \ref{tt} (4) to deduce $\pd X^\dag<\infty$.
We next suppose that $N$ has constant rank and $R$ is generically Gorenstein.
Then $Y$ also has constant rank, and so does $X$ by the exact sequence $0 \to Y \to X \to N \to 0$.
We again use the assumption that $R$ is generically Gorenstein to see that $X^\dag$ has constant rank.
As $R$ is of dimension one, it yields that $X^\dag$ is locally free on the punctured spectrum $\spec(R)\smallsetminus\{\m\}$.
Thanks to Proposition \ref{tt} (7), we get $\pd X^\dag<\infty$ (We may see that $\m L\not=0$ since $\syz^{2-s} M$ is nonzero).
In any cases, we reach the inequality $\pd X^\dag<\infty$, i.e., $X^\dagger$ is free, which shows $\id X<\infty$.   
\end{proof}

\begin{rem} \label{r56}
Let $R$ be a Cohen--Macaulay local ring of dimension at most one with a canonical module. There exists a surjective homomorphism $\m^{\oplus n}\to \m^\dag$ for some $n$ in the case where either (a) $R$ has minimal multiplicity, or (b)  $\m\cong \m^\dag$.
Indeed, it is clear in the case of (b).
In the case of (a), if $\dim R=1$, then $\m^\dag$ is Ulrich (see \cite[2.5]{bhu}) so that $\m^\dag\cong \m\m^\dag$, hence there exists a surjection $\m^{\oplus n}\to \m^\dag$; and if $\dim R=0$, then $\m^2=0$ implies $\m\cong k^{\oplus \mu(\m)}$, so $\m^\dag\cong (k^\dag)^{\mu(\m)}\cong  k^{\oplus \mu(\m)}\cong \m$, so we are back to case (b). In particular, Theorem \ref{t55} gives another proof of Theorem \ref{newmin} in $1$-dimensional case. It is worth noting that Artinian hypersurfaces satisfies the condition (b). Indeed, for an Artinian hypersurface ring, we have $\m=xR$ for some $x\in \m$. Hence $\m\cong R/\ann_R(x)$, and consequently using that $R$ is Gorenstein, we get $\m^{\dagger}\cong \m^*\cong \ann_R(\ann_R(x))=xR=\m$. We also remark that $1$-dimensional rings with the isomorphism in (b) are studied in \cite{k}. Indeed, if $(R,\m)$ be a Cohen--Macaulay local ring of dimension one with a canonical module and there exists a Gorenstein local subring $(S,\n)$ of $R$ such that $R\cong \End_S(\n)$ as $S$-algebras, then $\m\cong \m^\dagger$ due to \cite[1.4]{k}, so $R$ is generically Gorenstein and the hypothesis (1) of Theorem \ref{t55} and \ref{522} is satisfied. 

It can also be seen that if $R$ is $1$-dimensional, and has decomposable maximal ideal $\m=I_1\oplus I_2$ and $R/I_1, R/I_2$ satisfies the hypothesis of Lemma \ref{l521} (both of these rings are $1$-dimensional Cohen--Macaulay, see for instance the discussion preceding \cite[Lemma 6.14]{burch}), then so does $R$. Indeed, first note that $\m/I_1\cong I_2, \m/I_2\cong I_1$, and $\omega_{R/I_i}\cong \Hom_R(R/I_i, \omega_R)$ (see \cite[Theorem 3.3.7(b)]{bh1}). Thus, by tensor-hom adjunction, we get $\Hom_R(\m/I_i,\omega_R)\cong \Hom_{R/I_i}\left(\m/I_i,\Hom_{R}(R/I_i, \omega_{R})\right)\cong \Hom_{R/I_i}(\m/I_i, \omega_{R/I_i})$, for $i=1,2$. Hence, if $(\m/I_i)^{\oplus n}$ surjects onto $\Hom_{R/I_i}(\m/I_i, \omega_{R/I_i})$, then $\m^{\oplus n} \cong \left(\oplus_{i=1}^2\dfrac \m{I_i}\right)^{\oplus n}$ surjects onto $\oplus_{i=1}^2 \Hom_{R/I_i}(\m/I_i, \omega_{R/I_i})\cong \oplus_{i=1}^2 \Hom_R(\m/I_i, \omega_R) \cong \Hom_R(\m, \omega_R)$.     
\end{rem}

\section{The existence of a surjection from sum of $\m$ to a dual of $\m$}

We further investigate the situation that there exists a surjection $\m^{\oplus n} \to \m^\dag$ for some $n$.
We note the following simple and easy lemma.

\begin{lem}\label{6.1}
Let $R$ be a Cohen--Macaulay local ring of dimension one with a canonical module $\omega$.
Then the following conditions are equivalent.
\begin{enumerate}[ \rm(1)]
\item There exists a surjection $\m^{\oplus n} \to \m^\dag$ for some $n$.
\item $\tau_{\m}(\m^\dag)=\m^\dag$.
\item There exists a surjection $\widehat{\m}^{\oplus n} \to \widehat{\m}^\dag$ for some $n$, where $(\widehat{R},\widehat{\m})$ is the $\m$-adic completion of $R$. 
\end{enumerate}
\end{lem}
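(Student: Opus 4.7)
The plan is to derive the equivalence $(1)\Leftrightarrow(2)$ directly from the general fact recalled earlier in the paper (\cite[Lemma 5.1]{kt}): for any $R$-modules $M$ and $X$, one has $\tau_M(X)=X$ if and only if there exist $n\ge 0$ and a surjection $M^{\oplus n}\to X$. Specializing to $M=\m$, $X=\m^\dag$ yields $(1)\Leftrightarrow(2)$ at once. Applying the same fact over $\widehat R$ (noting that $\widehat\omega$ is a canonical module of $\widehat R$) reformulates $(3)$ as $\tau_{\widehat\m}(\widehat\m^\dag)=\widehat\m^\dag$, where $\widehat\m^\dag$ now denotes $\Hom_{\widehat R}(\widehat\m,\widehat\omega)$.

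Having reduced everything to an equality of trace modules, I would next argue $(2)\Leftrightarrow(3)$ using faithful flatness of $R\to\widehat R$. The key ingredients are three compatibilities. First, since $\m$ is finitely generated and $\widehat R$ is $R$-flat, one has $\Hom_R(\m,\omega)\otimes_R\widehat R\cong\Hom_{\widehat R}(\widehat\m,\widehat\omega)$, which identifies $\widehat{\m^\dag}$ with $\widehat\m^\dag$. Second, as was noted in the discussion of trace modules in Section 2, the trace commutes with $\m$-adic completion, so
\[
\tau_\m(\m^\dag)\otimes_R\widehat R\;\cong\;\tau_{\widehat\m}(\widehat\m^\dag).
\]
Third, by faithful flatness, an inclusion of finitely generated $R$-modules becomes an equality if and only if it does so after base change to $\widehat R$. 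Combining these, the equality in $(2)$ is equivalent to $\tau_{\widehat\m}(\widehat\m^\dag)=\widehat\m^\dag$, which is the reformulation of $(3)$.

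The only real obstacle is bookkeeping: verifying the compatibilities of $(-)^\dag$ and of the trace module with $\m$-adic completion in this precise setting. Both are standard consequences of finite generation of $\m$ (and $\m^\dag$) together with flatness of $\widehat R$, and they have essentially been recorded or tacitly used already earlier in the paper. With these compatibilities in hand, the argument is short and purely formal, and the three-way equivalence follows.
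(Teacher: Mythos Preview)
Your proposal is correct and follows essentially the same approach as the paper's proof: the equivalence $(1)\Leftrightarrow(2)$ is exactly \cite[Lemma 5.1]{kt}, and $(2)\Leftrightarrow(3)$ follows from the fact that trace modules commute with completion (as recorded in Section 2). Your write-up supplies a bit more detail on the compatibility of $(-)^\dag$ with completion, but the argument is the same.
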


\begin{proof}
The equivalence of (1) and (2) is \cite[Lemma 5.1]{kt}. Since the trace module $\tau_{\m}(\m^\dag)$ commutes with the completion, we may see that (2) is equivalent to (3).   
\end{proof}

In view of case (b) of Remark \ref{r56}, it is natural to ask when $\m$ is a homomorphic image of $(\m^{\dagger})^{\oplus n}$ for some $n$.
We first record a lemma that shows under some mild assumptions, such situation occurs exactly when $R$ is nearly Gorenstein. Here we call a Cohen--Macaulay local ring $(R,\m)$ with a canonical module \textit{nearly Gorenstein} if $\tau_\omega(R)$ contains $\m$. Note that the notion of nearly Gorenstein rings are firstly introduced and studied by Herzog, Hibi and Stamate  \cite{hhs}.
Note also that Gorenstein local rings are always nearly Gorenstein.

\begin{lem}\label{near} Let $R$ be a Cohen--Macaulay local ring of dimension at most $1$ with a canonical module. Then, $R$ is nearly Gorenstein if and only if there is a surjection $(\m^{\dagger})^{\oplus n}\to \m$ for some $n$. 
\end{lem}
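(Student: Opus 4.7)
By \cite[Lemma 5.1]{kt}, the existence of a surjection $(\m^\dagger)^{\oplus n}\to\m$ is equivalent to $\tau_{\m^\dagger}(\m)=\m$. Hence my plan is to establish the equivalence
\[
\tau_\omega(R)\supseteq \m \iff \tau_{\m^\dagger}(\m)=\m.
\]
The Gorenstein case is trivial (both conditions hold), so I would first reduce to the non-Gorenstein case. Since $R$ is indecomposable as an $R$-module, I would check that every $R$-linear map $\omega\to R$ has image in $\m$: otherwise such a map would be a split surjection, giving $R$ as a direct summand of $\omega=R^\dagger$; applying $(-)^\dagger$ and Krull--Schmidt would then force $\omega\cong R$, a contradiction. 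Thus $\tau_\omega(R)=\tau_\omega(\m)\subseteq \m$, and the equivalence becomes $\tau_\omega(\m)=\m \iff \tau_{\m^\dagger}(\m)=\m$.

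The main tool would be the adjunction
\[
\Hom_R(\m^\dagger, R) \;\cong\; \Hom_R(\m^\dagger, \End_R(\omega)) \;\cong\; \Hom_R(\m^\dagger\otimes_R\omega,\omega) \;\cong\; \Hom_R(\omega, \m^{\dagger\dagger}) \;=\; \Hom_R(\omega, \m),
\]
which uses $R\cong \End_R(\omega)$ and $\m^{\dagger\dagger}\cong \m$ (valid because $\m$ is maximal Cohen--Macaulay in dimension one, and by Matlis duality in dimension zero). Under this isomorphism $f\colon\m^\dagger\to R$ corresponds to $\tilde f\colon\omega\to \m$ via the formula $\phi(\tilde f(y))=f(\phi)\cdot y$ for all $\phi\in\m^\dagger$ and $y\in\omega$. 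A parallel argument (now using that $\m=\m^{\dagger\dagger}$) would show that in the non-Gorenstein case every map $\m^\dagger\to R$ also factors through $\m$, so that $\tau_{\m^\dagger}(R)=\tau_{\m^\dagger}(\m)$.

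The remaining step—and the main obstacle—is the trace identity $\tau_\omega(R)=\tau_{\m^\dagger}(R)$, after which $\tau_\omega(\m)=\tau_{\m^\dagger}(\m)$ follows from the identifications above. Although the Hom-sets match under the adjunction, the individual images $\im f\subseteq R$ and $\im\tilde f\subseteq \m$ of corresponding maps need not coincide, as small examples already show. I would handle this by directly comparing the $R$-linear spans of the two sets of images, extracting from the correspondence $\phi(\tilde f(y))=f(\phi)y$ that every generator of one trace belongs to the other, so that the two sum-ideals agree. Once this is done, the equivalence $\tau_\omega(\m)=\m\iff \tau_{\m^\dagger}(\m)=\m$ drops out, completing the proof. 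I expect the sum-level image comparison to be the technically most delicate piece of the argument.
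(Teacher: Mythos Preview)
Your approach is genuinely different from the paper's, and it contains a real gap at exactly the point you flag as ``the technically most delicate piece.'' The paper does not compare traces at all: it simply observes that $(-)^\dagger$ is an exact duality on maximal Cohen--Macaulay modules, so a surjection $(\m^\dagger)^{\oplus n}\twoheadrightarrow\m$ with (automatically MCM) kernel $M$ dualizes to an exact sequence $0\to\m^\dagger\to\m^{\oplus n}\to M^\dagger\to 0$, and conversely. The equivalence with nearly Gorensteinness is then read off from \cite[Lemma 3.5(4)]{k}, which characterizes nearly Gorenstein rings by the existence of exactly such an embedding $\m^\dagger\hookrightarrow\m^{\oplus n}$ with MCM cokernel. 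This is a two-line argument once that external lemma is in hand.

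Your plan, by contrast, tries to prove the stronger statement $\tau_\omega(R)=\tau_{\m^\dagger}(R)$, and the mechanism you propose does not reach it. The identity $\phi(\tilde f(y))=f(\phi)\cdot y$ is an equality of elements of $\omega$, whereas the traces you want to compare are ideals of $R$ generated by the values $f(\phi)\in R$ on one side and by the elements $\tilde f(y)\in\m$ on the other. Nothing in your correspondence lets you read off, say, $\tilde f(y)\in\sum_g\im(g)$ from that $\omega$-valued identity; concretely, unwinding shows $f(\phi)$ is the scalar in $R\cong\End_R(\omega)$ representing $\phi\circ\tilde f$, which is not visibly an $R$-combination of the $\tilde f(y)$'s. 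Moreover, the extension problem $\Hom_R(\m^\dagger,R)\to\Hom_R(\omega,R)$ arising from $0\to\omega\to\m^\dagger\to k\to 0$ (in dimension one) need not be surjective since $\Ext^1_R(k,R)\neq 0$, so you cannot simply match maps out of $\omega$ with maps out of $\m^\dagger$. You only need the equivalence $\tau_\omega(R)\supseteq\m\iff\tau_{\m^\dagger}(\m)=\m$, not the full equality of traces, but even this weaker statement is not established by your sketch. The cleanest fix is to abandon the trace comparison and instead dualize short exact sequences as the paper does.
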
 

\begin{proof} Since $\m,\m^{\dagger}$ are maximal Cohen--Macaulay, so the existence of a surjection as in the hypothesis implies we have an exact sequence $0\to M \to (\m^{\dagger})^{\oplus n}\to \m\to 0$, where $M$ is maximal Cohen--Macaulay. Taking canonical duals, we get an exact sequence $0\to \m^{\dagger} \to \m^{\oplus n}\to M^{\dagger}\to 0$ which implies $R$ is nearly Gorenstein by \cite[Lemma 3.5(4)]{k}.   
Conversely, if we assume $R$ is nearly Gorenstein, then taking canonical duals of the exact sequence of \cite[Lemma 3.5(4)]{k} we get the required surjection.  
\end{proof}

We next attempt to find a relationship between nearly Gorenstein rings and local rings with a surjection $\m^{\oplus n} \to \m^\dag$ for some $n$.
Before stating our result, we need to give a technical lemma.

\begin{lem}\label{episplit} Let $R$ be a henselian local ring. Let $M$ be an indecomposable $R$-module. Then, any surjective $R$-linear map $f: M^{\oplus n} \to M$ is split. 
\end{lem}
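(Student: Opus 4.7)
The plan hinges on the classical fact that for a henselian local ring $R$ and a finitely generated indecomposable $R$-module $M$, the endomorphism ring $E := \End_R(M)$ is local; let $\mathfrak{m}_E$ denote its unique maximal ideal. Writing $g_j \colon M \to M^{\oplus n}$ for the inclusion into the $j$-th factor and setting $f_j := f \circ g_j \in E$, we identify $f$ with the tuple $(f_1, \ldots, f_n)$, so $\im(f) = \sum_{j=1}^n f_j(M)$. To split $f$ it suffices to exhibit some $f_j$ that is a unit in $E$ (equivalently, an automorphism of $M$), because then $s := g_j \circ f_j^{-1} \colon M \to M^{\oplus n}$ satisfies $f \circ s = f_j \circ f_j^{-1} = \id_M$.

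Assuming $M \ne 0$ (otherwise the assertion is trivial), the heart of the proof is the claim that some $f_j$ is a unit in $E$. Suppose not; then every $f_j$ lies in $\mathfrak{m}_E$, and hence $\im(f) = \sum_j f_j(M) \subseteq \mathfrak{m}_E \cdot M$. Surjectivity of $f$ then forces $\mathfrak{m}_E \cdot M = M$. Now view $M$ as a left $E$-module via the natural ring map $R \to Z(E)$ sending $r$ to multiplication by $r$; since $M$ is finitely generated as an $R$-module and the $R$-action factors through $E$, the module $M$ is also finitely generated as a left $E$-module. Because $E$ is local, $\mathfrak{m}_E$ equals the Jacobson radical of $E$, and the noncommutative form of Nakayama's lemma then yields $M = 0$, contradicting our assumption.

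The two conceptual inputs are therefore the locality of $\End_R(M)$ for indecomposable $M$ over a henselian $R$ and the noncommutative Nakayama lemma; both are standard, so I do not anticipate a real obstacle beyond their invocation. The only point deserving a moment's care is checking that $M$ is finitely generated as a left $E$-module, which is immediate once one observes that scalar multiplication by elements of $R$ lies in $E$, so any $R$-generating set of $M$ is automatically an $E$-generating set.
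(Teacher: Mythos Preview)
Your proof is correct and follows essentially the same approach as the paper's: both decompose $f$ via the coordinate inclusions into endomorphisms $f_j = f\circ i_j$, use locality of $\End_R(M)$ (from the henselian hypothesis) together with Nakayama's lemma to force some $f_j$ to be a unit, and then split $f$ via $i_j\circ f_j^{-1}$. Your version is slightly more explicit in justifying that $M$ is finitely generated as an $E$-module before invoking Nakayama, a point the paper leaves implicit.
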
   

\begin{proof}
Since $M$ is assumed to be indecomposable, it should be nonzero.
We note that the action of the endomorphism ring $A:=\End_R(M)$ of $M$ on $M$ is given by $\phi\cdot x :=\phi(x)$ for all $\phi\in \End_R(M)$ and $x\in M$.
By this action, $M$ can be regard as a left module over $A$.
For each $j=1,\dots, n$, we consider the canonical $j$-th coordinate inclusion map $i_j:M\to M^{\oplus n}$.
For any elements $(x_1,\cdots,x_n)\in M^{\oplus n}$, we have $f(x_1,\cdots,x_n)=\sum_{j=1}^n (f\circ i_j)(x_j)=\sum_{j=1}^n (f\circ i_j)\cdot x_j$.
Now suppose that each $f\circ i_j$ belongs to the Jacobson radical $\mathrm{Jac}(A)$.
Then it yields that $\im(f)\subseteq\mathrm{Jac}(A)\cdot M$, and then surjectivity of $f$ implies $M\subseteq \mathrm{Jac}(A)\cdot M$.
By Nakayama's lemma, we get $M=0$, contradicting the assumption that $M$ is indecomposable.
Thus, there exists $j$ such that $f\circ i_j \notin \mathrm{Jac}(A)$.
Since $R$ is henselian, so Krull–Schmidt holds for $R$-modules, so our assumption that $M$ is indecomposable implies $A$ is a local ring.
In other words, $\mathrm{Jac}(A)$ is the collection of all non-units of $A$.
Therefore $f\circ i_j$ is a unit of $A$, i.e., an automorphism of $M$.
We now achieve an equality $f\circ \left(i_j\circ(f\circ i_j)^{-1}\right)=\text{id}_M$, which shows that $f$ is a split surjection. 
\end{proof}

Now we establish that nearly Gorenstein property along with a surjection $\m^{\oplus n} \to \m^\dag$ for some $n$ forces that $\m$ is self canonical dual under some mild assumptions.

\begin{prop}\label{nind} Let $R$ be a henselian Cohen--Macaulay local ring with a canonical module and indecomposable maximal ideal. Then, the following are equivalent:   

\begin{enumerate}[\rm(1)]
    \item $R$ is nearly Gorenstein and there exists a surjection $\m^{\oplus n} \to \m^\dag$ for some $n$.
    
    \item $\m \cong \m^{\dagger}$.   
\end{enumerate} 
\end{prop}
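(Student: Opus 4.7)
The plan is to dispatch $(2)\Rightarrow(1)$ instantly and concentrate on $(1)\Rightarrow(2)$. If $\m\cong \m^\dag$, the isomorphism itself serves as the required surjection with $n=1$; combined with Lemma \ref{near} it also certifies that $R$ is nearly Gorenstein. So all of the substance is in the converse.

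For $(1)\Rightarrow (2)$, the plan is to combine the two available surjections into a single surjection of a finite power of $\m$ onto $\m$, and then split it. Let $\phi\colon \m^{\oplus n}\to \m^\dag$ be the given surjection; Lemma \ref{near}, fed with the nearly Gorenstein hypothesis, supplies a surjection $\psi\colon (\m^\dag)^{\oplus m}\to \m$ for some $m$. Composing yields a surjection
\[
\psi\circ \phi^{\oplus m}\colon \m^{\oplus nm}\twoheadrightarrow \m.
\]
I then intend to invoke Lemma \ref{episplit}, which applies because $R$ is henselian and $\m$ is indecomposable; this splits the composite and, in particular, exhibits $\m$ as a direct summand of $(\m^\dag)^{\oplus m}$ via the section followed by $\phi^{\oplus m}$.

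From that point I would invoke Krull--Schmidt, available since $R$ is henselian, to conclude that every indecomposable summand of $(\m^\dag)^{\oplus m}$ is isomorphic to an indecomposable summand of $\m^\dag$. Provided $\m^\dag$ is itself indecomposable, this forces $\m\cong \m^\dag$, as desired.

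The hard part will be verifying indecomposability of $\m^\dag$. Here I would first invoke Lemma \ref{l521} to note that hypothesis (1) already forces $\dim R\le 1$; consequently $\m$ is maximal Cohen--Macaulay ($\dim R=0$ being trivial, and in $\dim R=1$ because $\m\subseteq R$ contains a nonzerodivisor). The canonical dual $(-)^\dag$ then restricts to an involutive exact duality on the category of maximal Cohen--Macaulay $R$-modules and therefore preserves indecomposability; indecomposability of $\m$ passes to $\m^\dag$, closing the argument.
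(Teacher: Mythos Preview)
Your proof is correct and follows essentially the same route as the paper's: combine the two surjections from Lemma \ref{near} and the hypothesis into a single surjection $\m^{\oplus nm}\twoheadrightarrow \m$, split it via Lemma \ref{episplit}, deduce that $\m$ is a summand of $(\m^\dag)^{\oplus m}$, and finish using indecomposability of $\m^\dag$ (which follows from $\m$ being maximal Cohen--Macaulay and the canonical dual being a duality on such modules). One small presentational point: Lemma \ref{near} is stated only for $\dim R\le 1$, so you should invoke Lemma \ref{l521} (which gives this dimension bound from the surjection $\m^{\oplus n}\to\m^\dag$) \emph{before} appealing to Lemma \ref{near} in either direction, rather than only at the end.
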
   

\begin{proof} If either (1) or (2) holds, then $\dim R\le 1$ by Lemma \ref{l521}. So, $(2)\implies (1)$ is obvious by Lemma \ref{near}. For $(1) \implies (2)$: By Lemma \ref{near} and our hypothesis, we have two surjections $f:\m^{\oplus n}\to \m^{\dagger}$ and $g:(\m^{\dagger})^{\oplus u}\to \m$ for some integer $n$ and $u$. So we have a surjection $g\circ f^{\oplus u}: \m^{\oplus un}\to \m$. Since $\m$ is indecomposable, so by Lemma \ref{episplit}, the surjection $g\circ f^{\oplus u}$ splits, which mean that there exists a map $h:\m \to \m^{\oplus un}$ such that $g\circ f^{\oplus u}\circ h=\id_{\m}$.
Then $f^{\oplus u}\circ h$ gives a splitting of the surjection $g$, so $\m$ is a direct summand of $(\m^{\dagger})^{\oplus u}$. Since $\m$ is maximal Cohen--Macaulay, $\m^{\dagger}$ is also indecomposable. Thus we must have $\m \cong \m^{\dagger}$.   
\end{proof}

\begin{chunk}\label{colfrac}
Let $R$ be a Cohen--Macaulay local ring.
We denote by $Q(R)$ the total ring of quotients of $R$.
A finitely generated submodule $I$ of $Q(R)$ is called fractional ideal of $R$.
For fractional ideals $I,J$, with $J$ containing some regular element $b$, the colon ideal $I:_{Q(R)}J:=\{a\in Q(R)\mid aJ\subseteq I\}$ is identified with $\Hom_R(J,I)$ via the correspondence $x \mapsto [a\mapsto x\cdot a]$ (and $f\in \Hom_R(J,I)$ goes to $\frac{1}{b}f(b)\in I:_{Q(R)} J$).     

Assume $R$ is one-dimensional.
An ideal $K$ of $R$ is called a \textit{canonical ideal} of $R$ if $K$ is a canonical module of $R$.
Let $K$ be a canonical ideal.
Then $K$ contains a regular element of $R$ and $K:_{Q(R)}K=R$.
Moreover, for any fractional ideal $I$ of $R$ containing a regular element, the equality $K:_{Q(R)}(K:_{Q(R)}I)=I$ holds.
We also note that $R$ has a canonical ideal if and only if $R$ has a canonical module and $Q(R)$ is Gorenstein if and only if $Q(\widehat{R})$ is Gorenstein.
See \cite{hk} for details on canonical ideals.
\end{chunk}

For fractional ideals $I,J$, with $J$ containing a non-zero-divisor $b$, since $f(x)=x\left(\dfrac{1}{b}f(b)\right)$ for every $x\in J$ and $f\in \Hom_R(J,I)$, and since by the discussion of \ref{colfrac}, we have $I:_{Q(R)}J=\left\{\dfrac{1}{b}f(b):f\in \Hom_R(J,I)\right\}$, so the image of the evaluation map $J\otimes_R \Hom_R(J,I)\xrightarrow{x\otimes f\mapsto f(x)} I$ is $J(I:_{Q(R)}J)$. Hence we get the following Lemma   

\begin{lem}\label{6.6}
Let $R$ be a Cohen--Macaulay local ring of dimension one having a canonical ideal $K$.
Then the image of the evaluation map $\m \otimes_R \Hom_R(\m,\m^\dag) \to \m^\dag$ is equal to $\m((K:_{Q(R)}\m):_{Q(R)}\m)$, provided the identification $\m^\dag=K:_{Q(R)}\m$.
\end{lem}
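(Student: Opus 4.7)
The plan is to reduce the statement to a direct application of the general identity stated just before Lemma \ref{6.6}: for fractional ideals $I, J$ of $R$ with $J$ containing a non-zero-divisor, the image of the evaluation map $J\otimes_R \Hom_R(J,I)\to I$ equals $J(I:_{Q(R)}J)$. In our situation we would take $J=\m$ and $I=\m^\dag$.

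First I would verify the setup required to apply the cited identity. Since $R$ is Cohen--Macaulay of dimension one and admits a canonical ideal $K$, we have $\depth R=1$, so $\m$ contains a non-zero-divisor of $R$. Thus $\m$ qualifies as a fractional ideal containing a regular element, and $K$ is likewise a fractional ideal containing a regular element. Using the identification \ref{colfrac}, $\m^\dag=\Hom_R(\m,K)=K:_{Q(R)}\m$ as fractional ideals. Next, applying the same identification with the fractional ideals $\m$ and $\m^\dag$ in place of $J$ and $I$ (and noting $\m^\dag$ contains a regular element since $K$ does), we get
\[
\Hom_R(\m,\m^\dag)=\m^\dag:_{Q(R)}\m=(K:_{Q(R)}\m):_{Q(R)}\m.
\]

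Now I would apply the general identity with $I=\m^\dag$ and $J=\m$: the image of the evaluation map $\m\otimes_R\Hom_R(\m,\m^\dag)\to \m^\dag$ equals
\[
\m\bigl(\m^\dag:_{Q(R)}\m\bigr)=\m\bigl((K:_{Q(R)}\m):_{Q(R)}\m\bigr),
\]
which is exactly the desired conclusion.

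The argument is essentially a bookkeeping exercise once the fractional-ideal identifications are in place, so I do not anticipate a serious obstacle. The only point that deserves care is making sure that all colons are interpreted inside $Q(R)$ and that the evaluation map described abstractly as $x\otimes f\mapsto f(x)$ really does correspond, under the identification $\Hom_R(\m,\m^\dag)\cong \m^\dag:_{Q(R)}\m$, to multiplication of fractional ideals---which is exactly what the paragraph preceding the lemma records.
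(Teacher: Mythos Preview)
Your proof is correct and follows exactly the paper's approach: the paper records the general identity that the image of the evaluation map $J\otimes_R\Hom_R(J,I)\to I$ is $J(I:_{Q(R)}J)$ in the paragraph immediately preceding the lemma, and then states Lemma~\ref{6.6} as a direct consequence by taking $J=\m$ and $I=\m^\dag=K:_{Q(R)}\m$. Your additional care in verifying that $\m$ and $\m^\dag$ contain regular elements is appropriate but not something the paper spells out.
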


Now we give a characterization of the existence of a surjection $\m^{\oplus n} \to \m^\dag$ over Cohen--Macaulay local rings with a canonical ideal. 

\begin{prop} \label{64}
Let $R$ be a Cohen--Macaulay local ring of dimension one having a canonical ideal $K$.
Then the following conditions are equivalent.
\begin{enumerate}[ \rm(1)]
\item There exists a surjection $\m^{\oplus n} \to \m^\dag$ for some $n$.
\item $\m((K:_{Q(R)}\m):_{Q(R)}\m)=K:_{Q(R)}\m$.
\item $\m^2:_{Q(R)}\m=\m$.
\end{enumerate}
\end{prop}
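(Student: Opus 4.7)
My plan is to handle $(1)\Leftrightarrow(2)$ and $(2)\Leftrightarrow(3)$ separately. The first equivalence should drop out immediately by combining Lemma~\ref{6.1} with Lemma~\ref{6.6}; the second will be obtained by applying the canonical duality $K:_{Q(R)}(-)$ termwise to the defining equation of (2) and simplifying with the elementary colon identities recalled in \ref{colfrac}.

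For $(1)\Leftrightarrow(2)$: Lemma~\ref{6.1} rephrases (1) as $\tau_\m(\m^\dag)=\m^\dag$. By definition, the trace $\tau_\m(\m^\dag)$ is the image of the evaluation map $\m\otimes_R\Hom_R(\m,\m^\dag)\to\m^\dag$, and Lemma~\ref{6.6} identifies this image with $\m((K:_{Q(R)}\m):_{Q(R)}\m)$ inside $\m^\dag=K:_{Q(R)}\m$. Thus (1) is equivalent to the equality of fractional ideals in (2), and nothing further is required.

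For $(2)\Leftrightarrow(3)$: I will use that on the class of fractional ideals of $R$ containing a regular element, the operation $I\mapsto I^{\vee}:=K:_{Q(R)}I$ is an involution (by \ref{colfrac}), so two such ideals agree iff their $(-)^{\vee}$-images agree. A preliminary direct check shows $(K:_{Q(R)}\m):_{Q(R)}\m=K:_{Q(R)}\m^2$, which rewrites (2) as $\m(K:_{Q(R)}\m^2)=K:_{Q(R)}\m$. Applying $(-)^{\vee}$ to both sides, the right-hand side collapses to $\m$, while unpacking the left-hand side by the definition of colon yields
\[
K:_{Q(R)}\!\bigl(\m\cdot(K:_{Q(R)}\m^2)\bigr)=\{\,a\in Q(R):a\m\subseteq K:_{Q(R)}(K:_{Q(R)}\m^2)\,\}=\m^2:_{Q(R)}\m,
\]
where the last equality uses the canonical double-dual $K:_{Q(R)}(K:_{Q(R)}\m^2)=\m^2$. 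Hence (2) is equivalent to $\m^2:_{Q(R)}\m=\m$, which is (3).

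The only delicate point to check is that every fractional ideal appearing in the dualization (notably $\m$, $\m^2$, $K:_{Q(R)}\m$, $K:_{Q(R)}\m^2$, and $\m\cdot(K:_{Q(R)}\m^2)$) contains a regular element, so that applying $(-)^{\vee}$ is legitimate and bijective. This is automatic because $R$ is one-dimensional Cohen--Macaulay, so $\m$ and all its powers contain regular elements, and the colon of a fractional ideal by such an ideal remains a fractional ideal. I do not anticipate any genuine obstacle beyond this bookkeeping.
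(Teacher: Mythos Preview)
Your proof is correct and follows essentially the same approach as the paper: the equivalence $(1)\Leftrightarrow(2)$ via Lemmas~\ref{6.1} and \ref{6.6}, and $(2)\Leftrightarrow(3)$ by applying the involution $K:_{Q(R)}(-)$ together with the colon identity $(I:_{Q(R)}J):_{Q(R)}L=I:_{Q(R)}JL$. The only cosmetic difference is that you simplify $(K:_{Q(R)}\m):_{Q(R)}\m$ to $K:_{Q(R)}\m^2$ before dualizing, whereas the paper carries out the same simplification inside the chain of equalities after dualizing; your extra remark about the fractional ideals containing regular elements is a reasonable point of care that the paper leaves implicit.
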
   

\begin{proof}
The equivalence of (1) and (2) follows immediately by Lemma \ref{6.1} and \ref{6.6}.
We now consider the equivalence of (2) and (3).
Applying $K:_{Q(R)}(-)$, we see that the equality $\m((K:_{Q(R)}\m):_{Q(R)}\m)=K:_{Q(R)}\m$ holds if and only if so does $K:_{Q(R)}(\m((K:_{Q(R)}\m):_{Q(R)}\m))=K:_{Q(R)}(K:_{Q(R)}\m)$.
On the other hand, $K:_{Q(R)}(K:_{Q(R)}\m)$ is equal to $\m$, and $K:_{Q(R)}(\m((K:_{Q(R)}\m):_{Q(R)}\m))$ is calculated as follows:
\begin{align*}
K:_{Q(R)}(\m((K:_{Q(R)}\m):_{Q(R)}\m)) &=(K:_{Q(R)}((K:_{Q(R)}\m):_{Q(R)}\m)):_{Q(R)}\m\\
&=(K:_{Q(R)}(K:_{Q(R)}\m^2)):_{Q(R)}\m\\
&=\m^2:_{Q(R)}\m.
\end{align*}
Here the first and second equalities follows by a general fact on colon ideals; for fractional ideals $I,J,L$, one has $(I:_{Q(R)}J):_{Q(R)}L)=I:_{Q(R)}JL$.
Therefore the assertion follows.
\end{proof}

\begin{rem} \label{msq}
Let $R$ be as in the proposition above.
Then $\m:_{Q(R)}\m$ is a module-finite subalgebra of $Q(R)$ which contains both $R$ and $\m^2:_{Q(R)}\m$.
Therefore to see the equality $\m^2:_{Q(R)}\m=\m$, it is enough to check that $f\cdot \m\not\subseteq \m^2$ for any $f\in \m:_{Q(R)}\m\smallsetminus R$.
\end{rem}

\begin{setup}\label{6.9}
Let $k$ be a field.
Let $a_1,a_2,\dots,a_l$ ($l\ge 1$) be positive integers with $\mathrm{gcd}(a_1,\dots,a_l)=1$.
The additive submonoid  
\[
H=\langle a_1,a_2,\dots,a_l\rangle=\left\{\sum_{i=1}^lc_ia_i \mid c_i\in\mathbb{Z}_{\ge 0}\right\}
\]
of $\mathbb{N}$ is called the numerical semigroup generated by $a_i$'s. We always choose the generators $a_1,\dots, a_l$ to be minimal, i.e., $a_i\not\in \langle a_1,\dots,a_{j-1},a_{j+1},\dots a_l\rangle$ for each $j$. We put
\[
R=k[\![t^{a_1},t^{a_2},\dots,t^{a_l}]\!]\subseteq k[\![t]\!],
\]
where $k[\![t]\!]$ be a formal power series ring of one variable over $k$.
Then $R$ is a one-dimensional complete local integral domain.
In particular, $R$ has a canonical ideal.
We denote by $\mathrm{PF}(H)$ the set of pseudo-Frobenius numbers, that is,
\[
\mathrm{PF}(H)=\{a\in\mathbb{N}\smallsetminus H\mid a+a_i\in H \text{ for any }i\}.
\]
See \cite{rg} for details on numerical semigroups.

\end{setup}  

\begin{prop} \label{pfn} Assume the setup of \ref{6.9}. The equality $\m^2:_{Q(R)}\m=\m$ holds if and only if for any $f\in \mathrm{PF}(H)$ there exists indices $i$ and $j$ such that $f+a_i=a_j$.
\end{prop}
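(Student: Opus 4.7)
The plan is to translate the identity $\m^2:_{Q(R)}\m = \m$ into a statement about which exponents appear in each of the fractional ideals in sight, and then read off the claim. Since $\gcd(a_1, \ldots, a_l) = 1$, Bezout gives $t \in Q(R)$, so $Q(R) = k((t))$. By Remark \ref{msq}, it suffices to show that some $f \in (\m :_{Q(R)} \m) \setminus R$ satisfies $f\m \subseteq \m^2$ if and only if some $p \in \mathrm{PF}(H)$ has $p + a_i \neq a_j$ for all $i, j$.

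First I would identify the $k$-bases of the relevant modules. With $H_+ := H \setminus \{0\}$, one has $R = \bigoplus_{h \in H} k t^h$ and $\m = \bigoplus_{h \in H_+} k t^h$ directly. For $\m^2$, the minimality of the $a_i$ forces $H_+ + H_+ = H_+ \setminus \{a_1, \ldots, a_l\}$: no generator $a_j$ can be written $h_1 + h_2$ with $h_1, h_2 \in H_+$, for otherwise expanding $h_1, h_2$ in the $a_k$'s would present $a_j$ as a $\mathbb{Z}_{\geq 0}$-combination of the $a_k$'s with total coefficient sum $\geq 2$, ruled out by a short argument using minimality; whereas any $h \in H_+$ that is not itself a generator splits as $a_{k_0} + (h - a_{k_0})$ with both summands in $H_+$. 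For $\m :_{Q(R)} \m$, given $f = \sum_{n \in \mathbb{Z}} c_n t^n \in k((t))$, the condition $f \m \subseteq \m$ amounts to $c_n = 0$ whenever $n + a_i \notin H_+$ for some $i$; taking $a_1 = \min H_+$ eliminates all $n < 0$, and for $n \geq 0$ the condition becomes precisely $n \in H \cup \mathrm{PF}(H)$. Thus $\m :_{Q(R)} \m = \bigoplus_{n \in H \cup \mathrm{PF}(H)} k t^n$.

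With these bases in hand, an $f = \sum_n c_n t^n$ in $\m :_{Q(R)} \m$ satisfies $f\m \subseteq \m^2$ exactly when, for every $n$ with $c_n \neq 0$ and every $i$, the exponent $n + a_i$ lies in $H_+ \setminus \{a_1, \ldots, a_l\}$. The minimality argument above makes this automatic for $n \in H_+$, forces $c_0 = 0$, and for $n \in \mathrm{PF}(H)$ translates to the requirement that $n + a_i \neq a_j$ for every $i, j$. The equivalence now follows: if every $p \in \mathrm{PF}(H)$ admits $i, j$ with $p + a_i = a_j$, then any $f \in \m:_{Q(R)}\m$ with $f \m \subseteq \m^2$ has $c_n = 0$ off $H_+$ and so lies in $\m \subseteq R$, giving $\m^2:_{Q(R)}\m = \m$ via Remark \ref{msq}; conversely, if some $p \in \mathrm{PF}(H)$ violates the condition, then $f := t^p \in (\m :_{Q(R)} \m) \setminus R$ satisfies $f\m \subseteq \m^2$, so the equality fails. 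The principal technical step is the combinatorial description of $\m :_{Q(R)} \m$, in particular ruling out negative exponents via the smallest generator; once this is in place the remainder is bookkeeping with the monomial bases.
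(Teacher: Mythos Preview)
Your proof is correct and follows essentially the same approach as the paper: both identify $\m:_{Q(R)}\m$ via the pseudo-Frobenius numbers and then invoke Remark \ref{msq}. The paper compresses this into a single sentence by quoting the well-known fact that $\m:_{Q(R)}\m$ is generated as an $R$-module by $1$ and the monomials $t^a$ for $a\in\mathrm{PF}(H)$, whereas you derive the monomial $k$-basis of $\m:_{Q(R)}\m$ explicitly and carry out the comparison with $\m^2$ by hand; the underlying argument is the same.
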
  

\begin{proof}
Since the $R$-module $\m:_{Q(R)}\m$ is generated by $1$ and monomials $t^a$ for $a\in \mathrm{PF}(H)$, the assertion is a consequence of Remark \ref{msq}.
\end{proof}

\begin{ex} \label{ex}
Let $H=\langle 9,10,61,62 \rangle$ be a numerical semigroup and $R=k[\![H]\!]$ be the associated algebra over a field $k$.
Then one has $\mathrm{PF}(H)=\{51,52,53\}$.
Applying Proposition \ref{pfn}, one can check the equality $\m^2:_{Q(R)}\m=\m$.
Note that $R$ is neither of minimal multiplicity nor nearly Gorenstein.
Thus $\m$ is not self canonical dual by Proposition \ref{nind}. 
\end{ex}

\section{Applications to the conjecture of Huneke and Wiegand}

For an $R$-module $M$, we denote by $\Tr M$ the Auslanader transpose of $M$. Note that $M^*$ is isomorphic to $\syz^2 \Tr M$ up to projective summands.   
We recall the following conjecture on torsion of tensor products made by Huneke and Wiegand \cite{hw1}.

\begin{conj}[Huneke--Wiegand] \label{hw}
If $R$ is a one--dimensional local domain and $M$ be an 
$R$-module such that $M\otimes_R M^*$ is torsion--free, then is $M^*$ free? 
\end{conj}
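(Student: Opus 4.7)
The plan is, assuming $M^*$ is not free, to derive that $M\otimes_R M^*$ has nonzero torsion. Since $R$ is a one-dimensional local domain, every finitely generated $R$-module has constant rank; in particular $M$ and $M^*$ do, and $M^*$ is torsion-free of positive depth. The strategy then splits into two steps.

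First, convert the hypothesis ``$M\otimes_R M^*$ torsion-free'' into vanishing of $\Tor$. The classical depth-chasing of Huneke and Wiegand (compare \cite{hiw}) shows that over a one-dimensional Cohen--Macaulay ring, torsion-freeness of $M\otimes_R N$ for $M,N$ of positive constant rank forces $\Tor_1^R(M,N)=0$. Applied with $N=M^*$ and using the identification $M^*\cong \syz^2\Tr M$ up to projective summands noted at the start of Section 7, this produces $\Tor_1^R(M,M^*)=0$, equivalently $\Tor_3^R(M,\Tr M)=0$ after shifting through two syzygies.

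Second, promote this single $\Tor$-vanishing to $\pd M^* < \infty$, which in the one-dimensional Cohen--Macaulay setting with $\depth M^*\ge 1$ forces $M^*$ free via Auslander--Buchsbaum. This is exactly where the rigid-test results of the paper enter: Propositions \ref{t}(6), \ref{finpd3}, and Theorem \ref{t55} each furnish such a deduction from a single or double $\Tor$-vanishing, but only when one of the factors carries a privileged structure, namely being of the form $\m N$ with $N$ faithful, being a Burch submodule of some ambient module, or being a weakly $\m$-full submodule sitting inside $\m X$ for some faithful $X$ with $X/M$ of finite length. Theorem \ref{74} then closes the conjecture in precisely the three cases where $M$ itself, or a syzygy of $M$ produced from the ring via Lemma \ref{l521}, lands in one of these classes.

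The main obstacle is that for an arbitrary maximal Cohen--Macaulay module $M$ over an arbitrary one-dimensional local domain, there is no known mechanism producing, from $M$ alone, a Burch or weakly $\m$-full submodule one could feed into these rigid-test theorems. To attempt the general conjecture, I would look for a universal construction — candidates include the trace $\tau_\m(M)$, the socle preimage $(\m M:_M \m)$, or a module built from the conductor of $R$ into its integral closure — that outputs such a structured submodule canonically from $M$, and then try to prove that it always yields a Burch or weakly $\m$-full submodule. An alternative route would be to weaken the structural hypothesis in Proposition \ref{finpd3} so that Burch-type rigidity extends to a larger class of modules, provably containing every MCM module $M$ with $\Tor_1^R(M,M^*)=0$. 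Either route demands genuinely new input beyond what the present paper supplies, which is why the conjecture, despite the partial progress recorded in Theorem \ref{74}, remains open in full generality.
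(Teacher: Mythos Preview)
The statement you were asked to address is a \emph{conjecture}, not a theorem: the paper does not prove it, and indeed explicitly presents it as an open question. There is no ``paper's own proof'' to compare against. What the paper does prove is Theorem \ref{74}, which settles the conjecture only under one of three additional structural hypotheses on $M$ or on $R$.

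Your write-up correctly recognizes this. You do not claim a proof; instead you give an accurate outline of the two-step strategy (torsion-freeness $\Rightarrow$ $\Tor$-vanishing, then $\Tor$-vanishing $\Rightarrow$ freeness via rigid-test machinery), identify precisely where the extra hypotheses of Theorem \ref{74} are consumed, and honestly name the obstruction to the general case. That is the right assessment of the situation, and it matches the paper's own framing. One minor point: your first step invokes a general depth-chasing argument to get $\Tor_1^R(M,M^*)=0$ from torsion-freeness of the tensor product; the paper's proof of Theorem \ref{74} handles this slightly differently in each of the three cases (embedding $\Tor_1$ as a torsion submodule of the torsion-free tensor product), but the mechanism is the same in spirit.
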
   

Recall that for an $R$-module $M$, the submodule 

$\t(M)=\{x\in M\mid \text{there exists an $R$-regular element $r$ such that } rx=0\}$
of $M$ is called the \textit{torsion part} of $M$.  
Then the factor module $M/\t(M)$ is called the \textit{torsion-free part} of $M$. The following lemma can be found in \cite[2.13]{hw}.
The proof we include below is slightly different from that of \cite[2.13]{hw}.

\begin{lem}\label{100} Let $R$ be a local ring with $\operatorname{depth}(R)\le 1$. If $M$ is a 
torsion-free module such that $M^*$ is free, then $M$ is free.    
\end{lem}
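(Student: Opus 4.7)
My plan is to show that the biduality map $\sigma_M\colon M\to M^{**}$ is an isomorphism; since $M^{**}$ is free (being the dual of the free module $M^*$), this will give freeness of $M$.

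I would first observe that the dual map $\sigma_M^*\colon M^{***}\to M^*$ is itself an isomorphism. Indeed, the general identity $\sigma_M^*\circ\sigma_{M^*}=\id_{M^*}$ exhibits $\sigma_M^*$ as a split surjection, and since $M^*$ (hence also $M^{***}$) is free of the same finite rank, this forces $\sigma_M^*$ to be bijective.

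To show that $\sigma_M$ is surjective, I would set $I=\im(\sigma_M)$ and $C=\Cok(\sigma_M)$. Dualizing the short exact sequence $0\to I\to M^{**}\to C\to 0$, and using that $M^{**}$ is free, yields an exact sequence $0\to C^*\to M^{***}\xrightarrow{\alpha}I^*\to \Ext^1_R(C,R)\to 0$. The composition $\beta\circ\alpha\colon M^{***}\to I^*\hookrightarrow M^*$---where $\beta\colon I^*\hookrightarrow M^*$ comes from dualizing $M\twoheadrightarrow I$---equals $\sigma_M^*$; since $\beta$ is injective and $\beta\circ\alpha$ is an isomorphism, both $\beta$ and $\alpha$ must be isomorphisms. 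Consequently $C^*=0$ and $\Ext^1_R(C,R)=0$, whence $\grade_R(C)\ge 2$. But for any nonzero $C$ one has $\grade_R(C)\le \depth R\le 1$, a contradiction; hence $C=0$ and $\sigma_M$ is surjective.

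Since $M^{**}$ is projective, this surjection splits, giving $M\cong M^{**}\oplus K$ with $K=\Ker(\sigma_M)$. Applying $(-)^*$ and comparing ranks of free modules of equal rank forces $K^*=0$. Now $K\subseteq M$ is torsion-free, and since $R$ has positive depth, a standard fact ensures that $K$ is torsionless---that is, $\sigma_K\colon K\to K^{**}$ is injective---so combined with $K^*=0$ (hence $K^{**}=0$) we conclude $K=0$ and $M\cong M^{**}$ is free. The most delicate step is this last one: ruling out the kernel is exactly where the torsion-free hypothesis is essential.
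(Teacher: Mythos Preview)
Your strategy coincides with the paper's (both show $M$ is reflexive), and your surjectivity argument for $\sigma_M$ via $\grade C\ge 2>\depth R$ is correct. The injectivity step, however, has a genuine gap. You write ``since $R$ has positive depth,'' but the hypothesis $\depth R\le 1$ allows $\depth R=0$; more importantly, the ``standard fact'' you invoke---that torsion-free implies torsionless over a ring of positive depth---is simply false. Over $R=k[[x,y,z,w]]/(x^2,xy,xz)$ one has $\depth R=1$ and $\Ass(R)=\{(x),(x,y,z)\}$, and the module $R/(x,y)\cong k[[z,w]]$ is torsion-free (every $R$-regular element lies outside $(x,y,z)$, hence has nonzero image in the domain $k[[z,w]]$) but not torsionless, since its unique associated prime $(x,y)$ is not in $\Ass(R)$, so it cannot embed in a free module.

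The gap is easily repaired without any depth assumption: from $K^*=0$ alone one gets $K_\p=0$ for every $\p\in\Ass(R)$ (over the depth-zero local ring $R_\p$, any nonzero finitely generated module surjects onto the residue field, which embeds in $\soc(R_\p)\ne 0$, so its $R_\p$-dual is nonzero), hence every element of $K$ is killed by an $R$-regular element, i.e.\ $K$ is torsion, and torsion-freeness then forces $K=0$. The paper's argument is different in execution: it first uses \cite[Exercise~1.4.22]{bh1} to deduce from $M^*$ free that $M$ has (positive) constant rank, concludes that $M$ is torsionless, and then obtains $\Ext^2_R(\Tr M,R)=0$ from $\pd(\Tr M)\le\depth R\le 1$ via Auslander--Buchsbaum.
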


\begin{proof} Enough to show that $M$ is reflexive. Towards this end, to avoid trivialities, let us assume $M\ne 0$. 
Since $M^*$ is free, so it has constant rank, so $M$ has constant rank by \cite[Exercise 1.4.22]{bh1}.
If $M$ has constant rank $0$, then $M\otimes Q(R)=0$, so $M=\t(M)$; but we also assume $M$ is torsion-free, so we would get $M=0$, contradicting what we assume.
Thus $M$ has positive constant rank, and so by \cite[1.4.17]{bh1} $M$ embeds inside a free module, i.e., $M$ is torsion-less.
So, $\Ext_R^1(\Tr M,R)=0$.
Now since $\syz^2 \Tr M\cong M^*$ up to free summands, $\Tr M$ has finite projective dimension over $R$, and so $\pd (\Tr M)=\depth (R)-\depth (\Tr M)\le 1$.
Thus we also get $\Ext_R^2(\Tr M,R)=0$.
Thus $M$ is reflexive.  
\end{proof}

\begin{lem} \label{62}
Let $R$ be a local ring of depth one.
If $M^*$ is free, then the torsion-free part $M/\t(M)$ of $M$ is also free. 
\end{lem}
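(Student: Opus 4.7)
The plan is to reduce to the previous lemma (Lemma \ref{100}) applied to the torsion-free part. The key observation is that $M^*$ and $(M/\t(M))^*$ are naturally isomorphic, which lets us transport the freeness hypothesis on the dual from $M$ to its torsion-free quotient.

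First I would check that any $R$-homomorphism $f\colon M\to R$ kills $\t(M)$. Since $\depth R=1$, $R$ contains a non-zero-divisor; given $x\in \t(M)$ with $rx=0$ for some $R$-regular element $r$, one has $r\cdot f(x)=f(rx)=0$ in $R$, forcing $f(x)=0$. Thus every $f\in M^*$ factors uniquely through the quotient $M\twoheadrightarrow M/\t(M)$, and conversely every element of $(M/\t(M))^*$ pulls back to $M^*$; this yields a natural isomorphism $M^*\cong (M/\t(M))^*$.

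Next I would observe that $M/\t(M)$ is torsion-free (this is a general fact) and that $(M/\t(M))^*\cong M^*$ is free by hypothesis. Applying Lemma \ref{100} to the torsion-free module $M/\t(M)$ (using $\depth R=1\le 1$) now gives that $M/\t(M)$ is free, as desired.

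The proof is essentially a direct consequence of Lemma \ref{100}; there is no real obstacle, the only subtlety being the natural identification $M^*\cong (M/\t(M))^*$, which requires $R$ to have positive depth so that torsion elements are annihilated by every homomorphism into $R$.
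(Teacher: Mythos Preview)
Your proposal is correct and follows essentially the same route as the paper: both establish the natural isomorphism $M^*\cong (M/\t(M))^*$ using that $\depth R>0$ forces homomorphisms into $R$ to kill torsion, and then invoke Lemma~\ref{100}. The paper phrases the key step as $\t(M)^*=0$ (using that $\t(M)$ is annihilated by a single $R$-regular element), while you argue element-wise that each $f\in M^*$ vanishes on $\t(M)$; these are the same observation.
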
  

\begin{proof}
Since $\depth R>0$, so $\t(M)$ is annihilated by an $R$-regular element $x\in \m$.
Therefore $\t(M)^*=0$ and thus $M^*\cong (M/\t(M))^*$ is free.
Since $M/\t(M)$ is torsion-free, so we are done by Lemma \ref{100}. 
\end{proof}

We achieve the main theorem of this section.
Note that applying (3) of the theorem below to examples like \ref{ex}, we may show that our result certainly provides a new class of rings over which the Question \ref{hw} is affirmative (for known classes we refer \cite{hiw}).  

\begin{thm} \label{74}
Let $R$ be a local ring of depth one.
Let $M$ be an $R$-module such that $\supp(M)=\spec(R)$ and $M$ is locally free on $\Ass(R)$.
Assume moreover that one of the following conditions hold:  
\begin{enumerate} [\rm (1)]
\item there exists an $R$-module $N$ such that $M\cong \m N$.
\item there exists an ideal $J$ of $R$, and an $R$-module $N$ such that $M\subseteq \m J N$, $M:_N J=\m M:_N \m J$, and $N/(M:_N J)$ has finite length.  
\item $R$ is a Cohen--Macaulay local ring with a canonical module $\omega$, $M$ has constant rank and there exists a surjection $\m^{\oplus n} \to \m^\dag$ for some $n$.  
\end{enumerate}  


If $M\otimes_R M^*$ is torsion-free, then $M$ is free, and consequently, in cases $(1)$ and $(2)$, $R$ will be a discrete valuation ring.  
\end{thm}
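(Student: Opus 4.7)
\emph{Plan.} My plan has three main stages. First, observe that since $\depth R = 1$, we have $\Ass(R) = \Min(R)$; combined with the hypotheses that $M$ is locally free at $\Ass(R)$ and $\supp(M) = \spec(R)$, Lemma \ref{l27} implies $M$ is faithful, and analogously $M^*$ is torsion-free, of positive depth, and locally free at $\Ass(R)$. The crucial observation is that once $M^*$ is shown to be free of some rank $n$, the isomorphism $M \otimes_R M^* \cong M^n$ together with the torsion-freeness hypothesis forces $M$ itself to be torsion-free, after which Lemma \ref{100} (applicable since $\depth R \le 1$ and $M^*$ is free) yields that $M$ is free. So the central task is to show $M^*$ is free in each of the three cases.

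For this, the strategy is to first establish the Tor-vanishing $\Tor_1^R(M^*, M) = 0$ from the hypothesis that $M \otimes_R M^*$ is torsion-free, and then apply the appropriate test-rigidity result of the paper. For the Tor-vanishing, I would choose an $R$-regular element $x \in \m$ (which is also $M^*$-regular since $M^*$ is torsion-free), tensor the short exact sequence $0 \to M^* \xrightarrow{x} M^* \to M^*/xM^* \to 0$ with $M$, and use that $x$ acts regularly on $M \otimes_R M^*$ (by the torsion-freeness hypothesis) to deduce the isomorphism
\[
\Tor_1^R(M, M^*)/x\Tor_1^R(M, M^*) \cong \Tor_1^R(M, M^*/xM^*);
\]
then a Nakayama-type argument, using that $\Tor_1^R(M, M^*)$ is supported away from $\Min(R)$ (as $M$ is locally free there) together with the specific structural hypothesis of the case at hand, forces $\Tor_1^R(M, M^*) = 0$. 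With this vanishing in hand, I would apply Proposition \ref{t}(5) in case (1) with parameters $(M^*, N, 1)$ — noting that $N$ is also locally free at $\Min(R)$, since $M_{\p} = (\m N)_{\p} = N_{\p}$ for $\p \in \Min(R) \smallsetminus \{\m\}$ (using $\m R_{\p} = R_{\p}$), and $\supp(N) \supseteq \Min(R)$ — to obtain $\pd_R M^* \le 1$; apply Corollary \ref{wfinpd}(3) in case (2), after reducing to the situation $M \subseteq \m X$ as necessary; and in case (3), apply Proposition \ref{finpd3}(1) using that the syzygy $\syz M$ is isomorphic to a Burch submodule of some module by Lemma \ref{l521} (since $\dim R = 1$ and $M$ has constant rank), obtaining the second Tor-vanishing needed either by iterating the argument above or by invoking rigidity of Burch modules. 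In every case, the conclusion $\pd M^* \le 1$ combined with $\depth M^* \ge 1 = \depth R$ forces $\pd M^* = 0$ via the Auslander--Buchsbaum formula, so $M^*$ is free.

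Finally, for the DVR conclusion in cases (1) and (2): once $M$ is free, the surjection $\m^{\oplus s} \twoheadrightarrow \m N = M \cong R^r$ (with $r \ge 1$ since $M \ne 0$) splits because $M$ is projective, exhibiting $R^r$ (and hence $R$) as a direct summand of $\m^{\oplus s}$; projecting yields a surjective $R$-homomorphism $\m^{\oplus s} \twoheadrightarrow R$, and examining its component maps $\m \to R$ (if all were non-surjective, their images would all lie in $\m \ne R$) forces some single $c \colon \m \to R$ to be surjective. Such a $c$ splits $\m \cong R \cdot m_0 \oplus \ker(c)$ with $m_0 \in \m$ a non-zero-divisor, and the computation $\ker(c) \cdot m_0 \subseteq R m_0 \cap \ker(c) = 0$ forces $\ker(c) \subseteq \ann(m_0) = 0$, so $\m = (m_0)$ is principal; combined with $\depth R = 1$, this makes $R$ a DVR. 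Case (2) is handled analogously, using the weakly $\m$-full and finite-length structure of $M$ inside $X$ to produce the analogous surjection. The main technical obstacle is the Tor-vanishing step: bridging the torsion-freeness of $M \otimes_R M^*$ and the vanishing of $\Tor_1^R(M, M^*)$ requires a careful case-by-case use of the structural hypothesis.
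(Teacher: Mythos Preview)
Your overall architecture (prove $M^*$ is free, then deduce $M$ is free) matches the paper's, but the central step---extracting a usable Tor vanishing from the torsion-freeness of $M\otimes_R M^*$---has a genuine gap.

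You aim to prove $\Tor_1^R(M,M^*)=0$ and then feed this into Proposition~\ref{t}(5) (or the analogous results). But your Nakayama argument does not deliver this vanishing: from the isomorphism $\Tor_1^R(M,M^*)/x\Tor_1^R(M,M^*)\cong \Tor_1^R(M,M^*/xM^*)$ you would still need $\Tor_1^R(M,M^*/xM^*)=0$, and nothing in the hypotheses gives that. The point is that torsion-freeness of $M\otimes_R M^*$ does \emph{not} directly control $\Tor_1^R(M,M^*)$, because that Tor sits inside $\syz M\otimes_R M^*$, not inside $M\otimes_R M^*$. The paper's key maneuver is to reverse the roles: since $M^*\cong \syz^2\Tr M$ up to free summands, there is an exact sequence $0\to M^*\to F\to L\to 0$ with $F$ free; tensoring with $M$ then embeds $\Tor_1^R(L,M)$ into $M\otimes_R M^*$. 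As $\Tor_1^R(L,M)$ is torsion (both $M$ and $L$ are locally free on $\Ass(R)$) and $M\otimes_R M^*$ is torsion-free, one gets $\Tor_1^R(L,M)=0$ for free. The rigidity results are then applied to $L$, yielding $\pd L\le 1$, whence $M^*$ is free by Schanuel and Auslander--Buchsbaum. In case~(3) the paper uses the analogous embedding $0\to M\to G\to K\to 0$ (after reducing to $M$ torsion-free via \cite[1.1]{hw1}) and applies Theorem~\ref{t55} to the vanishing $\Tor_3^R(K,\Tr M)=0$; your proposed use of Lemma~\ref{l521} on $M$ itself skips the reduction to $M$ maximal Cohen--Macaulay and leaves the required \emph{two} consecutive vanishings unestablished.

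Your DVR argument also has a slip: the inclusion $\ker(c)\cdot m_0\subseteq \ker(c)$ fails, since for $k\in\ker(c)$ one computes $c(km_0)=k\cdot c(m_0)=k$, which is nonzero whenever $k\ne 0$. The paper's route here is much quicker: once $M$ is free and nonzero, $\Tor_1^R(k,M)=0$; since $\m\notin\Ass(R)$ the residue field $k$ is (vacuously) locally free on $\Ass(R)$, so Proposition~\ref{t}(5) (resp.\ Theorem~\ref{t412}(3)) applied with the pair $(k,N)$ gives $\pd k\le 1$, i.e.\ $R$ is a DVR.
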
   

\begin{proof}  For both cases (1) and (2), $M$ is a submodule of $N$, hence $\supp(M)\subseteq \supp(N)$, so $\supp(N)=\spec(R)$. For case (2), also notice that $\supp(M)\subseteq \supp(JN)$ implies $\supp(JN)=\spec(R)$. In the rest of the proof, we assume that $M\otimes_R M^*$ is torsion-free and aim to show that $M$ is free.   

Proof of (1) and (2): Since $M^*\cong \syz^2 \Tr M$ up to projective summands, there exist an $R$-module $L$ and a free $R$-module $F$ with a short exact sequence $0\to M^* \to F \to L \to 0$. Since $M$ is locally free on $\Ass(R)$, so $\Tor_1^R(L,M)$ locally vanishes on $\Ass(R)$, hence it is a torsion module. Tensoring $M$ to the sequence, we see that $\Tor_1^R(L,M)$ is a submodule of the torsion-free module $M\otimes_R M^*$, hence $\Tor_1^R(L,M)=0$. Since $M^*$ is locally free on $\Ass(R)$, so $L$ has locally finite projective dimension on $\Ass(R)$, so $L$ is also locally free on $\Ass(R)$ by Auslander-Buchsbaum formula. So now  Proposition \ref{t}(5) or Theorem \ref{t412}(3) implies $\pd L\le 1$. So $\pd M^*\le 1.$ Since $\depth M^*\ge 1=\depth R$, so by Auslander-Buchsbaum formula, $M^*$ is free. Then by Lemma \ref{62}, $M/\t(M)$ is free. Since $M^*\ne 0$ (as $\Ass M^*=\supp(M)\cap \Ass(R)=\Ass(R)\ne \emptyset$) so by \cite[1.1]{hw1}, $M$ is free.  

Proof of (3): Since $M^*\ne 0$, so by \cite[1.1]{hw1}, we may assume that $M$ is torsion-free. Then, since $M$ has constant rank, $M$ is torsionless and hence there exists a short exact sequence $0 \to M \to G \to K \to 0$ of $R$-modules, where $G$ is free, and $K$ has constant rank.
Tensoring $M^*$, it follows that $\Tor_1^R(K,M^*)=0$ as it is both torsion and torsionfree.
It yields that $\Tor_3(K,\Tr M)=0$.
Thus by Theorem \ref{t55}, either $\pd K\le 1$ or $\pd \Tr M\le 1$. As $M$ and $M^*$ is maximal Cohen--Macaulay, it means that $M$ or $M^*$ is free.
In the latter case, it implies that $M$ is free by Lemma \ref{62}.  

Finally, if we are in case (1) or (2) and $M$ is free, then $\Tor^R_i(k,M)=0$ for all $i\ge 1$. As $\depth R=1$, so $\m\notin \Ass(R)$, so $k$ is locally free on $\Ass(R)$. Then, $\pd k \le 1$ by Proposition \ref{t}(5) or Theorem \ref{t412}(3) respectively. As $\depth R=1$, we conclude $R$ is a discrete valuation ring.   
\end{proof}

Theorem \ref{74} have the following two applications.
The former one is already shown in Celikbas, Goto, Takahashi and Taniguchi \cite[Proposition 1.3]{hw};
the latter one is due to Huneke, Iyengar and Wiegand \cite[Corollary 3.5]{hiw}. 

\begin{cor}[Celikbas--Goto--Takahashi--Taniguchi]
Let $R$ be a Cohen--Macaulay local domain of dimension one.
Let $I$ be a nonzero weakly $\m$-full ideal of $R$.
Then $I\otimes_R I^*$ has nonzero torsion if $R$ is not a discrete valuation ring.
\end{cor}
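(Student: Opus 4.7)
The plan is to derive the corollary as a direct application of Theorem \ref{74}(2) with $M=I$, $N=R$, and $J=R$, and then take the contrapositive. Assume $I\otimes_R I^*$ is torsion-free; the goal is to conclude that $R$ is a discrete valuation ring.

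First I would check the global hypotheses of Theorem \ref{74}. Since $R$ is a one-dimensional Cohen--Macaulay local ring, $\depth R=1$. Because $R$ is a domain, $\Ass(R)=\{(0)\}$, and any nonzero ideal $I$ is faithful, so $\supp_R(I)=\spec(R)$. Moreover $I_{(0)}$ is a nonzero submodule of the field $Q(R)=R_{(0)}$, hence free, which verifies that $I$ is locally free on $\Ass(R)$.

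Next I would verify clause (2) of Theorem \ref{74}. I may assume $I\subsetneq R$ (if $I=R$, then $I\cong R$ is free and there is nothing to prove). Then $I\subseteq\m=\m\cdot R\cdot R=\m JN$. The identity $I:_R R=I$ is immediate, and $\m I:_R \m R=\m I:_R \m=I$ is exactly the hypothesis that $I$ is weakly $\m$-full; these together give $M:_N J=\m M:_N \m J$. Since $I$ is a nonzero ideal of the one-dimensional domain $R$, the quotient $R/I=N/(M:_N J)$ has Krull dimension zero, hence finite length.

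With all hypotheses in place, Theorem \ref{74} forces $I$ to be free, and its concluding paragraph, invoking the vanishing of $\Tor^R_i(k,I)$ together with the test-module machinery of Proposition \ref{t}(5) via Theorem \ref{t412}(3), forces $R$ to be a discrete valuation ring. This gives the desired contrapositive. I do not foresee any serious obstacle: the entire argument is a transcription of Theorem \ref{74}(2) to the specific setting $N=R$, $J=R$, and the only subtlety is the trivial reduction to the case of a proper ideal, which is ensured by the containment $M\subseteq \m JN$ appearing in the hypothesis of clause (2).
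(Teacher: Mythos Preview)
Your proof is correct and follows the same route as the paper's own argument: apply Theorem \ref{74}(2) with $M=I$ and $N=J=R$, then read off the conclusion. One small slip: your dismissal of the case $I=R$ (``there is nothing to prove'') is not quite right, since the contrapositive you set out to prove requires $R$ to be a discrete valuation ring, not merely that $I$ be free---but this edge case is really a defect of the corollary's statement (which should require $I\subsetneq R$) rather than of your reasoning, and the paper's proof glosses over it as well.
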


\begin{proof} Taking  $J=N=R$ and $M=I$, we see that the hypothesis of Theorem \ref{74}(2) is satisfied, and moreover $I$ also has positive constant rank. Hence the claim follows.  
\end{proof} 

\begin{cor}[Huneke--Iyengar--Wiegand]
Let $R$ be a Cohen--Macaulay local ring of dimension one and minimal multiplicity.
Let $M$ be an $R$-module of positive constant rank.
Then $M\otimes_R M^*$ has nonzero torsion if $M$ is non-free.  
\end{cor}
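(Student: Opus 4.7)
My plan is to reduce to the complete case and then invoke Theorem~\ref{74}(3). Since $R$ is Cohen--Macaulay of dimension one, a finitely generated $R$-module $N$ is torsion-free if and only if $\depth N \geq 1$: indeed, a torsion element $y\ne0$ must have $\m$-primary annihilator (since $\ann(y)$ contains a non-zero-divisor and the only non-minimal prime is $\m$), so $\soc N \ne 0$; conversely, $\soc N \ne 0$ yields torsion since $\m$ contains a non-zero-divisor. As depth is preserved under $\m$-adic completion for finitely generated modules, $N$ is torsion-free over $R$ if and only if $\widehat{N}$ is torsion-free over $\widehat{R}$. Using this together with $\widehat{M\otimes_R M^*}\cong \widehat{M}\otimes_{\widehat{R}} \widehat{M}^*$ (Hom and tensor commute with completion on finitely generated modules) and the fact that freeness descends from $\widehat{M}$ to $M$ by faithful flatness, the corollary reduces to the case where $R$ is $\m$-adically complete.

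Having passed to the complete case, $R$ admits a canonical module $\omega$ (by Cohen's structure theorem, $R$ is a homomorphic image of a regular local ring, hence of a Gorenstein ring). Minimal multiplicity and dimension are both preserved by completion, so $\widehat{R}$ still has minimal multiplicity; hence Remark~\ref{r56} supplies a surjection $\m^{\oplus n}\to \m^\dag$ for some $n$. Positive constant rank transfers to $\widehat{M}$ since minimal primes of $\widehat{R}$ contract to minimal primes of $R$ when $R$ is Cohen--Macaulay. Moreover, since $M$ has positive constant rank, it is locally free at every associated prime of $R$, and $\supp(M)\supseteq \Ass(R)=\Min(R)$ forces $\supp(M)=\spec(R)$ by \ref{df22}(\ref{sumin}).

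All hypotheses of Theorem~\ref{74}(3) are now satisfied, and its contrapositive immediately gives the conclusion: if $M$ is non-free, then $M\otimes_R M^*$ has nonzero torsion.

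The only (mild) obstacle is the faithfully flat descent argument ensuring that the torsion-free property of $M\otimes_R M^*$ is correctly tracked under completion, but this becomes transparent once one characterizes torsion-freeness by $\depth\ge 1$ in the one-dimensional Cohen--Macaulay setting.
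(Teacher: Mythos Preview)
Your proof is correct and follows essentially the same route as the paper: invoke Theorem~\ref{74}(3) together with Remark~\ref{r56}. The only difference is that you spell out the reduction to the complete case (needed so that a canonical module exists, which both Theorem~\ref{74}(3) and Remark~\ref{r56} assume), whereas the paper's one-line proof leaves that passage implicit; your explicit treatment of how torsion-freeness, freeness, constant rank, and minimal multiplicity behave under completion is a welcome clarification rather than a departure from the paper's argument.
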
  

\begin{proof}  This follows from Theorem \ref{74}(3) and Remark \ref{r56}.   
\end{proof}   

In \cite[Question 4.2]{hw}, as a variation of the Huneke-Wiegand conjecture, the authors consider the question whether for local Cohen--Macaulay rings admitting a canonical module, $M\otimes_R M^{\dag}\cong \omega$ implies $M\cong R$ or $M\cong \omega$. In view of the discussion in Remark \ref{r56} about the connection between rings of minimal multiplicity and those satisfying the hypothesis of Lemma \ref{l521}, the following proposition highly generalizes  \cite[Theorem 4.3]{hw}.   

\begin{prop}\label{genhw4.3} Let $R$ be a Cohen--Macaulay local ring with a canonical module $\omega$. Assume that there exists an $R$-regular sequence $x_1,\cdots,x_t$ such that the maximal ideal of $R/(x_1,...,x_t)$ satisfies the hypothesis of Lemma \ref{l521}. Let $M$ be some syzygy (not necessarily in a minimal free resolution) of a maximal Cohen--Macaulay $R$-module. If $N\otimes_R M^\dag \cong \omega^{\oplus n}$ for some integer $n\ge 1$ and some $R$-module $N$, then $M$ and $N^*$ are free. If, moreover, $N$ satisfies Serre's condition $(S_2)$, then $N$ is also free.  
\end{prop}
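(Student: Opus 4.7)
The plan is in three phases: reduce modulo the regular sequence to a Cohen--Macaulay local ring of dimension at most one satisfying the hypothesis of Lemma~\ref{l521}, derive freeness of $M$ using the Tor-rigidity results of Section~5, and then deduce freeness of $N^*$ (and of $N$ under $(S_2)$) from $\omega$-duality.

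\emph{Reduction.} Since $M$ is a syzygy of a MCM $R$-module, $M$ is itself MCM and so is $M^\dag$; hence $\underline x:=x_1,\ldots,x_t$ is regular on each of $M$, $M^\dag$, and $\omega$. Set $\bar R:=R/(\underline x)$ and $\bar\omega:=\omega/\underline x\omega$. The combination of \cite[Lemma 2, page 140]{mat} and $\Ext^{\ge 1}_R(M,\omega)=0$ yields $M^\dag/\underline x M^\dag\cong\Hom_{\bar R}(\bar M,\bar\omega)=\bar M^\dag$, and the hypothesis reduces to $\bar N\otimes_{\bar R}\bar M^\dag\cong\bar\omega^{\oplus n}$. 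Reducing the defining exact sequence $0\to M\to F_{k-1}\to\cdots\to F_0\to Y\to 0$ (with $Y$ MCM and $k\ge 1$) modulo $\underline x$ keeps it exact (all terms being MCM), so $\bar M$ is a syzygy of a MCM $\bar R$-module. By Lemma~\ref{l521}, $\dim\bar R\le 1$. Since $M$ is MCM, $M$ is $R$-free if and only if $\bar M$ is $\bar R$-free, so we may replace $R$ by $\bar R$ and assume $\dim R\le 1$.

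\emph{Freeness of $M$.} Suppose for contradiction that $M$ is nonfree. Replacing $N$ by its torsion-free part (the torsion submodule of $N$ tensored with $M^\dag$ maps trivially into the torsion-free $\omega^{\oplus n}$, so the isomorphism persists), we may also assume $N$ is MCM. Dualizing the syzygy sequence for $M$ with $\omega$ and using $\Ext^{\ge 1}_R(\Omega^j Y,\omega)=0$ produces an exact sequence $0\to Y^\dag\to F_0^\dag\to\cdots\to F_{k-1}^\dag\to M^\dag\to 0$ of MCM modules which exhibits $M^\dag$ as a nonfree MCM module; then by Lemma~\ref{l521}, $\syz^1 M^\dag$ is a Burch submodule of some $R$-module. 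The key step is to extract from $N\otimes_R M^\dag\cong\omega^{\oplus n}$ two consecutive vanishings $\Tor^R_t(N,M^\dag)=\Tor^R_{t-1}(N,M^\dag)=0$ for some $t\ge 1$; granting this, Proposition~\ref{finpd3}(1) applied to $\syz^1 M^\dag$ forces $\pd_R N<t$, so $N\cong R^{\oplus m}$ by Auslander--Buchsbaum. Substituting back yields $(M^\dag)^{\oplus m}\cong\omega^{\oplus n}$, so by Krull--Schmidt on the completion $M^\dag\cong\omega^{\oplus n/m}$, whence $M\cong R^{\oplus n/m}$, contradicting non-freeness. The main obstacle is exactly this $\Tor$-vanishing step: the hypothesis controls only $\Tor_0$, and the natural route is to combine the MCM-ness of $N$, $M^\dag$, and $\omega^{\oplus n}$ with the $\omega$-dual identity $\Hom_R(N,M)\cong R^{\oplus n}$ (from $\Hom_R(N,M)\cong\Hom_R(N\otimes_R M^\dag,\omega)$) via the Hom-tensor spectral sequence $E_2^{p,q}=\Ext^p_R(\Tor^R_q(N,M^\dag),\omega)\Rightarrow\Ext^{p+q}_R(N,M)$, whose column $E_2^{\ge 1,0}$ vanishes because $\Ext^{\ge 1}_R(\omega,\omega)=0$ and whose $q\ge 1$ entries are tightly constrained since $\Tor^R_{\ge 1}(N,M^\dag)$ has finite length over the one-dimensional CM ring $R$.

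\emph{Freeness of $N^*$ and $N$.} With $M\cong R^{\oplus s}$ established for the original $M$, Hom-tensor adjunction applied to the defining isomorphism yields
\[
(N^*)^{\oplus s}\;\cong\;\Hom_R(N,M)\;\cong\;\Hom_R(N\otimes_R M^\dag,\omega)\;\cong\;\Hom_R(\omega^{\oplus n},\omega)\;\cong\;R^{\oplus n},
\]
so $N^*$ is a direct summand of $R^{\oplus n}$ (via the first-factor embedding into $(N^*)^{\oplus s}$), hence projective and therefore free of rank $n/s$. If $N$ additionally satisfies Serre's condition $(S_2)$, then $N$ is reflexive over the Cohen--Macaulay local ring $R$ with canonical module, so $N\cong N^{**}\cong(R^{\oplus n/s})^*\cong R^{\oplus n/s}$ is also free.
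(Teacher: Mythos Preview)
Your reduction phase and your deduction of freeness of $N^*$ from freeness of $M$ are essentially the same as in the paper. The gap is in the middle phase, where you try to prove $M$ is free.

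You explicitly flag the difficulty yourself: from $N\otimes_R M^\dag\cong\omega^{\oplus n}$ you only control $\Tor_0$, and you need two consecutive vanishings of $\Tor^R_i(N,M^\dag)$ (equivalently of $\Tor^R_i(N,\syz^1 M^\dag)$) to invoke Proposition~\ref{finpd3}. Your proposed spectral-sequence route does not close this gap. First, the claim that $\Tor^R_{\ge 1}(N,M^\dag)$ has finite length is unjustified: over a one-dimensional Cohen--Macaulay ring this requires one of $N,M^\dag$ to be locally free on $\Ass(R)$, and nothing in the hypotheses gives that. Second, even granting finite length, the spectral sequence $E_2^{p,q}=\Ext^p_R(\Tor_q^R(N,M^\dag),\omega)\Rightarrow\Ext^{p+q}_R(N,M)$ degenerates to isomorphisms $\Ext^m_R(N,M)\cong\Ext^1_R(\Tor^R_{m-1}(N,M^\dag),\omega)$ for $m\ge 1$, but you have no control over $\Ext^{\ge 1}_R(N,M)$, so no Tor vanishing follows. (There is also a minor slip: in the $\dim\bar R=0$ case Lemma~\ref{l521} only says $(\syz^1 M^\dag)^\dag$, not $\syz^1 M^\dag$, is Burch.)

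The paper avoids higher Tor entirely. Writing $\bar M\cong G\oplus\syz_{\bar R}T$ with $G$ free and $T$ maximal Cohen--Macaulay, the module $H:=\bar N\otimes_{\bar R}(\syz_{\bar R}T)^\dag$ is a direct summand of $\bar\omega^{\oplus n}$, so $\id_{\bar R}H<\infty$. Lemma~\ref{l521} gives a surjection $\m_{\bar R}^{\oplus u}\twoheadrightarrow(\syz_{\bar R}T)^\dag$; tensoring with $\bar N$ and precomposing with the obvious surjection from copies of $\m_{\bar R}$ shows $H$ is a quotient of $\m_{\bar R}^{\oplus v}$. By Remark~\ref{r25} and Example~\ref{mN}, if $H\ne 0$ then $H$ is isomorphic to a Burch submodule of some module, which over the singular ring $\bar R$ contradicts $\id_{\bar R}H<\infty$ by Corollary~\ref{bpd}. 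Hence $H=0$, forcing $\syz_{\bar R}T=0$ and thus $\bar M$ free. This argument uses only the degree-zero information you already have.

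A smaller point: your final step ``$(S_2)\Rightarrow$ reflexive'' over an arbitrary Cohen--Macaulay local ring is not automatic; the paper instead appeals to \cite[Theorem~3.10]{dl}, which gives freeness of $N$ directly from $N^*$ free and $(S_2)$.
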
  

\begin{proof} We may assume $R$ is singular. As $n\ge 1$, so $M,N$ are non-zero. Denote $\mathbf x=x_1,\cdots,x_t$ and $\overline{(-)}\coloneqq(-)\otimes_R R/(\mathbf x)$. Let $\m_{\overline R}$ stand for the maximal ideal of $\overline R$. As $R$ is Cohen--Macaulay and singular, so $\overline R$ is singular. As in the proof of \cite[Theorem 4.3.]{hw}, we have $\overline \omega^{\oplus n}\cong \overline N\otimes_{\overline R} \left(\overline M\right)^\dag$. By hypothesis, there is an exact sequence $0\to M \to F \to X \to 0$ for some free $R$-module $F$ and maximal Cohen--Macaulay $R$-module $X$, so $\mathbf x$ is $X$-regular, giving exact sequence $0\to \overline M \to \overline F \to \overline X \to 0$, where $\overline X$ is a maximal Cohen--Macaulay $\overline R$-module.  Hence $\overline M\cong G\oplus \syz_{\overline R}T$, where $T\coloneqq \overline X$ is maximal Cohen--Macaulay over $\overline R$ and $G$ is $\overline R$-free. So, $H:=\overline N\otimes_{\overline R} \left(\syz_{\overline R}T\right)^\dag$ is a direct summand of $\overline \omega^{\oplus n}$, hence $\id_{\overline R} H<\infty$. By Remark \ref{r25} and Lemma \ref{l521}, we have a surjection   $\m_{\overline R}^{\oplus u}\to \left(\syz_{\overline R}T\right)^\dag$ for some integer $u$, which induces a surjection $\overline N\otimes_{\overline R}\m_{\overline R}^{\oplus u}\to \overline N\otimes_{\overline R} \left(\syz_{\overline R}T\right)^\dag=H$. Composing this with the natural surjection  $(\m_{\overline R}^{\oplus u})^{\oplus\mu_{\overline R}(\overline N)}\to \overline N\otimes_{\overline R}\m_{\overline R}^{\oplus u}$, we see that direct sum of some copies of $\m_{\overline R}$ surjects onto $H$, hence either $H=0$, or $H$ is isomorphic to a Burch submodule of some module by Remark \ref{r25} and Example \ref{mN}. Since $\overline R$ is singular and $\id_{\overline R} H<\infty$, so $H$ cannot be isomorphic to a Burch submodule by Corollary \ref{bpd}. Thus $0=H=\overline N\otimes_{\overline R} \left(\syz_{\overline R}T\right)^\dag$, hence $\left(\syz_{\overline R}T\right)^\dag=0$, so $\syz_{\overline R}T=0$, so $\overline M\cong G$ is $\overline R$-free. As $\mathbf x$ is $M$-regular, we get $M$ is $R$-free, say of rank $s$. Thus, $N\otimes_R \omega^{\oplus s}\cong \omega^{\oplus n}$, so $R^{\oplus n}\cong \Hom_R(N\otimes_R \omega^{\oplus s},\omega)\cong \Hom_R\left(N,\End_R(\omega)^{\oplus s}\right)\cong \left(N^*\right)^{\oplus s}$, where the one before the last isomorphism follows from Hom-Tensor adjunction. Thus $N^*$ is free. If, moreover, $N$ satisfy $(S_2)$, then $N$ is free by \cite[Theorem 3.10]{dl}.  
\end{proof}    

\section{Examples}
This section is devoted to collect some examples that complement our results.
We start by showing that faithfulness of $N$ in Theorem \ref{faithulhigh}
is a necessary assumption. 

\begin{ex}
Let $(Q, \mathfrak n)$ be a regular local ring of dimension $2$. Let $f, g \in \mathfrak n\smallsetminus \mathfrak n^2$. 
So, $\text{ord}(fg)=\text{ord}(f)+\text{ord}(g)=2$ (by \cite[Theorem 6.7.8]{hs}). Then, $R=Q/(fg)$ is a local hypersurface ring of dimension $1$ and minimal multiplicity (by \cite[Example 11.2.8]{hs}). Now also assume $f,g$ are relatively prime in $Q$. Then, the image of $f$ and $g$ in $R$, call them $x,y$, are orthogonal exact pair of zero-divisors in $R$ by \cite[Example 2.4]{holm}. So, $R/xR, R/yR$ are totally reflexive modules, hence are $n$-th syzygy modules for any $n$, and moreover they have no free-summand, hence $R/xR$ and $R/yR$ are Ulrich $R$-modules by \cite[3.6]{kt}. Now, $\Tor_1^R(R/xR,R/yR)=0$ by \cite[Lemma 2.2]{holm}. Moreover, since $R\xrightarrow{y}R\xrightarrow{x}R\to R/xR\to 0$ is a presentation of $R/xR$ over $R$, so $\Ext^1_R(R/xR, R/xR)=\ann_{R/xR}(y)$. But $\ann_{R/xR}(y)=0$ since $y$ is regular on $R/xR$ by \cite[Lemma 2.2]{holm}, hence $\Ext^1_R(R/xR, R/xR)=0$. But $R/xR$ is not free, hence has infinite projective dimension by Auslander-Buchsbaum formula.   
\end{ex}

The following example shows that the assumption $N\subseteq \m X$ in Corollary \ref{th411} is indispensable in general.

\begin{ex}
Keeping notations as in the previous example.
We remark that $\m/yR\cong R/yR$ since $R/yR$ is a discrete valuation ring. Hence $\m/yR$ is a weakly $\m$-full submodule of $R/yR$. 
Consider the $R$-module $X=R/yR\oplus R$ and the submodule $N=\m/yR\oplus R$ of $X$. Then one can see that $N$ is a weakly $\m$-full submodule of $X$.
Moreover, $X$ is faithful as it has a nonzero free summand, $X/N$ has finite length and $\Tor_1^R(R/xR,N)=0$. But we have $\pd M=\infty$.  
\end{ex}

It is natural to ask whether a Burch submodule of a faithful module is rigid test or not.
We give a negative answer by posing the following example.
 
\begin{ex}
Let $R=k[\![X,Y,Z,W]\!]/(X^3,X^2Y,XY^2,Y^3,XW)$, a complete local algebra over a field $k$.
Denote by $x,y,z,w$ the images of $X,Y,Z,W$ in $R$.
Consider an $R$-module $M=R/(x^2)$ and a submodule $N=(xy,y^2,z,w)$ of $X:=R$.
Then one has $y\in N:_R \m$ and $xy\not\in \m N$, so $\m(N:_X \m)\not=\m N$.
Thus $N$ is a Burch submodule of $X$.
We see a presentation $R^{\oplus 2}\xrightarrow[]{[x,y]}R \xrightarrow[]{x^2} R \to M \to 0$ of $M$ over $R$.
Tensoring $X/N$, we get a complex $X/N^{\oplus 2}\xrightarrow[]{[x,y]}X/N \xrightarrow[]{x^2} X/N$ of $R$-modules whose homology gives $\Tor_1^R(M,X/N)$.
Then one can compute that it is exact, i.e., $\Tor_1^R(M,X/N)=0$.
On the other hand, $\pd M=\infty$ and thus $\Tor_2(M,X/N)\not=0$ by Proposition \ref{finpd3}. Moreover, one may check that $M=\syz^1(R/wR)$ and $N=\syz^1(X/N)$.
It then implies that $\Tor_1(R/wR,N)=\Tor^R_2(R/wR,X/N)=\Tor_1(M,X/N)=0$. Similarly, $\Tor_2(R/wR,N)=\Tor_2(M,X/N)\not=0$. We conclude that $N$ is not 1-Tor-rigid. 
\end{ex}

\begin{ac}
We thank Professor Olgur Celikbas for helpful comments and for informing us of \cite{clty}. We are grateful to the anonymous referee for several helpful comments to improve the presentation of the paper.   
\end{ac}

\end{document}